
\documentclass{amsart}

\usepackage{graphicx}
\usepackage{amsmath}
\usepackage{amsfonts}
\usepackage{amssymb}
\usepackage{bbm}
\usepackage{epic}
\usepackage{amscd}
\usepackage{amsthm}
\usepackage{amssymb}

\usepackage{mathtools}

\usepackage[margin=1.0in]{geometry}

\usepackage[all]{xy}

\usepackage{multirow}
\usepackage{tikz-cd}

\newtheorem{theorem}{Theorem}[section]
\newtheorem{corollary}[theorem]{Corollary}
\newtheorem{lemma}[theorem]{Lemma}
\newtheorem{conjecture}[theorem]{Conjecture}

\theoremstyle{definition}

\DeclareMathOperator{\tr}{tr}

\DeclareMathOperator{\Conf}{Conf}

\DeclareMathOperator{\Khr}{Khr}

\newcommand{\Reals}{\mathbbm{R}}

\newcommand{\Ints}{\mathbbm{Z}}
\newcommand{\CP}{\mathbbm{CP}}
\newcommand{\RP}{\mathbbm{RP}}

\newcommand{\id}{{\mathbbm{1}}}

\newcommand{\timesbar}{{\overline{\times}}}

\newcommand{\wbar}{{\overline{w}}}
\newcommand{\rbar}{{\overline{r}}}
\newcommand{\sbar}{{\overline{s}}}

\newcommand{\thetabar}{{\overline{\theta}}}

\newcommand{\dtilde}{{\tilde{d}}}

\newcommand{\qx}{{\bf{i}}}
\newcommand{\qy}{{\bf{j}}}
\newcommand{\qz}{{\bf{k}}}

\newcommand{\calA}{{\mathcal A}}
\newcommand{\calG}{{\mathcal G}}

\newcommand{\calP}{{\mathcal P}}

\newcommand{\calF}{{\mathcal F}}

\newcommand{\calB}{{\mathcal B}}

\newcommand{\calL}{{\mathcal L}}

\newcommand{\partialbar}{\overline{\partial}}

\DeclareMathOperator{\MCG}{MCG}
\DeclareMathOperator{\PMCG}{PMCG}
\DeclareMathOperator{\PB}{PB}

\DeclareMathOperator{\Out}{Out}

\DeclareMathOperator{\Tw}{Tw}
\DeclareMathOperator{\Ch}{Ch}

\DeclareMathOperator{\etadot}{\dot{\eta}}
\DeclareMathOperator{\epsilondot}{\dot{\epsilon}}

\DeclareMathOperator{\F}{{\mathbbm{F}}}

\newcommand{\abar}{{\overline{a}}}
\newcommand{\bbar}{{\overline{b}}}
\newcommand{\Abar}{{\overline{A}}}
\newcommand{\Bbar}{{\overline{B}}}
\newcommand{\Wbar}{{\overline{W}}}

\newcommand{\gammabar}{{\overline{\gamma}}}

\let\amsamp=&

\begin{document}

\title[Khovanov homology and the Fukaya category of $R^*(T^2,2)$]
      {Khovanov homology and the Fukaya category of the traceless
        character variety for the twice-punctured torus}

\begin{abstract}
We describe a strategy for constructing reduced Khovanov homology for
links in lens spaces by generalizing a symplectic interpretation of
reduced Khovanov homology for links in $S^3$ due to Hedden, Herald,
Hogancamp, and Kirk.
The strategy relies on a partly conjectural description of the Fukaya
category of the traceless $SU(2)$ character variety of the 2-torus
with two punctures.
From a diagram of a 1-tangle in a solid torus, we construct a
corresponding object $(X,\delta)$ in the $A_\infty$ category of
twisted complexes over this Fukaya category.
The homotopy type of $(X,\delta)$ is an isotopy invariant of the
tangle diagram.
We use $(X,\delta)$ to construct cochain complexes for links in
$S^3$ and some links in $S^2 \times S^1$.
For links in $S^3$, the cohomology of our cochain complex reproduces
reduced Khovanov homology, though the cochain complex itself is not the
usual one.
For links in $S^2 \times S^1$, we present results that suggest the
cohomology of our cochain complex may be a link invariant.
\end{abstract}

\author{David Boozer} 

\date{\today}

\maketitle
\setcounter{tocdepth}{1}
\tableofcontents

\section{Introduction}

Khovanov homology is an invariant of oriented links in $S^3$ that
categorifies the Jones polynomial \cite{Jones,Khovanov}.
An important open problem is to generalize Khovanov homology to links
in arbitrary 3-manifolds.
Candidate generalizations have been defined for links in certain
specific 3-manifolds, including links in $I$-bundles over
arbitrary surfaces by Asaeda, Przytycki, and Sikora \cite{Asaeda-1},
links in $S^2 \times S^1$ by Rozansky \cite{Rozansky}, links in
all connected sums of $S^2 \times S^1$ by Willis \cite{Willis}, and
links in $\RP^3$ by Gabrov\v{s}ek \cite{Gabrovsek}.

We describe here a strategy for constructing Khovanov homology for
links in arbitrary lens spaces.
Our strategy is inspired by a symplectic interpretation of Khovanov
homology for links in $S^3$ due to Hedden, Herald, Hogancamp, and
Kirk \cite{Hedden-3}, which originated as an outgrowth of their
project to construct
\emph{pillowcase homology} \cite{Hedden-1,Hedden-2}, a symplectic
counterpart to Kronheimer and Mrowka's singular instanton link
homology \cite{Kronheimer-2,Kronheimer-1,Kronheimer-3}.
The setup for pillowcase homology is as follows.
Given an oriented link $L$ in $S^3$, we consider a Heegaard splitting
\begin{align*}
  (S^3,L) = (B^3,T_0) \cup_{(S^2,4)} (B^3,T_1)
\end{align*}
such that the Heegaard surface $(S^2,4)$ is a 2-sphere that
transversely intersects $L$ in four points, the handlebodies
$(B^3,T_0)$ and $(B^3,T_1)$ are closed 3-balls containing 2-tangles
$T_0$ and $T_1$, and $T_0$ is trivial.
To the Heegaard surface $(S^2,4)$ we associate the irreducible locus
$R^*(S^2,4)$ of the traceless $SU(2)$ character variety for the
2-sphere with four punctures, a symplectic manifold known as the
\emph{pillowcase}.
To the handlebodies $(B^3,T_0)$ and $(B^3,T_1)$ we associate traceless
$SU(2)$ character varieties $R_\pi^\natural(B^3,T_0)$ and $R^*(B^3,T_1)$.
By pulling back $SU(2)$ representations along the inclusions
$(S^2,4) \hookrightarrow (B^3,T_0)$ and
$(S^2,4) \hookrightarrow (B^3,T_1)$, we obtain maps
$R_\pi^\natural(B^3,T_0) \rightarrow R^*(S^2,4)$ and
$R^*(B^3,T_1) \rightarrow R^*(S^2,4)$ of the corresponding
character varieties.
The images of these maps define Lagrangians $L_0$ and $W_1$ in
$R^*(S^2,4)$ that can be viewed as symplectic representations of the
tangles $T_0$ and $T_1$.
Roughly speaking, and ignoring certain technical complications, the
pillowcase homology of $(S^3,L)$ is defined to be the Lagrangian Floer
homology of the pair of Lagrangians $(L_0,W_1)$.

In \cite{Hedden-3}, Hedden, Herald, Hogancamp, and Kirk give a
symplectic interpretation of reduced Khovanov homology by modifying
the construction used to define pillowcase homology.
Instead of working directly with the tangle $T_1$, they consider a
cube of resolutions of a planar projection of $T_1$.
The natural setting to symplectically encode this cube
of resolutions is the twisted Fukaya category of $R^*(S^2,4)$.
To the cube of resolutions they assign an object $(X,\delta)$ of this
$A_\infty$ category, where $X$ consists of shifted copies of
Lagrangians corresponding to planar tangles at the vertices of the
cube and $\delta:X \rightarrow X$ consists of maps of Lagrangians
corresponding to saddles at the edges of the cube.
They prove:

\begin{theorem}(Hedden--Herald--Hogancamp--Kirk
  \cite[Theorem 1.1]{Hedden-3})
\label{theorem:hhhk-invar}
Given a planar projection of $(B^3,T_1)$, there is a corresponding
object $(X,\delta)$ of the twisted Fukaya category of $R^*(S^2,4)$.
The homotopy type of $(X,\delta)$ is an isotopy invariant of $T_1$ rel
boundary.
\end{theorem}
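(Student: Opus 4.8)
The plan is to reduce the statement to invariance under a finite list of local diagram moves and to establish each such invariance by exhibiting an explicit homotopy equivalence in the category of twisted complexes. Two planar projections of $(B^3,T_1)$ that are isotopic rel boundary are related by a finite sequence of (i) planar isotopies of the diagram rel the four boundary points and (ii) the Reidemeister moves R1, R2, R3 performed in a disk disjoint from the boundary. Invariance under planar isotopy is immediate: the cube of resolutions, the Lagrangian in $R^*(S^2,4)$ assigned to each resolved planar $2$-tangle, and the Floer-theoretic morphism assigned to each elementary saddle cobordism at an edge of the cube all depend only on the planar-isotopy classes of the tangles and cobordisms involved, so $(X,\delta)$ is literally unchanged. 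Thus the content is entirely in the three Reidemeister moves, together with the routine checks that the homotopy type of $(X,\delta)$ does not depend on the auxiliary choices made in its construction --- an ordering of the crossings used to fix signs in the cube differential, and the Hamiltonian perturbation and almost-complex-structure data used to define the morphisms.

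For each Reidemeister move I would first identify the local change in the cube of resolutions: R1 adjoins one cube direction whose two resolutions differ by splitting off a null-homotopic circle; R2 adjoins a $2\times 2$ face, one pair of opposite vertices of which carries an extra circle; and R3 is treated by retracting the before- and after-cubes onto a common complex, using the R2 homotopy equivalence inside a larger sub-cube on each side. The engine in every case is the cancellation (Gaussian elimination) lemma for twisted complexes over an $A_\infty$ category: a summand $(A \oplus B,\delta)$ in which the component $\delta_{BA}\colon A \to B$ is a homotopy equivalence may be deleted, yielding a homotopy equivalent twisted complex in which $A,B$ are removed and the surviving components of $\delta$ are corrected by the usual zig-zag formula. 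To feed this lemma I need two inputs about $R^*(S^2,4)$. The first is a \emph{delooping}: the Lagrangian for a planar tangle with a disjoint null-homotopic circle should be isomorphic in the Fukaya category to a direct sum of two appropriately shifted copies of the Lagrangian for the tangle without the circle --- this can be arranged either by performing the delooping combinatorially in Bar-Natan's dotted cobordism category before passing to the Fukaya category, or verified directly by a Floer computation. The second is that the morphisms assigned to elementary saddle cobordisms compose, up to $A_\infty$-homotopy, exactly as the corresponding cobordisms compose in the dotted cobordism category, so that in particular the sphere, dot-times-dot, and neck-cutting relations hold on the nose or up to homotopy. Granting these, the Gaussian-elimination arguments that prove Reidemeister invariance of the Bar-Natan/Khovanov bracket transcribe essentially verbatim.

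It is cleanest to package the whole argument as an $A_\infty$-functor from a twisted-complex enlargement of Bar-Natan's tangle-cobordism category for the $4$-punctured disk to the twisted Fukaya category of $R^*(S^2,4)$, sending each crossingless matching to its Lagrangian and each elementary cobordism to its Floer morphism; if this functor carries the local relations to coherent homotopies, it carries the formal Khovanov bracket of $T_1$, known to be invariant up to homotopy equivalence in the cobordism category, to $(X,\delta)$, and the theorem follows formally. \textbf{The main obstacle}, in either formulation, is the same: one must compute enough of the Lagrangian Floer theory of $R^*(S^2,4)$ --- the differentials, products, and the relevant higher $A_\infty$ operations for the small family of curves that occur as resolutions of a neighborhood of a crossing --- to verify the delooping isomorphism and the cobordism relations. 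This is delicate because $R^*(S^2,4)$ is the pillowcase, a symplectic surface with corner/orbifold points, so the holomorphic-disk counts near those points and the accompanying perturbation data must be controlled; the grading and sign bookkeeping needed so that $\sum_{k}\mu^k(\delta,\dots,\delta)=0$ holds for the cube differential is the other delicate point. Everything else --- assembling the local homotopy equivalences into a global one --- is formal once these inputs are secured.
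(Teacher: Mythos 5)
This theorem is imported from \cite{Hedden-3} and is not proved in the present paper, so there is no in-paper proof to compare against line by line. What the paper does provide is (a) an explicit description of how \cite{Hedden-3} organizes its proof --- via an $A_\infty$ functor from (twisted complexes over) Bar-Natan's dotted cobordism category for the four-punctured disk into the twisted Fukaya category of $R^*(S^2,4)$, which is exactly your ``cleanest packaging'' --- and (b), in Section~\ref{sec:invariance}, a proof of the directly analogous Theorem~\ref{theorem:invar} for $R^*(T^2,2)$ that realizes your template: verify the sphere, isotopy-invariance, neck-cutting, and $4$-tube relations for the Floer-level differentials, cup maps, and cap maps (Theorems~\ref{theorem:rel-0}--\ref{theorem:rel-2}), then write down homotopy equivalences for R1 and R2 formally identical to Bar-Natan's, and deduce R3 from the fact that the R2 equivalence is a strong deformation retraction. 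So your outline matches the intended argument. The main structural difference is where the delooping lives: you propose either a Floer-theoretic decomposition of the Lagrangian for a tangle-with-circle into two shifted copies, or a combinatorial delooping prior to passing to the Fukaya category; both the paper and \cite{Hedden-3} take the second route, and indeed go one step further by assigning $L_n\otimes A^{\otimes r}$ to a planar tangle with $r$ circles from the outset, so the circle factors never enter the Fukaya category at all and no delooping isomorphism needs to be established there. One minor inaccuracy to flag: $R^*(S^2,4)$ has no orbifold or corner points --- those lie on the reducible locus, which is removed --- so the pillowcase here is a smooth $4$-punctured $2$-sphere, and the technical delicacy is noncompactness near the punctures and perturbation data rather than orbifold holomorphic-disk counts. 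Also, since everything is over $\F_2$, the sign bookkeeping you worry about in $\sum_k \mu^k(\delta,\dots,\delta)=0$ does not arise; what does require care is the verification that the higher $\mu^k$ vanish on the relevant generators (in the $R^*(T^2,2)$ case this is Conjecture~\ref{conj:fukaya}, and in $R^*(S^2,4)$ it is a theorem of \cite{Hedden-3}).
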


To the trivial tangle $T_0$ they assign an object $(W_0,0)$ of the
twisted Fukaya category.
The morphism spaces of the twisted Fukaya category have the structure
of cochain complexes, so in particular the space of morphisms from
$(W_0,0)$ to $(X,\delta)$ is a cochain complex.
They show that this cochain complex is identical to the usual cochain
complex for the reduced Khovanov homology $\Khr(L)$ of $L$, thus
proving:

\begin{theorem}(Hedden--Herald--Hogancamp--Kirk
  \cite[Corollary 1.2]{Hedden-3})
\label{theorem:hhhk-khr}
We have an isomorphism of bigraded vector spaces
\begin{align*}
  \Khr(L) \rightarrow H^*(\hom((W_0,0), (X,\delta))),
\end{align*}
where the $\hom$ space is taken within the twisted Fukaya category of
$R^*(S^2,4)$.
\end{theorem}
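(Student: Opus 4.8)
The plan is to identify the cochain complex $\hom((W_0,0),(X,\delta))$ with the usual cochain complex $\widetilde{C}(L)$ for the reduced Khovanov homology of $L$, directly at the chain level, and then pass to cohomology. Write $X = \bigoplus_{v} \calL_v[\sigma_v]$, where $v$ ranges over the vertices of the cube of resolutions of the given $n$-crossing projection of $T_1$, where $\calL_v \subset R^*(S^2,4)$ is the Lagrangian associated to the crossingless tangle $T_1^v$ obtained by resolving the crossings according to $v$, and where $\sigma_v$ is a grading shift. Since $(W_0,0)$ is an ordinary object with vanishing twist, we have
\begin{align*}
  \hom((W_0,0),(X,\delta)) \;=\; \bigoplus_{v} \hom(W_0,\calL_v)[\sigma_v]
\end{align*}
as a graded vector space, with differential $D(\phi) = \mu^1(\phi) + \mu^2(\delta,\phi) + \mu^3(\delta,\delta,\phi) + \cdots$ built from the $A_\infty$ operations $\mu^k$ of $\Fuk(R^*(S^2,4))$. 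The theorem follows once we match each summand $\hom(W_0,\calL_v)$ with the reduced Khovanov group at the vertex $v$ and match $D$ with the reduced Khovanov differential.

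First I would compute the summands. Capping the crossingless tangle $T_1^v$ with the trivial tangle $T_0$ yields an unlink in $S^3$; let $c_v$ be its number of components, exactly one of which meets the basepoint. Working in the two-dimensional symplectic manifold $R^*(S^2,4)$, where $W_0$ and $\calL_v$ are explicit (possibly immersed) curves, I would choose a small Hamiltonian perturbation making the intersection transverse and verify that $W_0 \cap \calL_v$ consists of $2^{\,c_v-1}$ points whose gradings exhibit a tensor decomposition $\hom(W_0,\calL_v) \cong V^{\otimes(c_v-1)}$ (with $V$ the two-dimensional Frobenius algebra underlying Khovanov homology), matching the reduced Khovanov generators at $v$ once the shift $\sigma_v$ is accounted for; I would also arrange that $\mu^1$ vanishes on each $\hom(W_0,\calL_v)$, so that this space equals its own cohomology. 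This is a finite, local computation.

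Next I would match the cube edges. An edge from $v$ to $v'$ comes from a saddle cobordism between $T_1^v$ and $T_1^{v'}$, and in the construction of $(X,\delta)$ this saddle determines the component $\delta_{v,v'} \in \hom(\calL_v,\calL_{v'})$; the map $\mu^2(\delta_{v,v'},-)\colon \hom(W_0,\calL_v) \to \hom(W_0,\calL_{v'})$ should then be identified, under the isomorphisms of the previous paragraph, with the Khovanov merge or split map $V^{\otimes(c_v-1)} \to V^{\otimes(c_{v'}-1)}$ --- including the bookkeeping for the basepoint circle in the reduced theory --- by counting the finitely many holomorphic triangles in $R^*(S^2,4)$ with boundary on $W_0$, $\calL_v$, $\calL_{v'}$. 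Finally I would show that the higher terms $\mu^{k+1}(\delta,\dots,\delta,-)$ with $k \geq 2$ contribute nothing: the relevant holomorphic polygons have boundary on $W_0$ together with a strictly increasing chain $\calL_v, \calL_{v^{(1)}}, \dots, \calL_{v'}$ in the cube, and I expect the corresponding moduli spaces to be empty for index and grading reasons. Together with $\mu^1 = 0$ this would give $D = \sum_{v \to v'} \mu^2(\delta_{v,v'},-)$, so that $D$ is exactly the reduced Khovanov differential, and hence an isomorphism of cochain complexes $\widetilde{C}(L) \cong \hom((W_0,0),(X,\delta))$. Passing to cohomology and tracking the homological grading (cube position together with the shifts $\sigma_v$) and the quantum grading (internal grading of the $V^{\otimes}$ factors together with cube position) yields the asserted isomorphism $\Khr(L) \rightarrow H^*(\hom((W_0,0),(X,\delta)))$ of bigraded vector spaces.

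The step I expect to be the main obstacle is ruling out the higher $A_\infty$ contributions and the stray Floer differentials: rather than a purely formal index count, one is likely forced into a genuine analysis of holomorphic curves in $R^*(S^2,4)$, requiring a choice of almost complex structure and Hamiltonian perturbations that is simultaneously compatible with every triangle and polygon appearing in the cube, together with careful control of the immersed ``figure-eight'' Lagrangians near the orbifold points of the pillowcase. Establishing such a uniform choice, and checking that it kills exactly the unwanted terms while preserving the edge-map counts, is where the real work lies.
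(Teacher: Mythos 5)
This statement is quoted from Hedden--Herald--Hogancamp--Kirk; the present paper does not reprove it, but it does summarize their argument as a chain-level identification of $\hom((W_0,0),(X,\delta))$ with the usual reduced Khovanov cochain complex, which is the same strategy you propose. However, your sketch has two concrete problems. First, a structural one: the vertex summands are not Lagrangian intersections of size $2^{\,c_v-1}$. In the construction the object at a cube vertex is $\calL_v \otimes A^{\otimes r_v}$ in the additive enlargement, where the circle components of the resolved tangle contribute the formal tensor factor $A^{\otimes r_v}$, not extra intersection points of $W_0$ with $\calL_v$; the intersection $W_0 \cap \calL_v$ (after Hamiltonian perturbation of the immersed figure-eight Lagrangians) consists of only one or two points. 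So the identification with $V^{\otimes(c_v-1)}$ must be organized through the $A^{\otimes r_v}$ bookkeeping of $\Sigma\calA$, not through a count of intersection points, and your grading/dimension matching has to be redone on that basis.

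Second, and more seriously, the step you yourself flag as the main obstacle is not merely technical: your proposed mechanism for it is wrong. You argue that all higher contributions $\mu^{k+1}(\delta,\dots,\delta,-)$, $k\geq 2$, vanish because the relevant polygon moduli spaces should be empty ``for index and grading reasons.'' In the pillowcase this is false as a blanket statement: there are nonzero triangle products of exactly the shape that could enter this differential, e.g.\ $\mu^3(c_{-1},q_{-1,1},w_1)=w_{-1}$ and $\mu^3(q_{-1,1},c_1,w_1)=w_{-1}$ (see Appendix \ref{sec:pillowcase}), where $c$ and $q$ are components of $\delta$ coming from circle-splitting and arc-type-changing saddles and $w$ generates the relevant $\hom$ space. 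So emptiness cannot be deduced from formal index or grading constraints; one must actually compute the $A_\infty$ operations --- which Hedden--Herald--Hogancamp--Kirk do combinatorially, using that $R^*(S^2,4)$ is two-dimensional and the Riemann mapping theorem reduces polygon counts to explicit immersed-polygon enumeration --- and then verify either that the surviving higher terms do not occur in the particular compositions arising from a cube of resolutions, or that the resulting complex is nonetheless isomorphic to the reduced Khovanov complex. (Compare Section \ref{sec:s3} of this paper, where the analogous complex for $R^*(T^2,2)$ genuinely acquires long differentials from $\mu^3$ and agreement with $\Khr$ is proved by a separate argument rather than by termwise identity.) Until the higher operations are controlled by explicit computation rather than expectation, your proof is incomplete at its central step.
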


Theorem \ref{theorem:hhhk-khr} shows that the Fukaya
category of $R^*(S^2,4)$ knows about Khovanov homology.
Our strategy for generalizing Khovanov homology is based on
generalizing this observation.
Theorem \ref{theorem:hhhk-khr} is formulated in terms of $R^*(S^2,4)$
because this is the character variety corresponding to the Heegaard
surface $(S^2,4)$ for the chosen Heegaard splitting of $(S^3,L)$.
More generally, one can split a 3-manifold $Y$ containing a
link $L$ along a Heegaard surface $(\Sigma_g,n)$ of genus $g$ that
intersects $L$ in $n$ points.
In light of Theorem \ref{theorem:hhhk-khr}, one might hope that the
Fukaya category of the corresponding character variety
$R^*(\Sigma_g,n)$ contains information that could be used to
generalize Khovanov homology to links in $Y.$

As a first step towards this goal, we consider the case of links in
lens spaces.
Given an oriented link $L$ in a lens space $Y$, we consider a Heegaard
splitting
\begin{align*}
  (Y,L) = (U_0,T_0) \cup_{(T^2,2)} (U_1,T_1)
\end{align*}
such that the Heegaard surface $(T^2,2)$ is a 2-torus that
transversely intersects $L$ in two points, the handlebodies
$(U_0,T_0)$ and $(U_1,T_1)$ are solid tori containing 1-tangles $T_0$
and $T_1$, and $T_0$ is trivial.
To the Heegaard surface $(T^2,2)$ we associate the irreducible locus
$R^*(T^2,2)$ of the traceless $SU(2)$ character variety for the torus
with two punctures.
We project $(U_1,T_1)$ onto the plane to obtain a 1-tangle diagram $T$
in the annulus, as shown in Figure \ref{fig:tangle-diagram}.
One of the two boundary points of the 1-tangle diagram lies on the
inner bounding circle of the annulus and the other lies on the outer
bounding circle.
We define the \emph{loop number} $\ell$ of the tangle diagram to be
the number of arcs that loop around the annulus.
We would like to encode the cube of resolutions of the tangle diagram
$T$ as an object $(X,\delta)$ of the twisted Fukaya category of
$R^*(T^2,2)$.

\begin{figure}
  \centering
  \includegraphics[scale=0.60]{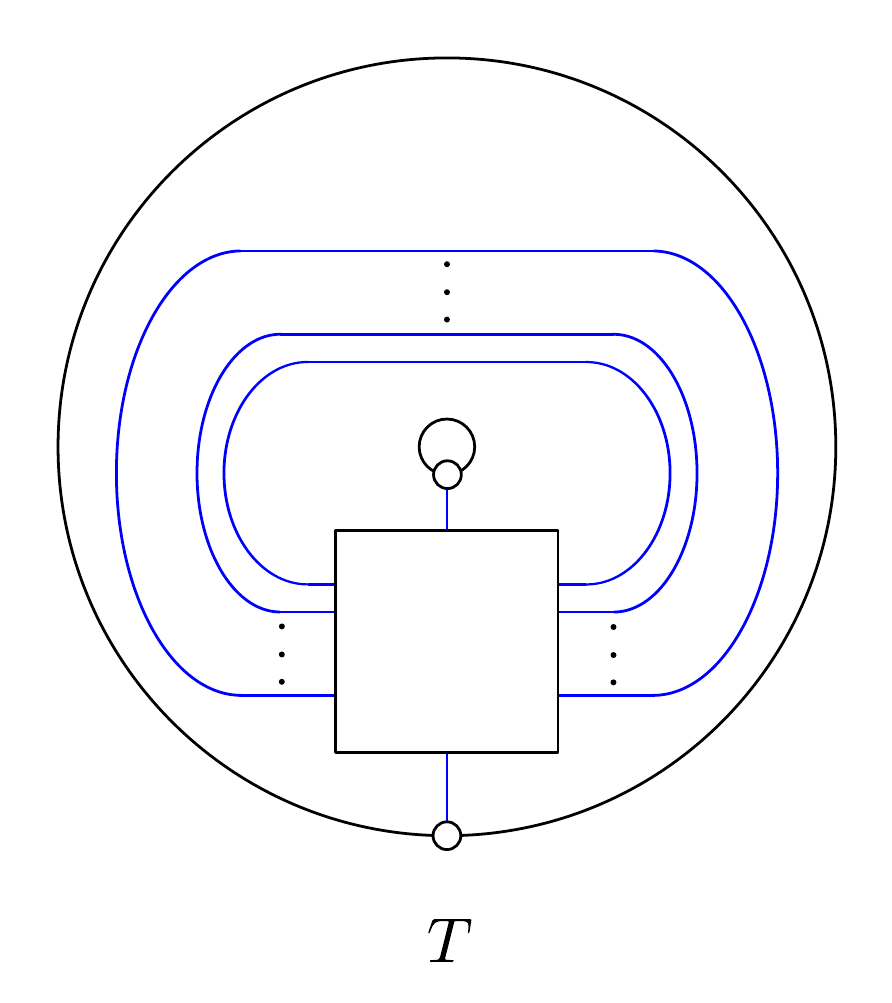}
  \caption{
  \label{fig:tangle-diagram}
    A planar projection of $(U_1,T_1)$ yields a 1-tangle diagram $T$
    in the annulus.
    One of the two boundary points of $T$ lies on the inner bounding
    circle of the annulus and the other lies on the outer bounding
    circle.
    For simplicity, we will usually omit these bounding circles when
    depicting tangle diagrams.
  }
\end{figure}

Our first task is thus to understand the relevant structure of the
Fukaya category of $R^*(T^2,2)$.
We can explicitly describe the Lagrangians that are needed to
construct the object $X$ corresponding to the planar tangles at the
vertices of the cube of resolutions, and we can describe the morphism
spaces for pairs of Lagrangians, the generators of which are needed to
construct the endomorphism $\delta:X \rightarrow X$.
However, to verify that $(X,\delta)$ is indeed an object of the
twisted Fukaya category, and to construct cochain complexes
from the morphism spaces of the twisted Fukaya category, we also
need to know certain $A_\infty$ operations, which are obtained by
counting pseudo-holomorphic disks in $R^*(T^2,2)$, and these
operations are more difficult to describe.
In the case of $(S^2,4)$, the character variety $R^*(S^2,4)$ is
two-dimensional, so the relevant $A_\infty$ operations can be computed
combinatorially using the Riemann mapping theorem.
But $R^*(T^2,2)$ is four-dimensional, and with our current methods we
are unable to compute the operations we need.
Instead, we conjecture these operations based on the information we do
have.
One useful guide in this process is that the Fukaya categories of
$R^*(T^2,2)$ and $R^*(S^2,4)$ appear to be closely related.
For example, we show that $R^*(S^2,4)$ is a symplectic submanifold of
$R^*(T^2,2)$.
The $A_\infty$ operations that we conjecture for $R^*(T^2,2)$ are
natural generalizations of the known $A_\infty$ operations for
$R^*(S^2,4)$.
Our hope is that the information we have regarding the Lagrangians and
morphism spaces, together with the conjectured $A_\infty$ operations,
will provide the necessary clues to generalize reduced Khovanov
homology.

The first step towards this goal is to generalize Theorem
\ref{theorem:hhhk-invar} for $R^*(S^2,4)$.
We prove:

\begin{theorem}
\label{theorem:intro-invar}
If the $A_\infty$ operations of $R^*(T^2,2)$ are as conjectured, then
given a planar projection $T$ of $(U_1,T_1)$ there are
two corresponding objects $(X,\delta_+)$ and $(X,\delta_-)$ of the
twisted Fukaya category of $R^*(T^2,2)$.
The homotopy type of $(X,\delta_\pm)$ is an invariant of $T_1$ rel
boundary.
\end{theorem}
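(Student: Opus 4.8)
\smallskip

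The plan is to follow the proof of Theorem~\ref{theorem:hhhk-invar} given by Hedden--Herald--Hogancamp--Kirk in \cite{Hedden-3}, replacing the pillowcase $R^*(S^2,4)$ by $R^*(T^2,2)$ and its conjectured $A_\infty$ structure throughout, and attending to the features that are new to the annular setting. First I would fix a finite list of Lagrangians in $R^*(T^2,2)$: one attached to each crossingless $1$-tangle diagram in the annulus that can occur as a resolution of $T$ (these diagrams consist of an arc of winding type determined by $\ell$ together with some trivial or essential closed loops), where trivial disk-bounding loops are removed by the usual delooping isomorphism and diagrams containing essential loops are assigned their own Lagrangians; and to each elementary saddle relating two such diagrams I would attach a morphism of the corresponding Lagrangians. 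Given a diagram $T$ with $n$ crossings, I then form the cube of resolutions indexed by $\{0,1\}^n$ and set
\begin{align*}
  X = \bigoplus_{v \in \{0,1\}^n} \calL_v[h_v]\{q_v\},
\end{align*}
where $h_v$ and $q_v$ are the usual Khovanov homological and quantum shifts (the homological shift being the number of $1$-resolutions, corrected by the writhe of $T$), and define $\delta_\pm\colon X \to X$ as the sum over the edges of the cube of the associated saddle morphisms, decorated with Khovanov's edge-sign assignment. The two differentials $\delta_+$ and $\delta_-$ record a binary choice --- with no counterpart in the pillowcase case --- that the torus construction forces on us when we insist that $(X,\delta_\pm)$ be a twisted complex.

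The first thing to check is that $(X,\delta_\pm)$ really is an object of the twisted Fukaya category of $R^*(T^2,2)$, i.e.\ that $\delta_\pm$ satisfies the Maurer--Cartan equation $\sum_{k \ge 1} \mu_k(\delta_\pm,\dots,\delta_\pm) = 0$. The $\mu_1$ term vanishes because each saddle morphism is a cocycle; the $\mu_2$ term vanishes because each square face of the cube anticommutes, either on the nose or after delooping, just as for the pillowcase; and the contributions of $\mu_k$ for $k \ge 3$ must be shown to cancel. This is the first place the conjectured operations are used. Since those operations are built to be the natural generalizations of the known operations on $R^*(S^2,4)$, and since $R^*(S^2,4)$ sits inside $R^*(T^2,2)$ as a symplectic submanifold, the disk-count identities that make the higher Maurer--Cartan terms cancel in \cite{Hedden-3} continue to hold here; the genuinely new cases --- faces involving essential annular loops, or the winding arc --- are checked directly against the conjectured formulas.

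The heart of the argument is invariance of the homotopy type of $(X,\delta_\pm)$ under planar isotopy and the Reidemeister moves R1, R2, R3 carried out inside the annulus. These moves, together with planar isotopy, generate isotopy of $1$-tangles in the solid torus rel boundary, and planar-isotopy invariance of the construction is immediate. For each Reidemeister move I would, following \cite{Hedden-3}, present the cube of the modified diagram as a twisted complex and simplify it by repeated Gaussian elimination of acyclic summands, interleaved with delooping, until it becomes the cube of the original diagram; the shifts built into $h_v$ and $q_v$ are precisely what make this an equality of homotopy types. The algebraic input needed for each move is a short list of identities among the structure maps of the Lagrangians involved --- for R2, that the composite of the two saddles in the relevant configuration is homotopic to the sum of an identity morphism and a morphism onto a summand to be cancelled; for R1 and R3, the corresponding statements --- and each such identity is again extracted from the conjectured $A_\infty$ operations, transporting along the symplectic embedding $R^*(S^2,4) \hookrightarrow R^*(T^2,2)$ the relations already verified for the pillowcase.

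The main obstacle, and the reason the theorem is conditional, is exactly this: the pseudo-holomorphic disks contributing to the relevant $A_\infty$ products live in the four-dimensional manifold $R^*(T^2,2)$ and are beyond the reach of the combinatorial Riemann-mapping techniques that suffice for the two-dimensional pillowcase, so we can only proceed by taking the conjectured operations as given and verifying that they are internally consistent and yield the cancellations and homotopies required above. Finally, the entire argument applies verbatim for either choice of sign, so $(X,\delta_+)$ and $(X,\delta_-)$ each have homotopy type invariant under isotopy of $T_1$ rel boundary --- though there is no reason to expect the two objects to be homotopy equivalent to one another.
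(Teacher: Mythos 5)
There is a genuine gap. The paper's construction does \emph{not} deloop: each circle component of a resolution contributes an honest factor of the two-dimensional vector space $A$, so that a vertex $P_n(r)$ of the cube becomes $L_n \otimes A^{\otimes r}$ with $L_n$ the Lagrangian of the bare winding arc. Retaining the circles forces a feature that your proposal does not anticipate and that has no analogue in the pillowcase setting: the saddle maps $d_{R^\pm}$ and $d_{L^\pm}$ that split or merge a circle against the arc carry \emph{nonlocal} terms, namely $c_n\,\etadot I_{r_n}\Sigma_{r_e}$ and the like, acting on the $A$-factors of the circles \emph{enclosed} by the affected circle. These terms are not an optional refinement; the proof of Theorem~\ref{theorem:d-d-commute} observes explicitly that without them the square of nested saddles fails to commute, so $\mu_{\Sigma\calL}^2(\delta,\delta)\neq 0$ and $(X,\delta_\pm)$ is not a twisted complex. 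A delooping-first construction does not dodge this: after delooping, the matrix entries of the edge maps would have to encode the same enclosure data, so the difficulty reappears in determining those entries, and there is no reason to expect the HHHK formulas to transport unchanged. Relatedly, over the field $\F$ of two elements the cube faces \emph{commute} (there is no edge-sign assignment), and the higher Maurer--Cartan terms do not ``cancel''; Conjecture~\ref{conj:fukaya} simply declares $\mu_\calL^m=0$ for $m\ne 2$ on the relevant generators.

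Your invariance strategy also diverges from the paper's. Rather than iterated Gaussian elimination in the HHHK style, the paper, having arranged that only $\mu^2$ is nonzero, reduces the problem formally to Bar-Natan's cobordism-category argument: it establishes algebraic analogues of Bar-Natan's sphere, isotopy, neck-cutting, and $4$-tube relations (Theorems~\ref{theorem:rel-0}--\ref{theorem:rel-2}), writes down explicit homotopy equivalences for R1 and R2, observes that the R2 equivalence is a strong deformation retraction, and then imports Bar-Natan's R3 argument verbatim. This is morally close to what you propose, and either could in principle be made to work; but the relations are derived directly from Conjecture~\ref{conj:fukaya}, not by restricting to the symplectic submanifold $R^*(S^2,4)$, and verifying the $4$-tube relation for the winding saddles $d_{W^\pm}$ is a genuinely new check that your outline gestures at but does not engage. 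As a final small point, there are no essential loops in any resolution of a 1-tangle in the annulus (the arc separates), so the special case you set aside for them is vacuous, while the case you \emph{would} need to treat carefully, the enclosed circles, is the one that is absent.
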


The twisted complex $(X, \delta_\pm)$ has several unusual features.
The complex is constructed from a cube of resolutions of the 1-tangle
diagram $T$ whose vertices are planar 1-tangles.
Each planar 1-tangle contains an  arc component connecting the
boundary points on the outer and inner bounding circles of the
annulus.
By orienting the arc component so it is directed outer-to-inner, we
can distinguish between saddles that split/merge circle components
from/with the left and right sides of the arc component.
In the twisted complex $(X,\delta_\pm)$, the differentials
corresponding to such \emph{left} and \emph{right} saddles are
distinct.
Also, the circle components that are split or merged by left and right
saddles can enclose additional circle components, and the
differentials corresponding to left and right saddles act nontrivially
on vector space factors corresponding to the enclosed circle
components. So, there is a sense in which the differential $\delta$
for our twisted complex is \emph{nonlocal}.

Our next task is to generalize Theorem \ref{theorem:hhhk-khr} by using
the twisted objects $(X,\delta_+)$ and $(X,\delta_-)$ to reproduce the
reduced Khovanov homology for links in $S^3$.
Given an oriented 1-tangle diagram $T$ in the annulus, we can
construct links $L_{T^+}$ and $L_{T^-}$ in $S^3$ by closing $T$ with an
overpass arc $A_+$ or underpass arc $A_-$, as shown in Figure
\ref{fig:s3-lp-lm}.
The handlebodies for the arcs $A_+$ and $A_-$ correspond to
Lagrangians $W_+$ and $W_-$ in $R^*(T^2,2)$, and hence to
twisted objects $(W_+,0)$ and $(W_-,0)$.
We define bigraded cochain complexes
\begin{align*}
  (C_\pm, \partial_\pm) &= \hom((W_\pm,0), (X,\delta_\pm)).
\end{align*}
We use the conjectured $A_\infty$ relations to explicitly construct
$(C_\pm, \partial_\pm)$ from the cube of resolutions of $T$.
The cohomology of the cochain complex $(C_\pm, \partial_\pm)$ is shown
in \cite{Boozer-3} to reproduce the reduced Khovanov homology for the
link $L_{T^\pm}$:

\begin{figure}
  \centering
  \includegraphics[scale=0.60]{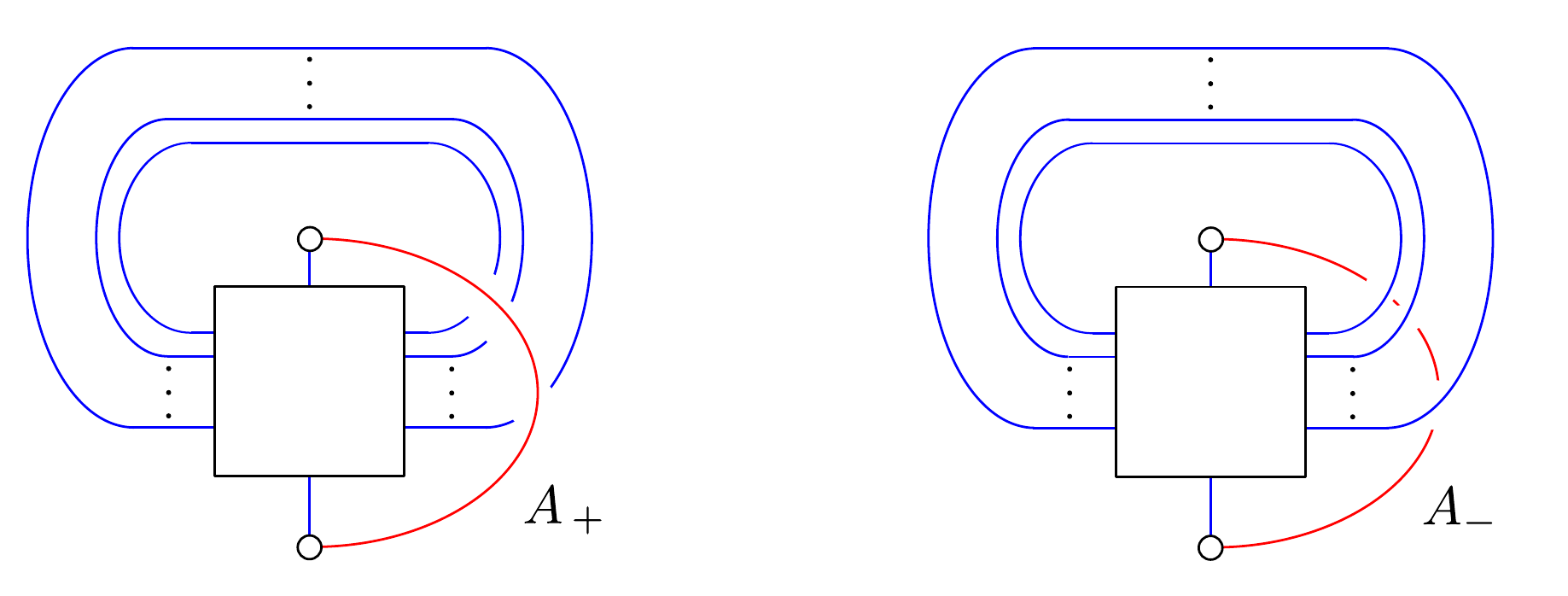}
  \caption{
    \label{fig:s3-lp-lm}
    We can close a 1-tangle diagram $T$ with an overpass arc $A_+$ or
    underpass arc $A_-$ to obtain a link diagram.
    The link diagram can be interpreted as the planar projection of
    either a link $L_{T^\pm}$ in $S^3$ or a link $L_{T^0}$ in
    $S^2 \times S^1$.
  }
\end{figure}

\begin{theorem}(Boozer \cite{Boozer-3})
\label{theorem:intro-khr-s3}
There is an isomorphism of bigraded vector spaces
\begin{align*}
  \Khr(L_{T^\pm}) \rightarrow H^*((C_\pm, \partial_\pm)).
\end{align*}
\end{theorem}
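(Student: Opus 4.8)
The plan is to compute the cochain complex $(C_\pm,\partial_\pm) = \hom((W_\pm,0),(X,\delta_\pm))$ completely explicitly from the cube of resolutions of $T$ — using the conjectured $A_\infty$ operations of $R^*(T^2,2)$ — and then to compare it with the standard reduced Khovanov cochain complex $C_{\mathrm{Kh}}(L_{T^\pm})$ of the link $L_{T^\pm}$. Since the abstract already warns that $(C_\pm,\partial_\pm)$ is \emph{not} the usual complex, I would not attempt to match the two complexes on the nose; instead I would produce an explicit chain map (or a short zig-zag of chain maps) relating $(C_\pm,\partial_\pm)$ and $C_{\mathrm{Kh}}(L_{T^\pm})$ and then verify it is a quasi-isomorphism, e.g.\ by showing it respects a common filtration and induces an isomorphism on the associated graded, or by repeatedly cancelling acyclic subcomplexes via Gaussian elimination until the two complexes become visibly isomorphic. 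That $(C_\pm,\partial_\pm)$ is a genuine cochain complex I would take for granted from Theorem \ref{theorem:intro-invar}, which (granting the conjectured operations) says $\delta_\pm$ satisfies the Maurer--Cartan equation.

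First I would set up the correspondence on generators. The object $X$ is a direct sum, over the vertices $v$ of the cube of resolutions of $T$, of shifted copies of Lagrangians $X_v \subset R^*(T^2,2)$ attached to the planar $1$-tangles at those vertices, so $C_\pm = \bigoplus_v \hom(W_\pm,X_v) = \bigoplus_v CF(W_\pm,X_v)$ as a bigraded vector space, with the cube coordinate giving the homological grading and the internal Floer grading giving the quantum grading, up to the overall shifts recorded in $X$ and in $(W_\pm,0)$. I would identify each factor $CF(W_\pm,X_v)$ with $V^{\otimes k_v}$, where $k_v$ is the number of closed circle components in the resolution of $L_{T^\pm}$ obtained by closing the planar tangle at $v$ with the arc $A_\pm$; here the arc component that is always present, connecting the two boundary circles of the annulus, plays the role of the reduced basepoint strand, so that only the genuine circle components contribute tensor factors. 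This identification uses the known structure of the relevant Lagrangians in $R^*(T^2,2)$ together with the analogous identification in $R^*(S^2,4)$ from \cite{Hedden-3}, plus a bookkeeping check that the resulting bigradings match those of the Khovanov generators.

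Next I would analyze the differential $\partial_\pm$, which has an internal piece $\mu^2(\,\cdot\,,\delta_\pm)$ coming from composition with the edge maps of the twisted complex, together with any contribution from the Floer differential $\mu^1$ inside each $CF(W_\pm,X_v)$. The edge maps of $(X,\delta_\pm)$ are the morphisms of Lagrangians associated to the saddle cobordisms between adjacent resolutions, so under the identification of the previous step the matrix entries of $\partial_\pm$ should be merge and split maps of the sort familiar from Khovanov homology — but with the two complications flagged in the discussion following Theorem \ref{theorem:intro-invar}: the distinction between \emph{left} and \emph{right} saddles relative to the oriented arc component, and the nonlocal action of these saddles on the tensor factors of circles enclosed by the circle being merged or split. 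I would read off these matrix entries from the conjectured operations and then show — by an explicit, combinatorially defined isomorphism of complexes (a change of basis dictated by the enclosure relations among the circles) — that the complex becomes isomorphic, or at least chain homotopy equivalent, to $C_{\mathrm{Kh}}(L_{T^\pm})$.

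The step I expect to be the main obstacle is exactly this last reconciliation: showing that the nonlocal left/right saddle differentials either cancel in cohomology or can be straightened out by a single coherent change of basis requires tracking how the enclosure relations among circles evolve across \emph{every} edge of the cube, not merely locally at each saddle, and verifying that the change of basis is simultaneously compatible with all of them and with the bigrading. A secondary difficulty, which pervades the whole argument, is that every computation is contingent on the conjectured $A_\infty$ structure of $R^*(T^2,2)$; the proof must therefore be organized so that only those specific conjectured operations, together with the genuinely established ones inherited from $R^*(S^2,4)$, are ever invoked, and so that the precise list of conjectural inputs remains transparent.
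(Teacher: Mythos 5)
The paper does not actually prove this statement in-house: it is attributed to the reference \cite{Boozer-3}, and the corresponding ``proof'' in Section \ref{sec:s3} is purely a one-page convention translation (relating the generators $w_{n,\pm}$ of the present paper to the generators $P_{\ell,n}^\pm$ of \cite{Boozer-3}, and showing that the bigradings $(h,q)$ here match $(h,h+q)$ there). Your proposal, by contrast, tries to assemble a proof from scratch; you cannot be faulted for not matching a non-proof, but your outline as written contains a genuine gap.

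The gap is in your identification of the generators. You propose to identify $CF(W_\pm,X_v)$ with $V^{\otimes k_v}$, where $k_v$ is the number of circle components ``in the resolution of $L_{T^\pm}$ obtained by closing the planar tangle at $v$ with the arc $A_\pm$.'' But closing $T_i=P_{n_i}(c_i)$ with $A_\pm$ does not produce a planar diagram when the loop number is nonzero: $A_\pm$ still crosses $T_i$ at the $a_+ + a_-$ unresolved crossings, precisely because the cube of resolutions of $T$ does not include the crossings between $T$ and $A_\pm$. There is therefore no well-defined set of ``closure circles'' at a vertex $v$. What the construction actually gives is $(W_\pm,L_{n_i})\otimes A^{\otimes c_i}$, with $(W_\pm,L_{n_i})$ a \emph{one}-dimensional space (Theorems \ref{theorem:Wbar0}--\ref{theorem:Wbar1}) and $c_i$ the number of circle components of $T_i$ \emph{before} closing. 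Consequently, $(C_\pm,\partial_\pm)$ and the standard reduced Khovanov complex of $L_{T^\pm}$ are not merely related by a change of basis: they have cubes over different numbers of crossings ($m$ versus $m+a_++a_-$), hence genuinely different numbers of generators. The paper says as much explicitly (``The complex $(C_\pm,\partial_\pm)$ can be smaller than the usual complex, since crossings between the overpass or underpass arc and the tangle diagram are not included in the cube of resolutions''). Your mention of Gaussian elimination is the right instinct, but the actual argument must show that the winding-number bookkeeping carried by the generator $w_{n,\pm}\in(W_\pm,L_n)$, together with the long $\mu^3$-differentials connecting resolutions of different winding number, precisely absorbs the information that Khovanov homology usually extracts by resolving the extra arc crossings. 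That reconciliation is the substance of the theorem, and it is precisely what is delegated to \cite{Boozer-3}; left unaddressed, as here, it is not a proof. (For loop number $0$ your strategy works as stated and is exactly what the paper observes: there are no long differentials and $(C_\pm,\partial_\pm)$ is literally the reduced Khovanov complex.)
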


The cochain complex $(C_\pm, \partial_\pm)$ is typically not the same
as the usual cochain complex for the reduced Khovanov homology of
$L_{T^\pm}$, though the two complexes do agree for tangle diagrams with
loop number 0.
For example, the complex $(C_\pm, \partial_\pm)$ has
\emph{long differentials} corresponding to pairs of successive saddles
in the cube of resolutions of $T$.
The complex $(C_\pm, \partial_\pm)$ can be smaller than the usual
complex, since crossings between the overpass or underpass arc and the
tangle diagram are not included in the cube of resolutions.
Example cochain complexes for the unknot and right trefoil are given
in Sections \ref{ssec:example-unknot} and
\ref{ssec:example-trefoil}.

From Theorem \ref{theorem:intro-khr-s3}, we obtain the following
generalization of Theorem \ref{theorem:hhhk-khr}:

\begin{theorem}
If the $A_\infty$ operations of $R^*(T^2,2)$ are as conjectured, then
there are isomorphisms of bigraded vector spaces
\begin{align*}
  &\Khr(L_{T^\pm}) \rightarrow H^*(\hom((W_\pm,0), (X,\delta_\pm))).
\end{align*}
where the $\hom$ space is taken within the twisted Fukaya category of
$R^*(T^2,2)$.
\end{theorem}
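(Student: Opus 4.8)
The plan is to deduce the statement by assembling results already in hand, since the substantive geometric and combinatorial content has been isolated in the preceding theorems. First I would recall the definition of the bigraded cochain complex $(C_\pm,\partial_\pm)$: by construction it is \emph{defined} to be the morphism complex $\hom((W_\pm,0),(X,\delta_\pm))$ computed inside the twisted Fukaya category of $R^*(T^2,2)$, where $(W_\pm,0)$ is the twisted object attached to the closure arc $A_\pm$ and $(X,\delta_\pm)$ is the twisted object attached to the cube of resolutions of $T$. Thus the right-hand side $H^*(\hom((W_\pm,0),(X,\delta_\pm)))$ is literally $H^*((C_\pm,\partial_\pm))$, with the identical bigrading.

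Second, I would invoke Theorem \ref{theorem:intro-invar}: under the hypothesis that the $A_\infty$ operations of $R^*(T^2,2)$ are as conjectured, $(X,\delta_\pm)$ is a bona fide object of the twisted Fukaya category, i.e.\ $\delta_\pm$ satisfies the appropriate Maurer--Cartan equation. Consequently the morphism space $\hom((W_\pm,0),(X,\delta_\pm))$ genuinely carries the structure of a cochain complex, so that $(C_\pm,\partial_\pm)$ is well defined as stated. This is the only place the conditional hypothesis enters; without it the target of the claimed isomorphism would not even be defined.

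Third, I would apply Theorem \ref{theorem:intro-khr-s3} of \cite{Boozer-3}, which supplies an isomorphism of bigraded vector spaces $\Khr(L_{T^\pm}) \to H^*((C_\pm,\partial_\pm))$. Composing this with the identification from the first step yields the desired isomorphism $\Khr(L_{T^\pm}) \to H^*(\hom((W_\pm,0),(X,\delta_\pm)))$, and this completes the argument.

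The one point requiring care — and the closest thing to an obstacle in what is essentially a packaging of earlier results — is the grading bookkeeping: one must confirm that the homological and quantum gradings on the Fukaya-theoretic $\hom$ complex, as induced by the shifts appearing in $X$ together with the internal grading on morphism spaces of the twisted Fukaya category, coincide with the gradings on $(C_\pm,\partial_\pm)$ used in \cite{Boozer-3} when that complex is identified with reduced Khovanov homology. Since $(C_\pm,\partial_\pm)$ is defined in the earlier sections precisely as that $\hom$ complex, the conventions match by construction, and the theorem follows immediately; all of the real work lives in Theorem \ref{theorem:intro-invar} and in \cite{Boozer-3}.
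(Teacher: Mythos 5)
Your proposal is correct and matches the paper's own argument, which is precisely the one-step deduction you describe: the paper defines $(C_\pm,\partial_\pm)$ to be $\hom((W_\pm,0),(X,\delta_\pm))$ (up to the bigrading shift $[h_A,q_A]$, cf.\ the translation $\Khr^{(h,q)} \cong H^{(h,h+q)}$ spelled out at the end of Section~\ref{sec:s3}), Theorem~\ref{theorem:intro-invar} (assuming Conjecture~\ref{conj:fukaya}) guarantees $(X,\delta_\pm)$ is a genuine twisted object so that this $\hom$ complex is defined, and Theorem~\ref{theorem:intro-khr-s3} supplies the isomorphism with $\Khr(L_{T^\pm})$. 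Your closing remark about grading bookkeeping is exactly the one nontrivial point, and the paper handles it the same way — by tracking the shift $[h_A,q_A]$ and the change of convention $(h,q)\mapsto(h,h+q)$ rather than leaving it implicit.
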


We next turn our attention to links in $S^2 \times S^1$.
Given an oriented 1-tangle diagram $T$ in the annulus, we can
construct links $L_{T^+}'$ and $L_{T^-}'$ in $S^2 \times S^1$ by
closing $T$ with an overpass arc $A_+$ or underpass arc $A_-$, as
shown in Figure \ref{fig:s3-lp-lm}.
The links $L_{T^+}'$ and $L_{T^-}'$ are isotopic, since one can
isotope $A_+$ to $A_-$ by moving $A_+$ around the $S^2$ factor, and
we will let $L_{T^0}$ denote either of these isotopic links.
The handlebodies for the arcs $A_+$ and $A_-$ correspond to the same
Lagrangian $W_0$ in $R^*(T^2,2)$.
We define a bigraded cochain complex
\begin{align*}
  (C_0, \partial_0) = \hom((W_0,0), (X,\delta_-)).
\end{align*}
For a 1-tangle diagram $T$ with loop number $\ell \in \{0,2\}$, we use
the conjectured $A_\infty$ relations to explicitly construct
$(C_0, \partial_0)$ from the cube of resolutions of $T.$

One might hope that the cohomology of $(C_0,\partial_0)$ depends only
on the isotopy class of the link $L_{T^0}$ and not on its description
as the closure of the particular tangle diagram $T$.
From Theorem \ref{theorem:intro-invar}, it follows that the cohomology
is an isotopy invariant of the tangle diagram $T$.
But it is possible for nonisotopic tangle diagrams $T_1$ and $T_2$ to
yield isotopic links $L_{T_1^0}$ and $L_{T_2^0}$ in $S^2 \times S^1$,
and example calculations show that in this situation the cohomology
for $T_1$ and $T_2$ need not be the same.
In all the examples we have checked, however, the dependence on the
tangle diagram is reflected only in the bigradings of generators, and
the cohomology does agree if we collapse bigradings from $\Ints$ to
$\Ints_2$.
Several specific examples are described in Sections
\ref{ssec:example-s2-s1-1}, \ref{ssec:example-s2-s1-2}, and
\ref{ssec:example-s2-s1-3}.
On the basis of such examples, we make the following conjecture:

\begin{conjecture}
\label{conj:intro-s2-s1}
The cohomology of the cochain complex $(C_0,\partial_0)$ with bigradings
collapsed from $\Ints$ to $\Ints_2$ is an isotopy invariant of the
link $L_{T^0}$.
\end{conjecture}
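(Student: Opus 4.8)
The plan is to reduce the conjecture to invariance under a finite list of local moves on annular $1$-tangle diagrams, and then to verify each move separately using the (conjectural) structure of the twisted Fukaya category of $R^*(T^2,2)$. The first step is to pin down a Reidemeister-type calculus for links in $S^2 \times S^1$ presented as closures $L_{T^0}$ of annular $1$-tangle diagrams: two diagrams $T_1$ and $T_2$ should have isotopic closures precisely when they are related by a sequence of (i) the three Reidemeister moves carried out inside the annulus together with annular planar isotopy, and (ii) an additional \emph{sphere move} that pushes a portion of $T$ around the $S^2$ factor, the simplest instance of which is the isotopy carrying the overpass closure $A_+$ to the underpass closure $A_-$, and whose iterates change the loop number of $T$. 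The moves of type (i) are isotopies of $T_1$ rel boundary, so invariance of $H^*((C_0,\partial_0))$ under them is immediate from Theorem \ref{theorem:intro-invar} together with homotopy-invariance of $\hom((W_0,0),-)$; all of the content of the conjecture therefore sits in the sphere move.

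For the sphere move I would work at the level of twisted complexes. The cubes of resolutions of $T_1$ and $T_2$ differ by winding a circle (or arc) component of each planar resolution around the annulus, and the corresponding Lagrangians in $R^*(T^2,2)$ should be isomorphic as objects of the twisted Fukaya category --- ideally Hamiltonian isotopic --- in a way compatible with the left- and right-saddle differentials assembled into $\delta_-$. This would produce an explicit homotopy equivalence between the objects $(X,\delta_-)$ associated to $T_1$ and to $T_2$, fixing $(W_0,0)$, hence a chain homotopy equivalence of the complexes $(C_0,\partial_0)$ for $T_1$ and $T_2$. The obstruction to this equivalence being grading-preserving over $\Ints$ is precisely the bigrading shift that the winding induces on the Lagrangians, which is what one sees in the example computations; one then checks that the net shift in each of the two gradings produced by a single sphere move is even, so that the equivalence descends to a grading-preserving quasi-isomorphism after collapsing from $\Ints$ to $\Ints_2$, which is exactly the assertion of the conjecture.

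The principal obstacles are threefold. First, the argument is conditional on the conjectured $A_\infty$ operations of $R^*(T^2,2)$; in particular $\delta_-$ itself is only defined granting that conjecture, so an unconditional proof must await an actual computation of the Fukaya category of the traceless character variety of the twice-punctured torus. Second, $\delta_-$ is \emph{nonlocal} --- left and right saddles act on the vector-space factors attached to the circle components they enclose --- so the chain homotopy implementing the sphere move cannot be built from a standard local Khovanov-style homotopy; one must track how enclosed circle components are carried around $S^2 \times S^1$ and assemble the homotopy globally over the cube of resolutions, most plausibly via a filtration (or spectral sequence) whose first page already identifies the two complexes. Third, and relatedly, $(C_0,\partial_0)$ is currently constructed only for loop number $\ell \in \{0,2\}$ while the sphere move changes $\ell$, so a complete proof first requires extending the definition to all $\ell$, which again rests on further conjectural $A_\infty$ input. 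A sensible intermediate goal, parallel to Rozansky's \cite{Rozansky} and Willis's \cite{Willis} treatments of Khovanov homology in $S^2 \times S^1$, would be to identify the object of the twisted Fukaya category of $R^*(T^2,2)$ that plays the role of a categorified Jones--Wenzl projector and to reformulate sphere-move invariance as an absorption property of that object.
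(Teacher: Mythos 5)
This statement is labelled a conjecture in the paper, and the paper does not prove it; so there is no ``paper proof'' to check you against. What the paper does establish is the partial result stated as Theorem~\ref{theorem:intro-s2xs1-invar} (proved as Theorem~\ref{theorem:twist}): the cohomology is invariant, after collapsing bigradings to $\Ints_2$, under the \emph{full-twist} move $T \mapsto T_\tau$ on loop-number-$2$ diagrams, which is one instance of what you call the sphere move. Your overall strategy --- split the required equivalence into annular Reidemeister moves, already covered by Theorem~\ref{theorem:intro-invar} together with homotopy invariance of the $\hom$ functor, plus a residual sphere move --- is the same reduction the paper is implicitly operating under, and your three obstacles (conjectural $A_\infty$ data, nonlocality of $\delta_-$, and the restriction to $\ell \in \{0,2\}$) are exactly the reasons the paper leaves this as a conjecture rather than a theorem.

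The one place your plan departs from what the paper actually does is in how the sphere move is realized. You suggest looking for a Hamiltonian isotopy (or an isomorphism in the Fukaya category) between the Lagrangians before and after winding, assembled compatibly with $\delta_-$. The paper instead resolves the two extra crossings created by the full twist into a small square of planar tangles, views $T$ as a closure of a $3$-tangle in the disk, and constructs explicit chain homotopy equivalences $F$, $G$ and homotopy $H$ on each subcomplex indexed by the five isotopy types $P_2$, $P_0^L$, $P_0^C$, $P_0^R$, $P_{-2}$ of planar $3$-tangles (Appendix~\ref{sec:chain-homotopy-equivalences}), checking commutation with every type of saddle differential by computation (verified in Mathematica because of the $\Sigma_r$-terms you correctly flag as nonlocal). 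The Hamiltonian-isotopy route is not available as stated, since the relevant Lagrangians $L_n$ and $L_{n\pm 2}$ are related by the mapping class group action of $\alpha_1^2$, which acts symplectically but not by a Hamiltonian flow on $R^*(T^2,2)$, and indeed the complexes for $T$ and $T_\tau$ are \emph{not} quasi-isomorphic over $\Ints$ (only over $\Ints_2$), so an honest object-level isomorphism in $\Tw\calL$ cannot exist. Your observation that the crossing-number shift $[h_\tau,q_\tau]-[h,q]$ collapses to $(0,0)$ mod~$2$ is exactly the parity check the paper's proof uses to place the subcomplexes in the right $\Ints_2$-bidegrees; that part of your plan is sound. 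A complete proof would still require extending the chain-complex construction and the explicit homotopies to all even loop numbers, which the paper does not do, and which would require $A_\infty$ data beyond what is conjectured.
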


We prove an invariance result in support of Conjecture
\ref{conj:intro-s2-s1}.
Given a 1-tangle diagram $T$ with loop number 2, we define a 1-tangle
diagram $T_\tau$ by adding a full twist to $T$ as shown in Figure
\ref{fig:diagram-twist}.
The diagrams $T$ and $T_\tau$ are typically not isotopic, but the
corresponding links $L_{T^0}$ and $L_{T_\tau^0}$ in $S^2 \times S^1$
are isotopic, since one can unwind the full twist in $T_\tau$ by
moving one of the strands around the $S^2$ factor as shown in Figure
\ref{fig:s2-s1-twist}.
Let $(C_0,\partial_0)$ and $(C_0^\tau, \partial_0^\tau)$ denote the
cochain complex for $T$ and $T_\tau.$
We prove:

\begin{figure}
  \centering
  \includegraphics[scale=0.60]{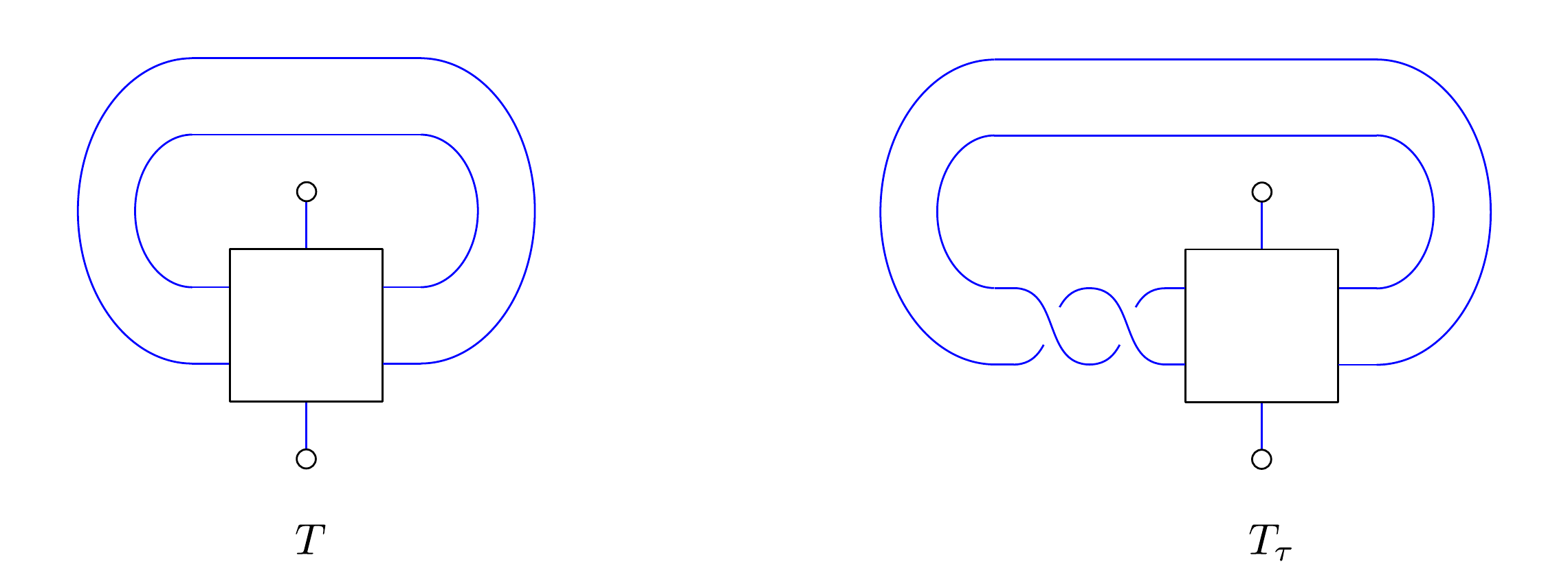}
  \caption{
    \label{fig:diagram-twist}
    Given a 1-tangle diagram $T$ with loop number 2, we add a full
    twist to obtain a 1-tangle diagram $T_\tau$.
  }
\end{figure}

\begin{figure}
  \centering
  \includegraphics[scale=0.60]{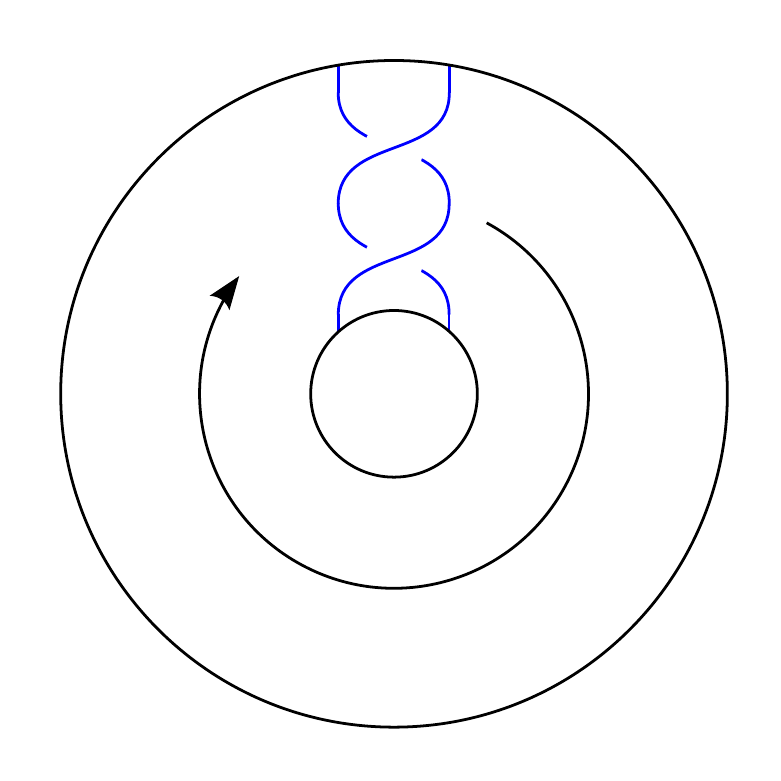}
  \caption{
    \label{fig:s2-s1-twist}
    The inner and outer circles represent two copies of $S^2$ that are
    identified to give $S^2 \times S^1$.
    We can unwind the full twist in the indicated link in $S^2 \times
    S^1$ by moving one of the strands around the $S^2$ factor.
  }
\end{figure}

\begin{theorem}
\label{theorem:intro-s2xs1-invar}
With bigradings collapsed from $\Ints$ to $\Ints_2$, there is an
isomorphism of bigraded vector spaces
\begin{align*}
  H^*((C_0,\partial_0)) \rightarrow H^*((C_0^\tau, \partial_0^\tau)).
\end{align*}
\end{theorem}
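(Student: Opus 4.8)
The plan is to compare the two cochain complexes $(C_0,\partial_0)$ and $(C_0^\tau,\partial_0^\tau)$ directly at the level of the cubes of resolutions from which they are built, and to exhibit an explicit bigraded chain homotopy equivalence between them after collapsing $\Ints$-bigradings to $\Ints_2$. First I would recall that, since $T$ has loop number $2$, its diagram contains exactly two arcs looping around the annulus, and passing to $T_\tau$ inserts a full twist involving two crossings between these strands. The cube of resolutions of $T_\tau$ is therefore obtained from that of $T$ by a local modification supported near the new crossings: at the level of twisted complexes, $(X^\tau,\delta_-^\tau)$ is built from $(X,\delta_-)$ by a Gaussian-elimination-style cone construction encoding the two extra resolution coordinates. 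I would make this precise by writing $(X^\tau,\delta_-^\tau)$ as an iterated mapping cone over $(X,\delta_-)$ with the connecting maps given by the conjectured $A_\infty$ operations associated to the saddle cobordisms at the two new crossings.

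The key computational step is then to identify these connecting maps. A full twist on two parallel strands is, up to Reidemeister moves, resolvable so that one of the two resolutions of each new crossing reproduces (a stabilization of) the original tangle $T$ while the other introduces a small circle that can be removed. Concretely, I would show that one of the two new cube coordinates contributes an acyclic subcomplex — a cone on an isomorphism, coming from a ``birth/death'' pair of saddles acting as the identity on the relevant vector-space factors — which can be cancelled by Gaussian elimination, and that after this cancellation the remaining coordinate likewise collapses, leaving a complex chain-homotopy equivalent to $(C_0,\partial_0)$. Here I would invoke Theorem~\ref{theorem:intro-invar}: since the homotopy type of $(X,\delta_-)$ is an invariant of $T_1$ rel boundary, it suffices to exhibit \emph{any} sequence of tangle-diagram moves relating (a stabilization of) $T$ to $T_\tau$ that is realized at the level of twisted complexes by homotopy equivalences, and the full-twist insertion, resolved as above, provides such a sequence.

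The step I expect to be the main obstacle is controlling the \emph{nonlocal} and \emph{left/right-asymmetric} behavior of the differentials $\delta_\pm$ highlighted after Theorem~\ref{theorem:intro-invar}: because the full twist wraps both looping strands around the annulus, the circle components created or destroyed by the new saddles can enclose other circle components, so the connecting maps in the mapping-cone description act nontrivially on enclosed tensor factors, and the naive Gaussian elimination must be checked to respect these actions. This is precisely where the $\Ints \to \Ints_2$ collapse enters: the $\Ints$-bigradings assigned to the newly created circles (and the framing/winding contributions from the twist) differ between $T$ and $T_\tau$, so the chain homotopy equivalence I construct will not be bigraded over $\Ints$, but I would verify that the quantum and homological gradings of corresponding generators agree modulo $2$, so that after collapsing gradings the equivalence is grading-preserving. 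Assembling these pieces — the mapping-cone description, the two Gaussian eliminations, the bookkeeping of enclosed-circle actions, and the mod-$2$ grading check — yields the claimed isomorphism $H^*((C_0,\partial_0)) \rightarrow H^*((C_0^\tau,\partial_0^\tau))$.
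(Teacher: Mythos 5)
There is a genuine gap in your argument, and it occurs at the precise point the theorem is designed to address. You propose to invoke Theorem~\ref{theorem:intro-invar} and reduce to ``any sequence of tangle-diagram moves relating (a stabilization of) $T$ to $T_\tau$,'' but $T$ and $T_\tau$ are \emph{not} isotopic rel boundary as 1-tangle diagrams in the annulus: the isotopy between $L_{T^0}$ and $L_{T_\tau^0}$ happens in $S^2\times S^1$ by sliding a strand around the $S^2$ factor, and this is not a Reidemeister move. Consequently Theorem~\ref{theorem:intro-invar} gives you nothing here, and in fact the twisted complexes $(X,\delta_-)$ and $(X^\tau,\delta_-^\tau)$ are generally \emph{not} homotopy equivalent in $\Tw\calL$. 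Your proposed Gaussian elimination of the two extra cube coordinates, carried out at the twisted-complex level, would produce an honest $\Ints$-bigraded homotopy equivalence, and applying $\Tw\calG_{W_0}$ would then force $H^*(C_0,\partial_0)\cong H^*(C_0^\tau,\partial_0^\tau)$ over $\Ints\oplus\Ints$. This is already refuted by the example of Section~\ref{ssec:example-s2-s1-1}, where the trefoil-like diagram and $P_2$ differ by exactly such a full twist yet have $\Khr = \F[0,1]\oplus\F[2,5]$ versus $2\F[0,-1]$. The crossing-number shift from the full twist is $[0,2]$ or $[-2,-6]$, which does not vanish over $\Ints$; it vanishes only mod $2$, and no $\Ints$-graded cancellation can absorb it.

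The structurally correct approach, which the paper follows, never compares $(X,\delta_-)$ and $(X^\tau,\delta_-^\tau)$ in $\Tw\calL$. Instead it passes to the cochain complexes $(C_0,\partial_0)$ and $(C_0^\tau,\partial_0^\tau)$ in $\Ch$ via $\Tw\calG_{W_0}$, decomposes them into subcomplexes indexed by the five isotopy types of planar 3-tangle in the disk (with multiplicities of circle components), and then writes down explicit linear maps $F^0,G^0,H^0,H^1$ between the resulting subcomplexes. These maps use the two-dimensionality of $(W_0,L_n)$ (terms like $\id_{\alpha\beta}$, $\id_{\beta\alpha}$ shifting between the generators $\alpha_0$ and $\beta_0$) and the nonlocal operators $\Sigma_r$ in an essential way, and they are defined only after the homological grading has been collapsed to $\Ints_2$, at which point the $2\times 2$ twist cube can be folded into a two-term $\Ints_2$-graded complex. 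So the $\Ints_2$ collapse is not, as you suggest, a bookkeeping discrepancy introduced by the framing of new circles that could be absorbed by a careful Gaussian elimination; it is built into the shape of the homotopy equivalence from the start, and without it no equivalence exists at any level.
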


If the cohomology of $(C_0, \partial_0)$ with bigradings collapsed to
$\Ints_2$ is indeed an isotopy invariant of $L_{T^0}$, there is a
sense in which it is a natural generalization of the reduced Khovanov
homology for links in $S^3$.
Consider a tangle diagram $T$ with loop number 0.
The corresponding link $L_{T^0}$ can be contained inside an open
3-ball in $S^2 \times S^1$.
By collapsing the complement of the open 3-ball to a point, we obtain
a link $L_{T^0}'$ in $S^3$.
We prove:

\begin{theorem}
\label{theorem:intro-s2xs1-khr-0}
There is an isomorphism of bigraded vector spaces
\begin{align*}
  V \otimes \Khr(L_{T^0}') \rightarrow H^*((C_0,\partial_0)),
\end{align*}
where $V$ is a two-dimensional bigraded vector space.
\end{theorem}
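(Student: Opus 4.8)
The plan is to reduce the statement to Theorem \ref{theorem:intro-khr-s3} by comparing the cochain complex $(C_0,\partial_0) = \hom((W_0,0),(X,\delta_-))$ with the cochain complex $(C_-,\partial_-) = \hom((W_-,0),(X,\delta_-))$ that computes $\Khr(L_{T^-})$. Since $T$ has loop number $0$, the link $L_{T^-}$ in $S^3$ obtained by closing $T$ with the underpass arc $A_-$ is, up to isotopy in $S^3$, the same as the link $L_{T^0}'$ obtained from $L_{T^0}$ by collapsing the complement of a $3$-ball; indeed, for loop number $0$ the tangle $T$ is contained in a ball, the arc $A_-$ does not interact with it, and both closures present the same link. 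So the right-hand side of the desired isomorphism, up to the factor $V$, is $H^*((C_-,\partial_-))$ by Theorem \ref{theorem:intro-khr-s3}. It therefore suffices to prove that, after collapsing bigradings or as stated with the explicit $V$-factor, $H^*((C_0,\partial_0)) \cong V \otimes H^*((C_-,\partial_-))$.

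The main step is a comparison of the two $\hom$ complexes, which differ only in the first argument: $(W_0,0)$ versus $(W_-,0)$. Here I would use the explicit description, from the construction preceding Theorem \ref{theorem:intro-invar}, of the Lagrangians $W_-$ and $W_0$ in $R^*(T^2,2)$ together with the conjectured $A_\infty$ operations, which are what is used in \cite{Boozer-3} to build $(C_\pm,\partial_\pm)$ explicitly from the cube of resolutions. For loop number $0$, each vertex of the cube is a planar $1$-tangle in the annulus whose arc component does not loop around, so the relevant Lagrangians at the vertices are ``parallel'' copies of a fixed family, and the generators of $\hom(W_-,-)$ and $\hom(W_0,-)$ with these vertex Lagrangians can be enumerated directly. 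The claim to be established is that $\hom((W_0,0),Y) \cong V \otimes \hom((W_-,0),Y)$ as cochain complexes, naturally in the vertex Lagrangian $Y$ and compatibly with the edge maps of $\delta_-$; the extra two-dimensional factor $V$ arises because $W_0$ differs from $W_-$ by (morally) an extra unknotted circle component, exactly the usual degree-$(\pm 1)$ tensor factor appearing when passing between $S^2\times S^1$ and $S^3$. Given such a natural isomorphism at the level of vertices and edges, it extends to an isomorphism of the twisted-complex $\hom$-complexes, i.e. of $(C_0,\partial_0)$ with $V \otimes (C_-,\partial_-)$, from which the result follows by taking cohomology.

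I expect the main obstacle to be the verification of this $V$-factor splitting at the level of the full differential rather than just the underlying vector spaces: one must check that the long differentials and the left/right nonlocal differentials of $\delta_-$ interact with the $W_0$-morphisms in a way that is precisely $V \otimes (\text{the corresponding piece of }\partial_-)$, using the conjectured $A_\infty$ operations for $R^*(T^2,2)$ and the geometry of $W_0$ versus $W_-$. In practice this means re-running the explicit computation of \cite{Boozer-3} with $W_0$ in place of $W_\pm$ and tracking the extra factor; the loop-number-$0$ hypothesis is what keeps this tractable, since it guarantees the arc component never wraps the annulus and the cube of resolutions involves only the ``local'' configurations. A secondary point to handle carefully is bookkeeping for the bigradings: the statement is asserted over $\Ints$-bigradings with an honest bigraded $V$, so I would record the $q$- and homological shifts contributed by the $W_0$-versus-$W_-$ comparison and confirm they assemble into the stated two-dimensional bigraded $V$, consistent with (but sharper than) the $\Ints_2$-collapsed invariance in Conjecture \ref{conj:intro-s2-s1}.
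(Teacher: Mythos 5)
Your reduction to Theorem \ref{theorem:intro-khr-s3} and your identification of the $V$-factor with the dimension jump from $\dim\hom(W_-,L_n)=1$ to $\dim\hom(W_0,L_n)=2$ are both sound observations. But the key step you are relying on --- a natural isomorphism $\hom((W_0,0),Y)\cong V\otimes\hom((W_-,0),Y)$ at the level of vertices ``compatibly with the edge maps of $\delta_-$,'' assembling into an isomorphism of cochain complexes $(C_0,\partial_0)\cong V\otimes(C_-,\partial_-)$ --- is false, and the paper explicitly flags this in the remark immediately after the theorem statement: the identity does \emph{not} hold at the level of cochain complexes. The reason it fails is concrete: the differential $\partial_0$ is built from $\mu^2$-operations with both $a_0$ and $c_0$, and by Conjecture \ref{conj:ops-s2-s1} one has $\mu^2(c_0,\alpha_0)=\beta_0$, $\mu^2(c_0,\beta_0)=0$. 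So, for example, the edge map $\partial_{R^+}$ sends $\alpha_0\otimes v\mapsto \alpha_0\otimes\dot\eta I_r(v)+\beta_0\otimes\eta I_r(v)+\beta_0\otimes\dot\eta I_{r_n}\Sigma_{r_e}(v)$ and $\beta_0\otimes v\mapsto\beta_0\otimes\dot\eta I_r(v)$; this mixes $\alpha_0$ and $\beta_0$ and is not of the form $\id_{(W_0,L_0)}\otimes(\text{a map of }A^{\otimes r})$. The differentials in $(C_-,\partial_-)$, by contrast, are of the form $(w_{n,-}\mapsto w_{n,-})\otimes(\cdot)$, so no choice of vertexwise splitting $(W_0,L_0)\cong V\otimes(W_-,L_0)$ makes the squares commute.

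What is actually needed is a homotopy equivalence rather than an isomorphism. The paper's proof modifies the twisted complex $(X,\delta)$ (not the $\hom$ target) to a simpler complex $(X,\delta')$ in which every $c_0$-term of the differential has been deleted, so that $\delta' = a_0\otimes d_T$ with $(C_T,d_T)$ the usual reduced Khovanov complex of $L_{T^0}'$. One then checks $(X,\delta')$ is a twisted complex, and proves $(X,\delta)\simeq(X,\delta')$ via explicit subobject-level homotopy equivalences $F=G=a_0 I_{\ell+r}+c_0 I_\ell\Sigma_r$ (Theorem \ref{theorem:loop-0-complexes}), which requires the $\Sigma$-identities and the conjectured $A_\infty$ relations. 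Since $\calG_{W_0}((X,\delta'))=\bigl((W_0,L_0)\otimes C_T,\ \id_{(W_0,L_0)}\otimes d_T\bigr)$, the isomorphism of the theorem then holds on cohomology with $V=(W_0,L_0)$. You would need to replace your ``natural isomorphism at the level of vertices and edges'' with this kind of zig-zag through a modified twisted complex; as written, your step two would not close.
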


Theorem \ref{theorem:intro-s2xs1-khr-0} does not hold at the level of
cochain complexes; that is, the cochain complex $(C_0,\partial_0)$ is
generally not the tensor product of $V$ with the usual cochain
complex for the reduced Khovanov homology of $L_{T^0}'$.

If the cohomology of $(C_0, \partial_0)$ with bigradings collapsed to
$\Ints_2$ is not an isotopy invariant of $L_{T^0}$, it may still be of
interest due to a possible relationship to Kronheimer and Mrowka's
singular instanton link homology.
Given a knot $K$ in $S^3$, Kronheimer and Mrowka show in
\cite{Kronheimer-2} that there is a spectral sequence whose $E_2$ page
is the reduced Khovanov homology of the mirror knot $K^m$ and
that converges to the singular instanton homology of $K$.
Singular instanton homology is closely related to Lagrangian Floer
homology in character varieties of punctured surfaces.
For example, generating sets for the singular instanton homology of
knots in a 3-manifold $Y$ can be constructed from Lagrangian
intersections in $R^*(S^2,4)$ when $Y = S^3$, as described in
\cite{Hedden-1}, and in $R^*(T^2,2)$ when $Y$ is a lens space, as
described in \cite{Boozer-2}.
Thus, one might conjecture that there is a spectral sequence from the
cohomology of $(C_0,\partial_0)$ to the singular instanton homology of
the corresponding link in $S^2 \times S^1$ that would generalize
Kronheimer and Mrowka's spectral sequence for links in $S^3$.

The paper is organized as follows.
In Section \ref{sec:prelim},
we review the material we will need regarding the group $SU(2)$ and
$A_\infty$ categories.
In Section \ref{sec:fukaya},
we describe the character variety $R^*(T^2,2)$ and its Fukaya
category.
In Section \ref{sec:complexes},
we show how to construct an object $(X,\delta)$ in the twisted Fukaya
category of $R^*(T^2,2)$ from an oriented 1-tangle diagram $T$ in the
annulus.
In Section \ref{sec:invariance},
we show that the homotopy type of $(X,\delta)$ is an isotopy invariant
of the corresponding tangle diagram $T$.
In Section \ref{sec:s3},
we prove Theorem \ref{theorem:intro-khr-s3} for links in $S^3$.
In Section \ref{sec:s2xs1},
we prove Theorems \ref{theorem:intro-s2xs1-invar} and
\ref{theorem:intro-s2xs1-khr-0} for links in $S^2 \times S^1$.
In Appendix \ref{sec:pillowcase},
we describe the character variety $R^*(S^2,4)$ and compare it with
$R^*(T^2,2)$.
In Appendix \ref{sec:chain-homotopy-equivalences},
we fill in some technical details in the proof of Theorem
\ref{theorem:intro-s2xs1-invar}.

\section{Preliminary material}
\label{sec:prelim}

\subsection{The group $SU(2)$}

We briefly review here some facts about the group $SU(2)$.
We define the Pauli spin matrices:
\begin{align*}
  \sigma_x &=
  \left(\begin{array}{cc}
    0 & 1 \\
    1 & 0
  \end{array}\right), &
  \sigma_y &=
  \left(\begin{array}{cc}
    0 & -i \\
    i & 0
  \end{array}\right), &
  \sigma_z &=
  \left(\begin{array}{cc}
    1 & 0 \\
    0 & -1
  \end{array}\right).
\end{align*}
We define $SU(2)$ matrices $\qx$, $\qy$, and $\qz$:
\begin{align*}
  \qx &= -i\sigma_x, &
  \qy &= -i\sigma_y, &
  \qz &= -i\sigma_z.
\end{align*}
These matrices satisfy the quaternion multiplication laws:
\begin{align*}
  \qx^2 = \qy^2 = \qz^2 = \qx \qy \qz = -\id.
\end{align*}
Any $SU(2)$ matrix $A$ can be uniquely expressed as
\begin{align*}
  A = t\,\id + x\,\qx + y\,\qy + z\,\qz,
\end{align*}
where
$(t,x,y,z) \in S^3 =
\{(t,x,y,z) \in \Reals^4 \mid t^2 + x^2 + y^2 + z^2 = 1\}$.
We can thus identify $SU(2)$ with the unit quaternions.
We will refer to $t\,\id$ and $x\,\qx + y\,\qy + z\,\qz$ as the
\emph{scalar} and \emph{vector}
parts of the matrix $A$.
Note that $\tr A = 2t$, so traceless $SU(2)$ matrices are precisely
those for which the scalar part is zero.

We define a surjective group homomorphism
$SU(2) \rightarrow SO(3)$ by $g \mapsto (\hat{v} \mapsto \hat{v}')$,
where the unit vectors $\hat{v} = (v_x, v_y, v_z)$ and
$\hat{v}' = (v_x', v_y', v_z')$ are related by
\begin{align}
  \nonumber  
  g(v_x\,\qx + v_y\,\qy + v_z\,\qz)g^{-1} =
  v_x'\,\qx + v_y'\,\qy + v_z'\,\qz.
\end{align}
In general, conjugating an arbitrary $SU(2)$ matrix preserves the
scalar part of the matrix and rotates the vector part of the matrix:
\begin{align}
  \nonumber  
  g(t\,\id + r_x\,\qx + r_y\,\qy + r_z\,\qz)g^{-1} =
  t\,\id + r_x'\,\qx + r_y'\,\qy + r_z'\,\qz,
\end{align}
where $(r_x',r_y',r_z')$ is given by multiplying
$(r_x,r_y,r_z)$ by the $SO(3)$ matrix corresponding to $g \in SU(2)$.

\subsection{$A_\infty$ categories}
\label{ssec:a-infinity}

We briefly review here the material we will need regarding $A_\infty$
categories.
For a more extensive discussion of this material, see Section 2 of
\cite{Hedden-3} or the book \cite{Seidel-Fukaya}.
We define $\F$ to be the field of two elements.

An \emph{$A_\infty$ category} consists of a set of objects
$\{a_i \}$, for each pair of objects $(a_i, a_j)$ an integer-graded
$\F$-vector space $\hom_\calA(a_i, a_j)$ called the
\emph{space of morphisms from $a_i$ to $a_j$}, and for each integer
$m \geq 1$ a linear \emph{structure map}, also called an
\emph{operation}, with grading $2 - m$:
\begin{align*}
  \mu_{\calA}^m: \hom_\calA(a_{m-1}, a_m) \otimes \cdots \otimes
  \hom_\calA(a_0, a_1) \rightarrow \hom_\calA(a_0, a_m).
\end{align*}
The structure maps are required to satisfy certain relations, which we
do not describe here.
The first relation asserts that the endomorphism
$\mu_\calA^1:(a_0,a_1) \rightarrow (a_0,a_1)$ squares to zero:
\begin{align*}
  \mu_\calA^1(\mu_\calA^1(f)) = 0.
\end{align*}
If $\mu_{\calA}^m = 0$ for $m \neq 2$, the relations reduce to a
single relation asserting the associativity of $\mu_\calA^2$:
\begin{align*}
  \mu_\calA^2(\mu_\calA^2(h,g),f) = \mu_\calA^2(h,\mu_\calA^2(g,f)).
\end{align*}

An important example of an $A_\infty$ category is the category $\Ch$
of cochain complexes over $\F$.
The objects of $\Ch$ are pairs $(C,d)$ consisting of an
integer-graded $\F$-vector space $C$ and an endomorphism
$d:C \rightarrow C$ with grading 1 such that
$d \circ d = 0$.
We define $\hom_{\Ch}((C_1,d_1),\, (C_2,d_2))$ to be the
graded $\F$-vector space of all linear maps $C_1 \rightarrow C_2$.
We define structure maps $\mu_{\Ch}^1$ and $\mu_{\Ch}^2$ such that
\begin{align*}
  \mu_{\Ch}^1(f) &= d_2 \circ f + f \circ d_1, &
  \mu_{\Ch}^2(g,f) &= g \circ f
\end{align*}
for $f:(C_1,d_1) \rightarrow (C_2,d_2)$ and
$g:(C_2,d_2) \rightarrow (C_3, d_3)$.
We define $\mu_{\Ch}^m = 0$ for $m > 2$.

Given an $A_\infty$ category $\calA$, we define an
$A_\infty$ category $\Sigma\calA$ called the
\emph{additive enlargement} of $\calA$.
The objects of $\Sigma\calA$ are pairs
$(I,\, \{(a_i, V_i) \mid i \in I\})$, where
$I$ is a finite indexing set and for each $i \in I$ the pair
$(a_i, V_i)$ consists of an object $a_i$ of $\calA$ and an
integer-graded $\F$-vector space $V_i$.
For simplicity, we will denote such an object of $\Sigma \calA$ as
\begin{align*}
  \bigoplus_{i \in I} (a_i \otimes V_i).
\end{align*}
The morphism spaces of $\Sigma\calA$ are given by
\begin{align*}
  \hom_{\Sigma \calA}( \bigoplus_{i \in I} a_i \otimes V_i,\,
  \bigoplus_{j \in J}  a_j \otimes V_j ) =
  \bigoplus_{i \in I,j \in J} \hom_{\calA}(a_i, a_j) \otimes
  \hom_{\F}(V_i,V_j).
\end{align*}
The $A_\infty$ operation $\mu_{\Sigma \calA}^n$ is given by applying
the operation $\mu_{\calA}^n$ to the $\hom_{\calA}(a_i, a_j)$
factors and composing the $\hom_{\F}(V_i,V_j)$ factors.

Given an $A_\infty$ category $\calA$, we define an $A_\infty$ category
$\Tw \calA$ called the \emph{category of twisted complexes over}
$\calA$.
The objects of $\Tw \calA$ are pairs $(X,\delta)$, where
$X$ is an object of $\Sigma \calA$ and
$\delta \in \hom_{\Sigma A}(X,X)$ is homogeneous with grading 1 and
satisfies
\begin{align*}
  \sum_m \mu_{\Sigma \calA}^m(\delta, \cdots, \delta) = 0.
\end{align*}
We also require that $X$ admit a filtration with respect to which
$\delta$ is strictly lower triangular.
The morphism spaces of $\Tw \calA$ are given by
\begin{align*}
  \hom_{\Tw \calA}((X,\delta_X),\,(Y,\delta_Y)) =
  \hom_{\Sigma \calA}(X,Y).
\end{align*}
The $A_\infty$ operations $\mu_{\Tw \calA}^m$ for $\Tw \calA$ can be
expressed in terms of the $A_\infty$ operations $\mu_\calA^m$ for
$\calA$.
If $\mu_\calA^m = 0$ for $m \neq 2$, then
$\mu_{\Tw \calA}^m = 0$ for $m > 2$ and we have
\begin{align*}
  \mu_{\Tw \calA}^1(F) &=
  \mu_{\Sigma\calA}^2(\delta_Y,F) + 
  \mu_{\Sigma\calA}^2(F,\delta_X), &
  \mu_{\Tw \calA}^2(G,F) &=
  \mu_{\Sigma \calA}^2(G,F)
\end{align*}
for $F:(X,\delta_X) \rightarrow (Y,\delta_Y)$ and
$G:(Y,\delta_Y) \rightarrow (Z,\delta_Z)$.

An \emph{$A_\infty$ functor} $\calF:\calA \rightarrow \calB$ of
$A_\infty$ categories $\calA$ and $\calB$ is an assignment of an
object $\calF(a_i)$ of $\calB$ for each object $a_i$ of $\calA$, and
for each integer $m \geq 1$ a linear map of grading $1-m$:
\begin{align*}
  \calF^m: \hom_\calA(a_{m-1}, a_m) \otimes \cdots \otimes
  \hom_\calA(a_0, a_1) \rightarrow
  \hom_{\calB}(\calF(a_0),\calF(a_m)).
\end{align*}
The maps $\calF^m$ are required to satisfy certain conditions, which
we do not describe here.

Given an object $b$ of an $A_\infty$ category $\calA$, one can
define an $A_\infty$ functor $\calG_b:\calA \rightarrow \Ch$.
On objects, we have
\begin{align*}
  \calG_b(a) = (\hom_\calA(b,a),\mu_\calA^1).
\end{align*}
The linear maps $\calG_b^m$ are given by
\begin{align*}
  &\calG_b^m:
  \hom_\calA(a_{m-1},a_m) \otimes \cdots \otimes \hom_\calA(a_0,a_1)
  \rightarrow
  \hom_{\Ch}(\calG_b(a_0),\, \calG_b(a_m)), \\
  &x_m \otimes \cdots \otimes x_1 \mapsto
  \mu_\calA^{m+1}(x_m, \cdots, x_1,-).
\end{align*}
One can extend the $A_\infty$ functor $\calG_b: \calA \rightarrow \Ch$
to an $A_\infty$ functor $\Tw \calG_b: \Tw \calA \rightarrow \Ch$.
On objects, we have
\begin{align*}
  (\Tw \calG_b)((X,\delta)) = (C, d+\partial),
\end{align*}
where the graded $\F$-vector space $C$ and endomorphisms
$d, \partial:C \rightarrow C$ are defined as follows.
The object $X$ of $\Sigma \calA$ has the form
\begin{align*}
  X = \bigoplus_{i \in I} a_i \otimes V_i.
\end{align*}
For each $i \in I$, we define a cochain complex
\begin{align*}
  (C_i, d_i) = \calG_b(a_i) = (\hom_\calA(b,a_i),\mu_\calA^1).
\end{align*}
We define
\begin{align*}
  C &= \bigoplus_{i \in I} C_i \otimes V_i, &
  d &= \sum_{i \in I} d_i \otimes \id_{V_i}, &
  \partial &=
  \sum_{m=1}^\infty \calG_b^m(\delta, \cdots, \delta) =
  \sum_{m=2}^\infty \mu_{\Sigma \calA}^m(\delta, \cdots, \delta, -).
\end{align*}
The differential for the cochain complex $(C, d+\partial)$
combines the differential $d$, which is due to the structure map
$\mu_\calA^1$ for $\calA$, and the map $\partial$, which is
due to the endomorphism $\delta:X \rightarrow X$ that constitutes part
of the data of the twisted object $(X,\delta)$.

\section{The Fukaya category of $R^*(T^2,2)$}
\label{sec:fukaya}

\subsection{The character variety $R^*(T^2,2)$}

Consider a surface $S$ with $n$ marked points $p_1, \cdots, p_n$.
We define a punctured surface
$S^\circ = S - \{p_1, \cdots, p_n\}$.
We define the traceless $SU(2)$ character variety $R(S,n)$ to be the
set of conjugacy classes of $SU(2)$ representations of the fundamental
group $\pi_1(S^\circ)$ that map loops around the punctures to
traceless matrices.
We say that a representation $\rho:\pi_1(S^\circ) \rightarrow SU(2)$
is \emph{irreducible} if the image of $\rho$ is a nonabelian subgroup
of $SU(2)$ and \emph{reducible} otherwise.
We define the \emph{irreducible locus} $R^*(S,n)$ of $R(S,n)$ to be
the set of conjugacy classes of irreducible representations.
The set $R^*(S,n)$ has the structure of a smooth manifold with a
canonical symplectic form.

We consider here the character variety $R(T^2,2)$, whose topology is
described in \cite{Boozer-2}.
We define fundamental cycles $A$ and $B$ of $T^2$ and loops $a$ and
$b$ around the punctures $p_1$ and $p_2$.
A presentation of the fundamental group of $T^2 - \{p_1,p_2\}$ is
\begin{align*}
  \pi_1(T^2 - \{p_1, p_2\}) = \langle A, B, a, b \mid
  [A,B]ab = 1 \rangle.
\end{align*}
We can specify a representation
$\pi_1(T^2 - \{p_1, p_2\}) \rightarrow SU(2)$ by specifying the images
of the generators $(A,B,a,b)$.
The character variety $R(T^2,2)$ is thus given by
\begin{align*}
  R(T^2,2) = \{(A,B,a,b) \in SU(2)^4 \mid
  [A,B]ab = \id,\, \tr a = \tr b = 0\}/{\textup{conjugation}},
\end{align*}
where for simplicity we use the same notation for generators of
$\pi_1(T^2 - \{p_1, p_2\})$ and their images under a given
representation.
The character variety $R(T^2,2)$ and its irreducible locus
$R^*(T^2,2)$ can be also be interpreted as moduli spaces of semistable
and stable parabolic bundles over an elliptic curve with two marked
points.
Using this interpretation, one can show that $R(T^2,2)$ and
$R^*(T^2,2)$ can be given the structure of complex manifolds
isomorphic to $(\CP^1)^2$ and the complement of a holomorphically
embedded elliptic curve in $(\CP^1)^2$, respectively \cite{Vargas}.
In particular, the irreducible locus $R^*(T^2,2)$ has the structure of
a smooth manifold of real dimension 4.

It is useful to define a subset
\begin{align*}
  R_3(T^2,2) = \{ab = \id\}
\end{align*}
of $R(T^2,2)$.
Note that $ab = \id$ is invariant under conjugation, so
$R_3(T^2,2)$ is well-defined.
We define $R_3^*(T^2,2)$ to be the irreducible locus of $R_3(T^2,2)$.
As we will see, the set $R_3^*(T^2,2)$ has the structure of a
submanifold of $R^*(T^2,2)$ of real dimension 3.

We define coordinates on $R_3(T^2,2)$ as follows.
Given a point $[A,B,a,b]$ of $R_3(T^2,2)$, we can always choose a
representative of the form
\begin{align}
  \label{eqn:alpha-beta-s}
  A &= \cos\alpha\,\id + \sin\alpha\,\qz, &
  B &= \cos\beta\,\id + \sin\beta\,\qz, &
  a &= b^{-1} = \cos s\,\qx + \sin s\,\qz.
\end{align}
The parameters $(\alpha,\beta,s)$ are subject to $2\pi$-periodicity
identifications, as well as the identifications
\begin{align*}
  (\alpha,\beta,s) &\sim (-\alpha,-\beta,-s), &
  (\alpha,\beta,s) &\sim (\alpha,\beta,\pi-s),
\end{align*}
which follow from the fact that we can conjugate
$(A,B,a,b)$ by $\qx$ or $\qz$ to obtain an equivalent representative.
Conjugation by $\qy$ does not yield an independent
identification, since $\qx \qy \qz = -\id$.
For points $(\alpha,\beta,s)$ for which
$\sin\alpha = \sin\beta = 0$, we have the additional identification
\begin{align*}
  (\alpha,\beta,s) \sim (\alpha,\beta,0),
\end{align*}
since in this case we can conjugate $(A,B,a,b)$ so as to force
$a = b^{-1} = \qz$.
We will use $(\alpha,\beta,s)$, subject to these identifications,
as standard coordinates on $R_3(T^2,2)$.

It is also useful to define a subset
\begin{align*}
  R_2(T^2,2) &= R_3(T^2,2) \cap \{s = 0\}
\end{align*}
of $R(T^2,2)$.
We define $R_2^*(T^2,2)$ to be the irreducible locus of $R_2(T^2,2)$.
Setting $s=0$ in equation (\ref{eqn:alpha-beta-s}) for the standard
coordinates $(\alpha,\beta,s)$ on $R_3(T^2,2)$, we find that points
$[A,B,a,b]$ of $R_2(T^2,2)$ have the form
\begin{align}
  \label{eqn:alpha-beta}
  A &= \cos\alpha\,\id + \sin\alpha\,\qz, &
  B &= \cos\beta\,\id + \sin\beta\,\qz, &
  a &= b^{-1} = \qx.
\end{align}
The standard coordinates $(\alpha,\beta,s)$ on $R_3(T^2,2)$ restrict
to coordinates $(\alpha,\beta)$ on $R_2(T^2,2)$.
The identifications for $(\alpha,\beta,s)$ restrict to
$2\pi$-periodicity identifications for $(\alpha,\beta)$,
as well as the identification
\begin{align*}
  (\alpha,\beta) \sim (-\alpha, -\beta).
\end{align*}
From the identifications on the coordinates $(\alpha,\beta)$, it
follows that $R_2(T^2,2)$ is homeomorphic to $S^2$.
From equation (\ref{eqn:alpha-beta}), it follows that the reducible
locus of $R_2(T^2,2)$ is the set $\{\sin\alpha = \sin\beta = 0\}$,
which consists of four points.
Thus the irreducible locus $R_2^*(T^2,2)$ is diffeomorphic to a sphere
minus four points, as shown in Figure \ref{fig:lagrangians-rectangle}.

To visualize $R_3(T^2,2)$, we define a homeomorphic space $Y$:
\begin{align*}
  Y = \{(\alpha,\beta,z) \mid
  \alpha \in [0,2\pi],\, \beta \in [0,\pi],\,
  |z| \leq \sin^2\alpha + \sin^2\beta \}/{\sim},
\end{align*}
where the equivalence relation $\sim$ is defined such that
\begin{align*}
  (\alpha,0,z) &\sim (2\pi-\alpha, 0, -z), &
  (\alpha,\pi,z) &\sim (2\pi-\alpha, \pi, -z), &
  (0,\beta,z) &\sim (2\pi,\beta,z).
\end{align*}
A specific homeomorphism $R_3(T^2,2) \rightarrow Y$ is given by
\begin{align*}
  (\alpha,\beta,s) \mapsto
  (\alpha,\beta, (\sin^2\alpha + \sin^2\beta)(\sin s)).
\end{align*}
We note that $Y$, hence $R_3(T^2,2)$, is compact with boundary
homeomorphic to $T^2$.
A specific homeomorphism $T^2 \rightarrow \partial R_3(T^2,2)$ is
given by $(e^{i\alpha},e^{i\beta}) \mapsto [A,B,a,b]$, where
\begin{align*}
  A &= \cos\alpha\,\id + \sin\alpha\,\qz, &
  B &= \cos\beta\,\id + \sin\beta\,\qz, &
  a &= b^{-1} = \qz.
\end{align*}
From equation (\ref{eqn:alpha-beta-s}), it follows that the reducible
and irreducible loci of $R_3(T^2,2)$ are its boundary and interior.

We would like to extend the standard coordinates $(\alpha,\beta,s)$ on
$R_3^*(T^2,2)$ to a system of coordinates defined on a neighborhood
of $R_3^*(T^2,2)$ in $R^*(T^2,2)$.
There are two natural extensions:

\begin{theorem}
\label{theorem:coords-1}
We have coordinates $(\alpha_1,\beta_1,s_1,t_1)$ defined on a
neighborhood of $R_3^*(T^2,2) \cap \{\sin\alpha \neq 0\}$:
\begin{align*}
  A &= \cos\alpha_1\,\mathbbm{1} + \sin\alpha_1\,\qz, \\
  B &=
  \cos t_1 \cos\beta_1\,\mathbbm{1} + \sin t_1\,\qx +
  \cos t_1 \sin\beta_1\,\qz, \\
  a &=
  \cos s_1 \cos \theta_1\,\qx +
  \cos s_1 \sin \theta_1\,\qy +
  \sin s_1\,\qz, \\
  b &= (ABA^{-1}B^{-1}a)^{-1},
\end{align*}
where
\begin{align*}
  \theta_1 &=
  \alpha_1 + \beta_1 - \arcsin(\cos\alpha_1 \tan s_1 \tan t_1).
\end{align*}
\end{theorem}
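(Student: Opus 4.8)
The plan is to verify directly that the formulas in the statement define a local coordinate system by exhibiting, on a neighborhood of $R_3^*(T^2,2) \cap \{\sin\alpha \neq 0\}$, both that the assignment $(\alpha_1,\beta_1,s_1,t_1) \mapsto [A,B,a,b]$ lands in $R^*(T^2,2)$ and that it is a local diffeomorphism. The key observation driving the choice of $\theta_1$ is the traceless constraint: the character variety $R(T^2,2)$ requires $\tr a = \tr b = 0$ and $[A,B]ab = \id$, so $b$ is forced to equal $(ABA^{-1}B^{-1}a)^{-1}$, and the only genuine constraint that remains is $\tr b = 0$, equivalently $\tr([A,B]a) = 0$ since conjugation and inversion preserve trace. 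So the first step is to compute the scalar part of $[A,B]a = ABA^{-1}B^{-1}a$ with the given $A$, $B$, $a$ and show that setting this scalar part to zero is exactly the equation $\theta_1 = \alpha_1 + \beta_1 - \arcsin(\cos\alpha_1 \tan s_1 \tan t_1)$. Since $A$ lies in the $\id,\qz$ plane, $A^{-1}$ is its conjugate there; $B$ and $B^{-1}$ are explicit; multiplying out $ABA^{-1}B^{-1}$ and then right-multiplying by $a = \cos s_1\cos\theta_1\,\qx + \cos s_1\sin\theta_1\,\qy + \sin s_1\,\qz$ and extracting the $\id$-coefficient is a finite quaternion computation. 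I expect the scalar part to come out proportional to $\cos s_1\bigl(\text{something}\bigr)\sin(\theta_1 - \alpha_1 - \beta_1) + \sin s_1 \cdot \cos\alpha_1 \sin t_1 \cdot(\dots)$, so that vanishing of the trace rearranges to $\sin(\theta_1 - \alpha_1 - \beta_1) = -\cos\alpha_1\tan s_1\tan t_1$, which is the stated relation; this is the heart of the verification.

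Next I would check that the map is well-defined and a local diffeomorphism near $R_3^*(T^2,2)$. Setting $t_1 = s_1 = 0$ recovers $A = \cos\alpha_1\,\id + \sin\alpha_1\,\qz$, $B = \cos\beta_1\,\id + \sin\beta_1\,\qz$, and $a = \cos\theta_1\,\qx + \cos\theta_1\cdot 0 + \sin s_1\,\qz$ — but at $s_1=t_1=0$ we get $\theta_1 = \alpha_1+\beta_1$ and $a = \cos(\alpha_1+\beta_1)\,\qx + \sin(\alpha_1+\beta_1)\,\qy$, which after the conjugation by a $\qz$-rotation (allowed since $\sin\alpha \neq 0$ forces irreducibility and pins down the conjugation freedom only up to the stabilizer) matches the standard parametrization (\ref{eqn:alpha-beta-s}) in the form $a = b^{-1} = \cos s\,\qx + \sin s\,\qz$; one has to track how the phase $\alpha_1 + \beta_1$ in the $\qx$–$\qy$ plane corresponds to the coordinate $s$ there. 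So the second step is to identify the restriction of $(\alpha_1,\beta_1,s_1,t_1)$ to the slice $\{s_1 = t_1 = 0\}$ with the known coordinates $(\alpha,\beta,s)$ on $R_3^*(T^2,2)$, up to the conjugation identifications listed after (\ref{eqn:alpha-beta-s}). The third step is the Jacobian: compute the derivative of $[A,B,a,b]$ with respect to $(\alpha_1,\beta_1,s_1,t_1)$ at a point of $R_3^*(T^2,2)$ and show it is injective into $T_{[A,B,a,b]}R^*(T^2,2)$. Since $R^*(T^2,2)$ is $4$-dimensional and we have four parameters, injectivity of the differential at the $3$-dimensional slice, together with the fact that $\partial_{t_1}$ moves $B$ out of the $\id,\qz$-plane (hence out of $R_3$) transversally, gives that the map is a local diffeomorphism in a neighborhood; the inverse function theorem then finishes it.

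The main obstacle is the bookkeeping in the first step — getting the scalar part of $[A,B]a$ into closed form and matching it cleanly to the $\arcsin$ expression — compounded by the fact that one must be careful about the domain where $\arcsin$ is defined (this is why the neighborhood is of $\{\sin\alpha \neq 0\}$ and not all of $R_3^*$, and why $s_1,t_1$ must be small: $|\cos\alpha_1\tan s_1\tan t_1| < 1$). A secondary subtlety is fixing the residual conjugation freedom consistently: on $R_3^*(T^2,2)\cap\{\sin\alpha\neq 0\}$ the matrix $A$ already breaks the conjugation symmetry down to the subgroup generated by $\qz$ and $\qx$, and one must check the given formulas are adapted to a consistent slice of that residual action so that the map to the quotient $R^*(T^2,2)$ is genuinely a chart and not merely a local parametrization of representations. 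I would handle this by noting that the stated formulas are already the image of a smooth section, and that the two remaining $\mathbb{Z}_2$ identifications from conjugating by $\qx$ or $\qz$ act freely enough near a generic point of $R_3^*$ that shrinking the neighborhood makes the chart injective. Once these points are dispatched, the claimed coordinate system follows; Theorem \ref{theorem:coords-1} is then just the assertion that this neighborhood, these four functions, and this explicit inverse assemble into a chart, which is what the inverse function theorem delivers.
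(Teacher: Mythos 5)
Your proposal follows essentially the same route as the paper: verify that the formulas land in the character variety (the only nontrivial constraint being $\tr b = 0$, which the $\theta_1$ formula is designed to enforce), identify the $t_1 = 0$ slice with the standard coordinates $(\alpha,\beta,s)$ on $R_3^*(T^2,2)$, and then check transversality of the $\partial_{t_1}$ direction. The one refinement in the paper worth noting is that instead of a general Jacobian computation into a quotient (which runs into the conjugation issue you flag at the end), it introduces a single conjugation-invariant function $f^4 = -\tfrac{1}{4}(\tr Aa + \tr Ab)$ that vanishes identically on $R_3^*(T^2,2)$ and has $\partial_{t_1}f^4 = \sin^2\alpha\cos s$ there while the other three coordinate derivatives of $f^4$ vanish; this gives linear independence of the four coordinate vectors precisely on $\{\sin\alpha\neq 0\}$ with no need to worry about choosing a slice. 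One small correction to your second step: the locus to compare with the standard coordinates is $\{t_1 = 0\}$, not $\{s_1 = t_1 = 0\}$ (the latter is the two-dimensional $R_2^*$); setting only $t_1 = 0$, a single $\qz$-conjugation removes the phase $\alpha_1 + \beta_1$ from $a$ and yields $a = \cos s_1\,\qx + \sin s_1\,\qz$, so $(\alpha,\beta,s) = (\alpha_1,\beta_1,s_1)$ as in the paper.
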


\begin{proof}
A calculation shows that the matrices $(A,B,a,b)$ satisfy the
constraint $[A,B]ab = \id$ and thus represent a point in
$R(T^2,2)$.
Comparing the above matrices with the definition of the standard
coordinates $(\alpha,\beta,s)$ on $R_3^*(T^2,2)$ in equation
(\ref{eqn:alpha-beta-s}), we see that if $t_1 = 0$ then $[A,B,a,b]$
lies in $R_3^*(T^2,2)$ and
\begin{align*}
  (\alpha,\beta,s) = (\alpha_1,\beta_1,s_1).
\end{align*}
It follows that
$\{\partial_{\alpha_1}, \partial_{\beta_1}, \partial_{s_1}\}$ are
linearly independent on $R_3^*(T^2,2)$.
Since irreducibility is an open condition, for sufficiently small
values of $|t_1|$ the point $(\alpha_1,\beta_1,s_1,t_1)$ lies in
$R^*(T^2,2)$.
Define a function $f^4:R(T^2,2) \rightarrow \Reals$:
\begin{align*}
  f^4 &= -\frac{1}{4}(\tr Aa + \tr Ab).
\end{align*}
A calculation shows that the first derivatives of $f^4$ are given by
\begin{align*}
  (\partial_{\alpha_1} f^4)(\alpha,\beta,s,0) &= 0, &
  (\partial_{\beta_1} f^4)(\alpha,\beta,s,0) &= 0, \\
  (\partial_{s_1} f^4)(\alpha,\beta,s,0) &= 0, &
  (\partial_{t_1} f^4)(\alpha,\beta,s,0) &= \sin^2\alpha \cos s.
\end{align*}
So
$\{\partial_{\alpha_1}, \partial_{\beta_1}, \partial_{s_1},
\partial_{t_1}\}$ are
linearly independent on $R_3^*(T^2,2) \cap \{\sin \alpha \neq 0\}$.
\end{proof}

\begin{theorem}
\label{theorem:coords-2}
We have coordinates $(\alpha_2,\beta_2,s_2,t_2)$ defined on a
neighborhood of $R_3^*(T^2,2) \cap \{\sin\beta \neq 0\}$:
\begin{align*}
  A &=
  \cos t_2 \cos\alpha_2\,\mathbbm{1} + \sin t_2\,\qx +
  \cos t_2 \sin\alpha_2\,\qz, \\
  B &= \cos\beta_2\,\mathbbm{1} + \sin\beta_2\,\qz, \\
  a &=
  \cos s_2 \cos \theta_2\,\qx +
  \cos s_2 \sin \theta_2\,\qy +
  \sin s_2\,\qz, \\
  b &= (ABA^{-1}B^{-1}a)^{-1},
\end{align*}
where
\begin{align*}
  \theta_2 &=
  \alpha_2 + \beta_2 - \arcsin(\cos\beta_2 \tan s_2 \tan t_2).
\end{align*}
\end{theorem}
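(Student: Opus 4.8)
The plan is to mirror the proof of Theorem \ref{theorem:coords-1}, exploiting the symmetry between the roles of $A$ and $B$ in the presentation of $\pi_1(T^2 - \{p_1, p_2\})$. First I would verify by direct computation that the quadruple $(A,B,a,b)$ as written satisfies the defining relation $[A,B]ab = \id$, so that $[A,B,a,b]$ genuinely represents a point of $R(T^2,2)$; since $b$ is defined as $(ABA^{-1}B^{-1}a)^{-1} = ([A,B]a)^{-1}$, this relation holds by construction, and what actually needs checking is that $\tr a = \tr b = 0$. For $a$ this is immediate since the scalar part of the displayed expression is zero. For $b$ one must check that $[A,B]a$ is traceless; here the specific value of $\theta_2$ (in particular the correction term $-\arcsin(\cos\beta_2 \tan s_2 \tan t_2)$) is exactly what is needed, and I would confirm this with a quaternion calculation, noting that when $t_2 = 0$ the formulas reduce to those of equation (\ref{eqn:alpha-beta-s}).

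Next I would identify the slice $t_2 = 0$ with $R_3^*(T^2,2)$: setting $t_2 = 0$ gives $A = \cos\alpha_2\,\id + \sin\alpha_2\,\qz$, $B = \cos\beta_2\,\id + \sin\beta_2\,\qz$, and $a = b^{-1} = \cos s_2\,\qx + \sin s_2\,\qz$ (using $\theta_2 = \alpha_2 + \beta_2$ and a conjugation), matching equation (\ref{eqn:alpha-beta-s}), so $(\alpha,\beta,s) = (\alpha_2,\beta_2,s_2)$ on this slice. This shows $\{\partial_{\alpha_2}, \partial_{\beta_2}, \partial_{s_2}\}$ are linearly independent along $R_3^*(T^2,2)$. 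Since irreducibility is an open condition, for $|t_2|$ small the point $(\alpha_2,\beta_2,s_2,t_2)$ remains in $R^*(T^2,2)$, so it only remains to show $\partial_{t_2}$ is independent of the other three along $R_3^*(T^2,2) \cap \{\sin\beta \neq 0\}$.

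For this last point I would introduce the analogous function $f^4 = -\frac{1}{4}(\tr Ba + \tr Bb)$ — replacing $A$ by $B$ relative to the proof of Theorem \ref{theorem:coords-1} — and compute its first derivatives at $t_2 = 0$. I expect to find $(\partial_{\alpha_2} f^4) = (\partial_{\beta_2} f^4) = (\partial_{s_2} f^4) = 0$ while $(\partial_{t_2} f^4)(\alpha,\beta,s,0) = \sin^2\beta \cos s$, which is nonzero precisely on $R_3^*(T^2,2) \cap \{\sin\beta \neq 0\}$ (recalling that $s \neq \pi/2$ generically, or more carefully that the vanishing locus of $\cos s$ is handled by the other chart or is lower-dimensional). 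This establishes linear independence of all four coordinate vector fields, completing the proof. The main obstacle is the bookkeeping in the quaternion calculation confirming that $\theta_2$ is chosen correctly so that $b$ is traceless away from $t_2 = 0$; everything else is a routine adaptation of the $A \leftrightarrow B$ symmetric version of Theorem \ref{theorem:coords-1}.
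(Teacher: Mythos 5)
Your proposal is correct and takes essentially the same approach as the paper: the paper's own proof simply says it mirrors Theorem~\ref{theorem:coords-1} and introduces the function $h^4 = \tfrac{1}{4}(\tr Ba + \tr Bb)$ (you wrote $-\tfrac{1}{4}$, a harmless sign convention) with $(\partial_{t_2} h^4)(\alpha,\beta,s,0) = \sin^2\beta\cos s$ and the other three first derivatives vanishing. Your additional attention to the point that $\cos s$ also appears in the derivative — a subtlety the paper elides in both chart theorems — is a reasonable caveat, but does not affect the match with the paper's argument.
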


\begin{proof}
The proof is similar to the proof of Theorem \ref{theorem:coords-1},
only we now define a function a function
$h^4:R(T^2,2) \rightarrow \Reals$:
\begin{align*}
  h^4 &= \frac{1}{4}(\tr Ba + \tr B b).
\end{align*}
A calculation shows that the first derivatives of $h^4$ are given by
\begin{align*}
  (\partial_{\alpha_2} h^4)(\alpha,\beta,s,0) &= 0, &
  (\partial_{\beta_2} h^4)(\alpha,\beta,s,0) &= 0, \\
  (\partial_{s_2} h^4)(\alpha,\beta,s,0) &= 0, &
  (\partial_{t_2} h^4)(\alpha,\beta,s,0) &= \sin^2\beta \cos s.
\end{align*}
So
$\{\partial_{\alpha_2}, \partial_{\beta_2}, \partial_{s_2},
\partial_{t_2}\}$ are
linearly independent on $R_3^*(T^2,2) \cap \{\sin \beta \neq 0\}$.
\end{proof}

For both systems, the coordinates $(\alpha_k,\beta_k,s_k,t_k)$ are
subject to $2\pi$-periodicity identifications together with the
identifications
\begin{align}
  \label{eqn:coords-ident}
  (\alpha_k,\beta_k,s_k,t_k)
  &\sim (-\alpha_k,-\beta_k,-s_k,t_k), &
  (\alpha_k,\beta_k,s_k,t_k)
  &\sim (\alpha_k,\beta_k,\pi-s_k,-t_k),
\end{align}
which follow from the fact that we can conjugate
$(A,B,a,b)$ by $\qx$ or $\qz$ to obtain an equivalent representative.
Conjugation by $\qy$ does not yield an independent identification,
since $\qx \qy \qz = -\id$.

\subsection{The character variety $R^*(S^2,4)$}
\label{ssec:pillowcase}

The Fukaya category of $R^*(T^2,2)$ appears to be closely related to
the Fukaya category of the pillowcase $R^*(S^2,4)$.
We review here some well-known results regarding $R^*(S^2,4)$ and
describe its relationship to $R^*(T^2,2)$.

The topology of $R(S^2,4)$ is discussed in \cite{Hedden-1}.
As a set, the character variety $R(S^2,4)$ consists of
conjugacy classes $SU(2)$ representations of the
fundamental group of $S^2 - \{p_1,p_2,p_3,p_4\}$ that map loops around
the punctures to traceless matrices.
We define loops $a$, $b$, $c$, $d$ around the punctures
$p_1$, $p_2$, $p_3$, $p_4$.
A presentation for the fundamental group of
$S^2 - \{p_1,p_2,p_3,p_4\}$ is
\begin{align*}
  \pi_1(S^2 - \{p_1,p_2,p_3,p_4\}) =
  \langle a, b, c, d \mid ba = cd \rangle.
\end{align*}
The character variety $R(S^4,4)$ is thus given by
\begin{align*}
  R(S^2,4) = \{(a,b,c,d) \in SU(2)^4 \mid ba = cd,\,
  \tr a = \tr b = \tr c = \tr d = 0\}/{\textup{conjugation}},
\end{align*}
where for simplicity we use the same notation for generators of
$\pi_1(S^2 - \{p_1,p_2,p_3,p_4\})$ and their image under a given
representation.
The character variety $R(S^2,4)$ and its irreducible locus
$R^*(S^2,4)$ can also be interpreted as moduli spaces of semistable
and stable parabolic bundles over $\CP^1$ with four marked points.
Using this interpretation, one can show that $R(S^2,4)$ and
$R^*(S^2,4)$ can be given the structure of complex manifolds
isomorphic to $\CP^1$ and the complement of four points in $\CP^1$,
respectively.
In particular, the irreducible locus $R^*(S^2,4)$ has the structure of
a smooth manifold diffeomorphic to $S^2$ minus four points.

We define coordinates on $R(S^2,4)$ as follows.
Given a point $[a,b,c,d]$ of $R(S^2,4)$, one can always choose a
representative of the form
\begin{align}
  \label{eqn:r-s2-4}
  a &= \qx, &
  b &= \cos\gamma\,\qx + \sin\gamma\,\qy, &
  c &= \cos\theta\,\qx + \sin\theta\,\qy, &
  d &= c^{-1}ba.
\end{align}
The parameters $(\gamma,\theta)$ are subject to $2\pi$-periodicity
identifications, as well as the identification
\begin{align*}
  (\gamma,\theta) \sim (-\gamma, -\theta),
\end{align*}
which follows from conjugation by $\qx$.
We will use $(\gamma,\theta)$ as standard coordinates on $R(S^2,4)$.
From the identifications on the coordinates $(\gamma,\theta)$, it
follows that $R(S^2,4)$ is homeomorphic to $S^2$.
From equation (\ref{eqn:r-s2-4}) it follows that the reducible locus
of $R(S^2,4)$ is the set $\{\sin\gamma = \sin\theta = 0\}$, which
consists of four points.

Since the identifications imposed on the coordinates $(\gamma,\theta)$
for $R(S^2,4)$
are the same as the identifications imposed on the imposed on the
coordinates $(\alpha,\beta)$ for $R_2(T^2,2)$, we can define a
homeomorphism $R(S^2,4) \rightarrow R_2(T^2,2)$,
$(\gamma,\theta) \mapsto (\alpha,\beta)$.
The homeomorphism restricts to a diffeomorphism
$R^*(S^2,4) \rightarrow R_2^*(T^2,2)$ of the irreducible loci.

\subsection{Symplectic form on $R^*(T^2,2)$}
\label{ssec:symplectic-form}

The irreducible locus $R^*(S,n)$ of the character variety $R(S,n)$ for
a surface $S$ with $n$ marked points is a smooth manifold with a
canonical symplectic form \cite{Goldman}.
The existence of the canonical symplectic form is due to the fact that
the character variety can be interpreted as the Hamiltonian reduction
of the space of connections on a trivial rank two complex vector
bundle over the surface with respect to the action of a group of gauge
transformations.
The space of connections has the structure of an infinite-dimensional
symplectic manifold, as was first noted by Atiyah and Bott
\cite{Atiyah-Bott}.
To compute the symplectic form on $R^*(T^2,2)$, we will use the
formalism of \emph{quasi-Hamiltonian reduction} described in
\cite{Alekseev}.
For our purposes, it will suffice to compute the restriction of the
symplectic form to the submanifold $R_3^*(T^2,2)$.

\begin{theorem}
\label{theorem:symplectic-form}
The restriction of the canonical symplectic form on $R^*(T^2,2)$ to
$R_3^*(T^2,2) \cap \{\sin \alpha \neq 0\}$ is
\begin{align*}
  \omega =
  d\alpha_1 \wedge d\beta_1 + \sin\alpha_1\,ds_1 \wedge dt_1.
\end{align*}
The restriction of the canonical symplectic form on $R^*(T^2,2)$ to
$R_3^*(T^2,2) \cap \{\sin \beta \neq 0\}$ is
\begin{align*}
  \omega =
  d\alpha_2 \wedge d\beta_2 - \sin\beta_2\,ds_2 \wedge dt_2.
\end{align*}
\end{theorem}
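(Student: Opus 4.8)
The plan is to compute the symplectic form using the quasi-Hamiltonian formalism of Alekseev--Malkin--Meinrenken \cite{Alekseev}, following the same strategy that produces the symplectic form on $R^*(S^2,4)$. Recall that $R^*(T^2,2)$ arises as a quasi-Hamiltonian reduction of the space $SU(2)^4$ (with coordinates $(A,B,a,b)$) equipped with the fusion product of the internal 2-form on $D(SU(2))$ for the two handle pairs $(A,B)$ and the conjugacy-class 2-forms for $a,b$; the moment map is the product $[A,B]ab$, and imposing $[A,B]ab=\id$ followed by conjugation-quotient yields the character variety. So the first step is to write down the Alekseev--Malkin--Meinrenken 2-form $\Omega$ on $SU(2)^4$ explicitly, restrict the ambient coordinates to the slice $\{t_1 = 0\}$ (respectively $\{t_2=0\}$) that parametrizes a neighborhood of $R_3^*(T^2,2)$ inside the moment-map level set, and then pull back along the coordinate embeddings $(\alpha_1,\beta_1,s_1,t_1)\mapsto(A,B,a,b)$ of Theorem \ref{theorem:coords-1} (respectively Theorem \ref{theorem:coords-2}).

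Concretely, I would parametrize everything by the four coordinates and compute the Maurer--Cartan forms $A^{-1}dA$, $B^{-1}dB$, $a^{-1}da$, $b^{-1}db$ as $\mathfrak{su}(2)$-valued 1-forms, expanding in the basis $\{\qx,\qy,\qz\}$ and using $\langle\qx,\qx\rangle=\langle\qy,\qy\rangle=\langle\qz,\qz\rangle=1$, $\langle\qx,\qy\rangle=0$, etc., for the normalized Killing form. The quasi-Hamiltonian 2-form is a sum of terms of the shape $\tfrac12\langle A^{-1}dA,\ dB\,B^{-1}\rangle$ for each handle and $\tfrac12\langle(a^{-1}da)\wedge(\ )\rangle$-type terms for the conjugacy classes, together with the fusion corrections involving the partial products. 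Since we only need the restriction to $R_3^*(T^2,2)$, we may evaluate most coefficient functions at $t_k=0$, which is where $b=a^{-1}$ and the matrices collapse to the form (\ref{eqn:alpha-beta-s}); this is exactly what makes the computation tractable. The derivatives $(\partial_{t_k} f^4)$ and $(\partial_{t_k} h^4)$ already computed in the proofs of Theorems \ref{theorem:coords-1} and \ref{theorem:coords-2} — namely $\sin^2\alpha\cos s$ and $\sin^2\beta\cos s$ — should reappear as the Jacobian factors relating the $t_k$ direction to the ambient normal direction, and I expect the $\sin\alpha_1$ (resp. $-\sin\beta_2$) coefficient in front of $ds_k\wedge dt_k$ to emerge from exactly this comparison, after the $\cos s$ factors cancel against contributions from the $\theta_k$-dependence.

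The main obstacle is bookkeeping: correctly assembling the fusion terms of $\Omega$ and carefully tracking the conjugation-quotient. Because $R^*(T^2,2)$ is the \emph{reduced} space, one must verify that the computed 2-form on the level set $\{[A,B]ab=\id\}$ actually descends to the quotient — i.e. that it is basic for the conjugation action — and that the slice we use is transverse to the conjugation orbits, so that the restriction to $R_3^*(T^2,2)$ is the honest pullback of the reduced form. A clean way to finish is to note that $R_2^*(T^2,2)=R_3^*(T^2,2)\cap\{s=0\}$ is diffeomorphic to $R^*(S^2,4)$ (as established in Section \ref{ssec:pillowcase}), so the $d\alpha_k\wedge d\beta_k$ part can be cross-checked against the known symplectic form on the pillowcase; this pins down the normalization and confirms there is no extraneous scalar. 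The two cases are related by the coordinate symmetry $\alpha\leftrightarrow\beta$ together with an orientation reversal, which accounts for the sign discrepancy between the two displayed formulas, so once the first is done the second follows by a symmetry argument rather than a repeat computation. Finally, consistency on the overlap $\{\sin\alpha\neq 0\}\cap\{\sin\beta\neq 0\}$ — where both formulas must agree after the change of coordinates $(\alpha_1,\beta_1,s_1,t_1)\leftrightarrow(\alpha_2,\beta_2,s_2,t_2)$ implicit in Theorems \ref{theorem:coords-1} and \ref{theorem:coords-2} — provides an additional check that I would carry out at the end.
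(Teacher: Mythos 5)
Your proposal takes essentially the same route as the paper: invoke the Alekseev--Malkin--Meinrenken quasi-Hamiltonian formalism, write down the 2-form for the fusion product corresponding to the relations $[A,B]ab=\id$, and pull back along the explicit parametrizations of Theorems \ref{theorem:coords-1} and \ref{theorem:coords-2}. The paper is slightly more economical in that it quotes the ready-made formula from Theorem 9.3 of \cite{Alekseev} directly — a four-term expression in the Maurer--Cartan forms of $A$, $B$, $AB$, $A^{-1}B^{-1}$, $b$, and $ABA^{-1}B^{-1}$ — which sidesteps your concerns about checking basicality and transversality, since that theorem already packages the descent to the conjugation quotient; it also simply repeats the substitution for the second coordinate system rather than appealing to a symmetry argument.
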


\begin{proof}
A general expression for the symplectic form on the character variety
$R^*(S,n)$ is given in Theorem 9.3 of \cite{Alekseev}.
We apply this formula to the case of $R^*(T^2,2)$.
For our application we use the Lie group $G = SU(2)$ and its Lie
algebra $\mathfrak{g} = \mathfrak{su}(2)$.
Define the Killing form $( - , - )$ on $\mathfrak{g}$ such that
\begin{align*}
  (x, y) = -\frac{1}{2}\tr xy.
\end{align*}
Define left-invariant and right-invariant Maurer-Cartan forms
$\theta,\bar{\theta} \in \Omega^1(G,\mathfrak{g})$.
At the point $g \in G$, we have
\begin{align*}
  \theta_g &= g^{-1}\,dg, &
  \thetabar_g &= dg\,g^{-1}.
\end{align*}
From Theorem 9.3 of \cite{Alekseev}, it follows that the canonical
symplectic form on $R^*(T^2,2)$ is given by
\begin{align*}
  \omega =
  \frac{1}{2}(A^*\theta,\, B^*\thetabar) +
  \frac{1}{2}(A^*\thetabar,\, B^*\theta) +
  \frac{1}{2}((AB)^*\theta,\, (A^{-1}B^{-1})^*\thetabar) +
  \frac{1}{2}(b^*\theta,\, (ABA^{-1}B^{-1})^* \thetabar).
\end{align*}
We substitute for the matrices $(A,B,a,b)$ using the coordinate
systems described in Theorems \ref{theorem:coords-1} and
\ref{theorem:coords-2} to obtain the result.
\end{proof}

A similar computation gives the canonical symplectic form on
$R^*(S^2,4)$ that is used in \cite{Hedden-3}:

\begin{theorem}
\label{theorem:r-s2-4}
The canonical symplectic form on $R^*(S^2,4)$ is
$d\gamma \wedge d\theta$.
\end{theorem}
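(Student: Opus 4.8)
The plan is to mirror exactly the strategy used in the proof of Theorem \ref{theorem:symplectic-form}, but for the four-punctured sphere, which is simpler because $R^*(S^2,4)$ is two-dimensional and admits a single global coordinate chart. First I would recall the presentation $\pi_1(S^2 - \{p_1,p_2,p_3,p_4\}) = \langle a,b,c,d \mid ba = cd\rangle$ and the resulting description of $R(S^2,4)$ as the space of quadruples $(a,b,c,d) \in SU(2)^4$ with $ba = cd$ and all four generators traceless, modulo conjugation. As with the torus case, I would apply the general formula of Theorem 9.3 of \cite{Alekseev} for the symplectic form on the character variety $R^*(S,n)$, specialized to $G = SU(2)$ with the Killing form $(x,y) = -\tfrac12\tr xy$ and the Maurer--Cartan forms $\theta_g = g^{-1}\,dg$, $\thetabar_g = dg\,g^{-1}$. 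This yields an explicit expression for $\omega$ on $R^*(S^2,4)$ as a sum of pairings of pullbacks of Maurer--Cartan forms along the maps $a,b,c,d$ and their products.

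The key computational step is to substitute the standard coordinate representative from equation (\ref{eqn:r-s2-4}),
\begin{align*}
  a &= \qx, &
  b &= \cos\gamma\,\qx + \sin\gamma\,\qy, &
  c &= \cos\theta\,\qx + \sin\theta\,\qy, &
  d &= c^{-1}ba,
\end{align*}
into the Alekseev formula. Since $a$ is constant, $a^*\theta = a^*\thetabar = 0$, which kills several terms immediately. The remaining terms involve $db$, $dc$, and $d(c^{-1}ba)$; each of these is a one-parameter family rotating in the $\qx$-$\qy$ plane, so the Maurer--Cartan forms are computed by elementary differentiation, e.g. $b^{-1}\,db$ is a multiple of $d\gamma$ times a fixed Lie algebra element, and similarly for $c$. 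Carrying out the pairings with the Killing form and collecting terms, the cross terms proportional to $d\gamma \wedge d\gamma$ and $d\theta \wedge d\theta$ vanish by antisymmetry, and the surviving $d\gamma \wedge d\theta$ coefficient should collapse to $1$ after trigonometric simplification. This reproduces the well-known result $\omega = d\gamma \wedge d\theta$ quoted in \cite{Hedden-3}.

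The main obstacle is purely bookkeeping: correctly tracking the pullbacks of $\theta$ versus $\thetabar$ along composite maps such as $AB$ and $A^{-1}B^{-1}$ (here the analogous products $ba$, $cd$), and ensuring the normalization constants in the Alekseev formula are applied consistently so that the final coefficient is exactly $1$ rather than a nonzero multiple. There is no conceptual difficulty — unlike the four-dimensional case $R^*(T^2,2)$, here the computation closes in a single chart and the answer is a constant-coefficient two-form — but the proof depends on getting the trigonometric identities and sign conventions right, which is why I would phrase it as "a calculation shows," exactly as in the proof of Theorem \ref{theorem:symplectic-form}.
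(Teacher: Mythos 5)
Your approach matches the paper's exactly: the paper proves Theorem~\ref{theorem:r-s2-4} by appealing to "a similar computation" as in Theorem~\ref{theorem:symplectic-form}, that is, by specializing the Alekseev quasi-Hamiltonian formula (Theorem~9.3 of \cite{Alekseev}) with $G = SU(2)$, the Killing form $(x,y) = -\tfrac12\tr xy$, and the Maurer--Cartan forms, and then substituting the standard representative from equation~(\ref{eqn:r-s2-4}). Your proposal is the same strategy with the same ingredients, and correctly notes that the constancy of $a$ and the two-dimensionality of $R^*(S^2,4)$ simplify the bookkeeping relative to the $R^*(T^2,2)$ case.
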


In Section \ref{ssec:pillowcase} we defined a diffeomorphism
$R^*(S^2,4) \rightarrow R_2^*(T^2,2)$,
$(\gamma,\theta) \mapsto (\alpha,\beta)$.
Theorems \ref{theorem:symplectic-form} and \ref{theorem:r-s2-4} give:

\begin{theorem}
The diffeomorphism $R^*(S^2,4) \rightarrow R_2^*(T^2,2)$ is a
symplectomorphism.
\end{theorem}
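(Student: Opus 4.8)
The plan is to compute the restriction of the canonical symplectic form $\omega$ on $R^*(T^2,2)$ to the submanifold $R_2^*(T^2,2)$ using Theorem~\ref{theorem:symplectic-form}, and then to compare the answer with Theorem~\ref{theorem:r-s2-4} under the diffeomorphism $R^*(S^2,4)\to R_2^*(T^2,2)$ constructed in Section~\ref{ssec:pillowcase}.

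First I would locate $R_2^*(T^2,2)$ inside the two coordinate charts of Theorems~\ref{theorem:coords-1} and~\ref{theorem:coords-2}. Recall from the proofs of those theorems that setting $t_k = 0$ puts $[A,B,a,b]$ in $R_3^*(T^2,2)$ with $(\alpha,\beta,s) = (\alpha_k,\beta_k,s_k)$; since by definition $R_2^*(T^2,2) = R_3^*(T^2,2) \cap \{s = 0\}$, the submanifold $R_2^*(T^2,2)$ is cut out inside the first chart by $\{s_1 = t_1 = 0\}$ and inside the second chart by $\{s_2 = t_2 = 0\}$, with $(\alpha,\beta) = (\alpha_k,\beta_k)$ along $R_2^*(T^2,2)$. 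In particular $\partial_{s_k}$ and $\partial_{t_k}$ are transverse to $R_2^*(T^2,2)$, so restricting a form to $R_2^*(T^2,2)$ — i.e.\ pulling back along the inclusion — simply sets $ds_k = dt_k = 0$. Applying this to the formulas of Theorem~\ref{theorem:symplectic-form}, the terms $\sin\alpha_1\,ds_1\wedge dt_1$ and $-\sin\beta_2\,ds_2\wedge dt_2$ restrict to zero, so $\omega$ restricts to $d\alpha_1\wedge d\beta_1 = d\alpha\wedge d\beta$ on $R_2^*(T^2,2)\cap\{\sin\alpha\neq 0\}$ and to $d\alpha_2\wedge d\beta_2 = d\alpha\wedge d\beta$ on $R_2^*(T^2,2)\cap\{\sin\beta\neq 0\}$. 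Since the reducible locus of $R_2(T^2,2)$ is exactly $\{\sin\alpha = \sin\beta = 0\}$, these two open sets cover $R_2^*(T^2,2)$ and the two local expressions agree on the overlap, so $\omega$ restricts to the globally defined, nondegenerate form $d\alpha\wedge d\beta$; in particular $R_2^*(T^2,2)$ is a symplectic submanifold of $R^*(T^2,2)$.

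It then only remains to observe that the diffeomorphism $R^*(S^2,4)\to R_2^*(T^2,2)$ of Section~\ref{ssec:pillowcase} is given in coordinates by $(\gamma,\theta)\mapsto(\alpha,\beta)$, so it pulls $d\alpha\wedge d\beta$ back to $d\gamma\wedge d\theta$, which by Theorem~\ref{theorem:r-s2-4} is the canonical symplectic form on $R^*(S^2,4)$; hence the diffeomorphism is a symplectomorphism. The only step requiring genuine care is the first one — correctly identifying $R_2^*(T^2,2)$ as the common zero locus of $s_k$ and $t_k$ inside each chart and checking that the two charts cover it — and this is a bookkeeping matter drawn directly from the constructions in Theorems~\ref{theorem:coords-1} and~\ref{theorem:coords-2}; the rest is a mechanical application of the cited theorems, so I do not expect a substantive obstacle.
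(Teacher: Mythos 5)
Your proposal is correct and follows exactly the argument the paper invokes implicitly: the theorem is stated as a direct consequence of Theorems~\ref{theorem:symplectic-form} and~\ref{theorem:r-s2-4}, and what you spell out — that $R_2^*(T^2,2)$ is cut out by $s_k = t_k = 0$ in each chart, so the pullback of $\omega$ is $d\alpha\wedge d\beta$, which the map $(\gamma,\theta)\mapsto(\alpha,\beta)$ identifies with $d\gamma\wedge d\theta$ — is precisely the computation the paper leaves tacit. You have correctly unpacked the one-line proof.
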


\subsection{Hamiltonian perturbation}
\label{ssec:hamiltonian}

We will want to perturb Lagrangians in $R^*(T^2,2)$ by applying a
small Hamiltonian flow, which we refer to as a
\emph{Hamiltonian pushoff}.
We define a Hamiltonian function $H:R^*(T^2,2) \rightarrow \Reals$ by
\begin{align}
  \label{eqn:H}
  H = \frac{1}{2}(\tr A + \eta \tr B).
\end{align}
On the submanifold $R_3^*(T^2,2)$, we can express the Hamiltonian in
terms of the standard coordinates $(\alpha,\beta,s)$:
\begin{align*}
  H|_{R_3^*(T^2,2)} = \cos \alpha + \eta \cos \beta.
\end{align*}
We note that the Hamiltonian flow fixes the symplectic submanifold
$R_2^*(T^2,2)$ as a set.
From the expression for the Hamiltonian given in equation
(\ref{eqn:H}) and the symplectic form given in Theorem
\ref{theorem:symplectic-form}, it follows that the Hamiltonian flow
equations for the coordinates $(\alpha,\beta)$ on $R_2^*(T^2,2)$ are
\begin{align*}
  \dot{\alpha} &= \partial_\beta H =
  -\eta \sin\beta, &
  \dot{\beta} &= -\partial_\alpha H =
  \sin\alpha.
\end{align*}
We will choose $\eta = 0.2$ and evolve in time by $\tau = -0.2$ for
each Hamiltonian pushoff.

\subsection{The mapping class group}
\label{ssec:mcg}

Given an orientable surface $S$ with $n$ distinct marked points
$p_1, \cdots, p_n$, we define the \emph{mapping class group}
$\MCG_n(S)$ to be the group of isotopy classes of
orientation-preserving homeomorphisms of $S$ that fix
$\{p_1, \cdots, p_n\}$ as a set.
We define the \emph{pure mapping class group} $\PMCG_n(S)$ to be the
subgroup of $\MCG_n(S)$ that fixes the individual points
$p_1, \cdots, p_n$.

The mapping class group $\MCG_n(S)$ acts on the character variety
$R(S,n)$ and its irreducible locus $R^*(S,n)$.
The action is defined as follows.
Choose a basepoint $x_0 \in S^\circ = S - \{p_1, \cdots, p_n\}$.
We first define a homomorphism from $\MCG_n(S)$ to
$\Out(\pi_1(S^\circ,x_0))$,
the group of outer automorphisms of the fundamental group
$\pi_1(S^\circ,x_0)$.
Given an element $[\phi] \in \MCG_n(S)$ represented by a homeomorphism
$\phi:S^\circ \rightarrow S^\circ$, we have an induced
homomorphism
$\phi_*:\pi_1(S^\circ,x_0) \rightarrow \pi_1(S^\circ,\phi(x_0))$,
$[\alpha] \mapsto [\alpha \circ \phi]$.
We choose a path $\gamma:I \rightarrow S^\circ$ from
$x_0$ to $\phi(x_0)$ and define an induced isomorphism
$\pi_1(S^\circ,x_0) \rightarrow \pi_1(S^\circ,\phi(x_0))$,
$[\alpha] \mapsto [\gamma \alpha \gammabar]$.
We now define $f:\MCG_n(S) \rightarrow \Out(\pi_1(S^\circ,x_0))$,
$[\phi] \mapsto [\gamma_*\phi_*]$.
One can show that $f$ is independent of the choice of representative
$\phi$ and path $\gamma$, and is hence well-defined, and is a
homomorphism.
We define a left action of $\MCG_n(S)$ on the character variety
$R(S,n)$ by
\begin{align*}
  [\phi] \cdot [\rho] = [\rho \circ \tilde{f}([\phi])^{-1}],
\end{align*}
where $\tilde{f}([\phi])$ is an automorphism of $\pi_1(S^\circ,x_0)$
representing $f([\phi])$.
The action fixes the irreducible locus $R^*(S,n)$ of
$R(S,n)$ as a set, so we can restrict the action to $R^*(S,n)$.
One can show that the action of $\MCG_n(S)$ on $R^*(S,n)$ is
symplectic.

Here we describe the group $\MCG_2(T^2)$ and its action on
$R(T^2,2)$.
Presentations for mapping class groups are described in
\cite{Cattabriga,Gervais,Labruere}, and more details regarding the
case $(T^2,2)$ can be found in \cite{Boozer-2}.
For our purposes it will suffice to consider the pure mapping
class group $\PMCG_2(T^2)$.
The pure mapping class group is generated by Dehn twists about the
simple closed curves $a$, $A$, $b$, and $B$ shown in Figure
\ref{fig:mcg-braid}.
For simplicity, we will use the same notation for the curves and
the corresponding Dehn twists.
We denote the inverse Dehn twists by $\abar$, $\Abar$, $\bbar$,
$\Bbar$.
We have:

\begin{figure}
  \centering
  \includegraphics[scale=0.65]{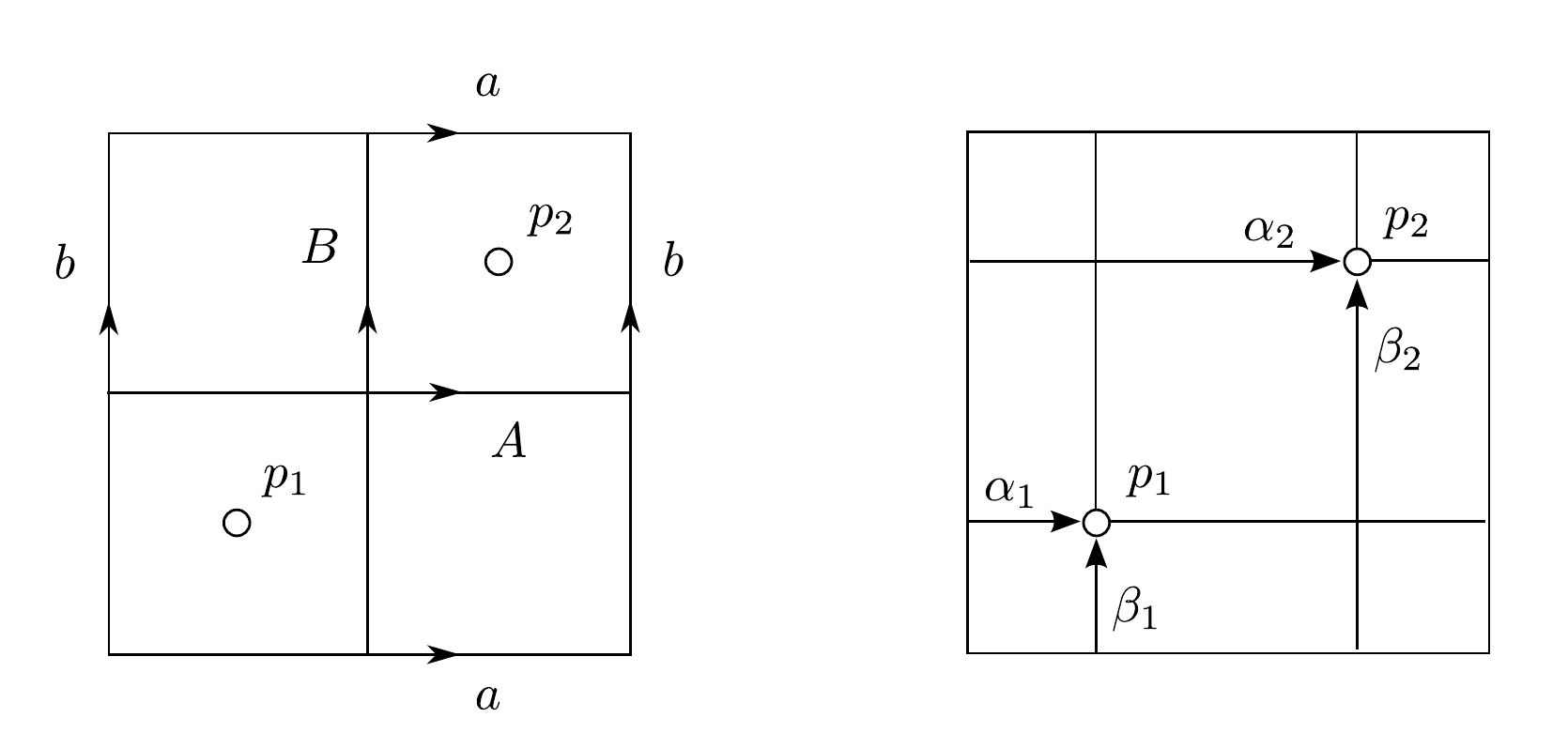}
  \caption{
    \label{fig:mcg-braid}
    Dehn twists $a$, $A$, $b$, and $B$ that generate
    $\PMCG_2(T^2)$ and
    braids $\alpha_1$, $\alpha_2$, $\beta_1$, and $\beta_2$ that
    generate $\PB_2(T^2)$.
  }
\end{figure}

\begin{theorem}
\label{theorem:pmcg-t2-2}
The pure mapping class group $\PMCG_2(T^2)$ is generated by the Dehn
twists $a$, $A$, $b$, and $B$, with relations
\begin{align*}
  &a \bbar a = \bbar a \bbar, &
  &A \bbar A = \bbar A \bbar, &
  &a A = A a, &
  & (\bbar a A)^4 = 1, &
  & \abar B a = \Abar b A.
\end{align*}
\end{theorem}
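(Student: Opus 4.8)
The plan is to establish Theorem~\ref{theorem:pmcg-t2-2} by identifying $\PMCG_2(T^2)$ with a known presentation from the literature and then translating it into the generators $a,A,b,B$. First I would recall the Birman exact sequence relating the pure mapping class group of $T^2$ with two marked points to the pure braid group $\PB_2(T^2)$ and the mapping class group $\PMCG_0(T^2) \cong \SL(2,\Ints)$ of the torus with no marked points:
\begin{align*}
  1 \rightarrow \PB_2(T^2) \rightarrow \PMCG_2(T^2) \rightarrow \SL(2,\Ints) \rightarrow 1.
\end{align*}
The group $\PB_2(T^2)$ is generated by the braids $\alpha_1,\alpha_2,\beta_1,\beta_2$ shown in Figure~\ref{fig:mcg-braid}, and a presentation for it is available in \cite{Cattabriga} (or can be read off from the general presentations in \cite{Gervais,Labruere}). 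One then expresses each braid generator as a product of the Dehn twists $a,A,b,B$ — for instance, the point-pushing braids $\alpha_i,\beta_i$ are products of a Dehn twist about a curve enclosing a finger-move and its inverse — and checks that $a,A,b,B$ together normally generate the kernel and also surject onto $\SL(2,\Ints)$ under the forgetful map (the images of $A$ and $B$ being the two standard generators of $\SL(2,\Ints)$, while $a,b$ map to the identity).

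Next I would verify that the five listed relations hold in $\PMCG_2(T^2)$. The braid relations $a\bbar a = \bbar a\bbar$ and $A\bbar A = \bbar A\bbar$ come from the standard fact that two Dehn twists about curves meeting once satisfy the braid relation; here $a$ and $b$ (respectively $A$ and $b$) are twists about such a pair of curves. The commutation $aA = Aa$ holds because the curves $a$ and $A$ can be made disjoint. The relation $(\bbar aA)^4 = 1$ is a lantern-type or chain relation reflecting that $\bbar aA$ represents an element of finite order; concretely one checks that $\bbar aA$ is (conjugate to) a hyperelliptic-type involution precomposed appropriately, or one verifies it directly on the level of the Birman sequence by showing its image in $\SL(2,\Ints)$ has order~$4$ and its square lies in the center. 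Finally $\abar Ba = \Abar bA$ encodes the interaction of the two point-pushing subgroups with the surface twists — both sides represent the same homeomorphism obtained by dragging a marked point around a curve homologous to a combination of the torus generators. For completeness I would then confirm that these relations suffice, i.e.\ that the abstract group with this presentation surjects onto $\PMCG_2(T^2)$ and has no larger order, by matching against the presentation derived via the Birman sequence (counting generators and relations, or exhibiting an isomorphism of the two presentations using Tietze transformations).

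The main obstacle I expect is the bookkeeping in the relation $(\bbar aA)^4 = 1$ and, to a lesser extent, $\abar Ba = \Abar bA$: verifying a finite-order relation and a ``mixed'' twist/push relation requires carefully tracking isotopies of curves on the twice-punctured torus, and it is easy to be off by an inverse or a conjugating element. I would handle this by passing to the action on $H_1$ or on $\pi_1(T^2 - \{p_1,p_2\})$ — the very action used later in Section~\ref{ssec:mcg} to make $\PMCG_2(T^2)$ act on $R(T^2,2)$ — since a homeomorphism of a surface with punctures and nonempty fixed marked-point set is determined up to isotopy by its induced outer automorphism (together with the action on a framing); thus it suffices to check each relation as an identity of automorphisms of the free-ish group $\langle A,B,a,b \mid [A,B]ab=1\rangle$, which is a finite, mechanical computation rather than a topological argument. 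The remaining step — that no relations are missing — is then forced by comparison with the Birman exact sequence presentation, which is the standard route and presents no conceptual difficulty once the generator translations are pinned down.
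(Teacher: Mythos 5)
The paper does not actually prove this theorem: immediately before it, the text directs the reader to \cite{Cattabriga,Gervais,Labruere} for general presentations of mapping class groups and to \cite{Boozer-2} for the specific case $(T^2,2)$, and the theorem is stated without a proof environment. Your proposal is therefore supplying a derivation that the paper delegates entirely to citations, so the two cannot be compared step for step; what can be examined is whether your proposed route is sound.

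There is a genuine technical problem in it. The Birman sequence you write, $1 \to \PB_2(T^2) \to \PMCG_2(T^2) \to \SL(2,\Ints) \to 1$, is not exact at $\PB_2(T^2)$. For the torus, $\Diff^+(T^2)$ deformation retracts to $T^2$, so $\pi_1(\Diff^+(T^2)) \cong \Ints^2$ and the evaluation fibration $\Diff^+(T^2,\{p_1,p_2\}) \to \Diff^+(T^2) \to \Conf_2(T^2)$ yields
\begin{align*}
  \Ints^2 \longrightarrow \PB_2(T^2) \longrightarrow \PMCG_2(T^2) \longrightarrow \SL(2,\Ints) \longrightarrow 1,
\end{align*}
with the leftmost map (dragging both marked points simultaneously around the two core circles) having nontrivial, central image. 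Consequently $\PB_2(T^2)/\Ints^2$, not $\PB_2(T^2)$, injects into $\PMCG_2(T^2)$. Since your completeness argument is precisely a relation-count against a presentation extracted from this sequence, omitting the quotient means you would be comparing against the wrong group and would either miss relations (those killing the central $\Ints^2$) or be unable to match the count. A second, smaller issue: describing $(\bbar a A)^4 = 1$ as a lantern or chain relation is off — neither of those involves this configuration of curves. What is really going on is that under the forgetful map the curves $a$ and $A$ become isotopic and $b$ maps to the transverse generator, so $\bbar a A$ projects to a trace-zero, hence order-four, element of $\SL(2,\Ints)$; the relation then asserts that this particular lift already has order four in $\PMCG_2(T^2)$, which still requires a separate check and is not implied by the image having finite order. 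Your fallback of verifying each relation as an automorphism identity in $\langle A,B,a,b \mid [A,B]ab = 1 \rangle$ is a sound way to confirm that the relations hold, but it does not establish that they suffice; for completeness you would still need the corrected Birman-sequence argument or a direct Tietze-transformation match with a published presentation in \cite{Gervais} or \cite{Labruere}.
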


The relations in Theorem \ref{theorem:pmcg-t2-2} can be used to derive
the additional relation in $\PMCG_2(T^2)$:
\begin{align*}
  b B = B b.
\end{align*}
The fact that $b$ and $B$ commute is intuitively clear, since $b$ and
$B$ describe Dehn twists around nonintersecting cycles.

The pure mapping class group is closely related to the pure braid
group, which we define as follows.
Given a surface $S$, we define the
\emph{configuration space for $n$ ordered points}
\begin{align*}
  \Conf_n(S) &= \{(x_1, \cdots, x_n) \in S^n \mid
  \textup{$x_i \neq x_j$ if $i \neq j$}\}.
\end{align*}
Given a surface $S$ and $n$ distinct marked points
$p_1, \cdots, p_n$, we define the \emph{pure braid group}
$\PB_n(S)$ to be the fundamental group of $\Conf_n(S)$ with basepoint
$(p_1,\cdots,p_n)$.
Presentations for braid groups are described in \cite{Bellingeri}.
The pure braid group $\PB_2(T^2)$ is generated by braids
$\alpha_i$ and $\beta_i$ for $i=1, 2$ that drag the marked point $p_i$
rightward and upward around cycles parallel to $a$ and $b$, as shown
in Figure \ref{fig:mcg-braid}.

We have a group homomorphism
$\PB_2(S) \rightarrow \PMCG_2(S)$ called the \emph{push map}.
Intuitively, to obtain the image of a braid under the push map we
view $S$ as an elastic membrane and push the marked points along the
braid until they return to their starting locations.
The map from the initial to the final state of the membrane gives a
homeomorphism of $S$ that represents image of the braid under the
push map.
The push map $\PB_2(T^2) \rightarrow \PMCG_2(T^2)$ is given by
\begin{align*}
  &\alpha_1 \mapsto a\Abar, &
  &\beta_1 \mapsto b \Bbar, &
  &\alpha_2 \mapsto \abar A, &
  &\beta_2 \mapsto \bbar B.
\end{align*}
For simplicity, we will use the same notation for elements of
$\PB_2(T^2)$ and their images in $\PMCG_2(T^2)$ under the push map.

We define an element $s$ of $\PMCG_2(T^2)$ by
\begin{align*}
  s = \alpha_1 \abar b \abar = \Abar b \abar.
\end{align*}
Roughly speaking, if we view $T^2$ as a square with opposite edges
identified, the element $s$ rotates the square counterclockwise by
$\pi/2$.
We can use the relations in Theorem \ref{theorem:pmcg-t2-2} to derive
the following relations:
\begin{align*}
  s^4 &= 1, &
  s \alpha_1 s^{-1} &= \beta_1, &
  s \beta_1 s^{-1} &= \alpha_1^{-1}.
\end{align*}

The action of $\PMCG_2(T^2)$ on $R(T^2,2)$ is described explicitly in
\cite{Boozer-2}.
Using this description, one can show:

\begin{lemma}
The action of $\alpha_1^2$ fixes $R_2^*(T^2,2)$ as a set.
The action of $\alpha_1^2$ on the coordinates $(\alpha,\beta)$ is
given by
\begin{align*}
  (\alpha,\,\beta) \mapsto (\alpha,\, \pi + \beta - 2\alpha).
\end{align*}
\end{lemma}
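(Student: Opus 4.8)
The plan is to use the explicit description of the action of $\PMCG_2(T^2)$ on $R(T^2,2)$ given in \cite{Boozer-2}, together with the decomposition $\alpha_1 = a\Abar$ of the point-pushing braid into Dehn twists. First I would combine the formulas for the Dehn twists $a$ and $\Abar$ to obtain an explicit automorphism of $\pi_1(T^2 - \{p_1,p_2\})$ representing $\alpha_1$, hence an explicit transformation of a tuple $(A,B,a,b) \in SU(2)^4$ representing a point of $R(T^2,2)$; composing this transformation with itself gives a formula for the action of $\alpha_1^2 = (a\Abar)^2$ on representatives.

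Next I would specialize to $R_2^*(T^2,2)$. A point of $R_2^*(T^2,2)$ has the standard representative of equation (\ref{eqn:alpha-beta}), namely $A = \cos\alpha\,\id + \sin\alpha\,\qz$, $B = \cos\beta\,\id + \sin\beta\,\qz$, $a = b^{-1} = \qx$. Applying the $\alpha_1^2$-transformation produces a new tuple $(A',B',a',b')$; I would then conjugate by the element $g \in SU(2)$ that carries $a'$ back to $\qx$ and check that in this frame $A$ is unchanged and $B$ is carried to $\cos\beta''\,\id + \sin\beta''\,\qz$ with $\beta'' = \pi + \beta - 2\alpha$. Since the conjugated tuple again has the standard form of equation (\ref{eqn:alpha-beta}), this shows at once that $\alpha_1^2$ maps $R_2^*(T^2,2)$ into itself and that the induced map in the coordinates $(\alpha,\beta)$ is $(\alpha,\beta) \mapsto (\alpha, \pi+\beta-2\alpha)$. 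Because this map is a bijection of the $(\alpha,\beta)$-torus compatible with $2\pi$-periodicity and with the identification $(\alpha,\beta) \sim (-\alpha,-\beta)$, its restriction to $R_2^*(T^2,2)$ is onto, so together with the inclusion $\alpha_1^2(R_2^*(T^2,2)) \subseteq R_2^*(T^2,2)$ we conclude that $\alpha_1^2$ fixes $R_2^*(T^2,2)$ as a set.

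I expect the main obstacle to be the bookkeeping in the second step: extracting the correct explicit form of the $\alpha_1$-action from \cite{Boozer-2} (fixing orientation conventions so the exponents and the roles of $\qx$ versus $\qz$ come out right), and then carrying out the $SU(2)$ conjugation that returns $(A',B',a',b')$ to standard form. One must verify both that $A$ is genuinely unaffected — this is what makes $\alpha_1^2$ preserve the subvariety rather than merely permute a neighborhood of it — and that the phase acquired by $B$ is exactly $\pi - 2\alpha$ and not some other linear expression in $(\alpha,\beta)$. A useful consistency check is that the resulting coordinate formula is compatible with the relations in $\PMCG_2(T^2)$ recorded in Theorem \ref{theorem:pmcg-t2-2} (for instance via the element $s$ with $s\alpha_1 s^{-1} = \beta_1$), and that it has infinite order for generic $\alpha$, as one expects for a point-push along a curve linking $p_2$.
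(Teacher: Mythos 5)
Your proposal is correct and follows the same route the paper tacitly takes: the lemma is stated without proof immediately after noting that the $\PMCG_2(T^2)$-action on $R(T^2,2)$ is given explicitly in \cite{Boozer-2}, so the intended argument is exactly the computation you outline — express $\alpha_1$ via the push map as $a\Abar$, apply the resulting $SU(2)^4$-transformation twice to the standard representative of equation (\ref{eqn:alpha-beta}), conjugate back to standard form, and read off the coordinate change. Your surjectivity argument (that the induced $(\alpha,\beta)$-map is a bijection compatible with the identifications) correctly closes the loop from the forward inclusion to fixing $R_2^*(T^2,2)$ as a set.
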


\subsection{Lagrangians}

Our goal in this section is to define Lagrangians in $R^*(T^2,2)$
corresponding to solid tori containing trivial 1-tangles.
Given a trivial 1-tangle that winds $n$ times around the solid torus,
we will define a corresponding noncompact embedded Lagrangian $W_n$
and a compact immersed Lagrangian $L_n$.
In Section \ref{sec:complexes} we will use these Lagrangians to
construct objects in the twisted Fukaya category of $R^*(T^2,2)$
corresponding to planar 1-tangle diagrams in the annulus.

Consider a solid torus $U_1$ containing the trivial tangle $T_1$ shown
in Figure \ref{fig:torus-arc}(a).
We define the \emph{unperturbed character variety} $R(U_1,T_1)$ to be
the set of conjugacy classes of homomorphisms
$\pi_1(U_1 - T_1) \rightarrow SU(2)$ that map loops around $T_1$ to
traceless matrices.
We define $R^*(U_1,T_1)$ to be the irreducible locus of $R(U_1,T_1)$.
The set $R(U_1,T_1)$ has the structure of a smooth manifold with
boundary, as described by the following result:

\begin{figure}
  \centering
  \includegraphics[scale=0.65]{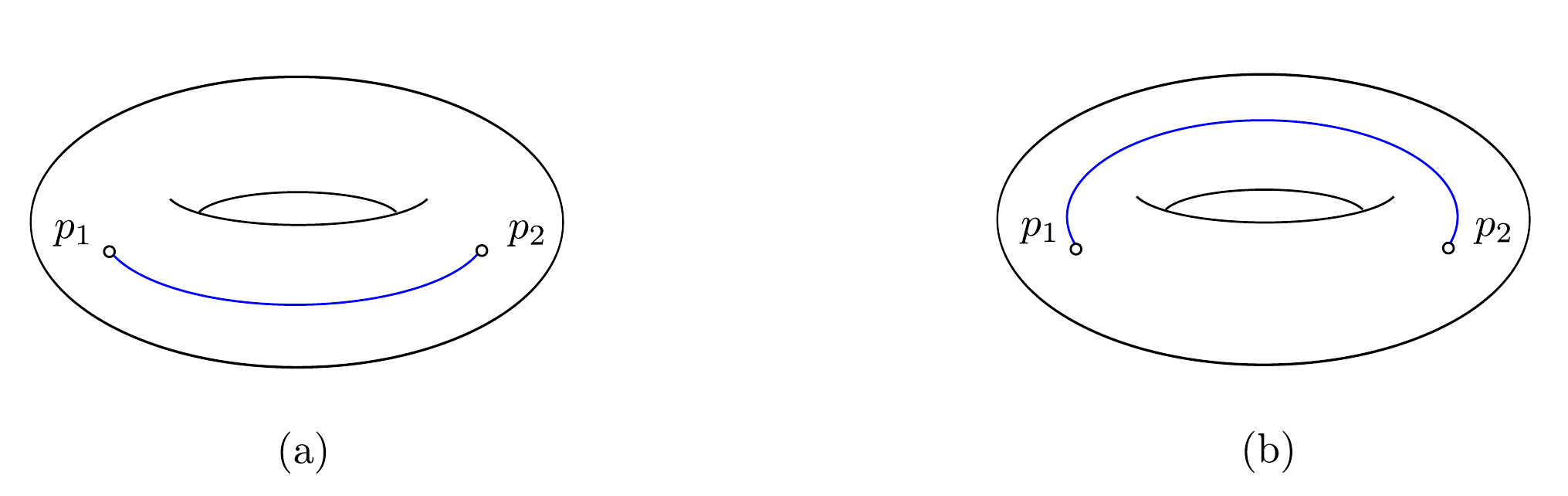}
  \caption{
    \label{fig:torus-arc}
    (a) Handlebody corresponding to the unperturbed Lagrangian $W_0$.
    (b) Handlebody corresponding to the unperturbed Lagrangian $W_1$.
  }
\end{figure}

\begin{theorem}(Boozer \cite[Theorem 3.11]{Boozer-2})
The unperturbed character variety $R(U_1,T_1)$ is diffeomorphic to the
closed unit disk $D^2$.
The reducible and irreducible loci of $R(U_1,T_1)$ are its boundary
and interior.
\end{theorem}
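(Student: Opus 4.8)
The plan is to reduce the statement to an explicit computation with the conjugation action of $SU(2)$ on itself. First I would compute $\pi_1(U_1 - T_1)$. Since $T_1$ is a trivial (unknotted, boundary-parallel) arc, a regular neighborhood $N(T_1) \cong D^2 \times I$ meets $\partial U_1$ in two disks, and excising it from the genus-one handlebody $U_1$ produces a genus-two handlebody; hence $\pi_1(U_1 - T_1)$ is free of rank two. A convenient free generating set is $\langle m, \lambda \rangle$, where $m$ is a meridian of $T_1$ and $\lambda$ is a core curve of the solid torus disjoint from $T_1$; applying van Kampen to $U_1 = (U_1 - T_1) \cup N(T_1)$ shows that killing $m$ recovers $\pi_1(U_1) = \Ints = \langle \lambda \rangle$, confirming that $m$ is primitive and that this is a valid generating set. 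Every loop encircling $T_1$ is conjugate to $m^{\pm 1}$, so the traceless condition imposes the single constraint $\tr \rho(m) = 0$, and therefore
\begin{align*}
  R(U_1,T_1) = \{(P,Q) \in SU(2) \times SU(2) \mid \tr P = 0\}/\textup{conjugation},
\end{align*}
where $P = \rho(m)$ and $Q = \rho(\lambda)$.

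Next I would carry out the quotient explicitly. The traceless matrices $P$ form the unit $2$-sphere $\{x\,\qx + y\,\qy + z\,\qz\}$, on which $SU(2)$ acts transitively by conjugation (through $SO(3)$), with the stabilizer of $\qz$ equal to the circle $U(1) = \{t\,\id + z\,\qz : t^2 + z^2 = 1\}$. Using conjugation to normalize $P = \qz$ identifies $R(U_1,T_1)$ with $SU(2)$ modulo conjugation by $U(1)$, i.e.\ with the set of $Q = t\,\id + x\,\qx + y\,\qy + z\,\qz \in S^3$ modulo the action that fixes $t$ and $z$ and rotates $(x,y)$. Passing to the invariants $t$, $z$, and $r = \sqrt{x^2 + y^2} \geq 0$, subject to $t^2 + z^2 + r^2 = 1$, exhibits the quotient as a closed hemisphere of $S^2$, hence diffeomorphic to $D^2$; in these coordinates the smooth manifold-with-boundary structure is transparent, the only point requiring care being the boundary $r = 0$.

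For the reducible/irreducible dichotomy: a representation is reducible iff its image is abelian iff $P$ and $Q$ commute. With $P$ normalized to $\qz$, this occurs exactly when $Q$ has vanishing $\qx$- and $\qy$-components, i.e.\ when $r = 0$, which is precisely the boundary circle of the hemisphere. Hence the reducible locus is $\partial R(U_1,T_1)$ and the irreducible locus is its interior.

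The step I expect to be the main obstacle is not any single computation but the bookkeeping at the reducible locus: checking that the conjugation quotient is Hausdorff and genuinely a smooth manifold with boundary there, since the $SU(2)$-action is not free along the reducibles, and confirming that the meridian $m$ lies in $\pi_1(U_1 - T_1)$ as claimed so that the single trace constraint is correctly identified. Everything else is a direct unwinding of the $SU(2)$ conjugation action, and the argument is insensitive to the winding number of $T_1$, depending only on $\pi_1(U_1-T_1)$ being free of rank two with primitive meridian.
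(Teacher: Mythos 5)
The paper cites this result from \cite{Boozer-2} without reproducing a proof (and then remarks that an explicit diffeomorphism $R(U_1,T_1) \to D^2$ is given there), so there is no proof in this paper to compare against. That said, your argument is correct in outline and is the natural one. Identifying $\pi_1(U_1 - T_1)$ with a free group of rank two on a meridian $m$ of $T_1$ and a core $\lambda$ of the solid torus is right, and the van Kampen sanity check (killing $m$ recovers $\Ints = \langle \lambda \rangle$) is the right way to justify it. Since $T_1$ is a single arc, there is only one meridian conjugacy class and the traceless condition is the single constraint $\tr \rho(m) = 0$, as you say. Normalizing $\rho(m) = \qz$ and passing to the residual $U(1)$ action, the invariants $(t,z,r)$ with $t^2 + z^2 + r^2 = 1$, $r \geq 0$ exhibit the quotient as a closed hemisphere, and the reducibility criterion $[\rho(m),\rho(\lambda)] = \id$ becomes exactly $r = 0$ since $\qz$ commutes with $t\,\id + z\,\qz$ and anticommutes with $\qx,\qy$.

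The one substantive step you elide is the one you flag: endowing the conjugation quotient with a smooth manifold-with-boundary structure at the reducible circle, where the residual $U(1)$ action has one-dimensional stabilizer. Hausdorffness is automatic (compact group acting on a compact space), but smoothness of the chart at $r = 0$ is a real check, since $r = \sqrt{x^2 + y^2}$ is not a smooth function of $(x,y)$ there. The standard resolution, consistent with the explicit-diffeomorphism formulation the paper attributes to \cite{Boozer-2}, is to observe that the invariant ring near $r = 0$ is generated by $t$, $z$, and $r^2 = x^2 + y^2$, and that the map $[Q] \mapsto (x^2+y^2,\, \text{suitable linear combination of } t,z)$, suitably rescaled, is a homeomorphism onto a neighborhood of the boundary of $D^2$ that one then declares to be a chart. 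You should state explicitly that you are defining the smooth structure by this parametrization (or equivalently via trace functions such as $\tr \rho(\lambda)$ and $\tr \rho(m\lambda)$), rather than inheriting it from some ambient structure, since otherwise the word ``diffeomorphic'' is not yet meaningful.
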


An explicit diffeomorphism
$R(U_1,T_1) \rightarrow D^2 =
\{(x,y) \in \Reals^2 \mid x^2 + y^2 \leq 1\}$ is given in
\cite{Boozer-2}.
We define coordinates
$(\chi,\psi) \in (0,\pi) \times (-\pi/2,\pi/2)$
on the irreducible locus $R^*(U_1,T_1)$ by
\begin{align*}
  \chi &=
  \pi - \cos^{-1} x \in (0,\pi), &
  \psi &=
  \sin^{-1}\left(\frac{y}{\sqrt{1-x^2}}\right) \in (-\pi/2,\pi/2).
\end{align*}

We define a map $R^*(U_1,T_1) \rightarrow R^*(T^2,2)$ by pulling back
representations along the inclusion
$T^2 - \{p_1, p_2\} \hookrightarrow U_1 - T_1$.
We define the \emph{unperturbed Lagrangian} $W_0$ to be the image of
this map.
We define $W_0(\chi,\psi)\ \in R^*(T^2,2)$ to be the image of the
point in $R^*(U_1,T_1)$ with coordinates $(\chi,\psi)$.

\begin{theorem}(Boozer \cite[Theorem 3.13]{Boozer-2})
\label{theorem:W0}
The map $R^*(U_1,T_1) \rightarrow R^*(T^2,2)$ is an injective
immersion.
The Lagrangian $W_0$ lies in $R_3^*(T^2,2)$ and is given by
\begin{align*}
  W_0 = \{B = \id\} = R_3^*(T^2,2) \cap \{\beta = 0\}.
\end{align*}
The point $W_0(\chi,\psi) = [A,B,a,b] \in R^*(T^2,2)$ is given by
\begin{align*}
  A &= \cos\chi + \sin\chi\,\qz, &
  B &= \id, &
  a &= b^{-1} = \cos\psi\,\qx + \sin\psi\,\qz.
\end{align*}
The point $W_0(\chi,\psi)$ lies in $R_3^*(T^2,2)$ and has coordinates
\begin{align*}
  (\alpha,\beta,s)(W_0(\chi,\psi)) = (\chi,0,\psi).
\end{align*}
\end{theorem}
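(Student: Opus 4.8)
The plan is to verify the three assertions in order by direct computation with $SU(2)$ matrices. First I would produce the explicit description of the map $R^*(U_1,T_1) \to R^*(T^2,2)$. Using the presentation $\pi_1(T^2 - \{p_1,p_2\}) = \langle A,B,a,b \mid [A,B]ab = 1\rangle$ and the inclusion $T^2 - \{p_1,p_2\} \hookrightarrow U_1 - T_1$, one reads off which cycles of $T^2$ bound in $U_1$: since $T_1$ is the trivial 1-tangle winding once around $U_1$, one meridian-type cycle of the torus becomes nullhomotopic in the complement $U_1 - T_1$, forcing $B = \id$ on the nose. Concretely, I would take the diffeomorphism $R(U_1,T_1) \to D^2$ from \cite[Theorem 3.11]{Boozer-2}, restrict to the interior, and push the given coordinates $(\chi,\psi)$ through the pullback map. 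This produces a representative $(A,B,a,b)$ with
\begin{align*}
  A &= \cos\chi + \sin\chi\,\qz, &
  B &= \id, &
  a &= b^{-1} = \cos\psi\,\qx + \sin\psi\,\qz,
\end{align*}
and a short check confirms $[A,B]ab = A \id A^{-1}\id^{-1}\,a\,a^{-1} = \id$ and $\tr a = \tr b = 0$, so this is a point of $R(T^2,2)$; irreducibility for $(\chi,\psi) \in (0,\pi)\times(-\pi/2,\pi/2)$ follows since $A$ and $a$ fail to commute there, their vector parts being non-parallel. This simultaneously establishes the formula for $W_0(\chi,\psi)$.

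Next I would identify $W_0$ as a subset of $R_3^*(T^2,2)$. Comparing the representative above with the standard-coordinate normal form \eqref{eqn:alpha-beta-s} for $R_3(T^2,2)$, namely $A = \cos\alpha + \sin\alpha\,\qz$, $B = \cos\beta + \sin\beta\,\qz$, $a = b^{-1} = \cos s\,\qx + \sin s\,\qz$, we see that the point $W_0(\chi,\psi)$ is already in that normal form with $(\alpha,\beta,s) = (\chi,0,\psi)$. Hence $W_0(\chi,\psi) \in R_3^*(T^2,2)$ with the asserted coordinates, and as $(\chi,\psi)$ ranges over $(0,\pi)\times(-\pi/2,\pi/2)$ the image sweeps out exactly $R_3^*(T^2,2) \cap \{\beta = 0\}$ (modulo checking that the coordinate identifications on $R_3$ do not collapse or extend this slice — the identification $(\alpha,\beta,s)\sim(-\alpha,-\beta,-s)$ sends $\beta=0$ to itself, acting as $(\chi,\psi)\mapsto(-\chi,-\psi)$, which is exactly the $D^2$-identification already accounted for, and the identification $s \sim \pi - s$ lands outside the coordinate range). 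The equality $W_0 = \{B = \id\}$ is then immediate from the normal form, since $\beta = 0$ in \eqref{eqn:alpha-beta-s} is equivalent to $B = \id$.

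Finally, for the immersion claim I would compute the differential of $R^*(U_1,T_1) \to R^*(T^2,2)$ in coordinates. Using the description of $W_0(\chi,\psi)$ in the coordinates $(\alpha,\beta,s)$ on $R_3^*(T^2,2)$, the map is $(\chi,\psi) \mapsto (\chi,0,\psi)$, whose Jacobian has rank $2$ everywhere; composing with the inclusion $R_3^*(T^2,2) \hookrightarrow R^*(T^2,2)$, which is an embedding by Theorems \ref{theorem:coords-1} and \ref{theorem:coords-2} (the sets $\{\partial_{\alpha_k},\partial_{\beta_k},\partial_{s_k},\partial_{t_k}\}$ being linearly independent), keeps the rank at $2$, so the map is an immersion. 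Injectivity follows because the coordinate map $(\chi,\psi)\mapsto(\chi,0,\psi)$ is injective on $(0,\pi)\times(-\pi/2,\pi/2)$ and, as noted, no nontrivial identification of $R_3$ acts within this range. I expect the main obstacle to be the bookkeeping around the conjugation identifications: one must be careful that the chosen representative form \eqref{eqn:alpha-beta-s} is genuinely a slice for the conjugation action near $W_0$ and that none of the degenerate identifications (e.g. the extra collapse when $\sin\alpha = \sin\beta = 0$) interfere — but since $W_0$ has $\beta = 0$ with generic $\alpha = \chi \in (0,\pi)$, we stay away from those degenerate loci, so this is a matter of care rather than genuine difficulty. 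The rest reduces to the routine quaternion arithmetic invoked by the phrase "a calculation shows."
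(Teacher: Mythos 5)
This theorem is quoted in the paper from \cite[Theorem 3.13]{Boozer-2} without an internal proof, so there is nothing in this document to compare against line by line; your sketch — $B$ becomes nullhomotopic in $U_1 - T_1$ forcing $B=\id$ and $b=a^{-1}$, the explicit normal form for $(A,B,a,b)$, irreducibility from the non-commuting pair $A,a$, and the check that the conjugation identifications on $(\alpha,\beta,s)$ neither collapse nor extend the $\{\beta=0\}$ slice — is correct and is essentially the direct-computation route the cited reference takes. The only point worth making explicit is that surjectivity onto $\{B=\id\}\cap R^*(T^2,2)$ uses that $\pi_1(U_1-T_1)$ is free on (the images of) $A$ and $a$, so every irreducible class in normal form is realized by a point of $R^*(U_1,T_1)$; your appeal to the $D^2$ parametrization of \cite[Theorem 3.11]{Boozer-2} covers this.
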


The Lagrangian $W_0$ is an open disk, and hence noncompact.
We will often prefer to work with compact Lagrangians.
To obtain a compact Lagrangian, we first modify the unperturbed
character variety $R(U_1,T_1)$ by introducing additional structure.
We introduce an \emph{earring} and a \emph{holonomy perturbation} to
yield a \emph{perturbed character variety} $R_\pi^\natural(U_1,T_1)$.
These modifications are motivated by considerations from gauge theory
and are discussed in \cite{Boozer-2}; the precise details will not be
needed here.
The set $R_\pi^\natural(U_1,T_1)$ has the structure of a smooth
manifold described by the following result:

\begin{theorem}(Boozer \cite[Theorem 3.15]{Boozer-2})
The perturbed character variety $R_\pi^\natural(U_1,T_1)$ is
diffeomorphic to $S^2$.
All points of $R_\pi^\natural(U_1,T_1)$ are irreducible.
\end{theorem}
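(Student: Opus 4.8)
The plan is to present $R_\pi^\natural(U_1,T_1)$ as an explicit space of conjugacy classes of $SU(2)$ representations and then analyze it directly. First I would fix a presentation of the fundamental group of the complement in $U_1$ of $T_1$ together with the earring arc. Since this complement deformation retracts onto a handlebody the group is free, and the earring arc together with the curve supporting the holonomy perturbation are represented by explicit words in the generators. The $\natural$ construction then imposes the Kronheimer--Mrowka earring relation, traceless conditions on the meridians of $T_1$ and of the earring, and the constraint coming from the holonomy perturbation; the perturbed character variety $R_\pi^\natural(U_1,T_1)$ is the solution set of this system of $SU(2)$-valued equations modulo simultaneous conjugation.

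Next I would solve the equations. Using the conjugation action I would normalize one meridian to $\qx$, leaving only a residual circle of gauge freedom; writing the remaining generators in Euler-angle form and substituting should reduce the system to one or two scalar equations in a few angular parameters. I expect the resulting surface to be the double of the unperturbed character variety $R(U_1,T_1) \cong D^2$ (Boozer \cite[Theorem 3.11]{Boozer-2}) along its reducible boundary circle: there should be two sheets of solutions, one for each way the earring holonomy can be correlated with the representation's axis, glued along the locus that was the reducible boundary of $R(U_1,T_1)$. Since the double of $D^2$ along $\partial D^2$ is $S^2$, this identifies $R_\pi^\natural(U_1,T_1)$ as a $2$-sphere. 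As a cross-check one can instead build a Morse function on the solution set from one of the trace functions used above, count its critical points, and read off the Euler characteristic.

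For irreducibility I would show that the earring obstructs abelian representations. If some $[\rho] \in R_\pi^\natural(U_1,T_1)$ had abelian image, then after conjugation every meridian of $T_1$ and the earring meridian would lie in a single maximal torus of $SU(2)$ and, being traceless, would equal $\pm\qz$; one then checks that the Kronheimer--Mrowka earring relation cannot be satisfied with all of these holonomies in $\{\pm\qz\}$. This incompatibility is exactly the purpose of the earring, and it rules out reducible points. The finitely many potentially degenerate cases, such as a holonomy being central, would be eliminated by direct substitution, and together with the previous step this shows every point of $R_\pi^\natural(U_1,T_1)$ is irreducible.

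The main obstacle will be verifying smoothness everywhere, and in particular along the locus corresponding to $\partial R(U_1,T_1)$. There the earring ``opens up'' the circle of reducibles, and one must check that the differential of the defining system has maximal rank across this locus, so that the two sheets glue together smoothly and not merely topologically. This amounts to tracking carefully how the residual circle of gauge transformations acts on the earring-holonomy parameter as one approaches the old reducible set; once that is controlled, smoothness of the gluing and hence the diffeomorphism $R_\pi^\natural(U_1,T_1) \cong S^2$ follow.
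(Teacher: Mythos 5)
The paper you are reading does not prove this statement: it is stated as a citation to Boozer \cite[Theorem 3.15]{Boozer-2} and no proof is given here. There is therefore no ``paper's own proof'' in this document to compare your attempt against. Your sketch --- presenting $R_\pi^\natural(U_1,T_1)$ as the solution set of the earring relation together with traceless and holonomy-perturbation constraints modulo conjugation, normalizing a meridian, identifying the result as the double of $R(U_1,T_1)\cong D^2$ along its reducible boundary circle, and using the earring relation to exclude abelian representations --- is a plausible outline and consistent in spirit with how the surrounding results (Theorems 3.11, 3.13, 3.17 of \cite{Boozer-2}, quoted here with explicit formulas) are organized, but it cannot be checked against this paper. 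To evaluate it properly you would need to consult \cite{Boozer-2} directly; in particular, whether the ``doubling along the reducible circle'' picture is literally what happens, and how the smoothness across that locus is handled, are exactly the points where your sketch is most speculative.
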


An explicit diffeomorphism
$R_\pi^\natural(U_1,T_1) \rightarrow S^2 =
\{(x,y,z) \in \Reals^3 \mid x^2+y^2+z^2 = 1\}$
is given in \cite{Boozer-2}.
We define spherical-polar coordinates
$(\phi,\theta) \in [0,\pi] \times [0,2\pi]$ on
$R_\pi^\natural(U_1,T_1)$ by
\begin{align*}
  x &= \sin\phi \cos\theta, &
  y &= \sin\phi \sin\theta, &
  z &= \cos\phi.
\end{align*}

We define a pullback map
$R_\pi^\natural(U_1,T_1) \rightarrow R^*(T^2,2)$ and define the
\emph{perturbed Lagrangian} $L_0$ to be its image.
We define $L_0(\phi,\theta) \in R^*(T^2,2)$ to be the image of the point
in $R_\pi^\natural(U_1,T_1)$ with coordinates $(\phi,\theta)$.
We define the \emph{double-point} $p_D \in R^*(T^2,2)$ to be the point
$[A,B,a,b] = [\qz,\id,\qx,-\qx]$.
The point $p_D$ lies in $R_3^*(T^2,2)$ and has
coordinates
\begin{align*}
  (\alpha,\beta,s)(p_D) = (\pi/2,0,0).
\end{align*}

\begin{theorem}(Boozer \cite[Theorem 3.17]{Boozer-2})
\label{theorem:L0}
The map $R_\pi^\natural(U_1,T_1) \rightarrow R^*(T^2,2)$ is an immersion
and is injective except at the north pole $(\phi=0)$ and south pole
$(\phi=\pi)$ of $R_\pi^\natural(U_1,T_1) = S^2$, which are
both mapped to the double-point $p_D$.
The point $L_0(\phi,\theta) = [A,B,a,b] \in R^*(T^2,2)$ is given by
\begin{align}
  \nonumber
  A &=
  (\cos^2\nu + \sin^2\nu\sin^2\theta)^{-1/2}
  (\cos \nu\,\qx + \sin\nu \sin\theta\,\qz)
  (\cos\phi + \sin\phi\,(\cos\theta\,\qx + \sin\theta\,\qy)), \\
  \nonumber
  B &=
  \cos \nu + \sin\nu\,(\cos\theta\,\qx + \sin\theta\,\qy), \\
  \nonumber
  a &= \qz, \\
  \nonumber
  b &=
  (\cos^2\nu + \sin^2\nu\sin^2\theta)^{-1}
  (\sin2\nu \sin\theta\,\qx - (\cos^2\nu - \sin^2\nu\sin^2\theta)\qz),
\end{align}
where
\begin{align*}
  \nu = \epsilon \sin \phi
\end{align*}
and $\epsilon > 0$ is a small control parameter that determines the
strength of the holonomy perturbation.
The point $L_0(\phi,\theta)$ lies in $R_3^*(T^2,2)$ if and only
if $\theta \in \{0, \pi\}$, and the coordinates of such points are
\begin{align*}
  (\alpha, \beta, s)(L_0(\phi,0)) &=
  (\phi + \pi/2, \nu, 0).
\end{align*}
\end{theorem}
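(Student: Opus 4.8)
The plan is to derive everything from the explicit model of the perturbed character variety $R_\pi^\natural(U_1,T_1)$ and its diffeomorphism with $S^2$ given in \cite{Boozer-2}. First I would recall that model: a point of $S^2$ with spherical-polar coordinates $(\phi,\theta)$ determines a representation of the fundamental group of the perturbed complement, in which the earring and the holonomy perturbation contribute the parameter $\nu = \epsilon \sin\phi$. Since $L_0$ is by definition the image of the pullback map along the inclusion $T^2 - \{p_1,p_2\} \hookrightarrow U_1 - T_1$, to obtain the formula for $L_0(\phi,\theta) = [A,B,a,b]$ I would write the generators $A$, $B$, $a$, $b$ of $\pi_1(T^2 - \{p_1,p_2\})$ as words in the generators of the perturbed complement and evaluate under the representation labelled by $(\phi,\theta)$. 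The stated formulas then emerge, and a short quaternion computation checks that they satisfy $[A,B]ab = \id$ and that $a$, $b$ are traceless, so that $[A,B,a,b]$ is a point of $R(T^2,2)$.

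Next I would show that the map is an immersion. The cleanest route is to observe that the $A$-component already controls the rank: differentiating the formula for $A$ with respect to the standard chart $(u,v) = (\phi\cos\theta, \phi\sin\theta)$ near a pole of $S^2$ gives $\partial_u A = -\id$ and $\partial_v A = (1+\epsilon)\qz$ to first order, which are linearly independent and transverse to the conjugation-orbit direction (the orbit moves $A = \qx$ along $\qy$), and a similar but longer computation handles the generic region; on the locus $\theta \in \{0,\pi\}$, which lands in $R_3^*(T^2,2)$, one can alternatively test injectivity of $dL_0$ against the coordinate frames of Theorems \ref{theorem:coords-1} and \ref{theorem:coords-2}. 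For injectivity of $L_0$ away from the poles, I would argue that the representative with $a = \qz$ is essentially rigid — the residual conjugation freedom is by the stabilizer of $\qz$ up to sign — so that $(\phi,\theta)$ can be recovered, modulo the $S^2$-chart identifications, from conjugation-invariant data built from $A$, $B$, $b$; this reconstruction degenerates exactly when $\sin\phi = 0$. Setting $\phi \in \{0,\pi\}$, so $\nu = 0$, collapses the formulas to $[\pm\qx, \id, \qz, -\qz]$, and conjugating by $(\qx+\qz)/\sqrt{2}$, the rotation exchanging the $x$- and $z$-axes, identifies this with the double-point $p_D = [\qz,\id,\qx,-\qx]$; hence both poles map to $p_D$ and $L_0$ is injective elsewhere.

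Finally I would pin down the intersection with $R_3^*(T^2,2)$. Since $a = \qz$ in the standard representative, $[A,B,a,b]$ lies in $R_3^*(T^2,2) = \{ab = \id\}$ if and only if $b = \qz^{-1} = -\qz$. In the formula for $b$ the $\qx$-coefficient is proportional to $\sin 2\nu \sin\theta$ and the $\qz$-coefficient equals $-1$ only when $\sin^2\nu \sin^2\theta = 0$; for $\nu \neq 0$ this forces $\theta \in \{0,\pi\}$, and conversely those values give $b = -\qz$. On the slice $\theta = 0$ the formulas simplify to $A = -\sin\phi\,\id + \cos\phi\,\qx$, $B = \cos\nu\,\id + \sin\nu\,\qx$, $a = \qz$, $b = -\qz$; conjugating once more by $(\qx+\qz)/\sqrt{2}$ puts this into the normal form $(\ref{eqn:alpha-beta-s})$ used to define the standard coordinates on $R_3^*(T^2,2)$, and reading off the parameters gives $(\alpha,\beta,s) = (\phi + \pi/2,\, \nu,\, 0)$.

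The main obstacle is the very first step: correctly identifying how the earring and holonomy perturbation alter the representation, i.e. tracking the images of the generators of $\pi_1(T^2 - \{p_1,p_2\})$ inside the perturbed complement, since this is what produces the $\nu = \epsilon\sin\phi$ dependence and hence the precise shape of the formulas. Once those formulas are available, the remaining steps — the constraint equation, the immersion property, injectivity away from the poles, the identification of the double-point, and the $R_3^*(T^2,2)$ computation — are all routine, if somewhat lengthy, quaternion and linear-algebra calculations; these are carried out in detail in \cite{Boozer-2}.
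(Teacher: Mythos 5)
The paper does not prove this theorem itself: it is quoted as \cite[Theorem~3.17]{Boozer-2} and no proof appears in the present text, so there is no internal argument with which to compare your outline. What you sketch is a plausible reconstruction of the external derivation, and the pieces you compute check out. At $\phi\in\{0,\pi\}$ one has $\nu=0$ and the formulas collapse to $[\pm\qx,\,\id,\,\qz,\,-\qz]$; conjugation by $(\qx+\qz)/\sqrt{2}$, the half-turn exchanging the $x$- and $z$-axes, carries this class to $p_D=[\qz,\id,\qx,-\qx]$, absorbing the sign at $\phi=\pi$ by a further conjugation by $\qz$. On the slice $\theta=0$ the same conjugation converts the representative $(-\sin\phi\,\id+\cos\phi\,\qx,\ \cos\nu\,\id+\sin\nu\,\qx,\ \qz,\ -\qz)$ into the normal form of equation~(\ref{eqn:alpha-beta-s}) with $(\alpha,\beta,s)=(\phi+\pi/2,\nu,0)$, as claimed, and the condition $b=-\qz$ forces $\sin 2\nu\sin\theta=0$, i.e.\ $\theta\in\{0,\pi\}$ away from the poles. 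Your first-order expansion near the north pole, $A\approx\qx-u\,\id+(1+\epsilon)v\,\qz$ in the chart $(u,v)=(\phi\cos\theta,\phi\sin\theta)$, does produce two directions that are independent and transverse to the residual conjugation orbit (which moves $A=\qx$ along $\qy$ only), so the immersion claim survives scrutiny there. However, the step you yourself flag as the main obstacle --- deriving the explicit formulas for $L_0(\phi,\theta)$ from the definition of $R_\pi^\natural(U_1,T_1)$, the earring, and the holonomy perturbation --- cannot be carried out from the material in this paper, which nowhere develops the perturbed character variety. So your proposal is a sound plan, but it is a plan of the proof in \cite{Boozer-2}, not an argument available from the resources of the present paper.
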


In the limit  $\epsilon \rightarrow 0$ in which the
holonomy perturbation goes to zero, we can relate the perturbed
Lagrangian $L_0$ to the unperturbed Lagrangian $W_0$.
Recall that we have explicit diffeomorphisms
$R(U_1,T_1) \rightarrow D^2$ and
$R_\pi^\natural(U_1,T_1) \rightarrow S^2$.
If we view $D^2$ as the equatorial disk of $S^2$, we can define a map
$R_\pi^\natural(U_1,T_1) \rightarrow R(U_1,T_1)$ by orthogonally
projecting $S^2$ to $D^2$ along the $z$-axis.
Using the results of Theorems \ref{theorem:W0} and \ref{theorem:L0}, a
straightforward calculation gives the following result:

\begin{theorem}
\label{theorem:map-L0-W0}
In the limit $\epsilon \rightarrow 0$ in which the holonomy
perturbation goes to zero, we have a commutative diagram
\begin{eqnarray*}
\begin{tikzcd}
  R_\pi^\natural(U_1,T_1) \arrow{d} \arrow{dr} \\
  R(U_1,T_1) \arrow{r} &
  L_0 = W_0.
\end{tikzcd}
\end{eqnarray*}
\end{theorem}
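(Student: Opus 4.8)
The plan is to unwind all three arrows of the diagram into the explicit coordinate descriptions of Theorems \ref{theorem:W0} and \ref{theorem:L0}, pass to the limit $\epsilon \to 0$, and then exhibit a single element of $SU(2)$ that conjugates the two resulting representatives into one another. First I would describe the vertical map $R_\pi^\natural(U_1,T_1) \to R(U_1,T_1)$: under the fixed diffeomorphisms $R_\pi^\natural(U_1,T_1) \cong S^2$ and $R(U_1,T_1) \cong D^2$ it is orthogonal projection along the $z$-axis, $(x,y,z) \mapsto (x,y)$, so on the irreducible loci, in the coordinates $(\phi,\theta)$ and $(\chi,\psi)$, it reads
\begin{align*}
  \cos\chi &= -\sin\phi\cos\theta, &
  \sin\psi &= \frac{\sin\phi\sin\theta}{\sin\chi}, &
  \cos\psi &= \frac{|\cos\phi|}{\sin\chi},
\end{align*}
where $\sin\chi = \sqrt{1 - \sin^2\phi\cos^2\theta}$. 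Next I would take $\nu = \epsilon\sin\phi \to 0$ in Theorem \ref{theorem:L0}; using $\qx^2 = -\id$ and $\qx\qy = \qz$ the matrices $(A,B,a,b)$ for $L_0(\phi,\theta)$ limit to the class of
\begin{align*}
  A &= -\sin\phi\cos\theta\,\id + \cos\phi\,\qx + \sin\phi\sin\theta\,\qz, &
  B &= \id, &
  a &= \qz, &
  b &= -\qz.
\end{align*}
Since $ab = \id$ and $B = \id$, this limiting class lies in $R_3^*(T^2,2) \cap \{\beta = 0\}$, which by Theorem \ref{theorem:W0} is exactly $W_0$; this already gives the inclusion $\lim_{\epsilon\to 0} L_0 \subseteq W_0$.

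The heart of the argument is to match the limiting representative above against the representative $(A',B',a',b')$ that Theorem \ref{theorem:W0} assigns to $W_0(\chi,\psi)$, namely $A' = \cos\chi\,\id + \sin\chi\,\qz = -\sin\phi\cos\theta\,\id + \sin\chi\,\qz$, $B' = \id$, $a' = \cos\psi\,\qx + \sin\psi\,\qz$, and $b' = (a')^{-1} = -a'$ (the last equality because $(a')^2 = -\id$). I would produce $g \in SU(2)$ conjugating one to the other as follows. Conjugation fixes $B = B' = \id$ and preserves scalar parts; $A$ and $A'$ share scalar part $-\sin\phi\cos\theta$, and the vector part of $A$ has magnitude $\sqrt{\cos^2\phi + \sin^2\phi\sin^2\theta} = \sin\chi$, equal to that of $A'$. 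Hence it suffices that the $SO(3)$ rotation attached to $g$ carry the unit vector $\hat u = (\cos\phi, 0, \sin\phi\sin\theta)/\sin\chi$ (the direction of the vector part of $A$) to $\hat z$, and carry $\hat z$ (the vector part of $a$) to $\hat v = (|\cos\phi|, 0, \sin\phi\sin\theta)/\sin\chi$ (the direction of the vector part of $a'$). One checks that $\hat u$, $\hat v$, $\hat z$ all lie in the $xz$-plane and that $\hat u \cdot \hat z = \hat v \cdot \hat z$, so such a $g$ exists: when $\cos\phi \ge 0$ we have $\hat u = \hat v$ and $g$ is the half-turn about the axis in the $xz$-plane bisecting $\hat u$ and $\hat z$, and when $\cos\phi < 0$ one takes an appropriate rotation about the $y$-axis instead. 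For this $g$ one reads off $gAg^{-1} = A'$, $gBg^{-1} = B'$, $g\qz g^{-1} = a'$, and $g(-\qz)g^{-1} = -a' = b'$, so $[A,B,a,b] = [A',B',a',b']$ in $R^*(T^2,2)$.

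This last identity is precisely the statement that $\lim_{\epsilon\to 0} L_0(\phi,\theta) = W_0(\chi(\phi,\theta), \psi(\phi,\theta))$, i.e., commutativity of the triangle over the irreducible loci; commutativity over the reducible boundary then follows by continuity, since the diffeomorphisms $R(U_1,T_1) \cong D^2$ and $R_\pi^\natural(U_1,T_1) \cong S^2$ are defined on all of $D^2$ and $S^2$. Taking images, and using that orthogonal projection $S^2 \to D^2$ is surjective while the pullback map $R(U_1,T_1) \to R^*(T^2,2)$ has image $W_0$, we obtain the reverse inclusion $W_0 \subseteq \lim_{\epsilon\to 0} L_0$, hence $L_0 = W_0$ in the limit, which completes the diagram. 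I expect the construction of the conjugating element $g$ to be the main obstacle: while each individual verification is elementary, one must confirm that a single $g$ simultaneously transports all of $A$, $B$, $a$, $b$, and the reason this is possible is the somewhat delicate point that $g$ is forced to (essentially) interchange the two unit vectors $\hat u$ and $\hat z$ — which it can do only because, in this limit, the vector parts of $A$ and $a$ both lie in the $xz$-plane.
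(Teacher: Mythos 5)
Your proposal is correct and is essentially the ``straightforward calculation'' the paper invokes without writing out: you substitute the explicit representatives of Theorems \ref{theorem:W0} and \ref{theorem:L0}, pass to the limit $\nu = \epsilon\sin\phi \to 0$, and exhibit a conjugating element of $SU(2)$ matching the two classes over the irreducible locus, extending to the boundary by continuity. Your observations that the scalar parts of $A$ and $A'$ automatically agree, that the vector-part magnitudes both equal $\sin\chi$, and that $\hat u, \hat v, \hat z$ are coplanar with $\hat u \cdot \hat z = \hat v \cdot \hat z$ are exactly the identities that make the conjugation go through, and the case split on the sign of $\cos\phi$ (half-turn about the bisector versus rotation about $\hat y$) is the correct and necessary bookkeeping.
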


We define an unperturbed Lagrangian $W_n$ and a perturbed Lagrangian
$L_n$ by acting on $W_0$ and $L_0$ with the braid group element
$\alpha_1^n$:
\begin{align*}
  W_n &= \alpha_1^n \cdot W_0, &
  L_n &= \alpha_1^n \cdot L_0.
\end{align*}
The Lagrangians $W_n$ and $L_n$ correspond to a trivial 1-tangle that
winds $n$ times around the solid torus.
We define maps
$R^*(U_1,T_1) \rightarrow W_n$ and
$R_\pi^\natural(U_1,T_1) \rightarrow L_n$ by post-composing
$R^*(U_1,T_1) \rightarrow W_0$ and
$R_\pi^\natural(U_1,T_1) \rightarrow L_0$ with $\alpha_1^n$.

\begin{theorem}
\label{theorem:W2}
The point $W_2(\chi,\psi)$ lies in $R_2^*(T^2,2)$ if and only if
$\psi=0$.
The coordinates of such points are
\begin{align*}
  (\alpha,\beta)(W_2(\chi,0)) &=
  (\chi, \pi - 2\chi).
\end{align*}
\end{theorem}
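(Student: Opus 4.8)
The plan is to reduce the claim to the explicit description of $W_0$ in Theorem~\ref{theorem:W0}, the definition $W_2 = \alpha_1^2 \cdot W_0$, and the Lemma describing the action of $\alpha_1^2$ on $R_2^*(T^2,2)$. By the definition of $W_n$ we have $W_2(\chi,\psi) = \alpha_1^2 \cdot W_0(\chi,\psi)$, so both assertions about $W_2$ will follow once we understand $W_0(\chi,\psi)$ and how $\alpha_1^2$ interacts with the slice $R_2^*(T^2,2) = R_3^*(T^2,2) \cap \{s = 0\}$.

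First I would establish the ``if and only if''. By Theorem~\ref{theorem:W0}, $W_0(\chi,\psi)$ lies in $R_3^*(T^2,2)$ with standard coordinates $(\alpha,\beta,s) = (\chi,0,\psi)$, where $\chi \in (0,\pi)$ and $\psi \in (-\pi/2,\pi/2)$. I would then check that, under the identifications on $(\alpha,\beta,s)$ — the $2\pi$-periodicity, $(\alpha,\beta,s)\sim(-\alpha,-\beta,-s)$, $(\alpha,\beta,s)\sim(\alpha,\beta,\pi-s)$, and the degenerate identification valid when $\sin\alpha=\sin\beta=0$ — the point $(\chi,0,\psi)$ admits a representative with $s$-coordinate $0$ precisely when $\psi = 0$; here $\sin\chi \neq 0$ rules out the degenerate identification, and the range of $\psi$ rules out the others. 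Hence $W_0(\chi,\psi) \in R_2^*(T^2,2)$ if and only if $\psi = 0$. Now the Lemma tells us $\alpha_1^2$ fixes $R_2^*(T^2,2)$ as a set; since $\alpha_1^2$ acts bijectively on $R^*(T^2,2)$, so does $\alpha_1^{-2}$, which therefore also fixes $R_2^*(T^2,2)$ as a set. Consequently $W_2(\chi,\psi) = \alpha_1^2 \cdot W_0(\chi,\psi)$ lies in $R_2^*(T^2,2)$ exactly when $W_0(\chi,\psi)$ does, i.e.\ exactly when $\psi = 0$.

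For the coordinate formula I would set $\psi = 0$, so that $W_0(\chi,0)$ lies in $R_2^*(T^2,2)$ with coordinates $(\alpha,\beta) = (\chi,0)$ by Theorem~\ref{theorem:W0}. Applying the formula $(\alpha,\beta) \mapsto (\alpha, \pi + \beta - 2\alpha)$ for the action of $\alpha_1^2$ on $R_2^*(T^2,2)$ from the Lemma gives $(\alpha,\beta)(W_2(\chi,0)) = (\chi, \pi - 2\chi)$, as claimed. No step here is a genuine obstacle; the only point needing care is the bookkeeping of the coordinate identifications in the previous paragraph, which is exactly where the hypotheses $\chi \in (0,\pi)$ and $\psi \in (-\pi/2,\pi/2)$ enter.
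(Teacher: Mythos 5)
Your proposal is correct and takes essentially the same route as the paper: the paper's proof is the one-line remark that the result ``follows from Theorem \ref{theorem:W0} for $W_0$ and the fact that $\alpha_1^2$ fixes $R_2^*(T^2,2)$ as a set,'' and your argument simply spells out that same derivation, including the coordinate bookkeeping for the ``if and only if'' and the application of the formula $(\alpha,\beta)\mapsto(\alpha,\pi+\beta-2\alpha)$.
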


\begin{proof}
This follows from Theorem \ref{theorem:W0} for
$W_0$ and the fact that $\alpha_1^2$ fixes $R_2^*(T^2,2)$ as a set.
\end{proof}

\begin{theorem}
\label{theorem:L2}
The point $L_2(\phi,\theta)$ lies in $R_2^*(T^2,2)$ if and only if
$\theta \in \{0,\pi\}$.
The coordinates of such points are
\begin{align*}
  (\alpha,\beta)(L_2(\phi,0)) &=
  (\phi + \pi/2, \nu - 2\phi).
\end{align*}
\end{theorem}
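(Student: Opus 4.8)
The plan is to deduce Theorem \ref{theorem:L2} from Theorem \ref{theorem:L0}, which describes $L_0$, together with the known behaviour of the braid group element $\alpha_1^2$, in exact analogy with the proof of Theorem \ref{theorem:W2}. By construction the map $R_\pi^\natural(U_1,T_1) \to L_2$ is the composite of $R_\pi^\natural(U_1,T_1) \to L_0$ with the action of $\alpha_1^2$, so $L_2(\phi,\theta) = \alpha_1^2 \cdot L_0(\phi,\theta)$, and the whole argument amounts to transporting the statement of Theorem \ref{theorem:L0} across this action.

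First I would record the elementary fact that for a point of $R_3^*(T^2,2)$, membership in $R_2^*(T^2,2)$ is equivalent to having standard coordinate $s = 0$, since by definition $R_2^*(T^2,2) = R_3^*(T^2,2) \cap \{s = 0\}$. By Theorem \ref{theorem:L0}, the point $L_0(\phi,\theta)$ lies in $R_3^*(T^2,2)$ precisely when $\theta \in \{0,\pi\}$, and for $\theta = 0$ its coordinates are $(\alpha,\beta,s)(L_0(\phi,0)) = (\phi + \pi/2,\, \nu,\, 0)$; in particular $s = 0$, so $L_0(\phi,0) \in R_2^*(T^2,2)$. The case $\theta = \pi$ also lies in $R_3^*(T^2,2)$, which is all that will be needed for the ``only if'' direction below.

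Next I would invoke the Lemma stating that the action of $\alpha_1^2$ fixes $R_2^*(T^2,2)$ as a set and is given in the coordinates $(\alpha,\beta)$ by $(\alpha,\beta) \mapsto (\alpha,\, \pi + \beta - 2\alpha)$. Since $\alpha_1^2$ acts invertibly on $R^*(T^2,2)$ and restricts to a bijection of $R_2^*(T^2,2)$, the point $L_2(\phi,\theta) = \alpha_1^2 \cdot L_0(\phi,\theta)$ lies in $R_2^*(T^2,2)$ if and only if $L_0(\phi,\theta)$ does, hence if and only if $\theta \in \{0,\pi\}$; this is the first assertion. For the coordinates, applying the coordinate formula for $\alpha_1^2$ to $(\alpha,\beta)(L_0(\phi,0)) = (\phi + \pi/2,\, \nu)$ gives $\alpha = \phi + \pi/2$ and $\beta = \pi + \nu - 2(\phi + \pi/2) = \nu - 2\phi$, which is exactly the claimed value of $(\alpha,\beta)(L_2(\phi,0))$.

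I do not expect a genuine obstacle here: the proof is a short bookkeeping exercise combining Theorem \ref{theorem:L0} with the explicit action of $\alpha_1^2$, just as in Theorem \ref{theorem:W2}. The only point meriting a line of care is the equivalence, for points already known to lie in $R_3^*(T^2,2)$, between lying in $R_2^*(T^2,2)$ and having $s = 0$; this is what licenses passing from the $R_3^*$-membership statement of Theorem \ref{theorem:L0} to the $R_2^*$-membership statement asserted here, after which both directions of the ``if and only if'' are immediate from the invertibility of the $\alpha_1^2$-action.
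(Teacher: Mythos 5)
Your proof is correct and follows exactly the paper's approach: the paper's own proof is the one-line observation that the result follows from Theorem~\ref{theorem:L0} together with the fact that $\alpha_1^2$ fixes $R_2^*(T^2,2)$, which is precisely the argument you spell out (with the coordinate formula for $\alpha_1^2$ made explicit, mirroring Theorem~\ref{theorem:W2}). One very minor quibble: your remark that $\theta = \pi$ lying in $R_3^*(T^2,2)$ is ``all that will be needed for the only if direction'' mislabels the direction -- the forward implication only uses $R_2^* \subset R_3^*$ and Theorem~\ref{theorem:L0}, while the backward implication at $\theta=\pi$ strictly speaking needs $L_0(\phi,\pi)\in R_2^*$ rather than just $R_3^*$ -- but this same implicitness is present in the paper's one-line proof and in the statement of Theorem~\ref{theorem:L0} itself, so it does not constitute a gap you introduced.
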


\begin{proof}
This follows from Theorem \ref{theorem:L0} for
$L_0$ and the fact that $\alpha_1^2$ fixes $R_2^*(T^2,2)$ as a set.
\end{proof}

We depict the intersection of the Lagrangians $W_0$, $W_2$, $L_0$, and
$L_2$ with $R_2^*(T^2,2)$ in Figure \ref{fig:lagrangians-rectangle},
where $R_2^*(T^2,2)$ is depicted as a rectangle in the $\alpha\beta$
plane with edges identified, and in Figure \ref{fig:lagrangians},
where $R_2^*(T^2,2)$ is depicted as a plane with four punctures,
together with the point at infinity.

\begin{figure}
  \centering
  \includegraphics[scale=0.65]{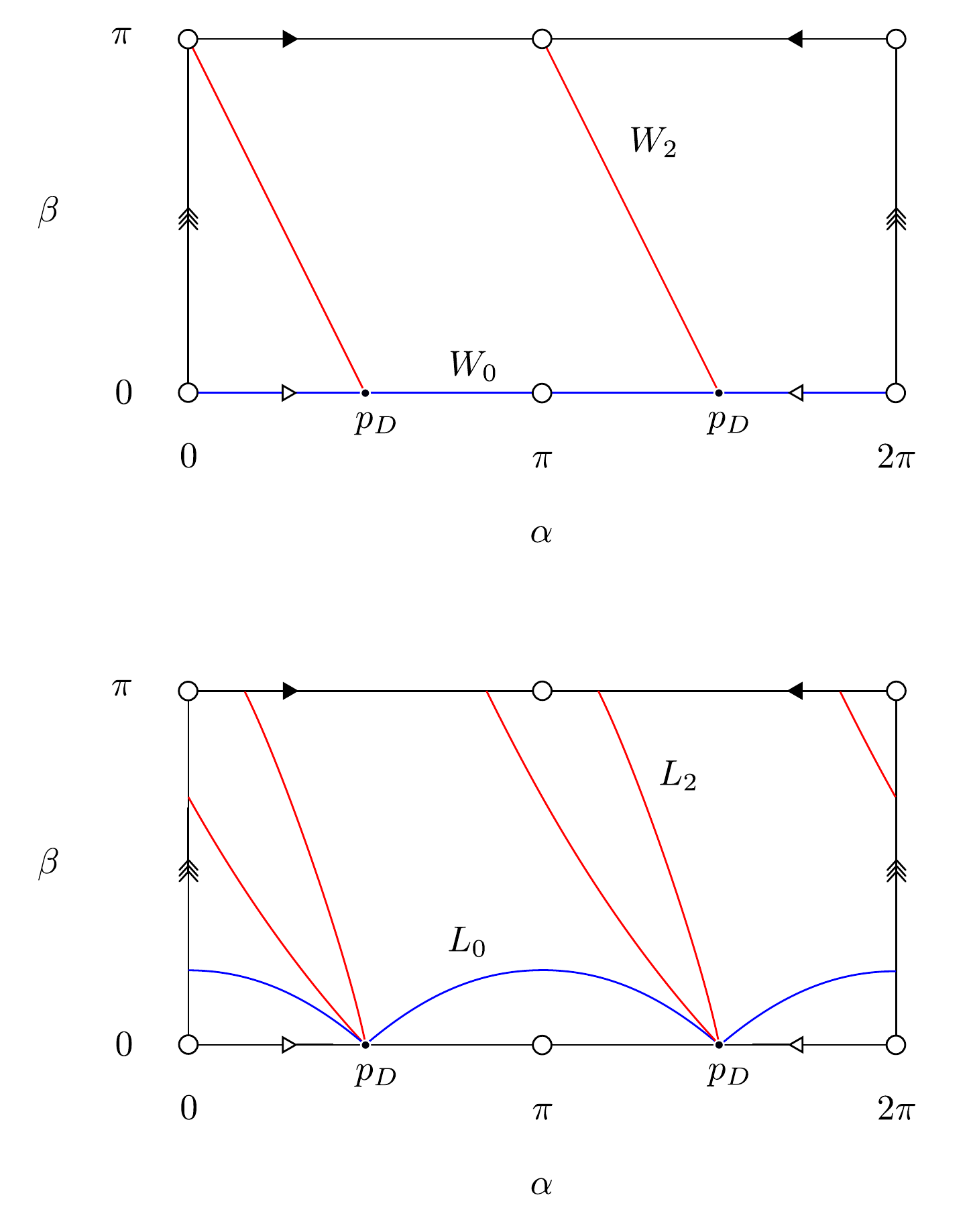}
  \caption{
    \label{fig:lagrangians-rectangle}
    Unperturbed Lagrangians $W_0$ and $W_2$, perturbed Lagrangians
    $L_0$ and $L_2$, and double-point $p_D$ in $R_2^*(T^2,2)$.
    The edges of the rectangle in the $\alpha\beta$ plane are
    identified as indicated to give $R_2^*(T^2,2)$.
    The circles indicate the four puncture points of
    $R_2^*(T^2,2)$.
  }
\end{figure}

\begin{figure}
  \centering
  \includegraphics[scale=0.65]{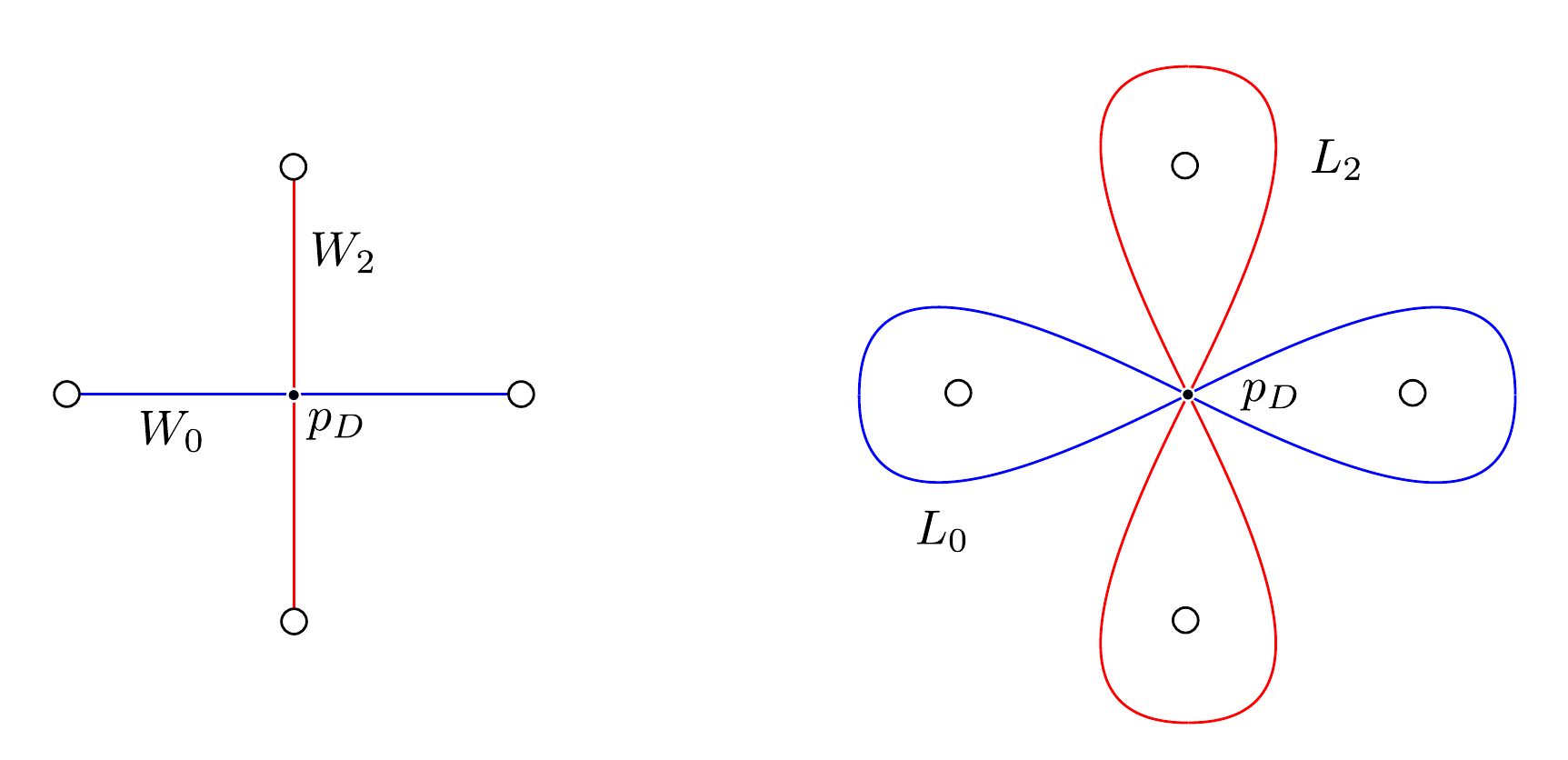}
  \caption{
    \label{fig:lagrangians}
    Unperturbed Lagrangians $W_0$ and $W_2$, perturbed Lagrangians
    $L_0$ and $L_2$, and double-point $p_D$ in $R_2^*(T^2,2)$.
    The four circles indicate the four puncture points of
    $R_2^*(T^2,2)$.
  }
\end{figure}

\subsection{The Fukaya category $\calL$}
\label{ssec:fuaka}

We define a subcategory $\calL$ of the Fukaya category of
$R^*(T^2,2)$ whose objects are the perturbed Lagrangians
$L_n$ in $R^*(T^2,2)$.
To complete the description of $\calL$, we need to describe the
morphism spaces and the $A_\infty$ operations.

We first consider the morphism spaces.
Given a Lagrangian $M$ of $R^*(T^2,2)$, let
$M^{(1)}, M^{(2)}, M^{(3)}, \cdots$ denote successive Hamiltonian
pushoffs of $M$ using the Hamiltonian described in Section
\ref{ssec:hamiltonian}.
Given transverse Lagrangians $M$ and $N$, we define $(M,N)$ to be the
$\F$-vector space freely generated by the intersection points of
$M^{(1)}$ and $N^{(0)}$:
\begin{align*}
  (M,N) = \langle M^{(1)} \cap N^{(0)} \rangle.
\end{align*}
The space of morphisms from $L_n$ to $L_m$ is given by
\begin{align*}
  \hom_{\calL}(L_n, L_m) = (L_n, L_m) =
  \langle L_n^{(1)} \cap L_m^{(0)} \rangle.
\end{align*}

Next we consider the $A_\infty$ operations.
Given $m+1$ immersed Lagrangian submanifolds $M_0, \cdots, M_m$, we
define
\begin{align*}
  \mu_\calL^m: (M_{m-1},M_m) \otimes (M_{m-2}, M_{m-1}) \otimes \cdots
  \otimes (M_0,M_1) \rightarrow (M_0,M_m)
\end{align*}
as follows.
Given generators $p_k \in M_{k-1} \cap M_k$ for $k=1, \cdots, m$, we
define
\begin{align*}
  \mu_\calL^m(p_m, \cdots, p_1) = q,
\end{align*}
where the coefficient of the point $q \in M_0^{(m)} \cap M_m^{(0)}$ is
given by a count of pseudo-holomorphic disks in $R^*(T,2)$ with
boundary on $M_0^{(m)} \cup M_1^{(m-1)} \cup \cdots \cup M_m^{(0)}$
and corners at $p_1, \cdots, p_m, q$.

We also need to define integer gradings on the morphism spaces.
We will return to this issue in Section \ref{ssec:ops-fukaya}; for
now, we will settle for defining a $\Ints_2$ grading, known as an
\emph{orientation grading}, as described in \cite{Auroux} Section 1.3.
Given a pair of oriented Lagrangians $M$ and $N$, consider a generator
$p$ of $(M,N)$, which is a point in $M^{(1)} \cap N^{(0)}$.
The tangent spaces $T_pM^{(1)}$ and $T_pN^{(0)}$ define Lagrangian
2-planes in the tangent space $T_p R^*(T^2,2)$.
One can define the notion of a \emph{canonical short path} from
$T_pM^{(1)}$ to $T_pN^{(0)}$ in the space of Lagrangian 2-planes of
$T_p R^*(T^2,2)$.
We assign the point $p$ grading plus or minus depending on whether the
canonical short path from $T_pM^{(1)}$ to $T_pN^{(0)}$ does or does
not map the given orientation of $T_pM^{(1)}$ to $T_pN^{(0)}$.
We will use the notation $p^{(+)}$ or $p^{(-)}$ to indicate that $p$
has orientation grading plus or minus.
The orientation grading of $\mu_{\calL}^m$ is plus for $m$ even and
minus for $m$ odd.

\subsection{Lagrangian intersections}

Our goal in this section is to describe the morphism spaces
$(M,N)$ for various pairs of Lagrangians $M$ and $N$ in $R^*(T^2,2)$.
We will focus our attention on the Lagrangians $W_0$, $L_0$, $W_2$,
and $L_2$.
For these Lagrangians, the intersection points that generate $(M,N)$
all lie in the two-dimensional submanifold $R_2^*(T^2,2)$ of
$R^*(T^2,2)$.
We can thus find the generators of $(M,N)$ by plotting the Lagrangians
$M^{(1)}$ and $N^{(0)}$ in $R_2^*(T^2,2)$ and identifying the
intersection points of the resulting curves.

To check that pairs of Lagrangians intersect transversely and to
compute the orientation gradings of the intersection points, we will
want to calculate tangent vectors to the Lagrangians.
We will express the tangent vectors in terms the following basis of
vector fields on $R_3^*(T^2,2) \cap \{\sin\alpha \neq 0\}$:
\begin{align*}
  v_1 &= \sin\alpha_1\,\partial_{\alpha_1}, &
  v_2 &= \sin\alpha_1\,\partial_{\beta_1}, &
  v_3 &= \sin\alpha_1\cos s_1\,\partial_{s_1}, &
  v_4 &= \cos s_1\,\partial_{t_1}.
\end{align*}
These vector fields are invariant under the coordinate identifications
given in equation (\ref{eqn:coords-ident}) and are thus well-defined.
A computation using the symplectic form given in Theorem
\ref{theorem:symplectic-form} shows that these vector fields do in
fact form a basis.

The Lagrangians $W_0$, $L_0$, $W_2$, and $L_2$ all contain the
double-point $p_D$, and for our purposes it will suffice to compute
tangent vectors at this point.
For $n \in \{0,2\}$, the point $p_D$ has a unique preimage under
$R^*(U_1,T_1) \rightarrow W_n$ whose coordinates are
$(\chi,\psi) = (\pi/2,0)$.
For $n \in \{0,2\}$, the point $p_D$ has two preimages under
$R_\pi^\natural(U_1,T_1) \rightarrow L_n$, namely the north pole
$(\phi = 0)$ and the south pole $(\phi = \pi)$.
We will orient $W_n$ so that $(\partial_\chi,\partial_\psi)$ is a
positively oriented basis.
We will orient $L_n$ so that $(\partial_\phi, \partial_\theta)$ is a
positively oriented basis.
For points along the great circle $y=0$ of
$R_\pi^\natural(U_1,T_1) = S^2$, it follows that
$(\partial_\phi, \partial_y)$ is an oriented basis, where
\begin{align*}
  \partial_\phi &= \cos\phi\,\partial_x - \sin\phi\,\partial_z, &
  \partial_y &= \csc\phi\,\partial_\theta.
\end{align*}

\begin{lemma}
\label{lemma:tangent-vectors}
The tangent vectors to $W_0$ at the point
$p_D$ are
\begin{align*}
  \partial_\chi &= v_1, &
  \partial_\psi &= v_3.
\end{align*}
The tangent vectors to $W_2$ at the point
$p_D$ are
\begin{align*}
  \partial_\chi &= v_1 - 2v_2, &
  \partial_\psi &= v_3 + 2v_4.
\end{align*}
The tangent vectors to $L_0$ at the point $p_D$ corresponding to
the north pole $(p_D = L_0(0,0)) $ and south pole
$(p_D = L_0(\pi,0))$ are
\begin{align*}
  &L_0(0,0): &
  &\partial_\phi =
  v_1 + \epsilon v_2, &
  &\partial_y =
  (1 + \epsilon) v_3 + \epsilon v_4, \\
  &L_0(\pi,0): &
  &\partial_\phi =
  -v_1 + \epsilon v_2, &
  &\partial_y =
  (1 - \epsilon) v_3 - \epsilon v_4.
\end{align*}
The tangent vectors to $L_2$ at the point $p_D$ corresponding to
the north pole $(p_D = L_2(0,0)) $ and south pole
$(p_D = L_2(\pi,0))$ are
\begin{align*}
  &L_2(0,0): &
  &\partial_\phi =
  v_1 - (2 - \epsilon) v_2, &
  &\partial_y =
  (1 + \epsilon) v_3 + (2 + 3\epsilon) v_4, \\
  &L_2(\pi,0): &
  &\partial_\phi =
  -v_1 + (2 + \epsilon) v_2, &
  &\partial_y =
  (1 - \epsilon) v_3 + (2 - 3\epsilon) v_4.
\end{align*}
\end{lemma}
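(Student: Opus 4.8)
The plan is to prove the lemma by direct computation. For each of the four Lagrangians I will produce an explicit local parametrization near $p_D$, push it forward into the coordinate chart $(\alpha_1,\beta_1,s_1,t_1)$ of Theorem~\ref{theorem:coords-1} (valid near $p_D$, since $(\alpha,\beta,s)(p_D)=(\pi/2,0,0)$ has $\sin\alpha=1\neq 0$), differentiate the four coordinate functions with respect to the Lagrangian's parameters, and evaluate at $p_D$. Since at $p_D$ we have $\sin\alpha_1=\cos s_1=1$, the basis simplifies to $v_1=\partial_{\alpha_1}$, $v_2=\partial_{\beta_1}$, $v_3=\partial_{s_1}$, $v_4=\partial_{t_1}$, so it suffices to compute the partial derivatives of $(\alpha_1,\beta_1,s_1,t_1)$ along each Lagrangian. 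Extracting these coordinates from a given tuple $(A,B,a,b)$ amounts to conjugating the tuple into the normal form of Theorem~\ref{theorem:coords-1}, then reading $\alpha_1$ off $A$, $t_1$ and $\beta_1$ off $B$, and $s_1$ off $a$.

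Several of the components are immediate because the relevant tangent direction stays inside $R_3^*(T^2,2)$. Theorem~\ref{theorem:W0} states that $W_0(\chi,\psi)$ has coordinates $(\alpha_1,\beta_1,s_1,t_1)=(\chi,0,\psi,0)$, so at $p_D=W_0(\pi/2,0)$ we get $\partial_\chi=v_1$ and $\partial_\psi=v_3$. Theorem~\ref{theorem:W2} gives $(\alpha,\beta)(W_2(\chi,0))=(\chi,\pi-2\chi)$, so $\partial_\chi=v_1-2v_2$; Theorem~\ref{theorem:L0} gives $(\alpha,\beta,s)(L_0(\phi,0))=(\phi+\pi/2,\nu,0)$ with $\nu=\epsilon\sin\phi$, so $\partial_\phi=v_1+\epsilon v_2$ at the north pole and $\partial_\phi=-v_1+\epsilon v_2$ at the south pole; and Theorem~\ref{theorem:L2} gives $(\alpha,\beta)(L_2(\phi,0))=(\phi+\pi/2,\nu-2\phi)$, yielding the stated $\partial_\phi$ vectors for $L_2$. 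The substantive part of the proof is the remaining components, namely the $\partial_\psi$ direction of $W_2$ and the $\partial_y$ directions of $L_0$ and $L_2$ at both poles, since these point out of $R_3^*(T^2,2)$ and therefore have nonzero $v_4$-component.

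For these, I will use the full four-dimensional parametrization. For $W_2$ and $L_2$ one composes $W_2(\chi,\psi)=\alpha_1^2\cdot W_0(\chi,\psi)$ and $L_2(\phi,\theta)=\alpha_1^2\cdot L_0(\phi,\theta)$ with the explicit formula for the $\PMCG_2(T^2)$-action on $R(T^2,2)$ from \cite{Boozer-2}; since the action of $\alpha_1^2$ on $R_2^*(T^2,2)$ described above fixes the point $(\pi/2,0)$, the map $\alpha_1^2$ fixes $p_D$, so $(\alpha_1^2)_*$ is a linear automorphism of $T_{p_D}R^*(T^2,2)$ which I apply to the vectors already found for $W_0$ and $L_0$ (its restriction to the $v_1,v_2$-plane is $v_1\mapsto v_1-2v_2$, $v_2\mapsto v_2$, and one can use symplecticity with respect to $\omega=d\alpha_1\wedge d\beta_1+\sin\alpha_1\,ds_1\wedge dt_1$ of Theorem~\ref{theorem:symplectic-form} as a consistency check). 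For the $\partial_y$ directions one perturbs $\theta$ away from $0$ in the matrix formula of Theorem~\ref{theorem:L0}, which now genuinely involves the factor $(\cos^2\nu+\sin^2\nu\sin^2\theta)^{-1/2}$ and its powers, conjugates the resulting tuple into normal form, extracts $(\alpha_1,\beta_1,s_1,t_1)$, and differentiates; the $v_4$-coefficients arise from the $\arcsin(\cos\alpha_1\tan s_1\tan t_1)$ term in $\theta_1$. A technical point is that $\theta$ is not a valid coordinate on $R_\pi^\natural(U_1,T_1)=S^2$ at the poles, which are the preimages of $p_D$; there one uses $\partial_y=\csc\phi\,\partial_\theta$ together with the fact that $(x,y)$ restrict to smooth local coordinates near each pole, so $\partial_y L_n$ at a pole is computed as $\lim_{\phi\to 0^+}\csc\phi\,\partial_\theta L_n(\phi,\theta)|_{\theta=0}$, and analogously at $\phi=\pi$.

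The main obstacle is the bookkeeping in this last step: differentiating the matrix expression for $L_0(\phi,\theta)$ with respect to $y$ at the poles, re-normalizing, and then tracking how the braid automorphism $\alpha_1^2$ mixes the $s_1$- and $t_1$-directions when passing from $L_0$ to $L_2$ and from $W_0$ to $W_2$. All of this is routine but lengthy $SU(2)$ algebra, and the small parameter $\epsilon$ can be carried along symbolically throughout; transversality of the various pairs of Lagrangians is then read off from the resulting $4\times 2$ matrices of components, though the lemma itself only records the tangent vectors.
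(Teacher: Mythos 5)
Your proposal is correct in broad strategy and would reproduce the stated vectors, but it takes a genuinely different — and computationally heavier — route than the paper. The paper's proof introduces the conjugation–invariant trace functions
\begin{align*}
  f^1 &= -\tfrac{1}{2}\tr A, &
  f^2 &= -\tfrac{1}{2}(\tr AB - \tr A), &
  f^3 &= -\tfrac{1}{2}\tr Aa, &
  f^4 &= -\tfrac{1}{4}(\tr Aa + \tr Ab),
\end{align*}
checks that $(\partial_\nu f^\mu)(p_D)=\delta_\mu^\nu$, and then simply differentiates $f^\mu \circ \gamma$ along each curve $\gamma$. Because the $f^\mu$ are well defined on the character variety itself, one never has to bring a representative $(A,B,a,b)$ into the normal form of Theorem~\ref{theorem:coords-1}: the explicit matrices from Theorems~\ref{theorem:W0} and \ref{theorem:L0} (and their $\alpha_1^2$–translates) are plugged in as is. Your proposal instead works directly with the coordinates $(\alpha_1,\beta_1,s_1,t_1)$, which forces you to conjugate each tuple into normal form before reading off coordinates — a step you correctly identify as ``routine but lengthy $SU(2)$ algebra,'' and which the paper deliberately bypasses. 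Two details in your write-up deserve tightening: (i) the $v_4$–component is read off from the $\qx$–part of (a conjugate of) $B$, not from the $\arcsin$ correction in $\theta_1$, which governs $a$; and (ii) for $W_2$ and $L_2$ you need the \emph{full} differential $(\alpha_1^2)_*$ at $p_D$ (in particular $v_3\mapsto v_3+2v_4$ and $v_4\mapsto v_4$), not just its restriction to the $v_1,v_2$–plane — symplecticity alone does not pin this down, so you must actually compute it from the explicit $\PMCG_2(T^2)$ action, as you indicate. With those caveats, your approach is valid; the paper's use of the invariant functions $f^\mu$ is the cleaner choice because it reduces everything to differentiating scalar trace polynomials and absorbs the passage from $W_0,L_0$ to $W_2,L_2$ into the same scheme.
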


\begin{proof}
We define functions $f^\mu:R(T^2,2) \rightarrow \Reals$ by
\begin{align*}
  f^1 &= -\frac{1}{2}(\tr A), &
  f^2 &= -\frac{1}{2}(\tr AB - \tr A), &
  f^3 &= -\frac{1}{2}(\tr Aa), &
  f^4 &= -\frac{1}{4}(\tr Aa + \tr Ab).
\end{align*}
We define derivatives $\partial_\mu$:
\begin{align*}
  \partial_1 &= \partial_{\alpha_1}, &
  \partial_2 &= \partial_{\beta_1}, &
  \partial_3 &= \partial_{s_1}, &
  \partial_4 &= \partial_{t_1}.
\end{align*}
A calculation shows that the derivatives of $f^\mu$ at the point $p_D$
are given by
\begin{align*}
  (\partial_\nu f^\mu)(p_D) = \delta_\mu^\nu,
\end{align*}
so $f^\mu$ define coordinates on a neighborhood of $p_D$.
The tangent vector to a curve $\gamma:\Reals \rightarrow R^*(T^2,2)$,
$t \mapsto \gamma(t)$ passing through the point
$p_D$ at $t=0$ is thus given by
\begin{align*}
  (f^1 \circ \gamma)'(0)\,v_1 +
  (f^2 \circ \gamma)'(0)\,v_2 +
  (f^3 \circ \gamma)'(0)\,v_3 +
  (f^4 \circ \gamma)'(0)\,v_4.
\end{align*}
The expressions for the tangent vectors are obtained via
straightforward calculations using Theorems \ref{theorem:W0} and
\ref{theorem:L0} for $W_0$ and $L_0$ and the action of $\alpha_1$.
\end{proof}

\begin{theorem}
\label{theorem:w0-l0-l2}
We have
\begin{align*}
  (W_0, L_0) &= \langle \alpha_0^{(-)},\, \beta_0^{(+)}\rangle, &
  (W_0, L_2) &= \langle \sigma_{2,0}^{(-)},\, \tau_{2,0}^{(+)}\rangle, &
  (W_2, L_0) &= \langle \sigma_{0,2}^{(-)},\, \tau_{0,2}^{(+)}\rangle,
\end{align*}
where the points
$\alpha_0, \beta_0, \sigma_{2,0}, \tau_{2,0}, \sigma_{0,2}, \tau_{0,2}
\in R_2^*(T^2,2)$ are as shown in Figure \ref{fig:w0-l0-w0-l2}.
\end{theorem}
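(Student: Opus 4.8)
The plan is to reduce everything to the two-dimensional symplectic submanifold $R_2^*(T^2,2) \subseteq R^*(T^2,2)$, on which $W_0$, $W_2$, $L_0$, $L_2$ restrict to explicitly known curves, to count the intersection points there, and then to compute their orientation gradings using the tangent data of Lemma \ref{lemma:tangent-vectors}.

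First I would invoke the reduction to $R_2^*(T^2,2)$ discussed just before the statement: for each of the pairs $(W_0,L_0)$, $(W_0,L_2)$, $(W_2,L_0)$, every generator of $(M,N)$ lies in $R_2^*(T^2,2)$. This rests on Theorems \ref{theorem:W0}, \ref{theorem:W2}, \ref{theorem:L0}, \ref{theorem:L2}, which identify the loci along which $W_0$, $W_2$, $L_0$, $L_2$ meet $R_2^*(T^2,2)$, combined with the observation in Section \ref{ssec:hamiltonian} that the Hamiltonian flow preserves $R_2^*(T^2,2)$ setwise, so that $M^{(1)} \cap R_2^*(T^2,2)$ is the Hamiltonian pushoff \emph{within} $R_2^*(T^2,2)$ of the curve $M \cap R_2^*(T^2,2)$. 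The real content here is the claim that no intersection point of $M^{(1)}$ with $N^{(0)}$ escapes $R_2^*(T^2,2)$; I would establish this by parametrizing each Lagrangian near $R_3^*(T^2,2)$ in the coordinates of Theorem \ref{theorem:coords-1} or Theorem \ref{theorem:coords-2} (valid because $W_0 \subseteq \{\beta=0\} \cap R_3^*(T^2,2)$, $W_2 \subseteq R_3^*(T^2,2)$, and $L_0$, $L_2$ meet $R_3^*(T^2,2)$ only along $\theta \in \{0,\pi\}$), applying $\alpha_1^n$ and the Hamiltonian pushoff, and checking from the explicit formulas of Theorems \ref{theorem:W0} and \ref{theorem:L0} that the intersection equations force the transverse coordinates to vanish. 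This global control is the step I expect to be the main obstacle, since one must also verify that the relevant pushoffs remain inside the region covered by these charts.

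With the reduction in hand, I would plot $W_0^{(1)}$, $W_2^{(0)}$, $L_0^{(0)}$, $L_2^{(0)}$ in $R_2^*(T^2,2)$ as in Figures \ref{fig:lagrangians-rectangle} and \ref{fig:lagrangians}, using the coordinate formulas of Theorems \ref{theorem:W0}, \ref{theorem:W2}, \ref{theorem:L0}, \ref{theorem:L2} and the Hamiltonian flow equations of Section \ref{ssec:hamiltonian}. Intersecting the appropriate pairs of curves yields, in each case, exactly two transverse intersection points -- the points $\alpha_0,\beta_0$ for $(W_0,L_0)$, the points $\sigma_{2,0},\tau_{2,0}$ for $(W_0,L_2)$, and the points $\sigma_{0,2},\tau_{0,2}$ for $(W_2,L_0)$ -- located as shown in Figure \ref{fig:w0-l0-w0-l2}. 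Transversality in $R_2^*(T^2,2)$, hence in $R^*(T^2,2)$ since $R_2^*(T^2,2)$ is a symplectic submanifold, follows from comparing the slopes of the curves at these points.

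Finally, to fix the orientation gradings I would apply the canonical-short-path recipe of Section \ref{ssec:fuaka}. At each intersection point $p$ I would compute the Lagrangian $2$-planes $T_p M^{(1)}$ and $T_p N^{(0)}$ in $T_p R^*(T^2,2)$, expressed in the basis $\{v_1,v_2,v_3,v_4\}$, by differentiating the explicit parametrizations of $W_n^{(1)}$ and $L_n^{(0)}$ -- the model computation being Lemma \ref{lemma:tangent-vectors}, which supplies these planes at the double-point $p_D$ through which all four Lagrangians pass, together with the $\alpha_1^n$-translates and the ($C^1$-small) Hamiltonian perturbation. From these planes one determines the canonical short path in the Lagrangian Grassmannian of $T_p R^*(T^2,2)$ and records whether it carries the chosen orientation of $M^{(1)}$ (induced from $(\partial_\chi,\partial_\psi)$ on $W_n$) to that of $N^{(0)}$ (induced from $(\partial_\phi,\partial_\theta)$ on $L_n$). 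The leading-order terms suffice to pin down each sign, giving $\alpha_0^{(-)}$, $\beta_0^{(+)}$, $\sigma_{2,0}^{(-)}$, $\tau_{2,0}^{(+)}$, $\sigma_{0,2}^{(-)}$, $\tau_{0,2}^{(+)}$, as claimed.
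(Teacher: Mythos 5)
Your strategy differs from the paper's in a substantive way, and the difference matters: the step you yourself flag as ``the main obstacle'' is a genuine gap that your sketch does not close.

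You propose to first prove that every intersection point of $W_0^{(1)}$ with $L_0^{(0)}$ (and similarly for the other pairs) already lies in the two-dimensional submanifold $R_2^*(T^2,2)$, and then to count intersections of curves in that surface. But the reduction you describe --- parametrizing the Lagrangians in the coordinate charts of Theorems~\ref{theorem:coords-1} and~\ref{theorem:coords-2} and checking that the intersection equations force the transverse coordinates to vanish --- cannot work as stated. Those charts cover only a neighborhood of $R_3^*(T^2,2)$; the perturbed Lagrangians $L_0$ and $L_2$, being immersed spheres, leave $R_3^*(T^2,2)$ entirely except along the arcs $\theta\in\{0,\pi\}$; and the paper states only that the Hamiltonian flow preserves $R_2^*(T^2,2)$ as a set (Section~\ref{ssec:hamiltonian}), not $R_3^*(T^2,2)$, so you cannot freely transport the problem into these charts after the pushoff without an additional argument. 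None of this is addressed in the proposal beyond acknowledging it as an obstacle.

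The paper avoids the issue by not reducing to $R_2^*(T^2,2)$ at all in the counting step. It passes to the limit in which the Hamiltonian pushoff vanishes, so that $W_0^{(1)}=W_0$ and $L_0^{(0)}=L_0$. In that limit the intersection $W_0\cap L_0=\{p_D\}$ is computed globally in $R^*(T^2,2)$ directly from Theorems~\ref{theorem:W0} and~\ref{theorem:L0}: $W_0=\{B=\id\}\cap R_3^*(T^2,2)$, and the explicit formula for $L_0(\phi,\theta)$ has $B=\id$ only at the north and south poles, both mapping to $p_D$. The two preimages give a two-fold intersection, each branch transverse to $W_0$ by Lemma~\ref{lemma:tangent-vectors}, and a small perturbation splits $p_D$ into exactly two points with no new intersections by transversality. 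This is considerably simpler than the confinement argument you outline, because the zero-perturbation intersection set is already a single point and no chart-coverage issues arise. Your final step, computing orientation gradings from the canonical short path and the tangent data of Lemma~\ref{lemma:tangent-vectors} at $p_D$, agrees with the paper's.
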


\begin{figure}
  \centering
  \includegraphics[scale=0.65]{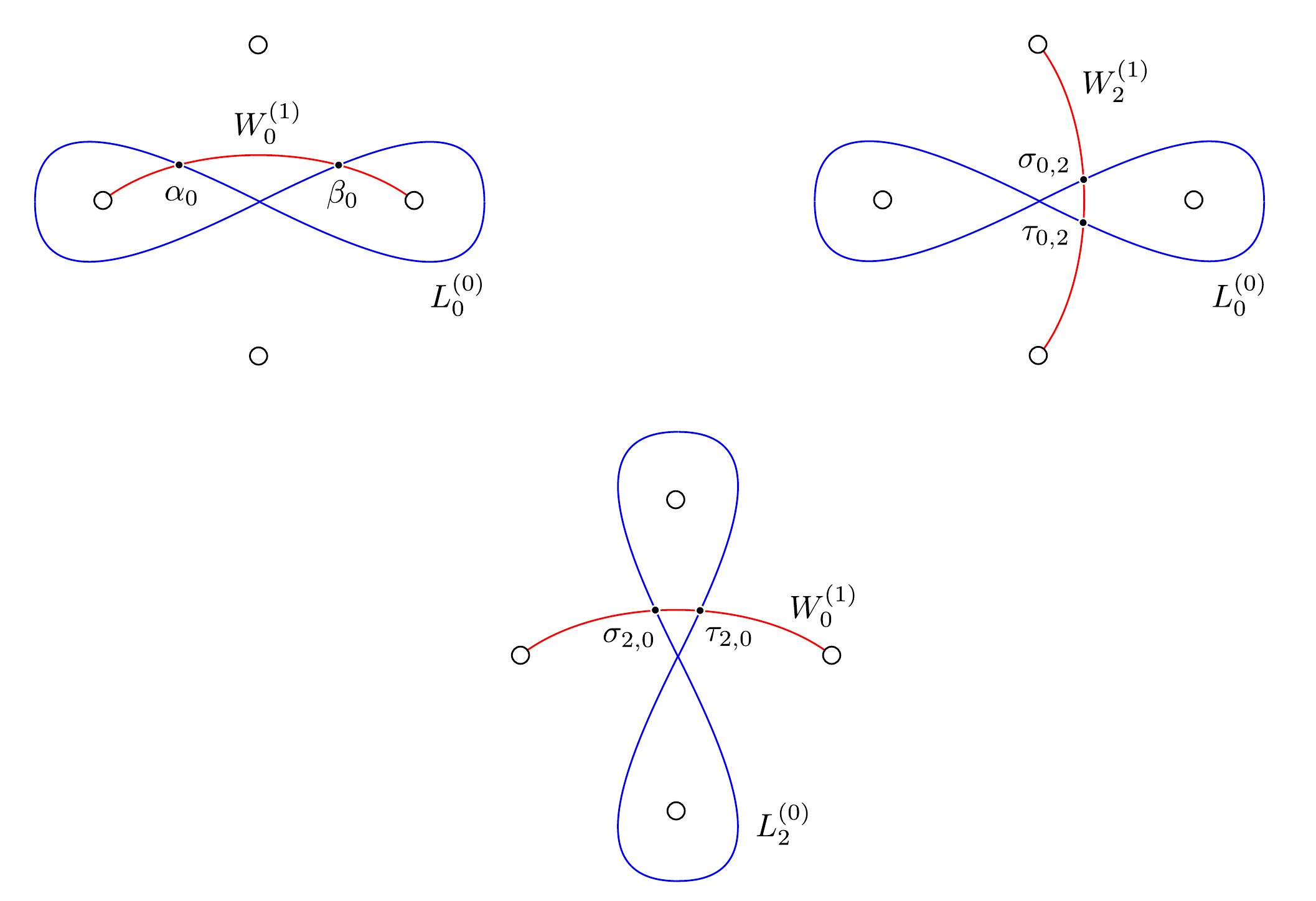}
  \caption{
    \label{fig:w0-l0-w0-l2}
    Generators $\alpha_0$ and $\beta_0$ of $(W_0, L_0)$,
    generators $\sigma_{0,2}$ and $\tau_{0,2}$ of $(W_2, L_0)$, and
    generators $\sigma_{2,0}$ and $\tau_{2,0}$ of $(W_0, L_2)$ in
    $R_2^*(T^2,2)$.
    The four circles indicate the four puncture points of
    $R_2^*(T^2,2)$.
  }
\end{figure}

\begin{proof}
First consider $(W_0,L_0)$.
To show that $\alpha_0$ and $\beta_0$ are the only points at which
$W_0^{(1)}$ and $L_0^{(0)}$ intersect, we consider the limit in which
the Hamiltonian perturbation goes to zero.
In this limit we have $W_0^{(1)} = W_0$ and $L_0^{(0)} = L_0$.
Recall from Theorem \ref{theorem:W0} that
$W_0 = R_3^*(U_1,T_1) \cap \{\beta = 0\}$.
From Theorem \ref{theorem:L0} for $L_0$ it follows that $W_0$ and
$L_0$ intersect in the unique point $p_D$.
Theorems \ref{theorem:W0} and \ref{theorem:L0} show that
$p_D$ has a unique preimage point under
$R(U_1,T_1) \rightarrow W_0$ and two preimage points under
$R_\pi^\natural(U_1,T_1) \rightarrow L_0$:
\begin{align*}
  p_D = W_0(\pi/2,0) = L_0(0,0) = L_0(\pi,0).
\end{align*}
So in the limit in which the Hamiltonian perturbation goes to zero we
have a 2-fold intersection
$W_0^{(1)} \cap L_0^{(0)} = W_0 \cap L_0 = \{p_D\}$.
In this limit the intersection of $W_0^{(1)}$ and $L_0^{(0)}$ is
transversal, as can be verified using the tangent vectors given in
Lemma \ref{lemma:tangent-vectors}.
If we turn on the Hamiltonian perturbation, the point $p_D$ splits
into two points $\alpha_0$ and $\beta_0$, and the
transversality of the intersection shows that for a sufficiently small
perturbation there are no additional intersection points.
The argument for $(W_0,L_2)$ and $(W_2,L_0)$ is similar.

To compute the orientation gradings, we note that
\begin{align*}
  &\alpha_0 = W_0^{(1)}(\pi/2,0) = L_0^{(0)}(\pi,0) = p_D, &
  &\beta_0 = W_0^{(1)}(\pi/2,0) = L_0^{(0)}(0,0) = p_D, \\
  &\sigma_{2,0} = W_0^{(1)}(\pi/2,0) = L_2^{(0)}(0,0) = p_D, &
  &\tau_{2,0} = W_0^{(1)}(\pi/2,0) = L_2^{(0)}(\pi,0) = p_D, \\
  &\sigma_{0,2} = W_2^{(1)}(\pi/2,0) = L_0^{(0)}(0,0) = p_D, &
  &\tau_{0,2} = W_2^{(1)}(\pi/2,0) = L_0^{(0)}(\pi,0) = p_D.
\end{align*}
The orientation gradings now follow from a computation using the
tangent vectors given in Lemma \ref{lemma:tangent-vectors}.
\end{proof}

\begin{theorem}
\label{theorem:gens-L0-L0}
We have
\begin{align*}
  (L_0, L_0) &=
  \langle a_0^{(+)},\, b_0^{(-)},\, c_0^{(-)},\, d_0^{(+)} \rangle,
\end{align*}
where the points $a_0, b_0, c_0, d_0 \in R_2^*(T^2,2)$ are as shown in
Figure \ref{fig:l0-l0}.
\end{theorem}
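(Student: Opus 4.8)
The plan is to follow the strategy used to prove Theorem~\ref{theorem:w0-l0-l2}, enumerating the points of $L_0^{(1)} \cap L_0^{(0)}$ by sending the Hamiltonian pushoff to zero and then tracking how the resulting degenerate intersection resolves. Write $\Phi_H$ for the time-$\tau$ Hamiltonian flow of Section~\ref{ssec:hamiltonian}, so that $L_0^{(1)} = \Phi_H(L_0)$ and $L_0^{(0)} = L_0$. When the pushoff is sent to zero, both Lagrangians converge to the immersed Lagrangian $L_0$, which by Theorem~\ref{theorem:L0} has a single self-intersection point $p_D = L_0(0,0) = L_0(\pi,0)$; thus in the limit the intersection data of $L_0^{(1)}$ with $L_0^{(0)}$ consists of the diagonal copy of $L_0$ together with the two transverse crossings of the north-pole and south-pole sheets of $L_0$ at $p_D$, transversality of the latter being read off from the tangent planes to these sheets recorded in Lemma~\ref{lemma:tangent-vectors}. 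When the pushoff is turned back on, the diagonal clean intersection breaks up into one transverse point for each critical point of the function obtained by pulling $H|_{L_0}$ back to $R_\pi^\natural(U_1,T_1) = S^2$, and the double point breaks up into exactly the two crossings found above.

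The next step is to show that $H|_{L_0}$, pulled back to $S^2$, is a perfect Morse function, so that the diagonal contributes exactly two generators. Using the explicit formulas for $A$ and $B$ on $L_0$ from Theorem~\ref{theorem:L0} together with $H = \frac{1}{2}(\tr A + \eta \tr B)$, one computes this function as a function of $(\phi,\theta)$, solves $\partial_\phi H = \partial_\theta H = 0$, and finds exactly two solutions, a minimum and a maximum, both lying on the circle $\theta \in \{0,\pi\}$. Since the Hamiltonian flow fixes $R_2^*(T^2,2)$ as a set and $L_0 \cap R_2^*(T^2,2)$ is the image of this circle (Theorem~\ref{theorem:L0}), all four intersection points, the two from the resolved diagonal and the two near $p_D \in R_2^*(T^2,2)$, lie in $R_2^*(T^2,2)$. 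One may therefore read them off by plotting the curves $\Phi_H(L_0) \cap R_2^*(T^2,2)$ and $L_0 \cap R_2^*(T^2,2)$ in the surface $R_2^*(T^2,2)$ and checking transversality; this yields the four points $a_0, b_0, c_0, d_0$ depicted in Figure~\ref{fig:l0-l0}.

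Finally, the orientation gradings follow from the definitions in Section~\ref{ssec:fuaka}. The two generators coming from the resolved diagonal sit near the minimum and the maximum of $H|_{L_0}$, hence have even index and grading $+$. For the two generators near $p_D$, the grading is determined by the canonical short path in the space of Lagrangian 2-planes in $T_{p_D} R^*(T^2,2)$ between the tangent planes to the two sheets of $L_0$ at $p_D$ given in Lemma~\ref{lemma:tangent-vectors}; one checks that in both cases this path fails to carry the orientation of one sheet to the other, giving grading $-$. Matching these four generators with the labelling of Figure~\ref{fig:l0-l0} gives $(L_0, L_0) = \langle a_0^{(+)}, b_0^{(-)}, c_0^{(-)}, d_0^{(+)} \rangle$.

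I expect the main obstacle to be the degeneration along the diagonal: whereas in Theorem~\ref{theorem:w0-l0-l2} the limiting intersection $W_0 \cap L_0$ was already a single point, here the pushoff-to-zero limit is a clean intersection along the whole of $L_0$, and one must control this Morse--Bott degeneration carefully and verify the perfect-Morse property of $H|_{L_0}$ in order to be sure that the diagonal contributes exactly two generators and that no intersection point of $L_0^{(1)}$ with $L_0^{(0)}$ escapes $R_2^*(T^2,2)$.
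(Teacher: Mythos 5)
Your proposal arrives at the right answer, but by a genuinely different degeneration than the paper's. You send only the Hamiltonian pushoff to zero, so the limit is a clean (Morse--Bott) intersection along all of $L_0$ plus the two off-diagonal sheet pairings at the double point $p_D$, and you then resolve the clean part via the critical points of $H|_{L_0}$ pulled back to $R_\pi^\natural(U_1,T_1)=S^2$. The paper instead degenerates twice: after the Hamiltonian limit it also sends the holonomy perturbation of $L_0^{(1)}$ (only) to zero, so that $L_0^{(1)}\to W_0$ and the whole limiting intersection collapses to the single transverse four-fold point $W_0\cap L_0=\{p_D\}$, which is then split back open; this avoids any Morse--Bott analysis and uses only the tangent-vector data of Lemma \ref{lemma:tangent-vectors} at $p_D$, but all four generators are then seen only ``near $p_D$'' in that doubly degenerate regime. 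Your route costs more: you must actually verify that $H|_{L_0}\approx -\sin\phi\cos\theta+\eta\cos(\epsilon\sin\phi)$ is Morse with exactly two critical points on $\theta\in\{0,\pi\}$, and invoke the standard clean-intersection perturbation lemma to know the diagonal contributes exactly those two points for sufficiently small pushoff (your own flagged obstacle; it is on par with the ``sufficiently small perturbation'' reasoning the paper already uses). In exchange you get a more faithful picture of where the generators actually sit with both perturbations at their working values (two near $p_D$, two near the extrema of $H|_{L_0}$). On gradings, you compute $b_0,c_0$ the same way as the paper (canonical short path with Lemma \ref{lemma:tangent-vectors}), but obtain $a_0^{(+)},d_0^{(+)}$ from the parity of the Morse index at the minimum and maximum; this identification of ``even index'' with orientation grading $+$ depends on the local model in $T^*L_0$ and the sign conventions of Section \ref{ssec:fuaka} and deserves a line of justification, whereas the paper sidesteps it by noting that $L_0$ is nullhomotopic in $R(T^2,2)$, so the signed count of all four points is $0$ and the two remaining gradings are forced once $b_0,c_0$ are known; you could cite that argument as a cross-check or replacement for the index-parity step.
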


\begin{figure}
  \centering
  \includegraphics[scale=0.65]{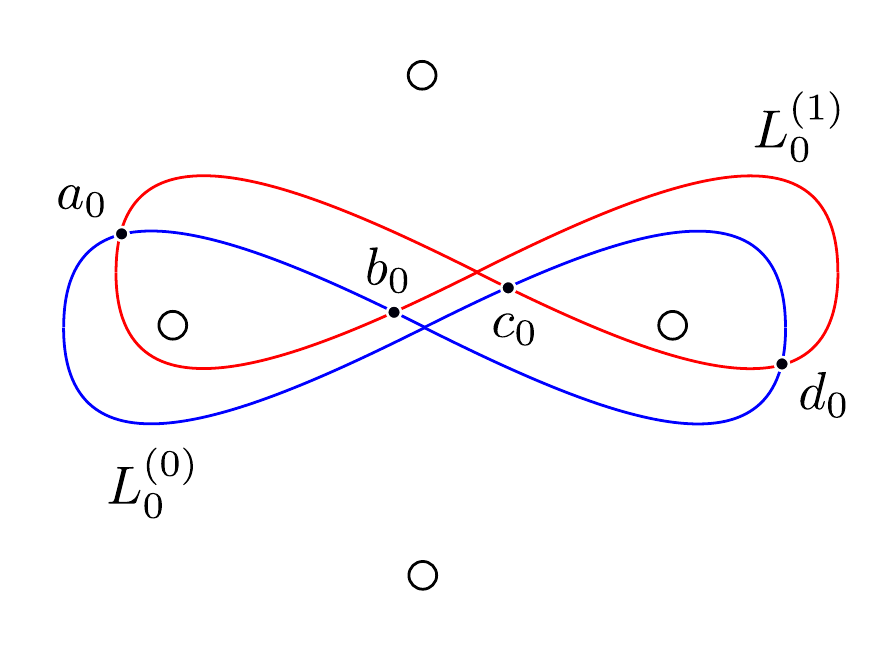}
  \caption{
    \label{fig:l0-l0}
    Generators $a_0$, $b_0$, $c_0$, and $d_0$ of $(W_0, L_0)$ in
    $R_2^*(T^2,2)$.
    The four circles indicate the four puncture points of
    $R_2^*(T^2,2)$.
  }
\end{figure}

\begin{proof}
To show that $a_0$, $b_0$, $c_0$, and $d_0$ are the only points at
which $L_0^{(1)}$ and $L_0^{(0)}$ intersect, we first take the limit
in which the Hamiltonian perturbation goes to zero.
In this limit $L_0^{(1)} = L_0^{(0)} = L_0$.
We next take the limit in which the holonomy perturbation of
$L_0^{(1)}$ goes to zero but the holonomy perturbation of $L_0^{(0)}$
remains nonzero.
In this limit $L_0^{(1)} = W_0$ and $L_0^{(0)} = L_0$.
Recall from Theorem \ref{theorem:W0} that
$W_0 = R_3^*(U_1,T_1) \cap \{\beta = 0\}$.
From Theorem \ref{theorem:L0} it follows that
$W_0$ and $L_0$ intersect in the unique point $p_D$.
From Theorems \ref{theorem:W0} and \ref{theorem:L0} it follows that
$p_D$ has two preimage points under
$R_\pi^\natural(U_1,T_1) \rightarrow R(U_1,T_1) \rightarrow W_0$:
\begin{align*}
  p_D &= W_0(\pi/2,0) = L_0^{(1)}(0,0) = L_0^{(1)}(\pi,0)
\end{align*}
and two preimage points under
$R_\pi^\natural(U_1,T_1) \rightarrow L_0$:
\begin{align*}
  p_D &= L_0^{(0)}(0,0) = L_0^{(0)}(\pi,0).
\end{align*}
So in the combined limit in which the Hamiltonian perturbation goes to
zero and the holonomy perturbation of $L_0^{(1)}$ goes to zero, we
have a four-fold intersection
$L_0^{(1)} \cap L_0^{(0)} = W_0 \cap L_0 = \{p_D\}$.
In the combined limit the intersection of $L_0^{(1)}$ and $L_0^{(0)}$
is transversal, as can be verified using the tangent vectors given
in Lemma \ref{lemma:tangent-vectors}.
If we now turn on the Hamiltonian and holonomy perturbations, the
point $p_D$ splits into the four points $a_0$, $b_0$, $c_0$, $d_0$
indicated in Figure \ref{fig:l0-l0}, and there are no additional
intersection points.

To compute the orientation gradings, we note that in the combined
limit we have
\begin{align*}
  b_0 &= L_0^{(1)}(0,0) = L_0^{(0)}(\pi,0) = p_D, &
  c_0 &= L_0^{(1)}(\pi,0) = L_0^{(0)}(0,0) = p_D.
\end{align*}
The orientation gradings now follow from a computation using the
tangent vectors given in Lemma \ref{lemma:tangent-vectors}.
The orientation gradings of $a_0$ and $d_0$ follow from the fact that
$L_0$ has self-intersection number 0, as can be seen by noting that
$L_0$ is homotopic to the closure of $W_0$ in $R(T^2,2)$, which is a
closed disk, and hence contractible in $R(T^2,2)$
\end{proof}

\begin{theorem}
\label{theorem:gens-L2-L0}
We have
\begin{align*}
  (L_0, L_2) &=
  \langle
  r_{2,0}^{(-)},\,\rbar_{2,0}^{(-)},\,s_{2,0}^{(+)},\,\sbar_{2,0}^{(+)}
  \rangle,
  &
  (L_2, L_0) &=
  \langle
  r_{0,2}^{(-)},\,\rbar_{0,2}^{(-)},\,s_{0,2}^{(+)},\,\sbar_{0,2}^{(+)}
  \rangle,
\end{align*}
where
$r_{2,0}, \rbar_{2,0}, s_{2,0}, \sbar_{2,0}, r_{0,2}, \rbar_{0,2}, s_{0,2},
\sbar_{0,2} \in R_2^*(T^2,2)$ are as
shown in Figure \ref{fig:l0-l2-l2-l0}.
\end{theorem}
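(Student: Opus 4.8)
The plan is to mirror the limiting arguments in the proofs of Theorems \ref{theorem:w0-l0-l2} and \ref{theorem:gens-L0-L0}, handling $(L_0, L_2) = \langle L_0^{(1)} \cap L_2^{(0)}\rangle$ first. I would begin by passing to the limit in which the Hamiltonian perturbation vanishes, so that $L_0^{(1)}$ and $L_2^{(0)}$ become $L_0$ and $L_2$, and then to the limit in which the holonomy perturbation of $L_0^{(1)}$ vanishes while that of $L_2^{(0)}$ is retained, so that $L_0^{(1)}$ becomes the unperturbed Lagrangian $W_0 = R_3^*(T^2,2) \cap \{\beta = 0\}$. Just as in the proof of Theorem \ref{theorem:w0-l0-l2}, the explicit formulas of Theorems \ref{theorem:W0} and \ref{theorem:L2} for $W_0$ and $L_2 = \alpha_1^2 \cdot L_0$ should show that $W_0$ and $L_2$ meet in the single point $p_D$. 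Since $\alpha_1^2$ fixes $p_D$, both the north and south poles of $R_\pi^\natural(U_1,T_1)$ again map to $p_D$ under $R_\pi^\natural(U_1,T_1) \to L_2$, so $p_D$ has two preimages there; it likewise has two preimages under $R_\pi^\natural(U_1,T_1) \to W_0$. Hence $W_0 \cap L_2 = \{p_D\}$ is a four-fold intersection, and after checking transversality of each of the four pairs of local sheets from the tangent vectors of Lemma \ref{lemma:tangent-vectors}, restoring the perturbations splits $p_D$ into exactly four intersection points, namely the points $r_{2,0}, \rbar_{2,0}, s_{2,0}, \sbar_{2,0}$ of Figure \ref{fig:l0-l2-l2-l0}.

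For the orientation gradings, each of the four points corresponds to a choice of sheet ($\phi = 0$ or $\phi = \pi$) for $L_0^{(1)}$ together with a choice of sheet for $L_2^{(0)}$, and its grading is read off from the canonical short path between the corresponding pair of Lagrangian $2$-planes at $p_D$, which are supplied by the $L_0(0,0)$, $L_0(\pi,0)$, $L_2(0,0)$, $L_2(\pi,0)$ rows of Lemma \ref{lemma:tangent-vectors}. Matching the four sheet-combinations to the four labeled points according to the direction in which each emerges into $R_2^*(T^2,2)$ then gives the claimed gradings $r_{2,0}^{(-)}, \rbar_{2,0}^{(-)}, s_{2,0}^{(+)}, \sbar_{2,0}^{(+)}$. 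As a consistency check, the signed count of the four points must vanish, since $L_0$ and $L_2$ are each homotopic in $R(T^2,2)$ to the closure of $W_0$, respectively $W_2$, each a closed disk and hence contractible, so that $[L_0] \cdot [L_2] = 0$; this forces two points of each sign, and so it suffices to compute two of the four gradings directly. The argument for $(L_2, L_0)$ is the same with the roles of $L_0$ and $L_2$ interchanged, now turning off the holonomy perturbation of the first factor $L_2^{(1)}$ to pass through $W_2 \cap L_0 = \{p_D\}$, and yields $r_{0,2}^{(-)}, \rbar_{0,2}^{(-)}, s_{0,2}^{(+)}, \sbar_{0,2}^{(+)}$.

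I expect the main obstacle to be the first step: verifying, from the explicit matrix formulas, that the unperturbed intersection $W_0 \cap L_2$ inside the four-dimensional manifold $R^*(T^2,2)$ — and not merely inside the two-dimensional slice $R_2^*(T^2,2)$, where it is easy to see — consists of the single point $p_D$, so that no unexpected generators appear once the perturbations are restored. Because $W_0 = \{B = \id\}$, this amounts to showing that the only point of $L_2$ with $B = \id$ is $p_D$, which should follow from the explicit description of $L_0$ in Theorem \ref{theorem:L0} together with the $\alpha_1^2$-action, exactly as the corresponding fact for $W_0 \cap L_0$ underlies the proof of Theorem \ref{theorem:w0-l0-l2}. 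With that in place, the transversality and orientation-grading computations reduce to routine applications of Lemma \ref{lemma:tangent-vectors}.
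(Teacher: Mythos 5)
Your proof is correct and follows essentially the same limiting-argument strategy as the paper: identify the four generators as the four sheet-pairs at the double point $p_D$ and read off gradings from Lemma \ref{lemma:tangent-vectors}. The one inefficiency is that you also switch off the holonomy perturbation on $L_0^{(1)}$ (imitating the $(L_0,L_0)$ case), but this extra step is unnecessary here: since $L_0$ and $L_2$ are already distinct Lagrangians, sending only the Hamiltonian perturbation to zero already gives the clean four-fold transverse intersection $L_0\cap L_2=\{p_D\}$, with the holonomy-free limit only needed in Theorem \ref{theorem:gens-L0-L0} to avoid the degeneracy $L_0^{(1)}=L_0^{(0)}$.
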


\begin{figure}
  \centering
  \includegraphics[scale=0.65]{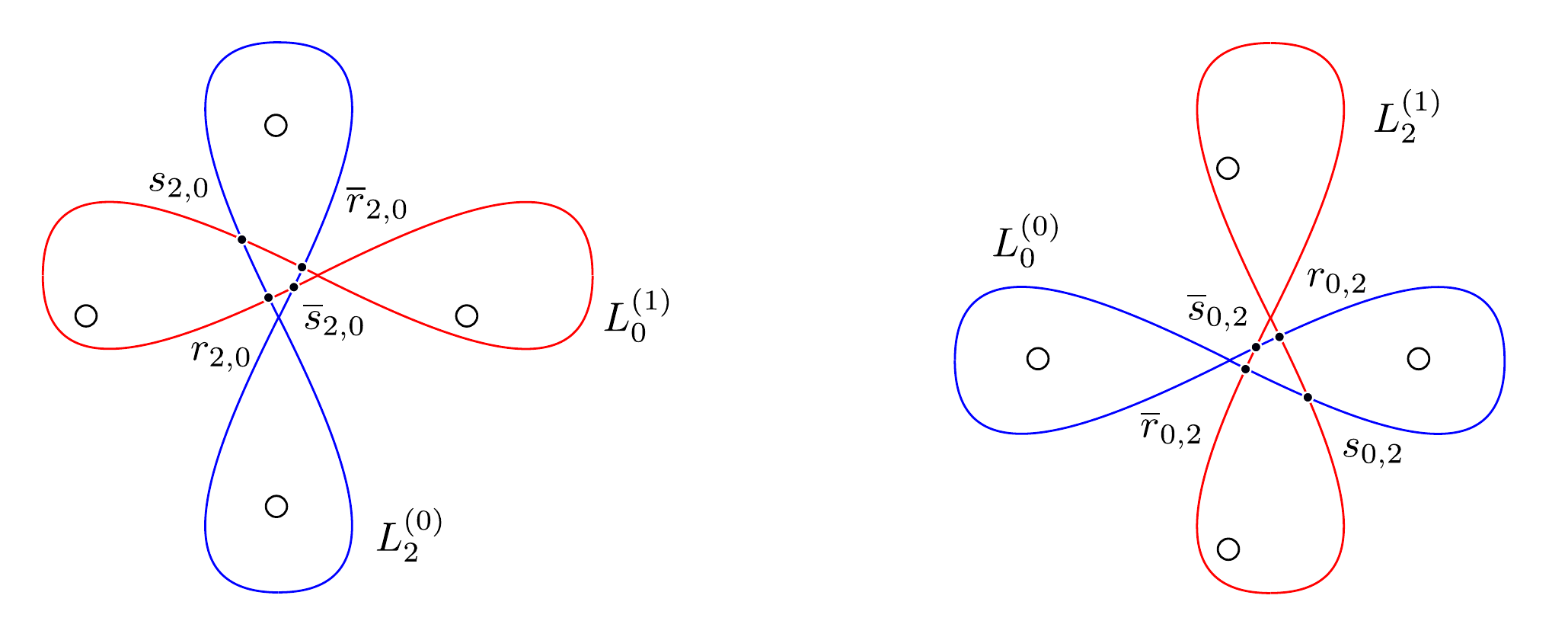}
  \caption{
    \label{fig:l0-l2-l2-l0}
    Generators $r_{2,0}$, $\rbar_{2,0}$, $s_{2,0}$, and $\sbar_{2,0}$
    of $(L_0,L_2)$ and
    generators $r_{0,2}$, $\rbar_{0,2}$, $s_{0,2}$, and $\sbar_{0,2}$
    of $(L_2,L_0)$ in $R_2^*(T^2,2)$.
    The four circles indicate the four puncture points of
    $R_2^*(T^2,2)$.
  }
\end{figure}

\begin{proof}
Similar arguments as those used in Theorems \ref{theorem:w0-l0-l2} and
\ref{theorem:gens-L0-L0} show that
$L_2^{(1)}$ and $L_0^{(0)}$ intersect transversely in the four points
$r_{0,2}$, $\rbar_{0,2}$, $s_{0,2}$, $\sbar_{0,2}$ shown in
Figure \ref{fig:l0-l2-l2-l0}.
In the limit that the Hamiltonian perturbation goes to zero, each of
these points approaches $p_D$ and we have
\begin{align*}
  & r_{0,2} =
  L_2^{(1)}(0,0) = L_0^{(0)}(0,0) = p_D, &
  & \rbar_{0,2} =
  L_2^{(1)}(\pi,0) = L_0^{(0)}(\pi,0) = p_D, \\
  & s_{0,2} =
  L_2^{(1)}(0,0) = L_0^{(0)}(\pi,0) = p_D, &
  & \sbar_{0,2} =
  L_2^{(1)}(\pi,0) = L_0^{(0)}(0,0) = p_D.
\end{align*}
The orientation gradings now follow from a computation using the
tangent vectors given in Lemma \ref{lemma:tangent-vectors}.
\end{proof}

By acting with suitable powers of $\alpha_1$, we can generalize
Theorems \ref{theorem:w0-l0-l2}, \ref{theorem:gens-L0-L0}, and
\ref{theorem:gens-L2-L0} as follows:

\begin{corollary}
\label{cor:gens-s2-s1}
We have
\begin{align*}
  (W_0, L_0) &= \langle \alpha_0^{(-)},\, \beta_0^{(+)}\rangle, &
  (W_0, L_{\pm 2}) &=
  \langle \sigma_{\pm 2,0}^{(-)},\, \tau_{\pm 2, 0}^{(+)}\rangle,
\end{align*}
where
\begin{align*}
  \sigma_{-2,0} &= \alpha_1^{-2} \cdot \sigma_{0,2}, &
  \tau_{-2,0} &= \alpha_1^{-2} \cdot \tau_{0,2}.
\end{align*}
\end{corollary}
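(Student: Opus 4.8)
The first two equalities, $(W_0, L_0) = \langle \alpha_0^{(-)},\, \beta_0^{(+)}\rangle$ and $(W_0, L_{2}) = \langle \sigma_{2,0}^{(-)},\, \tau_{2,0}^{(+)}\rangle$, are merely restatements of Theorem~\ref{theorem:w0-l0-l2}, so the only new content is the computation of $(W_0, L_{-2})$. The plan is to transport the computation of $(W_2, L_0)$ in Theorem~\ref{theorem:w0-l0-l2} along the symplectomorphism $\phi$ of $R^*(T^2,2)$ induced by $\alpha_1^{-2} \in \PMCG_2(T^2)$. Since $W_2 = \alpha_1^{2}\cdot W_0$ and $L_{-2} = \alpha_1^{-2}\cdot L_0$, we have $\phi(W_2) = W_0$ and $\phi(L_0) = L_{-2}$, so $\phi$ carries the pair $(W_2, L_0)$ to the pair $(W_0, L_{-2})$. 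I would then \emph{define} $\sigma_{-2,0} = \alpha_1^{-2}\cdot\sigma_{0,2}$ and $\tau_{-2,0} = \alpha_1^{-2}\cdot\tau_{0,2}$ and argue that these are the two transverse generators of $(W_0, L_{-2})$, with the orientation gradings inherited from $\sigma_{0,2}$ and $\tau_{0,2}$.

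The step requiring care is the Hamiltonian pushoff built into the definition $(M,N) = \langle M^{(1)}\cap N^{(0)}\rangle$ of Section~\ref{ssec:fuaka}: the symplectomorphism $\phi$ does not preserve the Hamiltonian $H$ of Section~\ref{ssec:hamiltonian}, so $\phi(W_2^{(1)})$ is a pushoff of $W_0$ by $H\circ\phi^{-1}$ rather than by $H$. I would handle this, in the spirit of the rest of the paper, by reducing to the two-dimensional submanifold $R_2^*(T^2,2)$: the Lagrangians $W_0, W_2, L_0, L_{-2}$, all of their pushoffs, and the points $\sigma_{0,2}, \tau_{0,2}$ together with their $\phi$-images all meet $R_2^*(T^2,2)$ in curves, since both $\alpha_1^{2}$ and the Hamiltonian flow fix $R_2^*(T^2,2)$ setwise. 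One then re-runs the planar argument of Theorem~\ref{theorem:w0-l0-l2} for the pushed-off pair $(W_0^{(1)}, L_{-2}^{(0)})$ inside $R_2^*(T^2,2)$, checking that exactly two transverse intersection points survive for a sufficiently small perturbation; equivalently, one may cite the standard invariance of the morphism space (a generating set for Lagrangian Floer cohomology) under the choice of small generic Hamiltonian perturbation.

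For the orientation gradings, the key point is that $\phi$, being a symplectomorphism, is orientation-preserving on $R^*(T^2,2)$ and, by construction, intertwines the chosen parametrizations and orientations of the Lagrangians: both $W_0$ and $W_2$ are oriented via $R^*(U_1, T_1)\to W_n$ post-composed with $\alpha_1^{n}$, and likewise for $L_0$ and $L_{-2}$. Hence $\phi$ sends oriented Lagrangian tangent planes to oriented Lagrangian tangent planes and preserves the canonical short paths that define the orientation grading of \cite{Auroux}, so $\sigma_{-2,0}$ and $\tau_{-2,0}$ inherit the gradings $(-)$ and $(+)$ of $\sigma_{0,2}$ and $\tau_{0,2}$. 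This is entirely parallel to the proofs of Theorems~\ref{theorem:W2} and~\ref{theorem:L2}, which likewise reduce to the action of $\alpha_1^{2}$ on $R_2^*(T^2,2)$.

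The main obstacle is precisely the noncommutativity of the mapping class group action and the Hamiltonian pushoff flagged above — they do not commute even after restriction to $R_2^*(T^2,2)$, since $\phi$ pulls $H|_{R_2^*(T^2,2)}$ back to a different function. I expect this to be harmless, being absorbed either by perturbation-invariance of Floer-theoretic morphism spaces or, more concretely, by the fact that both operations preserve $R_2^*(T^2,2)$ setwise so that the intersection count can be carried out unambiguously in the two-dimensional picture; but it is the one place where a genuine, if routine, argument is needed rather than a bare appeal to naturality.
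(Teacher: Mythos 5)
Your proposal is correct and follows the same route the paper intends — the only new content is $(W_0,L_{-2})$, and the paper's terse ``by acting with suitable powers of $\alpha_1$'' is precisely your transport along $\phi=\alpha_1^{-2}$, defining $\sigma_{-2,0}=\alpha_1^{-2}\cdot\sigma_{0,2}$ and $\tau_{-2,0}=\alpha_1^{-2}\cdot\tau_{0,2}$. Your flagging of the noncommutativity of the $\PMCG_2(T^2)$-action with the Hamiltonian pushoff is a genuine subtlety that the paper glosses over, and your resolution (re-running the degeneration argument of Theorem~\ref{theorem:w0-l0-l2} for $W_0^{(1)}\cap L_{-2}^{(0)}$ inside $R_2^*(T^2,2)$, using that $p_D$ is fixed by $\alpha_1^{\pm 2}$ and that transversality at $p_D$ pins down exactly two nearby intersection points for any sufficiently small perturbation) is the right fix; the alternative appeal to ``invariance of the morphism space'' should be phrased more carefully, since the generating set, as opposed to the Floer \emph{cohomology}, does depend on the perturbation — what is true is that the intersection count near $p_D$ is stable. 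Your orientation-grading argument is also sound: $\alpha_1^{-2}$ is $\omega$-preserving, and because $W_n$ and $L_n$ are oriented by post-composing $W_0$ and $L_0$ with $\alpha_1^n$, the symplectomorphism $\alpha_1^{-2}$ carries the chosen oriented tangent planes of $(W_2,L_0)$ at $\sigma_{0,2},\tau_{0,2}$ to those of $(W_0,L_{-2})$, so the discrete orientation grading is inherited.
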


\begin{corollary}
\label{cor:fukaya-gens}
We have
\begin{align*}
  (L_n, L_n) &=
  \langle a_n^{(+)},\, b_n^{(-)},\, c_n^{(-)},\, d_n^{(+)} \rangle, &
  (L_n, L_{n\pm 2}) &=
  \langle
  r_{n\pm 2,n}^{(-)},\,\rbar_{n\pm 2,n}^{(-)},\,
  s_{n\pm 2,n}^{(+)},\,\sbar_{n\pm 2,n}^{(+)}
  \rangle,
\end{align*}
where
\begin{align*}
  &a_n = \alpha_1^n \cdot a_0, &
  &b_n = \alpha_1^n \cdot b_0, &
  &c_n = \alpha_1^n \cdot c_0, &
  &d_n = \alpha_1^n \cdot d_0, \\
  &r_{n+2,n} = \alpha_1^n \cdot r_{2,0}, &
  &\rbar_{n+2,n} = \alpha_1^n \cdot \rbar_{2,0}, &
  &s_{n+2,n} = \alpha_1^n \cdot s_{2,0}, &
  &\sbar_{n+2,n} = \alpha_1^n \cdot \sbar_{2,0}, \\
  &r_{n-2,n} = \alpha_1^{n-2} \cdot r_{0,2}, &
  &\rbar_{n-2,n} = \alpha_1^{n-2} \cdot \rbar_{0,2}, &
  &s_{n-2,n} = \alpha_1^{n-2} \cdot s_{0,2}, &
  &\sbar_{n-2,n} = \alpha_1^{n-2} \cdot \sbar_{0,2}.
\end{align*}
\end{corollary}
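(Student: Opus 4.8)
The plan is to deduce the corollary from Theorems \ref{theorem:gens-L0-L0} and \ref{theorem:gens-L2-L0} by transporting their computations under the symplectomorphism induced by $\alpha_1^n$. First I would recall from Section \ref{ssec:mcg} that $\alpha_1^n \in \PMCG_2(T^2)$ acts on $R^*(T^2,2)$ by an orientation-preserving symplectomorphism $\psi_n$, that $\psi_n(L_0) = L_n$ and $\psi_n(L_{\pm 2}) = L_{n \pm 2}$, and that $\psi_n$ carries the chosen orientation of $L_0$ (resp. $L_{\pm 2}$) to the chosen orientation of $L_n$ (resp. $L_{n \pm 2}$), since the parametrizing maps out of $R_\pi^\natural(U_1,T_1)$ were defined to differ only by post-composition with $\alpha_1^n$. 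A symplectomorphism carries Lagrangians to Lagrangians, Hamiltonian pushoffs to Hamiltonian pushoffs (for the pushed-forward Hamiltonian), transverse intersection points to transverse intersection points, pseudo-holomorphic disks to pseudo-holomorphic disks (for the pushed-forward almost complex structure), and preserves the Lagrangian-Grassmannian data --- canonical short paths and their effect on chosen orientations --- that determines the orientation grading. So $\psi_n$ identifies the generators and orientation gradings of $(L_0,L_0)$, $(L_0,L_2)$, and $(L_2,L_0)$ with those of the corresponding morphism spaces built from $L_n$ and $L_{n\pm 2}$.

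Next I would set up the bookkeeping that reduces the statements for $(L_n, L_{n\pm 2})$ to the base cases: $\psi_n$ carries $(L_0,L_2)$ to $(L_n,L_{n+2})$, while the action of $\alpha_1^{n-2}$ carries $(L_2,L_0)$ to $(L_n,L_{n-2})$; this is exactly why the statement takes $r_{n+2,n} = \alpha_1^n \cdot r_{2,0}$ but $r_{n-2,n} = \alpha_1^{n-2} \cdot r_{0,2}$ (and likewise for the barred and $s$-type generators), and $a_n = \alpha_1^n \cdot a_0$, etc. With this in hand the content reduces to checking, for each of these morphism spaces, that the images under the relevant power of $\alpha_1$ of the generators found in Theorems \ref{theorem:gens-L0-L0} and \ref{theorem:gens-L2-L0} really do form a full generating set, with the indicated orientation gradings.

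The point that requires care --- and the step I expect to be the main obstacle --- is that the pushoff Hamiltonian $H = \tfrac12(\tr A + \eta\,\tr B)$ fixed in Section \ref{ssec:hamiltonian} is not $\alpha_1$-invariant, so $\psi_n(L_0^{(1)})$ is a Hamiltonian pushoff of $L_n$ by $H\circ\psi_n^{-1}$ rather than by $H$ itself, and a priori the morphism space $(L_n,L_n) = \langle L_n^{(1)}\cap L_n^{(0)}\rangle$ is computed with a different perturbation. I would dispose of this by rerunning the degeneration arguments of Theorems \ref{theorem:gens-L0-L0} and \ref{theorem:gens-L2-L0} with $\psi_n$ applied throughout: in the limit where the Hamiltonian pushoff (and, in the $(L_n,L_n)$ case, the holonomy perturbation of the first copy) is turned off, one has $W_n \cap L_n = \psi_n(W_0\cap L_0) = \{q_n\}$ with $q_n := \psi_n(p_D)$, a transverse four-fold intersection whose tangent-vector data at $q_n$ is the image under the linear isomorphism $d\psi_n$ of the vectors in Lemma \ref{lemma:tangent-vectors}; restoring the perturbations splits $q_n$ into exactly four intersection points and introduces no others, and since the perturbations may be taken arbitrarily small these four points are the asserted generators up to the choice of perturbation datum, which is immaterial for the homotopy type of the morphism complex and for the resulting $A_\infty$ structure. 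The orientation gradings come out unchanged because $d\psi_n$ is a linear symplectomorphism carrying the relevant tangent planes and chosen orientations to the corresponding ones, hence carrying canonical short paths to canonical short paths; and the gradings $a_n^{(+)}$, $d_n^{(+)}$ then follow, just as in the proof of Theorem \ref{theorem:gens-L0-L0}, from the vanishing of the self-intersection number of $L_n$ (equal to that of $L_0$, as $\psi_n$ is a diffeomorphism) together with the gradings $b_n^{(-)}$, $c_n^{(-)}$. Running the same argument with $(L_0,L_2)$ and $(L_2,L_0)$ in place of $(L_0,L_0)$ gives the remaining assertions.
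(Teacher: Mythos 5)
Your proposal is correct and follows essentially the same route the paper (implicitly) takes: the corollary is stated with no proof beyond the remark ``by acting with suitable powers of $\alpha_1$,'' which is exactly the symplectomorphism-transport argument you carry out, including the use of Theorems \ref{theorem:gens-L0-L0} and \ref{theorem:gens-L2-L0} as base cases and the self-intersection-number argument for the gradings of $a_n$ and $d_n$. The one place you go beyond the paper's one-line justification is your discussion of the non-$\alpha_1$-invariance of the pushoff Hamiltonian $H$; this is a genuine subtlety glossed over in the paper, and your resolution --- that either Hamiltonian pushoff gives a small generic perturbation separating the same four-fold tangency at $\psi_n(p_D)$ into the same number of points with the same orientation gradings --- is the right way to handle it.
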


\subsection{Lagrangians and Lagrangian intersections for links in
  $S^3$}
\label{sec:lagrangians-s3}

Given a link $L$ in $S^3$, we can split $S^3$ into two solid
tori, each containing a 1-tangle, that are glued together using the
mapping class group element $s$ defined in Section \ref{ssec:mcg}.
As we describe in Section \ref{sec:s3}, we will choose the splitting
in such a way that one of the 1-tangles is trivial.
The other 1-tangle is projected onto an annulus to yield a tangle
diagram, which is closed with the trivial tangle to yield a
description of the original link $L$.
We thus define unperturbed Lagrangians that can used for describing
links in $S^3$:
\begin{align*}
  \Wbar_n &= s \cdot W_n.
\end{align*}
We can also use the relation
$s \alpha_1 = \beta_1 s$ to express
$\Wbar_n$ as
\begin{align*}
  \Wbar_n &= s \cdot \Wbar_n = s\alpha_1^n \cdot W_0 =
  \beta_1^n s \cdot W_0 = \beta_1^n \cdot \Wbar_n.
\end{align*}
The following result describes the action of $s^2$ on the Lagrangians
$W_n$ and $\Wbar_n$:

\begin{lemma}
We have
\begin{align*}
  s^2 \cdot W_n &= W_{1-n}, &
  s^2 \cdot \Wbar_n &= \Wbar_{1-n}.
\end{align*}
\end{lemma}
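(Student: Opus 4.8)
The plan is to verify the two identities $s^2 \cdot W_n = W_{1-n}$ and $s^2 \cdot \Wbar_n = \Wbar_{1-n}$ purely by manipulating mapping class group elements acting on the single Lagrangian $W_0$, using the presentation of $\PMCG_2(T^2)$ in Theorem \ref{theorem:pmcg-t2-2} and the relations for $s$ derived just before the statement, namely $s^4 = 1$, $s\alpha_1 s^{-1} = \beta_1$, $s\beta_1 s^{-1} = \alpha_1^{-1}$. The only extra geometric input I will need is a single base case: that $s^2$ fixes $W_0$ up to the correct index shift, i.e.\ $s^2 \cdot W_0 = W_1$. Everything else should be formal.

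First I would establish the base case $s^2 \cdot W_0 = W_1 = \alpha_1 \cdot W_0$. Since $W_0 = \{B = \id\}$ by Theorem \ref{theorem:W0}, and $s$ acts on $R^*(T^2,2)$ via (the inverse of) its image in $\Out(\pi_1)$, I would compute the action of $s^2$ on the generators $(A,B,a,b)$ using the explicit description of the $\PMCG_2(T^2)$-action on $R(T^2,2)$ from \cite{Boozer-2}, together with $s = \Abar b \abar$. Applying this to a representative of a point of $W_0$ (where $B = \id$) and comparing with the explicit form of $W_1 = \alpha_1 \cdot W_0$ should give the base case; alternatively, one can observe that $s^2$ is the hyperelliptic-type involution and track the winding number of the corresponding trivial tangle directly, which changes $n \mapsto 1 - n$ because $s^2$ reverses the roles of the two boundary circles of the annulus.

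Given the base case, the first identity is immediate: using $s^2 W_0 = W_1$ together with $s \alpha_1 = \beta_1 s$ (equivalently $s\alpha_1 s^{-1} = \beta_1$), one writes
\begin{align*}
  s^2 \cdot W_n = s^2 \alpha_1^n \cdot W_0 = (s^2 \alpha_1^n s^{-2}) s^2 \cdot W_0 = (s^2 \alpha_1^n s^{-2}) \cdot W_1.
\end{align*}
Now $s^2 \alpha_1 s^{-2} = s \beta_1 s^{-1} = \alpha_1^{-1}$, so $s^2 \alpha_1^n s^{-2} = \alpha_1^{-n}$, giving $s^2 \cdot W_n = \alpha_1^{-n} \cdot W_1 = \alpha_1^{-n}\alpha_1 \cdot W_0 = \alpha_1^{1-n} \cdot W_0 = W_{1-n}$. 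For the second identity, recall $\Wbar_n = s \cdot W_n$, so
\begin{align*}
  s^2 \cdot \Wbar_n = s^2 s \cdot W_n = s \cdot (s^2 \cdot W_n) = s \cdot W_{1-n} = \Wbar_{1-n},
\end{align*}
where the middle equality uses that $s^2$ is central enough for this rearrangement — more precisely it just uses associativity of the group action, $s^2 \cdot (s \cdot W_n) = (s^2 s) \cdot W_n = (s s^2) \cdot W_n = s \cdot (s^2 \cdot W_n)$, which holds since $s$ commutes with $s^2$.

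The main obstacle is the base case $s^2 \cdot W_0 = W_1$: it is the only step requiring genuine computation in the character variety rather than formal group theory, and one must be careful about the distinction between $s$ and $s^{-1}$ in the action (the action is $[\phi]\cdot[\rho] = [\rho \circ \tilde f([\phi])^{-1}]$) and about the conjugation identifications on representatives, so that the identification of the image with $W_1$ rather than, say, $W_{-1}$ is pinned down correctly. I would handle this by explicitly applying the formulas of \cite{Boozer-2} for the $\PMCG_2(T^2)$-action, using the factorization $s = \Abar b \abar$, to the standard-coordinate representative of $W_0(\chi,\psi)$ from Theorem \ref{theorem:W0}, and matching the result against $\alpha_1 \cdot W_0(\chi,\psi)$.
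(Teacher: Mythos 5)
Your proposal is correct and follows essentially the same route as the paper: establish the base case $s^2 \cdot W_0 = W_1$ by a direct computation in the character variety, deduce $s^2 \cdot W_n = W_{1-n}$ via $s^2 \alpha_1^n s^{-2} = \alpha_1^{-n}$ (exactly the conjugation identity the paper uses implicitly in writing $s^2 \alpha_1^n \cdot W_0 = \alpha_1^{-n} s^2 \cdot W_0$), and then get $s^2 \cdot \Wbar_n = s \cdot W_{1-n} = \Wbar_{1-n}$ by writing $\Wbar_n = s \cdot W_n$ and commuting $s^2$ past $s$. The only cosmetic difference is in the base case: the paper checks $(\tr B)(\alpha_1^{-1} s^2 \cdot W_0) = 2$ to conclude $\alpha_1^{-1} s^2 \cdot W_0 = W_0 = \{B = \id\}$, which is a slightly slicker way of doing the same comparison you describe.
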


\begin{proof}
First we show that $s^2 \cdot W_0 = W_1$.
Recall from Theorem \ref{theorem:W0} that
$W_0 = \{B = \id\}$.
A computation gives
$(\tr B)(\alpha_1^{-1} s^2 \cdot W_0) = 2$, so
$\alpha_1^{-1} s^2 \cdot W_0 = W_0$ and thus
$s^2 \cdot W_0 = \alpha_1 \cdot W_0 = W_1$.
We then have
\begin{align*}
  s^2 \cdot W_n &= s^2 \alpha_1^n \cdot W_0 =
  \alpha_1^{-n} s^2 \cdot W_0 = \alpha_1^{-n} \cdot W_1 =
  W_{1-n}, &
  s^2 \cdot \Wbar_n &= s^3 \cdot W_n =
  s\cdot W_{1-n} =
  \Wbar_{1-n}.
\end{align*}
\end{proof}

We can depict the action of $s$ on the unperturbed Lagrangians $W_n$
and $\Wbar_n$ as
\begin{eqnarray*}
\begin{tikzcd}
  W_n \arrow{r}{s} &
  \Wbar_n \arrow{d}{s} \\
  \Wbar_{1-n} \arrow{u}{s} &
  W_{1-n}. \arrow{l}{s}
\end{tikzcd}
\end{eqnarray*}

We can explicitly describe the Lagrangian $\Wbar_0$:

\begin{lemma}
\label{lemma:W0bar}
We have
\begin{align*}
  \Wbar_0 = \{A = \id\} = R^*_3(T^2,2) \cap \{\alpha = 0\}.
\end{align*}
\end{lemma}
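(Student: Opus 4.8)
The plan is to use the geometric picture of $s$ as a counterclockwise $\pi/2$ rotation of $T^2$: such a rotation interchanges the fundamental cycles $A$ and $B$ up to inversion and conjugation, and since traces of $SU(2)$ elements are invariant under inversion and conjugation this should give the clean identity $(\tr A)\circ s = \tr B$ of functions on $R(T^2,2)$. Combined with the description $W_0 = \{B = \id\}$ from Theorem \ref{theorem:W0}, this identity immediately pins down $\Wbar_0 = s\cdot W_0$.

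Concretely, I would carry out the argument in three steps. First, I would establish the key identity $(\tr A)\circ s = \tr B$ on $R(T^2,2)$. Using the explicit description in \cite{Boozer-2} of the action of $\PMCG_2(T^2)$ on $R(T^2,2)$, I would compute the action of $s = \Abar b \abar$ on a representation $[A,B,a,b]$ by applying the three (inverse) Dehn twists in turn, and check that $s$ carries the conjugacy class of $\rho(B)$ to that of $\rho(A)^{\pm 1}$ (possibly pre/post-composed with puncture loops, which do not change traces); equivalently one can simply verify that $\tr A$ is identically $2$ on $s\cdot W_0$ by a direct computation, in the style of the proof of the preceding lemma where $(\tr B)(\alpha_1^{-1}s^2\cdot W_0) = 2$ is computed. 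Second, granting the identity, I would argue: since $\tr B = 2$ is equivalent to $B = \id$ for $B\in SU(2)$, Theorem \ref{theorem:W0} gives $W_0 = \{x\in R^*(T^2,2)\mid (\tr B)(x) = 2\}$, and because $s$ acts as a bijection of $R^*(T^2,2)$,
\begin{align*}
  \Wbar_0 = s\cdot W_0 &= \{y\in R^*(T^2,2)\mid (\tr B)(s^{-1}\cdot y) = 2\} \\
  &= \{y\in R^*(T^2,2)\mid (\tr A)(y) = 2\} = \{A = \id\},
\end{align*}
where $\{A = \id\}$ denotes the irreducible locus of $\{[A,B,a,b]\mid A = \id\}$. (If one prefers the computational route of the first step, one instead gets $s\cdot W_0\subseteq\{A = \id\}$, and concludes equality using that $W_0$ is a closed subset of $R^*(T^2,2)$, hence so is $\Wbar_0$, while $\{A = \id\}$ is connected.) Third, I would identify $\{A = \id\}$ with $R_3^*(T^2,2)\cap\{\alpha = 0\}$: if $A = \id$ then $[A,B] = \id$, so the relation $[A,B]ab = \id$ forces $ab = \id$, whence $\{A = \id\}\subseteq R_3(T^2,2)$; and in the standard coordinates $(\alpha,\beta,s)$ of equation (\ref{eqn:alpha-beta-s}) the condition $A = \cos\alpha\,\id + \sin\alpha\,\qz = \id$ is equivalent to $\alpha\equiv 0$. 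Taking irreducible loci gives $\{A = \id\} = R_3^*(T^2,2)\cap\{\alpha = 0\}$.

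I expect the only real obstacle to be the first step: tracking the word $\Abar b \abar$ through the (somewhat involved) formulas of \cite{Boozer-2} for the $\PMCG_2(T^2)$-action, and in particular checking that the puncture-loop corrections that appear do not spoil the relation $(\tr A)\circ s = \tr B$ (equivalently, that $\tr A$ really is constant on $s\cdot W_0$). Everything after that is routine: the second step is formal once the identity is in hand, and the third step is an elementary computation in the standard coordinates on $R_3(T^2,2)$.
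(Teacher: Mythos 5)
Your proposal is correct and takes essentially the same route as the paper: the paper's proof is simply the one-liner "follows from Theorem \ref{theorem:W0}, the definition $\Wbar_0 = s\cdot W_0$, and the known action of $s$," and your three steps are exactly an unpacking of those ingredients (verifying that the $\pi/2$ rotation $s$ carries $\{B=\id\}$ to $\{A=\id\}$ via the explicit action from \cite{Boozer-2}, then observing that $A=\id$ forces $[A,B]=\id$ and hence $ab=\id$, giving membership in $R_3^*(T^2,2)\cap\{\alpha=0\}$).
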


\begin{proof}
The result follows from Theorem \ref{theorem:W0}, the definition
$\Wbar_0 = s \cdot W_0$, and the known action of $s$.
\end{proof}

The Lagrangians $W_0$ and $W_1$ correspond to the 1-tangles shown in
Figure \ref{fig:torus-arc}(a) and \ref{fig:torus-arc}(b), so the
Lagrangians $\Wbar_0$ and $\Wbar_1$ correspond to an
overpass arc $A_+$ and an underpass arc $A_-$ for a link in $S^3$.
Recall that the braid group elements $\alpha_1$ and $\beta_1$ drag the
point $p_1$ around fundamental cycles of $T^2$.
For $W_0$ and $W_1$, the element $\beta_1$ drags $p_1$ around a
meridian.
For $\Wbar_0$ and $\Wbar_1$, the element $\alpha_1$ drags $p_1$ around
a meridian.
As we would expect, the braid group element $\beta_1$ acts trivially
on $W_0$ and $W_1$ and the braid group element $\alpha_1$ acts
trivially on $\Wbar_0$ and $\Wbar_1$:

\begin{lemma}
We have
\begin{align*}
  \beta_1 \cdot W_0 &= W_0, &
  \beta_1 \cdot W_1 &= W_1, &
  \alpha_1 \cdot \Wbar_0 &= \Wbar_0, &
  \alpha_1 \cdot \Wbar_1 &= \Wbar_1.
\end{align*}
\end{lemma}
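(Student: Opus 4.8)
The plan is to reduce all four identities to the single claim $\beta_1\cdot W_0 = W_0$ and then prove that claim directly.

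For the reduction, recall from Section~\ref{ssec:mcg} the relations $s\alpha_1 s^{-1} = \beta_1$ and $s\beta_1 s^{-1} = \alpha_1^{-1}$; these give $s^{-1}\alpha_1 s = \beta_1^{-1}$, hence $\alpha_1 s = s\beta_1^{-1}$ and $s^{-2}\beta_1 s^2 = \beta_1^{-1}$. Assuming $\beta_1\cdot W_0 = W_0$, and therefore also $\beta_1^{-1}\cdot W_0 = W_0$, and using $W_1 = s^2\cdot W_0$ from the preceding lemma, I obtain
\[
  \beta_1\cdot W_1 = \beta_1 s^2\cdot W_0 = s^2\,(s^{-2}\beta_1 s^2)\cdot W_0 = s^2\beta_1^{-1}\cdot W_0 = s^2\cdot W_0 = W_1,
\]
and then, using $\Wbar_n = s\cdot W_n$,
\[
  \alpha_1\cdot\Wbar_0 = \alpha_1 s\cdot W_0 = s\beta_1^{-1}\cdot W_0 = s\cdot W_0 = \Wbar_0, \qquad
  \alpha_1\cdot\Wbar_1 = \alpha_1 s\cdot W_1 = s\beta_1^{-1}\cdot W_1 = s\cdot W_1 = \Wbar_1,
\]
the last equality using $\beta_1^{-1}\cdot W_1 = W_1$, which has just been obtained. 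So everything comes down to $\beta_1\cdot W_0 = W_0$. One could instead establish the two $\Wbar$ identities by the same reasoning applied to the solid torus on the other side of the Heegaard splitting, for which $\alpha_1$ drags $p_1$ around a meridian.

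For $\beta_1\cdot W_0 = W_0$ I would use the geometric meaning of $\beta_1$. The Lagrangian $W_0$ is the image of the pullback map $R^*(U_1,T_1)\to R^*(T^2,2)$ along the inclusion $T^2-\{p_1,p_2\}\hookrightarrow U_1-T_1$, where $U_1$ is the solid torus containing the trivial $1$-tangle of Figure~\ref{fig:torus-arc}(a). The push map carries $\beta_1$ to the point-push of $p_1$ once around the cycle $b$, and $b$ is a meridian of $U_1$, hence bounds an embedded disk in $U_1$; this provides enough room to extend the point-push to a homeomorphism $\Phi$ of the pair $(U_1,T_1)$ with $\Phi|_{\partial U_1}$ representing $\beta_1$. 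Since $\Phi^*\colon R^*(U_1,T_1)\to R^*(U_1,T_1)$ is a bijection that intertwines the restriction map $R^*(U_1,T_1)\to R^*(T^2,2)$ with the action of $\beta_1$, the image $W_0$ of the restriction is preserved by $\beta_1$. Alternatively, one can argue by computation exactly as in the proof of the preceding lemma: using the explicit description of the $\PMCG_2(T^2)$-action on $R(T^2,2)$ from \cite{Boozer-2}, check that $\tr B$ is $\beta_1$-invariant on $R^*(T^2,2)$ --- intuitively because $\beta_1$ pushes $p_1$ around a cycle disjoint from the fundamental cycle $B$ --- so that $\beta_1$ preserves $\{\tr B = 2\} = \{B = \id\} = W_0$ by Theorem~\ref{theorem:W0}, and note that $\beta_1\cdot W_0$ is then a closed connected submanifold of the connected manifold $W_0$ of the same dimension, hence equal to it. The analogous computational route for $\alpha_1\cdot\Wbar_0 = \Wbar_0$ uses $\Wbar_0 = \{A = \id\}$ from Lemma~\ref{lemma:W0bar} and the $\alpha_1$-invariance of $\tr A$.

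I expect the main obstacle to be precisely this last point, $\beta_1\cdot W_0 = W_0$: in the geometric approach, carefully justifying that the point-push extends over the handlebody and that the resulting square of character varieties commutes; in the computational approach, carrying out the verification with the explicit $\beta_1$-action. Everything else is a formal consequence of the mapping-class-group relations together with the already-established identities $W_0 = \{B = \id\}$, $W_1 = s^2\cdot W_0$, and $\Wbar_n = s\cdot W_n$.
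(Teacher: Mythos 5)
Your proposal is correct and follows essentially the same route as the paper. The paper first establishes $\beta_1 \cdot W_0 = W_0$ by computing that $\tr B$ evaluates to $2$ on $\beta_1^{-1}\cdot W_0$ (so that it lands in $\{B = \id\} = W_0$ by Theorem~\ref{theorem:W0}), and then derives the remaining three identities from the relations $\beta_1 s^2 = s^2\beta_1^{-1}$ and $\alpha_1 s = s\beta_1^{-1}$ together with $W_1 = s^2\cdot W_0$ and $\Wbar_n = s\cdot W_n$, exactly as in your reduction. Your computational route for the base case matches; the geometric point-push alternative you sketch is not what the paper does, and the ``closed connected submanifold of the same dimension'' step you add is unnecessary once $\beta_1$ is known to preserve the level set $\{B=\id\}$, but neither of these affects the correctness of the argument.
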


\begin{proof}
Recall from Theorem \ref{theorem:W0} that
$W_0 = \{B = \id\}$.
A computation gives $(\tr B)(\beta_1^{-1} \cdot W_0) = 2$, so
$\beta_1 \cdot W_0 = W_0$.
We have
\begin{align*}
  \beta_1 \cdot W_1 &=
  \beta_1 s^2 \cdot W_0 = s^2 \beta_1^{-1} \cdot W_0 =
  s^2 \cdot W_0 =
  W_1, &
  \alpha_1 \cdot \Wbar_n &=
  \alpha_1 s \cdot W_n = s \beta_1^{-1} \cdot W_n =
  s \cdot W_n =
  \Wbar_n
\end{align*}
for $n=0, 1$.
\end{proof}

\begin{theorem}
\label{theorem:Wbar0}
We have
\begin{align*}
  (\Wbar_0, L_n) = \langle \wbar_{n,0}^{(+)} \rangle,
\end{align*}
where the point $\wbar_{n,0}$ lies in $R_3^*(T^2,2)$ and has coordinates
\begin{align*}
  (\alpha,\beta,s)(\wbar_{n,0}) &=
  \left\{
  \begin{array}{ll}
    (0, \epsilon, 0) &
    \quad \mbox{for $n \equiv 0 \pmod 4$,} \\
    (0, \pi/2, \pi/2-\epsilon) &
    \quad \mbox{for $n \equiv 1 \pmod 4$,} \\
    (0, \pi - \epsilon, 0) &
    \quad \mbox{for $n \equiv 2 \pmod 4$,} \\
    (0, \pi/2, -\pi/2+\epsilon) &
    \quad \mbox{for $n \equiv 3 \pmod 4$.}
  \end{array}
  \right.
\end{align*}
\end{theorem}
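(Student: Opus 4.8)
The plan is to localize the computation inside the submanifold $R_3^*(T^2,2)$, where the standard coordinates $(\alpha,\beta,s)$ are available, and then to exploit the fact that $\alpha_1$ fixes $\Wbar_0$ as a set to reduce general $n$ to the case $n=0$. First I would record that the Hamiltonian pushoff $\Wbar_0^{(1)}$ still lies in $R_3^*(T^2,2)$: by Lemma \ref{lemma:W0bar} we have $\Wbar_0 = R_3^*(T^2,2)\cap\{\alpha=0\}$, and using the coordinate systems of Theorems \ref{theorem:coords-1}--\ref{theorem:coords-2} and the symplectic form of Theorem \ref{theorem:symplectic-form} one checks that the Hamiltonian vector field of $H=\frac12(\tr A+\eta\tr B)$ has vanishing $\partial_{t_k}$-component, so its flow preserves $R_3^*(T^2,2)$, and along $R_3^*(T^2,2)$ is governed by $\dot\alpha=-\eta\sin\beta$, $\dot\beta=\sin\alpha$, $\dot s=0$. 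Hence every point of $\Wbar_0^{(1)}\cap L_n$ lies in $R_3^*(T^2,2)$, and it suffices to intersect the surface $\Wbar_0^{(1)}\subset R_3^*(T^2,2)$, a small perturbation of $\{\alpha=0\}$, with the compact curve $L_n\cap R_3^*(T^2,2)$ inside the $3$-manifold $R_3^*(T^2,2)$.

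Next I would treat $n=0$ directly. Since $\Wbar_0=\{A=\mathbbm 1\}$, finding $\Wbar_0\cap L_0$ in the limit of vanishing Hamiltonian pushoff amounts to solving $A(L_0(\phi,\theta))=\mathbbm 1$ using the explicit formula of Theorem \ref{theorem:L0}. Writing $A=\hat u\,g$ as a product of the pure unit quaternion $\hat u$ with $g=\cos\phi+\sin\phi\,\hat w$ gives $\tr A=-2\sin\phi\,(\hat u\cdot\hat w)$, which equals $2$ only for $\phi=\pi/2$, $\theta=\pi$; conjugating the resulting representative into the standard form (\ref{eqn:alpha-beta-s}) shows this point has coordinates $(\alpha,\beta,s)=(0,\epsilon,0)$, so it is $\wbar_{0,0}$. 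Transversality of this intersection, the orientation grading $(+)$, and the persistence of a single intersection point once the Hamiltonian pushoff is turned on (using compactness of $L_0$ to rule out points migrating from the noncompact ends of $\Wbar_0$) are then obtained by the same tangent-vector computations used in Lemma \ref{lemma:tangent-vectors} and Theorems \ref{theorem:w0-l0-l2}--\ref{theorem:gens-L2-L0}.

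For general $n$ I would use $L_n=\alpha_1^n\cdot L_0$ together with $\alpha_1\cdot\Wbar_0=\Wbar_0$ to write, in the vanishing-pushoff limit, $\Wbar_0\cap L_n=\alpha_1^n\cdot(\Wbar_0\cap L_0)$, whence $\wbar_{n,0}=\alpha_1^n\cdot\wbar_{0,0}$ (transversality and grading are unchanged because $\alpha_1$ acts symplectically). It then remains to compute the $\alpha_1$-orbit of $\wbar_{0,0}$ using the explicit action of $\alpha_1$ on representations recorded in \cite{Boozer-2}, convert each image point into the standard coordinates, and verify that the orbit $\{\alpha_1^n\cdot\wbar_{0,0}\}_{n\ge 0}$ has period $4$ and runs through the four ``cardinal'' points $(0,\epsilon,0)$, $(0,\pi/2,\pi/2-\epsilon)$, $(0,\pi-\epsilon,0)$, $(0,\pi/2,-\pi/2+\epsilon)$ in order.

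I expect this last verification to be the main obstacle: it requires a careful, though elementary, analysis of the $\alpha_1$-action on $R_3^*(T^2,2)$ in the standard coordinates, in particular pinning down that $\alpha_1$ acts on a neighborhood of $\wbar_{0,0}$ in $\Wbar_0$ so that $\wbar_{0,0}$ has exact period $4$ (geometrically, that $\alpha_1$ restricted to the disk $\Wbar_0$ turns this point a quarter-turn). Everything else---the quaternion identities, the conjugations into the standard form, and the tangent-vector linear algebra for transversality and the orientation gradings---is routine and parallel to the computations already carried out in the paper.
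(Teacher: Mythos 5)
Your proposal follows essentially the same route as the paper's proof: identify $\Wbar_0$ as $R_3^*(T^2,2)\cap\{\alpha=0\}$, pin down $\Wbar_0\cap L_0$ as the single point at $(\alpha,\beta,s)=(0,\epsilon,0)$ from the explicit formula in Theorem \ref{theorem:L0}, use $\alpha_1\cdot\Wbar_0=\Wbar_0$ and $L_n=\alpha_1^n\cdot L_0$ to propagate to $\wbar_{n,0}=\alpha_1^n\cdot\wbar_{0,0}$, trace the period-$4$ orbit under $\alpha_1$, and verify transversality and the $(+)$ grading via the $u_\mu$/$h^\mu$ tangent-vector computation in the $(\alpha_2,\beta_2,s_2,t_2)$ chart. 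Your extra observation that the Hamiltonian flow of $H$ has vanishing $\partial_{t_k}$-component and hence preserves $R_3^*(T^2,2)$ is a useful clarification of a point the paper passes over silently (it works entirely with the unperturbed $\Wbar_0$, tacitly invoking transversality), but it does not change the argument.
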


\begin{proof}
Recall from Lemma \ref{lemma:W0bar} that
$\Wbar_0 = R_3^*(T^2,2) \cap \{\alpha = 0\}$.
From Lemma \ref{theorem:L0} for $L_0$ it follows that
$\Wbar_0$ and $L_0$ intersect in a unique point $\wbar_{0,0}$ in
$R_3^*(T^2,2)$ with coordinates
\begin{align*}
  (\alpha,\beta,s)(\wbar_{0,0}) = (0,\epsilon,0).
\end{align*}
We apply $\alpha_1^n$ to both sides of the equation
$\Wbar_0 \cap L_0 = w_0$ to obtain
\begin{align*}
  (\alpha_1^n \cdot \Wbar_0) \cap
  (\alpha_1^n \cdot L_0) =
  \Wbar_0 \cap L_n =
  \alpha_1^n \cdot \wbar_{0,0},
\end{align*}
where we substituted $L_n = \alpha_1^n \cdot L_0$ and used the fact
that $\Wbar_0 = \alpha_1^n \cdot \Wbar_0$.
So $\Wbar_0$ and $L_n$ intersect in a single point
$\wbar_{n,0} := \alpha_1^n \cdot \wbar_{0,0}$.
Using the coordinates of $\wbar_{0,0}$ and the known action of
$\alpha_1$, we find that
\begin{align*}
  \wbar_{4r,0} &= \Wbar_0(\pi/2,-\pi/2+\epsilon) =
  L_0(3\pi/2,0), &
  \wbar_{4r+1,0} &= \Wbar_0(\epsilon,0), \\
  \wbar_{4r+2,0} &= \Wbar_0(\pi/2,\pi/2-\epsilon), &
  \wbar_{4r+3,0} &= \Wbar_0(\pi - \epsilon,0),
\end{align*}
so in particular $\wbar_{n,0}$ lies in $R_3^*(T^2,2)$,
and $\wbar_{n,0}$ has the coordinates stated in the theorem.

To check that $\Wbar_0$ and $L_n$ intersect transversely and to
compute the orientation grading of $\wbar_{n,0}$, we compute the
tangent vectors to $\Wbar_0$ and $L_n$ at $\wbar_{n,0}$.
We define the following basis of vector fields on
$R_3^*(T^2,2) \cap \{\sin\beta \neq 0\}$:
\begin{align*}
  u_1 &= \sin\beta\,\partial_{\alpha_2}, &
  u_2 &= \sin\beta\,\partial_{\beta_2}, &
  u_3 &= \sin\beta\,\partial_{s_2}, &
  u_4 &= \cos s\,\partial_{t_2}.
\end{align*}
These vector fields are invariant under the coordinate identifications
given in equation (\ref{eqn:coords-ident}) and are thus well-defined.
A computation using the symplectic form given in Theorem
\ref{theorem:symplectic-form} shows that these vector fields do in
fact form a basis.
We define functions $h^\mu:R(T^2,2) \rightarrow \Reals$:
\begin{align*}
  h^1 &= -\frac{1}{2}(\tr AB - \tr B), &
  h^2 &= -\frac{1}{2}(\tr B), &
  h^3 &= -\frac{1}{2}(\tr Ba), &
  h^4 &= \frac{1}{4}(\tr Ba + \tr B b).
\end{align*}
We define derivatives $\partialbar_\mu$:
\begin{align*}
  \partialbar_1 &= \partial_{\alpha_2}, &
  \partialbar_2 &= \partial_{\beta_2}, &
  \partialbar_3 &= \partial_{s_2}, &
  \partialbar_4 &= \partial_{t_2}.
\end{align*}
A calculation shows that $\partialbar_\nu h^\mu$ is diagonal at each
intersection point $\wbar_{n,0}$, so the functions $h^\mu$ serve as
convenient coordinates these points.
A calculation using the coordinates $h^\mu$, the vector fields
$u_\mu$, and the tangent vectors for $W_0$ given in Lemma
\ref{lemma:tangent-vectors} shows that $\Wbar_0$ and $L_n$ intersect
transversely at $\wbar_{n,0}$ and $\wbar_{n,0}$ has positive
intersection grading.
\end{proof}

A similar argument proves:

\begin{theorem}
\label{theorem:Wbar1}
We have
\begin{align*}
  (\Wbar_1, L_n) = \langle \wbar_{n,1}^{(+)} \rangle,
\end{align*}
where the point $\beta_1^{-1} \cdot \wbar_{n,1}$ lies in
$R_3^*(T^2,2)$ and has coordinates
\begin{align*}
  (\alpha,\beta,\gamma)(\beta_1^{-1} \cdot \wbar_{n,1}) &=
  \left\{
  \begin{array}{ll}
    (0, \nu_0, 0) &
    \quad \mbox{for $n \equiv 0 \pmod 4$,} \\
    (0, \pi/2, \pi/2-\nu_0) &
    \quad \mbox{for $n \equiv 1 \pmod 4$,} \\    
    (0, \pi - \nu_0, 0) &
    \quad \mbox{for $n \equiv 2 \pmod 4$,} \\    
    (0, \pi/2, -\pi/2+\nu_0) &
    \quad \mbox{for $n \equiv 3 \pmod 4$,}
  \end{array}
  \right.
\end{align*}
where $\nu_0 = \epsilon \sin\phi_0$ and $\phi_0$ is defined such that
\begin{align*}
  \phi_0 + \epsilon \sin \phi_0 = \pi/2.
\end{align*}
\end{theorem}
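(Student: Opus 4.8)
The plan is to follow the proof of Theorem~\ref{theorem:Wbar0}, with $\Wbar_0$ replaced by $\Wbar_1 = \beta_1\cdot\Wbar_0$. The reason the statement records the coordinates of $\beta_1^{-1}\cdot\wbar_{n,1}$ rather than of $\wbar_{n,1}$ is that $\beta_1$ does not preserve $R_3^*(T^2,2)$: while $\beta_1^{-1}\cdot\Wbar_1 = \Wbar_0 = R_3^*(T^2,2)\cap\{\alpha = 0\}$ lies in the slice where standard coordinates are available (Lemma~\ref{lemma:W0bar}), $\Wbar_1$ itself does not.

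First I would reduce to $n = 0$. Since $\alpha_1\cdot\Wbar_1 = \Wbar_1$ and $L_n = \alpha_1^n\cdot L_0$, we have $\Wbar_1\cap L_n = \alpha_1^n\cdot(\Wbar_1\cap L_0)$, so it is enough to show that $\Wbar_1$ and $L_0$ meet transversely in a single point $\wbar_{0,1}$ of positive orientation grading, with $\beta_1^{-1}\cdot\wbar_{0,1}$ having coordinates $(0,\nu_0,0)$ in $R_3^*(T^2,2)$. The general case then follows by setting $\wbar_{n,1} = \alpha_1^n\cdot\wbar_{0,1}$, iterating the known action of $\alpha_1$ on these coordinates, and invoking the period-four behavior encoded in $s^4 = 1$ and $(\bbar a A)^4 = 1$ to reduce to $n\bmod 4$. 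Transversality and the orientation grading survive this process because the mapping class group acts symplectically on $R^*(T^2,2)$.

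To locate $\wbar_{0,1}$ I would translate by $\beta_1^{-1}$, so that the point in question becomes $\Wbar_0\cap(\beta_1^{-1}\cdot L_0) = R_3^*(T^2,2)\cap\{\alpha = 0\}\cap(\beta_1^{-1}\cdot L_0)$. Feeding the matrices describing $L_0$ in Theorem~\ref{theorem:L0} through the explicit action of $\beta_1$ on $R(T^2,2)$ from \cite{Boozer-2}, one picks out the part of $\beta_1^{-1}\cdot L_0$ lying in $R_3^*(T^2,2)$ and then imposes $\alpha = 0$. The $\beta_1$-twist pulls the holonomy-perturbation parameter $\nu = \epsilon\sin\phi$ into the $\alpha$-coordinate, so the condition $\alpha = 0$ turns into the transcendental equation $\phi + \epsilon\sin\phi = \pi/2$, with unique solution $\phi = \phi_0$; the intersection point then has $\beta$-coordinate $\nu_0 = \epsilon\sin\phi_0$ and $s$-coordinate $0$. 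To rule out additional intersection points I would argue as in Theorem~\ref{theorem:Wbar0}: in the limit $\epsilon\to 0$ one has $L_0\to W_0$ (Theorem~\ref{theorem:map-L0-W0}) and $\beta_1\cdot W_0 = W_0$, so $\Wbar_1$ and $W_0$ could meet only along the reducible locus $\{\alpha = \beta = 0\}$ of $R_3(T^2,2)$ and hence not at all in $R^*(T^2,2)$; a single transverse intersection then emerges from the corresponding puncture once the holonomy perturbation is turned on.

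Finally I would compute the tangent spaces of $\Wbar_0 = \beta_1^{-1}\cdot\Wbar_1$ and $\beta_1^{-1}\cdot L_n$ at $\beta_1^{-1}\cdot\wbar_{n,1}$ in the basis of vector fields $u_1,\dots,u_4$ on $R_3^*(T^2,2)\cap\{\sin\beta\neq 0\}$, using the coordinate functions $h^1,\dots,h^4$ from the proof of Theorem~\ref{theorem:Wbar0} together with the symplectic form of Theorem~\ref{theorem:symplectic-form}, to confirm transversality and positivity of the orientation grading. I expect the main obstacle to be the middle step: pushing the explicit $L_0$-formulas through $\beta_1^{-1}$, extracting the equation $\phi_0 + \epsilon\sin\phi_0 = \pi/2$ in a clean form, and verifying that no spurious intersection points appear.
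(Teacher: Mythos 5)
Your proposal is correct and matches the paper's approach: the paper proves this result by the same argument as Theorem \ref{theorem:Wbar0}, i.e.\ translating by $\beta_1^{-1}$ so the intersection becomes $\Wbar_0 \cap (\beta_1^{-1}\cdot L_n)$ inside $R_3^*(T^2,2)$, reducing to $n=0$ via $\alpha_1\cdot\Wbar_1=\Wbar_1$, and checking transversality and the orientation grading with the $u_\mu$ vector fields and $h^\mu$ coordinate functions. (Your auxiliary $\epsilon\to 0$ argument is not needed, since every intersection point automatically lies in $R_3^*(T^2,2)$ where the explicit parametrization already locates all of them, exactly as in the $\Wbar_0$ case.)
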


\subsection{$A_\infty$ operations for $\calL$}
\label{ssec:ops-fukaya}

We would like to determine the $A_\infty$ operations involving the
Lagrangians $L_0$ and $L_2$, but a direct calculation of these
operations does not appear feasible.
Instead, we conjecture the operations we will need based on certain
considerations.

Let $E \rightarrow R_2^*(T^2,2)$ denote the restriction of the tangent
bundle of $R^*(T^2,2)$ to the 2-dimensional submanifold $R_2^*(T^2,2)$.
The bundle $E$ has the structure of a symplectic vector bundle with
symplectic form given by Theorem \ref{theorem:symplectic-form}.
We can split $(E,\omega)$ as $(E_1,\omega_1) \oplus (E_2,\omega_2)$,
where
\begin{align*}
  \omega_1 &=
  d\alpha_1 \wedge d\beta_1 = d\alpha_1 \wedge d\beta_1, &
  \omega_2 &=
  \sin\alpha_1\,ds_1 \wedge dt_1 = -\sin\beta_2\,ds_2 \wedge dt_2,
\end{align*}
and $E_1$ and $E_2$ are the kernel of
$v \mapsto \omega_2(v,-)$ and $v \mapsto \omega_1(v,-)$, respectively.
We can choose compatible almost complex structures $J_k$ on
$(E_k, \omega_k)$ and define an almost complex structure
$J = J_1 \oplus J_2$ on $E$.
We can extend $J$ to an almost-complex structure on $R^*(T^2,2)$, and
with this choice of complex structure pseudo-holomorphic disks in
$R_2^*(T^2,2)$ are also pseudo-holomorphic disks in $R^*(T^2,2)$.

To conjecture the $A_\infty$ operations, we plot the intersections of
the relevant Lagrangians with $R_2^*(T^2,2)$ and look for topological
disks within that 2-dimensional submanifold of $R^*(T^2,2)$.
For example, to conjecture $\mu_\calL^1$ operations for generators of
$(L_0,L_0)$, we look for topological disks in Figure \ref{fig:l0-l0}.
The only bigons we find are two canceling pairs of bigons
$a_0 \rightarrow d_0$, so we conjecture that $\mu_\calL^1 = 0$ for
$(L_0,L_0)$.
Based on Figure \ref{fig:l0-l2-l2-l0}, we conjecture that
$\mu_\calL^1 = 0$ for $(L_0,L_2)$ and $(L_2,L_0)$.

Next we consider product operations.
Based on counts of topological disks in $R_2^*(T^2,2)$, we conjecture
the following product operations:

\begin{enumerate}
\item
We have the following operations
$\mu_\calL^2:(L_0, L_0) \otimes (L_0, L_0) \rightarrow (L_0, L_0)$:
\begin{align*}
  & \mu_\calL^2(b_0,c_0) = \mu_\calL^2(c_0, b_0) = d_0, \\
  & \mu_\calL^2(a_0, a_0) = a_0, &
  & \mu_\calL^2(a_0, b_0) = \mu_\calL^2(b_0, a_0) = b_0, \\
  & \mu_\calL^2(a_0, c_0) = \mu_\calL^2(c_0, a_0) = c_0, &
  & \mu_\calL^2(a_0, d_0) = \mu_\calL^2(d_0, a_0) = d_0.
\end{align*}

\item
We have the following operations
$\mu_\calL^2:(L_2, L_0) \otimes (L_0, L_2) \rightarrow (L_0, L_0)$:
\begin{align*}
  & \mu_\calL^2(\sbar_{0,2} \cdot \rbar_{2,0}) =
  \mu_\calL^2(r_{0,2}, s_{2,0}) = c_0, \\
  & \mu_\calL^2(r_{0,2}, r_{2,0}) =
  \mu_\calL^2(\rbar_{0,2}, \rbar_{2,0}) =
  \mu_\calL^2(s_{0,2}, s_{2,0}) =
  \mu_\calL^2(\sbar_{0,2}, \sbar_{2,0}) = d_0.
\end{align*}

\item
We have the following operations
$\mu_\calL^2:(L_0, L_2) \otimes (L_2, L_0) \rightarrow (L_2, L_2)$:
\begin{align*}
  & \mu_\calL^2(r_{2,0}, \sbar_{0,2}) = 
  \mu_\calL^2(s_{2,0}, \rbar_{0,2}) = c_2, \\
  & \mu_\calL^2(r_{2,0}, r_{0,2}) =
  \mu_\calL^2(\rbar_{2,0}, \rbar_{0,2}) =
  \mu_\calL^2(s_{2,0}, s_{0,2}) =
  \mu_\calL^2(\sbar_{2,0}, \sbar_{0,2}) = d_2.
\end{align*}

\item
We have the following operations
$\mu_\calL^2:(L_0, L_2) \otimes (L_0, L_0) \rightarrow (L_0, L_2)$:
\begin{align*}
  & \mu_\calL^2(\rbar_{2,0}, b_0) = \sbar_{2,0}, &
  & \mu_\calL^2(s_{2,0}, b_0) = r_{2,0}, \\
  & \mu_\calL^2(r_{2,0}, a_0) = r_{2,0}, &
  & \mu_\calL^2(\rbar_{2,0}, a_0) = \rbar_{2,0}, &
  & \mu_\calL^2(s_{2,0}, a_0) = s_{2,0}, &
  & \mu_\calL^2(\sbar_{2,0}, a_0) = \sbar_{2,0}.
\end{align*}

\item
We have the following operations
$\mu_\calL^2:(L_2, L_0) \otimes (L_2, L_2) \rightarrow (L_2, L_0)$:
\begin{align*}
  &\mu_\calL^2(\sbar_{0,2}, b_2) = r_{0,2}, &
  &\mu_\calL^2(\rbar_{0,2}, b_2) = s_{0,2}, \\
  &\mu_\calL^2(r_{0,2}, a_2) = r_{0,2}, &
  &\mu_\calL^2(\rbar_{0,2}, a_2) = \rbar_{0,2}, &
  &\mu_\calL^2(s_{0,2}, a_2) = s_{0,2}, &
  &\mu_\calL^2(\sbar_{0,2}, a_2) = \sbar_{0,2}.
\end{align*}

\item
We have the following operations
$\mu_\calL^2:(L_2, L_2) \otimes (L_0, L_2) \rightarrow (L_0, L_2)$:
\begin{align*}
  & \mu_\calL^2(b_2, r_{2,0}) = \sbar_{2,0}, &
  & \mu_\calL^2(b_2, s_{2,0}) = \rbar_{2,0}, \\
  & \mu_\calL^2(a_2, r_{2,0}) = r_{2,0}, &
  & \mu_\calL^2(a_2, \rbar_{2,0}) = \rbar_{2,0}, &
  & \mu_\calL^2(a_2, s_{2,0}) = s_{2,0}, &
  & \mu_\calL^2(a_2, \sbar_{2,0}) = \sbar_{2,0}.
\end{align*}

\item
We have the following operations
$\mu_\calL^2:(L_0, L_0) \otimes (L_2, L_0) \rightarrow (L_2, L_0)$:
\begin{align*}
  & \mu_\calL^2(b_0, \sbar_{0,2}) = \rbar_{0,2}, &
  & \mu_\calL^2(b_0, r_{0,2}) = s_{0,2}, \\
  & \mu_\calL^2(a_0, r_{0,2}) = r_{0,2}, &
  & \mu_\calL^2(a_0, \rbar_{0,2}) = \rbar_{0,2}, &
  & \mu_\calL^2(a_0, s_{0,2}) = s_{0,2}, &
  & \mu_\calL^2(a_0, \sbar_{0,2}) = \sbar_{0,2}.
\end{align*}

\end{enumerate}

There are several consistency checks we can perform on our conjectured
operations.
First, the product operations must respect orientation gradings: the
orientation grading of a product of generators must be the product of
their orientation gradings.
Second, if $\mu_\calL^1 = 0$, then the relations for the $A_\infty$
operations imply that the product operations must be associative.
Third, the product operations must satisfy the following invariance
result:

\begin{theorem}
\label{theorem:invariance}
The product operations
$(L_{n_1}, L_{n_2}) \otimes (L_{n_0}, L_{n_1}) \rightarrow
(L_{n_0}, L_{n_2})$
for $n_1, n_2, n_3 \in \{0, 2\}$
are invariant under the substitutions
\begin{align*}
  &a_0 \leftrightarrow a_2, &
  &b_0 \leftrightarrow b_2, &
  &c_0 \leftrightarrow c_2^, &
  &d_0 \leftrightarrow d_2, \\
  &r_{2,0} \leftrightarrow r_{0,2}, &
  &\rbar_{2,0} \leftrightarrow \rbar_{0,2}, &
  &s_{2,0} \leftrightarrow \sbar_{0,2}, &
  &\sbar_{2,0} \leftrightarrow s_{0,2}.
\end{align*}
\end{theorem}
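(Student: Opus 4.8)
The plan is to prove Theorem~\ref{theorem:invariance} by direct inspection of the conjectured product operations, organized around the observation that the displayed substitution is the combinatorial shadow of the reflection $L_n \leftrightarrow L_{2-n}$ of the family of Lagrangians. Write $\iota$ for the substitution; on indices it is the involution $n \mapsto 2-n$ of $\{0,2\}$, so it interchanges the morphism spaces $(L_0,L_0) \leftrightarrow (L_2,L_2)$ and $(L_0,L_2) \leftrightarrow (L_2,L_0)$.

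The first step is to enumerate the product operations relevant to the statement. For each of the four targets $(L_0,L_0)$, $(L_0,L_2)$, $(L_2,L_0)$, $(L_2,L_2)$ and each of the two intermediate indices in $\{0,2\}$ there is exactly one operation, giving eight in all. Seven of them appear explicitly as (1)--(7) above; the eighth, the operation $(L_2,L_2) \otimes (L_2,L_2) \to (L_2,L_2)$, is the $\alpha_1^2$-translate of (1) via Corollary~\ref{cor:fukaya-gens} and reads $\mu_\calL^2(b_2,c_2) = \mu_\calL^2(c_2,b_2) = d_2$, $\mu_\calL^2(a_2,a_2) = a_2$, and so on. The involution $\iota$ pairs these eight operations as: (1) with the $\alpha_1^2$-translate of (1); (2) with (3); (4) with (5); and (6) with (7).

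The second step is to check the intertwining pair by pair, by applying $\iota$ to the right-hand sides of the listed formulas. For instance $\iota$ sends $\mu_\calL^2(\rbar_{2,0}, b_0) = \sbar_{2,0}$ of (4) to $\mu_\calL^2(\rbar_{0,2}, b_2) = s_{0,2}$, which is the corresponding line of (5), and sends $\mu_\calL^2(s_{2,0}, \rbar_{0,2}) = c_2$ of (3) to $\mu_\calL^2(\sbar_{0,2}, \rbar_{2,0}) = c_0$, which is the corresponding line of (2); running through the remaining lines of (1)--(7) in the same way exhausts each of the eight operations. Operations with indices outside $\{0,2\}$ are not required by the statement, but the same conclusion holds for them by $\alpha_1$-equivariance, since by Corollary~\ref{cor:fukaya-gens} each is an $\alpha_1^k$-translate of one of the eight.

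Finally I would add the conceptual remark explaining why the invariance is expected: $\alpha_1^2$ is symplectic and carries $L_0$ to $L_2$, while the extra reflection in $\iota$ (exchanging $s_{2,0} \leftrightarrow \sbar_{0,2}$, which trades the two sides of the arc component) is realized on the submanifold $R_2^*(T^2,2)$ used to conjecture the operations by an affine symmetry of the $\alpha\beta$-picture interchanging the $L_0$- and $L_2$-configurations together with their marked intersection points; this symmetry may be anti-symplectic, but over $\F$ that is irrelevant to disk counts. There is therefore no genuine obstacle: the work is the bookkeeping of matching the eight operations, and the one caveat is that, like the rest of this subsection, the argument is contingent on the conjectured $A_\infty$ operations being as stated --- indeed Theorem~\ref{theorem:invariance} is precisely one of the consistency checks that lends credence to that conjecture.
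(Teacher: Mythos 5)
Your proof runs along a genuinely different track from the paper's, and while the direct verification is sound, you are proving a narrower statement than the paper intends. The paper's proof of Theorem~\ref{theorem:invariance} is conceptual: it observes that the mapping-class-group element $\alpha_1 s^2$ acts as a symplectic automorphism of $R^*(T^2,2)$, that $s^2 \cdot W_n = W_{1-n}$ forces $\alpha_1 s^2 \cdot W_n = W_{2-n}$ and hence (in the degeneration limits already used in Theorems~\ref{theorem:gens-L0-L0} and \ref{theorem:gens-L2-L0}) $\alpha_1 s^2$ exchanges $L_0 \leftrightarrow L_2$ and permutes the generators exactly according to the stated substitution, and that any symplectic automorphism preserves the pseudoholomorphic disk counts and hence the product structure. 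This argument is a constraint on the \emph{true} Floer-theoretic $\mu^2_\calL$, independent of any conjecture; the separate sentence ``It is straightforward to check that the conjectured product operations do in fact satisfy all three conditions'' is where the paper performs the case-by-case verification that occupies the bulk of your write-up. Your enumeration of the eight triples $(n_0,n_1,n_2)$, the pairing of (1)--(7) under $\iota$, and the sample intertwining checks are all correct, so your argument does establish that the \emph{conjectured} operations are $\iota$-invariant --- but that is the consistency check, not the theorem.

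The conceptual addendum at the end of your proposal identifies the wrong symmetry. You factor $\iota$ as ``$\alpha_1^2$ followed by an extra reflection'' and then have to apologize for the reflection being potentially anti-symplectic. But $\alpha_1^2$ only sends $L_0 \to L_2$ and $L_2 \to L_4$, so it is not an involution on $\{L_0, L_2\}$; the correct automorphism is $\alpha_1 s^2$, whose own action already realizes $W_n \to W_{2-n}$ and is a genuine symplectomorphism because both $\alpha_1$ and $s$ lie in $\PMCG_2(T^2)$, whose action on $R^*(T^2,2)$ is symplectic. No anti-symplectic element is needed, and no appeal to the ground field $\F$ is required. Moreover the nontrivial content of the substitution --- why $s_{2,0} \leftrightarrow \sbar_{0,2}$ and $\sbar_{2,0} \leftrightarrow s_{0,2}$ rather than the naive $s_{2,0} \leftrightarrow s_{0,2}$ --- is precisely what the paper extracts by tracking how $\alpha_1 s^2$ permutes the two preimages (north pole, south pole) of the double point $p_D$ under $R^\natural_\pi(U_1,T_1) \to L_n$ in the degeneration limit; this is the step your remark doesn't reach.
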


\begin{proof}
A symplectic automorphism of $R^*(T^2,2)$ must preserve the product
structure.
Invariance under the stated substitutions follows from considering the
action of the symplectic automorphism $\alpha_1 s^2$ of
$R^*(T^2,2)$ on the generators.
To understand this action, it is helpful to consider the limit in
which both the Hamiltonian and holonomy perturbations of the
Lagrangians go to zero.
Consider, for example, the generators of $(L_0,L_2)$.
In the limit that both perturbations go to zero we have
$L_0^{(1)} \rightarrow W_0$ and
$L_2^{(0)} \rightarrow W_2$.
The automorphism $\alpha_1 s^2$ maps $W_n$ to $W_{2-n}$.
Recall from the proof of Theorem \ref{theorem:gens-L2-L0} that in the
limit in which the Hamiltonian perturbation goes to zero we have
\begin{align*}
  & r_{0,2} =
  L_2^{(1)}(0,0) = L_0^{(0)}(0,0) = p_D, &
  & \rbar_{0,2} =
  L_2^{(1)}(\pi,0) = L_0^{(0)}(\pi,0) = p_D, \\
  & s_{0,2} =
  L_2^{(1)}(0,0) = L_0^{(0)}(\pi,0) = p_D, &
  & \sbar_{0,2} =
  L_2^{(1)}(\pi,0) = L_0^{(0)}(0,0) = p_D,
\end{align*}
so the action of $\alpha_1 s^2$ on the generators of $(L_0,L_2)$ is
as stated.
The argument for the generators of $(L_0,L_0)$ is similar.
\end{proof}

It is straightforward to check that the conjectured product operations
do in fact satisfy all three conditions.
By acting with suitable powers of $\alpha_1$, we obtain the following
generalized conjecture:

\begin{conjecture}
\label{conj:prod-perturbed}
The following are the only nonzero product operations
$(L_{n_1}, L_{n_2}) \otimes (L_{n_0}, L_{n_1}) \rightarrow
(L_{n_0}, L_{n_2})$
for $n_i - n_j \in \{0, \pm 2\}$.
We have
\begin{align*}
  \mu_\calL^2(a_n,x) &= x, &
  \mu_\calL^2(y,a_n) &= y
\end{align*}
whenever these operations are defined.
We have
\begin{align*}
  \mu_\calL^2(b_n, c_n) = \mu_\calL^2(c_n, b_n) = d_n.
\end{align*}
\begin{align*}
  \mu_\calL^2(r_{n,n+2}, s_{n+2,n}) =
  \mu_\calL^2(\rbar_{n,n+2}, \sbar_{n+2,n}) =
  \mu_\calL^2(s_{n,n-2}, \rbar_{n-2,n}) =
  \mu_\calL^2(\sbar_{n,n-2}, r_{n-2,n}) = c_n, \\
  \mu_\calL^2(r_{n,n \pm 2}, r_{n \pm 2, n}) =
  \mu_\calL^2(\rbar_{n,n \pm 2}, \rbar_{n \pm 2, n}) =
  \mu_\calL^2(s_{n,n \pm 2}, s_{n \pm 2, n}) =
  \mu_\calL^2(\sbar_{n,n \pm 2}, \sbar_{n \pm 2, n}) = d_n.
\end{align*}
\begin{align*}
  &\mu_\calL^2(b_{n+2}, r_{n+2,n}) =
  \mu_\calL^2(\rbar_{n+2, n}, b_n) = \sbar_{n+2,n}, &
  &\mu_\calL^2(b_{n-2}, r_{n-2,n}) =
  \mu_\calL^2(\rbar_{n-2, n}, b_n) = s_{n-2,n}, \\
  &\mu_\calL^2(s_{n+2,n}, b_n) = r_{n+2, n}, &
  &\mu_\calL^2(b_{n+2}, s_{n+2,n}) = \rbar_{n+2,n}, \\
  &\mu_\calL^2(\sbar_{n-2,n}, b_n) = r_{n-2, n}, &
  &\mu_\calL^2(b_{n-2}, \sbar_{n-2,n}) = \rbar_{n-2,n}.
\end{align*}
\end{conjecture}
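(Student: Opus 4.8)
The plan is to derive the full list of nonzero products from the operations already conjectured in items (1)--(7), which exhaust the index pairs drawn from $\{0,2\}$, by transporting them along powers of the symplectomorphism $\alpha_1$ of $R^*(T^2,2)$.

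First I would record the structural input. The action of $\PMCG_2(T^2)$, and in particular of $\alpha_1$, on $R^*(T^2,2)$ is symplectic (Section \ref{ssec:mcg}), so it transports the geometric data defining the Fukaya category: a tuple of Lagrangians $(M_0,\dots,M_m)$, their intersection points, and their moduli of pseudo-holomorphic disks for a compatible almost complex structure $J$ are carried to the corresponding data for $(\alpha_1\cdot M_0,\dots,\alpha_1\cdot M_m)$ with the pushed-forward structure $(\alpha_1)_*J$, matching the generators bijectively and matching the $A_\infty$ operations. Since $\alpha_1\cdot L_n = L_{n+1}$, this self-equivalence restricts to the subcategory $\calL$.

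Next I would observe that every index pair $(L_a,L_b)$ with $b-a\in\{0,\pm 2\}$ is the image of one of $(L_0,L_0)$, $(L_0,L_2)$, $(L_2,L_0)$ under a suitable power $\alpha_1^c$, and that this power sends each base generator to the generator bearing the translated name, exactly as recorded in Corollary \ref{cor:fukaya-gens}: $\alpha_1^n\cdot a_0 = a_n$ and likewise for $b,c,d$; $\alpha_1^n\cdot r_{2,0} = r_{n+2,n}$; $\alpha_1^{n-2}\cdot r_{0,2} = r_{n,n+2}$; and similarly for the $\rbar$-, $s$-, and $\sbar$-generators. Applying the corresponding power of $\alpha_1$ to each equation in (1)--(7) then reproduces, line by line, every operation in the generalized conjecture; for instance $\alpha_1^n$ applied to $\mu_\calL^2(r_{0,2},s_{2,0}) = c_0$ from case (2) gives $\mu_\calL^2(r_{n,n+2},s_{n+2,n}) = c_n$, and $\alpha_1^{n-2}$ applied to $\mu_\calL^2(\rbar_{0,2},b_2) = s_{0,2}$ from case (5) gives $\mu_\calL^2(\rbar_{n-2,n},b_n) = s_{n-2,n}$. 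Exhaustiveness is then automatic: since $\alpha_1^c$ acts bijectively on the operation data, the translated list is complete for the new index pair precisely when the base list is complete for the old one. Moreover the three consistency checks --- compatibility with orientation gradings, associativity when $\mu_\calL^1 = 0$, and the invariance of Theorem \ref{theorem:invariance} --- are all properties preserved by a symplectomorphism, so they persist for the generalized list.

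The one point that genuinely requires care, and the reason this is recorded as a conjecture rather than a theorem, is that the Hamiltonian $H$ of equation (\ref{eqn:H}) used to define the pushoffs, hence the morphism spaces $(L_a,L_b)$ themselves, is not $\alpha_1$-equivariant --- already the induced action of $\alpha_1^2$ on the coordinates $(\alpha,\beta)$ fails to be $H$-preserving --- so $\alpha_1^c$ does not literally intertwine the perturbation scheme for $(L_a,L_b)$ with the one for $(\alpha_1^c L_a,\alpha_1^c L_b)$. This is handled by the standard independence of the Fukaya-categorical data, up to the relevant quasi-isomorphism, on the choice of compatible Hamiltonian perturbation; but since several operations in (1)--(7) are themselves only conjectured, the transported list inherits that status. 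I expect this perturbation-compatibility issue to be the only real obstacle; all the rest is direct relabeling.
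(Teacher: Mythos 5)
Your proposal matches the paper's own reasoning, which reads ``By acting with suitable powers of $\alpha_1$, we obtain the following generalized conjecture''; that is precisely the $\alpha_1$-transport argument you give, with items (1)--(7) covering all index triples drawn from $\{0,2\}$ and the translated generators identified via Corollary \ref{cor:fukaya-gens}. Your caveat about the Hamiltonian pushoff failing to be $\alpha_1$-equivariant is a genuine subtlety that the paper does not spell out, and it is handled, as you say, by the independence of Fukaya-categorical data on the choice of compatible perturbation.
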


As described in Section \ref{ssec:a-infinity}, part of the data of
an $A_\infty$ category is an integer grading on the morphism spaces.
In the case of the Fukaya category of a symplectic manifold $M$, one
would like this integer grading to be given by the Maslov grading of
the Lagrangian intersection points that generate these spaces, but
this is possible only if two conditions are met.
First, the first Chern class of $M$ must be 2-torsion.
Second, the Maslov classes of the Lagrangians must vanish.
If these conditions are met, then the integer grading depends on the
additional data of a choice of \emph{graded lifts} of the Lagrangians.
We do not know if these conditions are met for our category $\calL$,
so rather than use Maslov gradings we simply assign integer gradings
to the generators of $(L_n,L_n)$ and $(L_n,L_{n\pm 2})$ as follows:
\begin{align*}
  (L_n, L_n) &=
  \langle a_n^{(0)},\, b_n^{(0)},\, c_n^{(-2)},\, d_n^{(-2)} \rangle, &
  (L_n, L_{n\pm 2}) &=
  \langle
  r_{n \pm 2, n}^{(-1)},\, \rbar_{n \pm 2,0}^{(-1)},\,
  s_{n \pm 2, n}^{(-1)},\, \sbar_{n \pm 2,0}^{(-1)} \rangle.
\end{align*}

Given a pair of Lagrangians $N_1$ and $N_2$, one defines the
\emph{Floer cohomology $HF(N_1,N_2)$} to be the cohomology of the
vector space $(N_1,N_2)$ with respect to the differential
$\mu^1:(N_1,N_2) \rightarrow (N_1,N_2)$.
If conditions described above for $(N_1,N_2)$ to carry an integer
grading are met, then $HF(N_1,N_2)$ is integer-graded and we have
a \emph{Poincar\'{e} duality} isomorphism
\begin{align*}
  HF^*(N_1,N_2) \rightarrow HF^{n-*}(N_2,N_1),
\end{align*}
where $2n$ is the real dimension of the symplectic manifold $M$.
We note that integer gradings we have defined on $(L_n,L_n)$ and
$(L_n, L_{n \pm 2})$ are not consistent with Poincar\'{e} duality,
though they are consistent with gradings collapsed to $\Ints_4$.

\subsection{$A_\infty$ operations for unperturbed Lagrangians}

We would also like the know certain product operations involving
the unperturbed Lagrangians $W_n$.
Strictly speaking, these Lagrangians are not objects of the category
$\calL$; rather, they will be used to define $A_\infty$ functors
$\calG_{W_n}:\calL \rightarrow \Ch$ as described in
Section \ref{ssec:a-infinity}.
We proceed as in Section \ref{ssec:ops-fukaya} and count topological
disks in $R_2^*(T^2,2)$.
Based on these counts, we conjecture that $\mu_1 = 0$ for
$(W_0, L_0)$, $(W_0, L_2)$, $(W_2, L_0)$, and $(W_2,L_2)$.
We also conjecture the following product operations:

\begin{enumerate}
\item
We have the following operations
$\mu^2:(L_0, L_0) \otimes (W_0, L_0) \rightarrow (W_0, L_0)$:
\begin{align*}
  \mu^2(a_0, \alpha_0) &= \alpha_0, &
  \mu^2(a_0, \beta_0) &= \beta_0, &
  \mu^2(c_0, \alpha_0) &= \beta_0.
\end{align*}

\item
We have the following operations
$\mu^2:(L_2, L_0) \otimes (W_0, L_2) \rightarrow (W_0, L_0)$:
\begin{align*}
  \mu^2(\sbar_{0,2}, \tau_{2,0}) =
  \mu^2(r_{0,2}, \sigma_{2,0}) = \beta_0.
\end{align*}

\item
We have the following operations
$\mu^2:(L_0, L_2) \otimes (W_2, L_0) \rightarrow (W_2, L_2)$:
\begin{align*}
  \mu^2(r_{2,0}, \sigma_{0,2}) &= \beta_2, &
  \mu^2(s_{2,0}, \tau_{0,2}) &= \beta_2.
\end{align*}

\item
We have the following operations
$\mu^2:(L_0, L_2) \otimes (W_0, L_0) \rightarrow (W_0, L_2)$:
\begin{align*}
  & \mu^2(\rbar_{2,0}, \alpha_0) = \tau_{2,0}, &
  & \mu^2(s_{2,0}, \alpha_0) = \sigma_{2,0}.
\end{align*}

\item
We have the following operations
$\mu^2:(L_2, L_0) \otimes (W_2, L_2) \rightarrow (W_2, L_0)$:
\begin{align*}
  \mu^2(\sbar_{0,2}, \alpha_2) &= \sigma_{0,2}, &
  \mu^2(\rbar_{0,2}, \alpha_2) &= \tau_{0,2}.
\end{align*}

\item
We have the following operations
$\mu^2:(L_2, L_2) \otimes (W_0, L_2) \rightarrow (W_0, L_2)$:
\begin{align*}
  \mu^2(a_2, \sigma_{2,0}) &= \sigma_{2,0}, &
  \mu^2(a_2, \tau_{2,0}) &= \tau_{2,0}, &
  \mu^2(b_2, \sigma_{2,0}) &= \tau_{2,0}.
\end{align*}

\item
We have the following operations
$\mu^2:(L_0, L_0) \otimes (W_2, L_0) \rightarrow (W_2, L_0)$:
\begin{align*}
  \mu^2(a_0, \sigma_{0,2}) &= \sigma_{0,2}, &
  \mu^2(a_0, \tau_{0,2}) &= \tau_{0,2}, &
  \mu^2(b_0, \sigma_{0,2}) &= \tau_{0,2}.
\end{align*}

\end{enumerate}

It is straightforward to check that the conjectured product operations
are consistent with orientation gradings and are associative when
combined with the product operations for unperturbed Lagrangians
conjectured in Section \ref{ssec:ops-fukaya}.
By acting with suitable powers of $\alpha_1$, we obtain the following
generalized conjecture:

\begin{conjecture}
\label{conj:prod-unperturbed}
The following are the only nonzero product operations
$(L_{n_1}, L_{n_2}) \otimes (W_0, L_{n_1}) \rightarrow (W_0, L_{n_2})$
for $n_1, n_2, n_1 - n_2 \in \{0, \pm 2\}$.
We have
\begin{align*}
  \mu^2(a_n,x) &= x, &
  \mu^2(y,a_n) &= y
\end{align*}
whenever these operations are defined.
We have
\begin{align*}
  \mu^2(c_0,\alpha_0) = 
  \mu^2(\sbar_{0,2}, \tau_{2,0}) =
  \mu^2(s_{-2,0}, \tau_{-2,0}) =
  \mu^2(r_{0,2}, \sigma_{2,0}) =
  \mu^2(r_{-2,0}, \sigma_{-2,0}) = \beta_0.
\end{align*}
\begin{align*}
  &\mu^2(s_{2,0}, \alpha_0) = \sigma_{2,0}, &
  &\mu^2(\rbar_{2,0}, \alpha_0) =
  \mu^2(b_2,\sigma_{2,0}) = \tau_{2,0}, \\
  &\mu^2(\sbar_{-2,0}, \alpha_0) = \sigma_{-2,0}, &
  &\mu^2(\rbar_{-2,0}, \alpha_0) =
  \mu^2(b_{-2},\sigma_{-2,0}) = \tau_{-2,0}.
\end{align*}
\end{conjecture}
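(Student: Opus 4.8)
The plan is to deduce Conjecture~\ref{conj:prod-unperturbed} from the seven itemized families of product operations displayed immediately above it, which we take as given, by the same device used to pass from the explicit operations of Section~\ref{ssec:ops-fukaya} to Conjecture~\ref{conj:prod-perturbed}. The key fact is that $\alpha_1$ acts on $R^*(T^2,2)$ by a symplectic automorphism (through the push map of Section~\ref{ssec:mcg}), and that $\alpha_1^2$, hence $\alpha_1^{-2}$, moreover fixes the symplectic submanifold $R_2^*(T^2,2)$ setwise, so it restricts there to a symplectomorphism permuting the topological disk configurations that define the relevant $\mu^2$ operations. Exactly as in the proof of Theorem~\ref{theorem:invariance}, such an automorphism intertwines the (conjectured) product structure, carrying each operation among $\{W_0, W_2, L_0, L_2\}$ to the operation obtained by relabelling every Lagrangian and generator by its action.

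First I would record that action. On Lagrangians, $\alpha_1^{-2}\cdot W_2 = W_0$, $\alpha_1^{-2}\cdot L_2 = L_0$, and $\alpha_1^{-2}\cdot L_0 = L_{-2}$. On generators, Corollary~\ref{cor:fukaya-gens} records $a_n = \alpha_1^n\cdot a_0$ and the analogous formulas for $b_n$, $c_n$, $d_n$ and for the four families $r,\rbar,s,\sbar$, while Corollary~\ref{cor:gens-s2-s1} records $\alpha_n = \alpha_1^n\cdot\alpha_0$, $\beta_n = \alpha_1^n\cdot\beta_0$, $\sigma_{-2,0} = \alpha_1^{-2}\cdot\sigma_{0,2}$, and $\tau_{-2,0} = \alpha_1^{-2}\cdot\tau_{0,2}$; together these determine the image under $\alpha_1^{-2}$ of every generator appearing in the seven displayed families.

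Next I would translate each displayed operation into the normal form $(L_{n_1},L_{n_2})\otimes(W_0,L_{n_1})\to(W_0,L_{n_2})$. Families~(1), (2), (4), (6) already involve $W_0$ and realize the index pairs $(n_1,n_2) = (0,0), (2,0), (0,2), (2,2)$; applying $\alpha_1^{-2}$ to families~(3), (5), (7), which involve $W_2$, converts them into operations involving $W_0$ that realize the pairs $(-2,0), (0,-2), (-2,-2)$. These seven pairs are precisely those with $n_1, n_2, n_1 - n_2 \in \{0, \pm 2\}$, so the union of the translated operations is the complete list; reading it off produces the universal $a_n$-rules together with the displayed equations for $\beta_0$, $\sigma_{\pm2,0}$, and $\tau_{\pm2,0}$. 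Finally, that these are the \emph{only} nonzero operations in the stated index range follows because this property is preserved by the automorphism $\alpha_1^{-2}$ and is part of what is conjectured for families~(1)--(7).

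The argument is essentially bookkeeping, and that is also where its only real subtlety lies: one must track precisely which of the barred and unbarred $r,\rbar,s,\sbar$ and $\sigma,\tau$ generators each generator in families~(1)--(7) is sent to by $\alpha_1^{-2}$, checking these against the index conventions of Corollaries~\ref{cor:gens-s2-s1} and~\ref{cor:fukaya-gens} --- the two sign conventions $r_{n+2,n} = \alpha_1^n\cdot r_{2,0}$ and $r_{n-2,n} = \alpha_1^{n-2}\cdot r_{0,2}$ are easy to conflate --- and one must confirm in passing that the orientation gradings are respected and that no operation with index gap $\pm4$ is forced, so that the list genuinely closes within the stated range.
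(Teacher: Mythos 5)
Your route is exactly the paper's: Conjecture~\ref{conj:prod-unperturbed} is obtained from the seven displayed families by translating under powers of $\alpha_1$ (here $\alpha_1^{-2}$ for the families involving $W_2$), and your bookkeeping of Lagrangians and generators through Corollaries~\ref{cor:gens-s2-s1} and~\ref{cor:fukaya-gens} is the correct way to spell out that one-line derivation. The subscript caution you raise is in fact realized in the statement as printed: conjugating family~(3) by $\alpha_1^{-2}$ yields $\mu^2(r_{0,-2},\sigma_{-2,0})$ and $\mu^2(s_{0,-2},\tau_{-2,0})$, matching the paper's subsequent diagram, whereas the conjecture as displayed writes $r_{-2,0},s_{-2,0}\in(L_0,L_{-2})$, which do not compose with $\sigma_{-2,0},\tau_{-2,0}\in(W_0,L_{-2})$.
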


We can depict the conjectured product operations as follows:
\begin{eqnarray*}
\begin{tikzcd}
  \sigma_{-2,0}
  \arrow{d}[swap]{b_{-2}}
  \arrow{dr}[swap]{r_{0,-2}} &[2 em]
  \alpha_0
  \arrow{d}{c_0}
  \arrow{l}[swap]{\sbar_{-2,0}}
  \arrow{dr}{\rbar_{2,0}} &[2 em]
  \sigma_{2,0}
  \arrow{d}{b_2} \\
  \tau_{-2,0} &[2 em]
  \beta_0 &[2 em]
  \tau_{2,0} \arrow{l}[swap]{\sbar_{0,2}}
\end{tikzcd}
&&
\begin{tikzcd}
  \sigma_{-2,0}
  \arrow{d}[swap]{b_{-2}} &[2 em]
  \alpha_0
  \arrow{d}{c_0}
  \arrow{dl}[swap]{\rbar_{-2,0}}
  \arrow{r}{s_{2,0}} &[2 em]
  \sigma_{2,0} \arrow{dl}{r_{0,2}}
  \arrow{d}{b_2} \\
  \tau_{-2,0}
  \arrow{r}{s_{0,-2}} &[2 em]
  \beta_0 &[2 em]
  \tau_{2,0}.
\end{tikzcd}
\end{eqnarray*}

We assign integer quantum gradings to the generators of $(W_0,L_0)$
and $(W_0, L_{\pm 2})$ as follows:
\begin{align*}
  (W_0, L_0) &=
  \langle \alpha_0^{(0)},\, \beta_0^{(-2)} \rangle, &
  (W_0, L_{\pm 2}) &=
  \langle \sigma_{\pm 2, 0}^{(-1)},\, \tau_{\pm 2, 0}^{(-1)} \rangle.
\end{align*}
It is straightforward to check that these gradings are consistent with
the product operations in Conjecture \ref{conj:prod-unperturbed}.

\section{Twisted complexes corresponding to tangle diagrams}
\label{sec:complexes}

In Section \ref{ssec:fuaka} we defined a Fukaya category $\calL$ for
$R^*(T^2,2)$ whose objects are the perturbed Lagrangians $L_n$ and
whose $A_\infty$ operations are given by counting pseudo-holomorphic
disks.
As described in Section \ref{ssec:a-infinity}, we can define an
$A_\infty$ category $\Tw \calL$ of twisted complexes over $\calL$.
Our goal in this section is to construct a twisted complex
$(X,\delta)$ in $\Tw \calL$ from a 1-tangle diagram $T$ in the
annulus.

\subsection{Conjectures regarding $\calL$}
\label{sec:conj-fukaya}

To construct the twisted object $(X,\delta)$, we need certain
$A_\infty$ operations of $\calL$ that we are currently unable to
compute.
We make the following conjecture regarding the operations we will
need:

\begin{conjecture}
\label{conj:fukaya}
We have identity elements $a_n \in (L_n, L_n)$ that satisfy
\begin{align*}
  &\mu_\calL^2(a_n,x) = x, &
  &\mu_\calL^2(y,a_n) = y
\end{align*}
whenever these operations are defined.
We have elements $c_n \in (L_n, L_n)$,
$p_{n+2,n} \in (L_n, L_{n+2})$, and
$q_{n-2,n} \in (L_n, L_{n-2})$ that satisfy
\begin{align*}
  &\mu_\calL^2(q_{n,n+2},p_{n+2,n}) =
  \mu_\calL^2(p_{n,n-2},q_{n-2,n}) = c_n, &
  &\mu_\calL^2(c_n,c_n) = 0, \\
  &\mu_\calL^2(c_{n+2}, p_{n+2,n}) = \mu_\calL^2(p_{n+2,n}, c_n) = 0, &
  &\mu_\calL^2(c_{n-2}, q_{n-2,n}) = \mu_\calL^2(q_{n-2,n}, c_n) = 0.
\end{align*}
All operations of the form
$\mu_\calL^m(x_m, \cdots, x_1)$
for $m \neq 2$ and
$x_m, \cdots, x_1 \in
\{a_n,\, c_n,\, p_{n+2,n},\, q_{n-2,n} \mid n \in \Ints \}$ are zero.
\end{conjecture}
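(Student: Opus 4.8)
The plan is to derive Conjecture~\ref{conj:fukaya} from the structural study of the $A_\infty$ operations of $\calL$ begun in Section~\ref{ssec:ops-fukaya}. First I would fix the dictionary between the abstract generators and the Lagrangian-intersection generators of Corollary~\ref{cor:fukaya-gens}: take $a_n$ to be the unit-type generator of $(L_n,L_n)$ (integer grading $0$), $c_n$ the generator of integer grading $-2$, and $p_{n+2,n}\in(L_n,L_{n+2})$, $q_{n-2,n}\in(L_n,L_{n-2})$ the $s$- and $r$-type generators singled out by the product relations of Conjecture~\ref{conj:prod-perturbed}. Under this dictionary the unit axioms $\mu_\calL^2(a_n,x)=x$, $\mu_\calL^2(y,a_n)=y$, the relations $\mu_\calL^2(q_{n,n+2},p_{n+2,n})=\mu_\calL^2(p_{n,n-2},q_{n-2,n})=c_n$, and the vanishing of $\mu_\calL^2(c_n,c_n)$, $\mu_\calL^2(c_{n\pm2},\,\cdot\,)$ and $\mu_\calL^2(\,\cdot\,,c_n)$ are all entries that are present in, or absent from, the list of Conjecture~\ref{conj:prod-perturbed}; so this portion of the statement is a bookkeeping consequence of that conjecture. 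What genuinely remains is (i)~the vanishing $\mu_\calL^1=0$ on $(L_n,L_n)$ and $(L_n,L_{n\pm2})$, and (ii)~the assertion that $\mu_\calL^m(x_m,\dots,x_1)=0$ for all $m\geq3$ and all inputs drawn from $\{a_n,c_n,p_{n+2,n},q_{n-2,n}\}$.

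For (i) and (ii) I would use the split almost complex structure $J=J_1\oplus J_2$ of Section~\ref{ssec:ops-fukaya}, for which a pseudo-holomorphic polygon contained in $R_2^*(T^2,2)$ is also pseudo-holomorphic in $R^*(T^2,2)$. All the generators in play lie in $R_2^*(T^2,2)$, and the curves $L_n^{(j)}\cap R_2^*(T^2,2)$ are described explicitly by Theorems~\ref{theorem:gens-L0-L0} and~\ref{theorem:gens-L2-L0} together with the action of $\alpha_1$. One then argues exactly as Hedden--Herald--Hogancamp--Kirk do for the pillowcase: $\mu_\calL^1$ is a count of immersed bigons in the pictures of Figures~\ref{fig:l0-l0} and~\ref{fig:l0-l2-l2-l0}, which occur in canceling pairs, giving~(i); $\mu_\calL^2$ is a count of immersed triangles, reproducing Conjecture~\ref{conj:prod-perturbed}; and for $m\geq3$ one checks with the Riemann mapping theorem that, for the small Hamiltonian and holonomy perturbation parameters fixed in Section~\ref{ssec:hamiltonian}, no immersed $(m{+}1)$-gon with all corners at the generators $a_n,c_n,p_{n+2,n},q_{n-2,n}$ exists; combined with the orientation-grading and integer-grading constraints on $\mu_\calL^m$, this gives~(ii).

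The hard part is the one flagged throughout Section~\ref{ssec:ops-fukaya}: upgrading these planar counts to honest counts in the four-dimensional manifold $R^*(T^2,2)$. Two things must be settled. First, one must rule out pseudo-holomorphic disks with boundary on the $L_n^{(j)}$ that \emph{leave} the two-dimensional locus $R_2^*(T^2,2)$; the split $J$ only guarantees that disks lying inside $R_2^*(T^2,2)$ are holomorphic, and excluding additional contributions that explore the normal directions requires understanding the geometry transverse to $R_2^*(T^2,2)$ --- that is, the summand $E_2$ appearing in Section~\ref{ssec:ops-fukaya} and Theorem~\ref{theorem:symplectic-form}, and the behavior of the $L_n$ in the $s$- and $t$-coordinates. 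Second, one must establish transversality and regularity of the relevant moduli spaces in the four-manifold, which is automatic in the two-dimensional pillowcase but not here. Until these points are resolved the statement must remain conjectural; the evidence in its favor is the body of consistency checks already carried out --- compatibility with orientation gradings, associativity of the conjectured $\mu_\calL^2$, and the $\alpha_1 s^2$-equivariance of Theorem~\ref{theorem:invariance} --- together with the close parallel to the known $A_\infty$ operations on the pillowcase $R^*(S^2,4)$, which is symplectomorphic to $R_2^*(T^2,2)$ (Section~\ref{ssec:pillowcase}).
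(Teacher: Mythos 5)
The statement you are asked to prove is Conjecture~\ref{conj:fukaya}, and it is worth keeping in mind that the paper itself does not prove it; Section~\ref{sec:conj-fukaya} offers only a discussion of motivation. Your proposal, which ends by acknowledging that the statement ``must remain conjectural,'' is therefore aimed at the right target: identifying which pieces are bookkeeping reductions conditional on other conjectured data, and which pieces encode the genuinely open analytic questions. On that score you have the main points right. The reduction of the $\mu_\calL^2$ relations to Conjecture~\ref{conj:prod-perturbed} is exactly the paper's first ``consistency check''; the identification of the vanishing of $\mu^1$ and of $\mu^m$ for $m\geq 3$ as the truly open content is correct; and your two flagged obstructions --- ruling out disks that escape the slice $R_2^*(T^2,2)$, and establishing regularity of the moduli spaces in the four-manifold --- are precisely why the paper's disk counts in $R_2^*(T^2,2)$ stop short of honest $A_\infty$ computations.

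There is, however, a concrete slip in your dictionary. You identify $p_{n+2,n}$ and $q_{n-2,n}$ with \emph{single} $s$- and $r$-type generators of $(L_n,L_{n+2})$ and $(L_n,L_{n-2})$, but the paper defines them as \emph{sums} of two such generators, e.g.\
\begin{align*}
  p_{n+2,n} &= r_{n+2,n} + \rbar_{n+2,n}, &
  q_{n-2,n} &= s_{n-2,n} + \sbar_{n-2,n},
\end{align*}
or the other pairing $p = s + \sbar$, $q = r + \rbar$. This is not cosmetic. In the product tables of Conjecture~\ref{conj:prod-perturbed}, only specific mixed pairs yield $c_n$: e.g.\ $\mu_\calL^2(r_{0,2},s_{2,0})=c_0$ and $\mu_\calL^2(\sbar_{0,2},\rbar_{2,0})=c_0$, while the remaining mixed compositions vanish and the diagonal ones (e.g.\ $\mu_\calL^2(r_{0,2},r_{2,0})$, $\mu_\calL^2(s_{0,2},s_{2,0})$) give $d_0$. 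If you take $p_{2,0}$ and $q_{0,2}$ to be single generators, you can arrange $\mu_\calL^2(q_{n,n+2},p_{n+2,n})=c_n$ or $\mu_\calL^2(p_{n,n-2},q_{n-2,n})=c_n$, but not both simultaneously: the ``one step down, one step up'' composition and the ``one step up, one step down'' composition require generators of opposite $r$/$s$ flavor in the middle slot, and a single generator cannot serve both roles. The sum does, because exactly one of the four cross terms survives in each direction over $\F_2$. A further minor point: $c_n$ is not pinned down by ``the generator of integer grading $-2$,'' since $d_n$ also has grading $-2$ in the paper's convention; $c_n$ and $d_n$ are distinguished by orientation grading, and $c_n$ is selected by its role in the products.

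One more item worth foregrounding, since it is the strongest piece of evidence the paper itself offers: restricted to $n\in\{\pm1\}$ and reinterpreted in $R^*(S^2,4)$, Conjecture~\ref{conj:fukaya} is a theorem, proved in \cite{Hedden-3} by the combinatorial (Riemann mapping theorem) disk counts that are available in real dimension two. When you invoke those counts for $R_2^*(T^2,2)$ in step (ii), make sure the logic is phrased as producing a \emph{candidate} answer that the four-dimensional theory would have to confirm, not as an argument in $R^*(T^2,2)$ itself; the paper is explicit that the slice computations do not by themselves determine the $A_\infty$ structure of $\calL$, and conflating the two is exactly the gap you correctly flag in your final paragraph.
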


We do not conjecture values for the products
$\mu_\calL^2(p_{n+4,n+2},p_{n+2,n})$ and $\mu_\calL^2(q_{n-4,n-2},q_{n-2,n})$.

Conjecture \ref{conj:fukaya} is motivated by several considerations.
First, it is consistent with the conjectured $A_\infty$ relations
described in Section \ref{ssec:ops-fukaya}.
The elements $a_n$ and $c_n$ are the generators described in Corollary
\ref{cor:fukaya-gens}.
The elements $p_{n+2,n}$ and $q_{n-2,n}$ are defined in terms of the
generators described in Corollary \ref{cor:fukaya-gens} in one of two
ways.
We could either define
\begin{align*}
  p_{n+2,n}^{(-)} &= r_{n+2,n}^{(-)} + \rbar_{n+2,n}^{(-)}, &
  q_{n-2,n}^{(+)} &= s_{n-2,n}^{(+)} + \sbar_{n-2,n}^{(+)},
\end{align*}
or
\begin{align*}
  p_{n+2,n}^{(+)} &= s_{n+2,n}^{(+)} + \sbar_{n+2,n}^{(+)}, &
  q_{n-2,n}^{(-)} &= r_{n-2,n}^{(-)} + \rbar_{n-2,n}^{(-)},
\end{align*}
where we have indicated orientation gradings using superscripts.
For some purposes it does not matter which of the two definitions is
used, but when it does matter we will indicate the definition we have
in mind by specifying the orientation gradings, which are opposite for
the two definitions.
It is straightforward to check that for both definitions Conjecture
\ref{conj:fukaya} is consistent with the product operations in
Conjecture \ref{conj:prod-perturbed}.

A second motivation for Conjecture \ref{conj:fukaya} is that it
is a natural generalization of a corresponding statement regarding the
$A_\infty$ operations for the Fukaya category of $R^*(S^2,4)$, which
as we discuss in Appendix  \ref{sec:pillowcase} appears to be closely
related to the Fukaya category of $R^*(T^2,2)$.
Indeed, Conjecture \ref{conj:fukaya} is in fact true if we restrict
$n$ to $n \in \{\pm 1\}$ and reinterpret it as a statement about
$R^*(S^2,4)$.

Recall that part of the data of an $A_\infty$ category is an integer
grading on the morphism spaces.
We will refer to this grading as a \emph{quantum grading} $q$.
We will also take the morphism spaces to carry a second integer
grading that we refer to as a \emph{homological grading} $h$.
We assign bigradings $(h,q)$ to $a_n$, $c_n$, $p_{n+2,n}$, and
$q_{n-2,n}$ as follows, as indicted by superscripts:
\begin{align}
  \label{eqn:bigradings}
  a_n^{(0,0)} &&
  c_n^{(0,-2)}, &&
  p_{n+2,n}^{(0,-1)}, &&
  q_{n-2,n}^{(0,-1)}.
\end{align}
The quantum gradings of these generators are consistent with the
grading assignments described at the end of Section
\ref{ssec:ops-fukaya}.
We say that $A_\infty$ operations \emph{respect bigradings} if
whenever
\begin{align*}
  \mu_\calL^m(x_m^{(h_m,q_m)}, \cdots, x_1^{(h_1,q_1)}) = y
\end{align*}
is nonzero for homogeneous vectors
$x_m^{(h_m,q_m)}, \cdots, x_1^{(h_1,q_1)}$, the bigrading
$(h,q)$ of $y$ is given by
\begin{align*}
  h &= h_m + \cdots + h_1 + 2 - m, &
  q &= q_m + \cdots + q_1 + 2 - m.
\end{align*}
As discussed in Section \ref{ssec:a-infinity}, the operations of an
$A_\infty$ category are required to respect quantum gradings.
Given the bigrading assignments in equation (\ref{eqn:bigradings}), 
the $A_\infty$ operations described in Conjecture \ref{conj:fukaya}
respect bigradings.

\subsection{Vector spaces and linear maps}
\label{ssec:linear-maps}

As described in Section \ref{ssec:a-infinity}, given an $A_\infty$
category $\calA$ one can construct a corresponding $A_\infty$ category
$\Sigma \calA$ called the additive enlargement of $\calA$.
The objects of
$\Sigma \calA$ are constructed from objects of $\calA$ and vector
spaces, and the morphisms of $\Sigma \calA$ are constructed from
morphisms of $\calA$ and linear maps.
We define here the vector spaces and linear maps that we will use to
construct objects and morphisms of $\Sigma \calL$.

The vector spaces that are used to construct objects of $\Sigma \calL$
are required to carry an integer grading.
We will refer to this grading as a \emph{quantum grading} $q$.
We will take these vector spaces to carry a second integer grading
that we refer to as a \emph{homological grading} $h$.
Given a bigraded vector space $V$, we use the notation $v^{(h,q)}$ to
indicate that a homogeneous vector $v \in V$ has bigrading $(h,q)$.
We define the vector space $V[h,q]$ to be $V$ with gradings shifted
upwards by $(h,q)$, so if $v \in V$ is homogeneous with bigrading
$(h_v, q_v)$ then the corresponding vector $v \in V[h,q]$ is
homogeneous with bigrading $(h_v + h, q_v + q)$.
We define $\F$ to be the field of two elements, where $1 \in \F$ is
assigned bigrading $(0,0)$.
We define a bigraded $\F$-vector space
\begin{align*}
  A = \langle e^{(0,1)},\, x^{(0,-1)} \rangle.
\end{align*}

We define the following $\F$-linear maps.
We define \emph{unit maps} $\eta^{(0,1)}$ and $\etadot^{(0,-1)}$:
\begin{align*}
  &\eta:\F \rightarrow A, &
  &\eta(1) = e, \\
  &\etadot:\F \rightarrow A, &
  &\etadot(1) = x.
\end{align*}
We define \emph{counit maps} $\epsilon^{(0,1)}$ and
$\epsilondot^{(0,-1)}$:
\begin{align*}
  &\epsilon:A \rightarrow \F, &
  &\epsilon(e) = 0, \qquad
  \epsilon(x) = 1, \\
  &\epsilondot:A \rightarrow \F, &
  &\epsilondot(e) = 1, \qquad
  \epsilondot(x) = 0.
\end{align*}
We define a \emph{raising map} $\id_{ex}^{(0,2)}$ and a
\emph{lowering map} $\id_{xe}^{(0,-2)}$:
\begin{align*}
  &\id_{ex}:A \rightarrow A, &
  &\id_{ex}(e) = 0, \qquad
  \id_{ex}(x) = e, \\
  &\id_{xe}:A \rightarrow A, &
  &\id_{xe}(e) = x, \qquad
  \id_{xe}(x) = 0.
\end{align*}
We define a \emph{multiplication map} $m^{(0,-1)}$:
\begin{align*}
  &m:A \otimes A \rightarrow A, &
  &m(e \otimes e) = e, \qquad
  m(e \otimes x) = m(x \otimes e) = x, \qquad
  m(x \otimes x) = 0.
\end{align*}
We define a \emph{comultiplication map} $\Delta^{(0,-1)}$:
\begin{align*}
  &\Delta:A \rightarrow A \otimes A, &
  &\Delta(e) = e \otimes x + x \otimes e, \qquad
  \Delta(x) = x \otimes x.
\end{align*}
We define an \emph{identity map} $I_r^{(0,0)}$:
\begin{align*}
  & I_r:A^{\otimes r} \rightarrow A^{\otimes r}, &
  I_r &= \id^{\otimes r}.
\end{align*}
We define a \emph{raising map} $\Sigma_r^{(0,2)}$:
\begin{align*}
  & \Sigma_r:A^{\otimes r} \rightarrow A^{\otimes r}, &
  \Sigma_r &=
  \sum_{s=0}^{r-1} \id^{\otimes s} \otimes \id_{ex} \otimes
  \id^{\otimes(r-1-s)}.
\end{align*}
Note that for $r=0$ we have $A^{\otimes r} = \F$, so
$I_0 = \id_{\F}$ and $\Sigma_0 = 0$.
For simplicity we will often omit tensor product symbols; for example
we write $\etadot \otimes I_r$ as $\etadot I_r$.
We record here several useful identities involving $\Sigma_r$:
\begin{align}
  \label{eqn:identities-sigma}
  &\Sigma_{a+b} = \Sigma_a I_b + I_a \Sigma_b, &
  &\Sigma_2 \circ \Delta = \Delta \circ \Sigma_1, &
  &\Sigma_1 \circ m = m \circ \Sigma_2.
\end{align}

\subsection{Planar tangles and saddles}
\label{ssec:saddles}

We will assign objects of $\Sigma \calL$ to planar 1-tangles in the
annulus and morphisms of $\Sigma \calL$ to saddles between pairs of
such tangles.
Here we describe these assignments.

Fix points $p_1$ and $p_2$ on the outer and inner bounding circle of
the annulus $S^1 \times [0,1]$.
A \emph{planar 1-tangle} is a compact 1-dimensional submanifold of the
annulus with boundary $\{p_1, p_2\}$.
A planar 1-tangle consists of an \emph{arc component} connecting $p_1$
to $p_2$, together with a finite number of \emph{circle components}.
We orient the arc component in the direction from the outer boundary
point $p_1$ to the inner boundary point $p_2$.

For each integer $n$ we define a planar tangle $P_n$ for which the
arc component winds $n$ times clockwise around the annulus, as shown
in Figure \ref{fig:planar-tangles}.
We define $P_n(r)$ to be the set of planar tangles whose arc component
is isotopic to $P_n$ and which contains $r$ circle components.
We also let $P_n(r)$ denote any specific tangle in the set $P_n(r)$.
We say that $n$ is the \emph{winding number} and $r$ is the
\emph{circle number} of the tangle $P_n(r)$.
To the planar tangle $P_n(r)$ we assign the following object of
$\Sigma \calL$:
\begin{align*}
  L_n \otimes A^{\otimes r},
\end{align*}
where $A$ is the 2-dimensional bigraded vector space defined in
Section \ref{ssec:linear-maps}.

\begin{figure}
  \centering
  \includegraphics[scale=0.45]{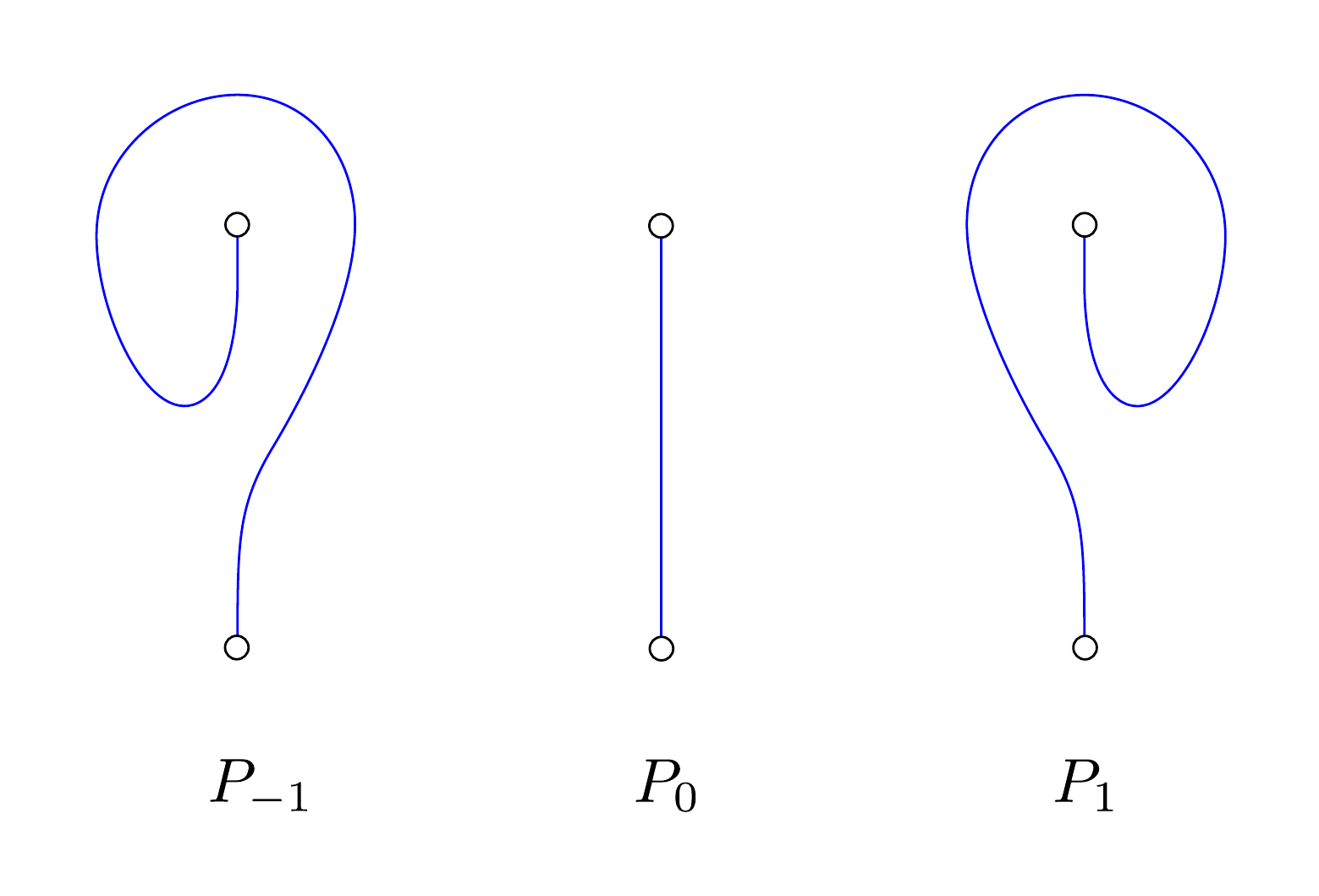}
  \caption{
    \label{fig:planar-tangles}
    Planar 1-tangles in the annulus.
  }
\end{figure}

Given a planar tangle $P_{n_1}(r_1)$, we can make a local replacement
of the form shown in Figure \ref{fig:saddle} to obtain a new
planar tangle $P_{n_2}(r_2)$.
We say that the local replacement defines a
\emph{saddle} $P_{n_1}(r_1) \rightarrow P_{n_2}(r_2)$.
The possible types of saddles are shown in Figure
\ref{fig:saddle-types}.
For saddles $P_n(r) \rightarrow P_n(r+1)$ or
$P_n(r+1) \rightarrow P_n$ that split or merge a circle component $C$
from the arc component, we say that circle components of $P_n(r+1)$
that lie inside (outside) $C$, and circle components of $P_n(r)$ whose
counterparts in $P_n(r+1)$ lie inside (outside) $C$, are
\emph{enclosed (non-enclosed) circles}.
To each saddle $P_{n_1}(r_1) \rightarrow P_{n_2}(r_2)$ we assign a
corresponding \emph{differential}, which is a morphism
\begin{align*}
  L_{n_1} \otimes A^{\otimes r_1} \rightarrow
  L_{n_2} \otimes A^{\otimes r_n}
\end{align*}
in $\Sigma \calL$ with bigrading $(h,q) = (0,-1)$.
The assignments are as follows:

\begin{figure}
  \centering
  \includegraphics[scale=0.50]{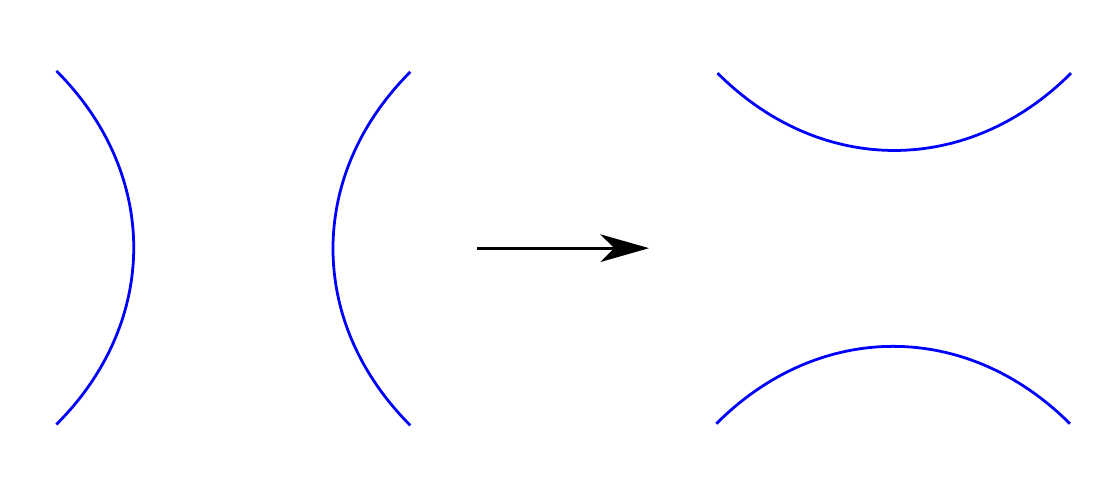}
  \caption{
    \label{fig:saddle}
    Local replacement for a saddle.
  }
\end{figure}

\begin{figure}
  \centering
  \includegraphics[scale=0.50]{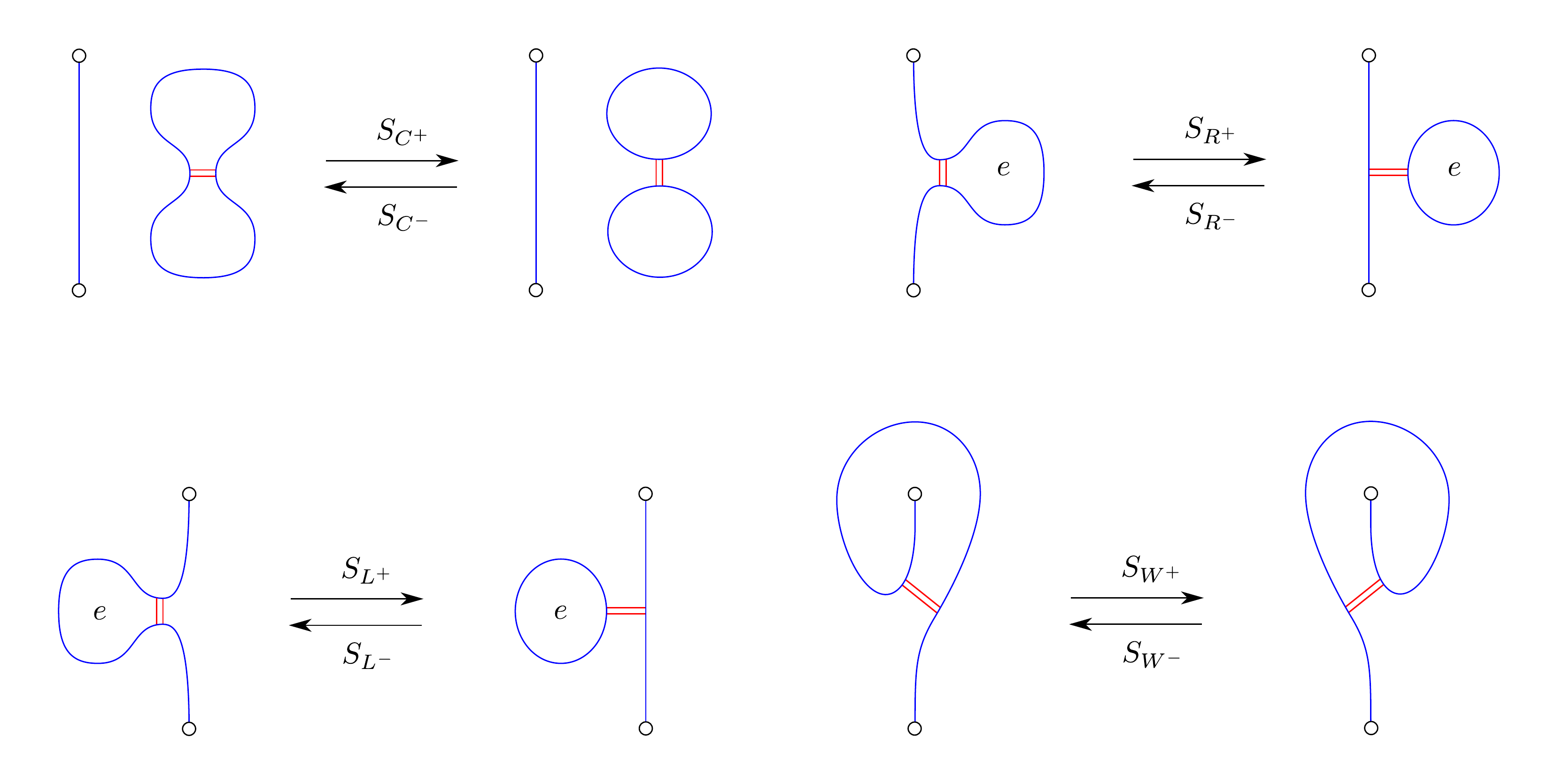}
  \caption{
    \label{fig:saddle-types}
    Types of saddles for planar 1-tangles in the annulus.
    Circle components in regions marked $e$ are said to be
    \emph{enclosed circles}.
  }
\end{figure}

\begin{enumerate}
\item
For a saddle
$S_{C^+}:P_n(r+1) \rightarrow P_n(r+2)$
that splits one circle into two circles:
\begin{align*}
  & d_{C^+}:L_n \otimes A^{\otimes(r+1)} \rightarrow
  L_n \otimes A^{\otimes(r+2)}, &
  & d_{C^+} = a_n \Delta I_r.
\end{align*}
For a saddle
$S_{C^-}:P_n(r+2) \rightarrow P_n(r+1)$
that merges two circles into one circle:
\begin{align*}
  & d_{C^-}:L_n \otimes A^{\otimes(r+2)} \rightarrow
  L_n \otimes A^{\otimes(r+1)}, &
  & d_{C^-} = a_n m I_r.
\end{align*}

\item
For a saddle
$S_{R^+}:P_n(r) \rightarrow P_n(r+1)$
that splits a circle from the right side of the arc component:
\begin{align*}
  & d_{R^+}:L_n \otimes A^{\otimes r} \rightarrow
  L_n \otimes A^{\otimes(r+1)}, &
  & d_{R^+} =
  (a_n \etadot + c_n \eta) I_r +
  c_n \etadot I_{r_n} \Sigma_{r_e}.
\end{align*}
For a saddle
$S_{R^-}:P_n(r+1) \rightarrow P_n(r)$
that merges a circle with the right side of the arc component:
\begin{align*}
  & d_{R^-}:L_n \otimes A^{\otimes(r+1)} \rightarrow
  L_n \otimes A^{\otimes r}, &
  & d_{R^-} =
  (a_n \epsilondot + c_n \epsilon)I_r +
  c_n\epsilondot I_{r_n} \Sigma_{r_e}.
\end{align*}
Here $r = r_e + r_n$, where $r_e$ is the number of enclosed
circles and $r_n$ is the number of non-enclosed circles.

\item
For a saddle
$S_{L^+}:P_n(r) \rightarrow P_n(r+1)$
that splits a circle from the left side of the arc component:
\begin{align*}
  & d_{L^+}:L_n \otimes A^{\otimes r} \rightarrow
  L_n \otimes A^{\otimes(r+1)}, &
  & d_{L^+} =
  a_n \etadot I_r +
  c_n\etadot I_{r_n} \Sigma_{r_e}.
\end{align*}
For a saddle
$S_{L^-}:P_n(r+1) \rightarrow P_n(r)$
that merges a circle with the left side of the arc component:
\begin{align*}
  & d_{L^-}:L_n \otimes A^{\otimes(r+1)} \rightarrow
  L_n \otimes A^{\otimes r}, &
  & d_{L^-} =
  a_n \epsilondot I_r +
  c_n\epsilondot I_{r_n} \Sigma_{r_e}.
\end{align*}
Here $r = r_e + r_n$, where $r_e$ is the number of enclosed circles
and $r_n$ is the number of non-enclosed circles.

\item
For a saddle
$S_{W^+}:P_n(r) \rightarrow P_{n+2}(r)$
that increases winding number of the arc component by two:
\begin{align*}
  & d_{W^+}:L_n \otimes A^{\otimes r} \rightarrow
  L_{n+2} \otimes A^{\otimes r}, &
  & d_{W^+} = p_{n+2,n} I_r.
\end{align*}
For a saddle
$S_{W^-}:P_n(r) \rightarrow P_{n-2}(r)$
that decreases the winding number of the arc component by two:
\begin{align*}
  & d_{W^-}:L_n \otimes A^{\otimes r} \rightarrow
  L_{n-2} \otimes A^{\otimes r}, &
  & d_{W^-} = q_{n-2,n} I_r.
\end{align*}

\end{enumerate}

The linear maps in the expressions for the differentials are as defined
in Section \ref{ssec:linear-maps}.
The expressions for the differentials depend on a specific choice of
ordering of the factors of $A$ corresponding to the circle components
of the planar tangles.
For a different ordering of the circle factors, we would need to
modify the expressions accordingly.
The differentials $d_{R^\pm}$ and $d_{L^\pm}$ corresponding
to saddles that split or merge a circle from the arc component contain
terms involving $\Sigma_{r_e}$ and are thus \emph{nonlocal}, in the
sense that they act nontrivially on the factors of $A$ corresponding
to enclosed circles.
As we will show in Section \ref{ssec:diff-cc-commute}, these terms are
needed to ensure that differentials corresponding to distinct saddles
commute.

Given a planar tangle $P_n(r+1)$ and a choice of a circle component
$C$, we define $P_n(r)$ to be the planar tangle obtained by removing
$C$ from $P_n(r+1)$.
We say that adding $C$ to $P_n(r)$ defines a \emph{cup}
$P_n(r) \rightarrow P_n(r+1)$ and removing $C$ from $P_n(r+1)$
defines a \emph{cap} $P_n(r+1) \rightarrow P_n(r)$.
To a cup $P_n(r) \rightarrow P_n(r+1)$ we assign a corresponding
\emph{cup map} with bigrading $(h,q) = (0,1)$:
\begin{align*}
  & \cup:L_n \otimes A^{\otimes r} \rightarrow
  L_n \otimes A^{\otimes(r+1)}, &
  & \cup = a_n \eta I_r.
\end{align*}
To a cap $P_n(r+1) \rightarrow P_n(r)$ we assign a corresponding
\emph{cap map} with bigrading $(h,q) = (0,1)$:
\begin{align*}
  & \cap:L_n \otimes A^{\otimes(r+1)} \rightarrow
  L_n \otimes A^{\otimes r}, &
  & \cap = a_n \epsilon I_r.
\end{align*}
We will use the same notation for cups and caps as for
their corresponding maps.

For simplicity, in what follows we will often denote
$\mu_{\Sigma \calL}^2(x,y)$ as $x \circ y$.

\subsection{Differentials, cup maps, and cap maps commute}
\label{ssec:diff-cc-commute}

Consider a planar tangle $P_{n_1}(r_1)$ and distinct saddles
$S_1:P_{n_1}(r_1) \rightarrow P_{n_2}(r_2)$ and
$S_2:P_{n_1}(r_1) \rightarrow P_{n_3}(r_3)$.
We can apply the local replacements for both $S_1$ and $S_2$ to
$P_{n_1}(r_1)$ to obtain a planar tangle $P_{n_4}(r_4)$.
We have induced saddles
$S_2':P_{n_2}(r_2) \rightarrow P_{n_4}(r_4)$ and
$S_1':P_{n_3}(r_3) \rightarrow P_{n_4}(r_4)$ obtained by applying
the local replacement for $S_2$ to $P_{n_2}(r_2)$ and the local
replacement for $S_1$ to $P_{n_3}(r_3)$:

\begin{eqnarray}
\label{diag:saddles}
\begin{tikzcd}
P_{n_1}(r_1) \arrow{r}{S_1} \arrow{d}{S_2} &
P_{n_2}(r_2) \arrow{d}{S_2'} \\
P_{n_3}(r_3) \arrow{r}{S_1'} &
P_{n_4}(r_4).
\end{tikzcd}
\end{eqnarray}

\begin{theorem}
\label{theorem:d-d-commute}
Assuming the $A_\infty$ operations of $\calL$ are as described in
Conjecture \ref{conj:fukaya}, the differentials corresponding to the
saddles in diagram (\ref{diag:saddles}) commute.
\end{theorem}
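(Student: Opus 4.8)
The plan is to reduce the statement to a finite check over the possible pairs of saddle types, using the explicit formulas of Section~\ref{ssec:saddles} together with Conjecture~\ref{conj:fukaya}. Since $S_1$ and $S_2$ are distinct saddles emanating from $P_{n_1}(r_1)$, their local replacements take place in disjoint disks of the annulus, so the induced saddles $S_2'$ and $S_1'$ exist and the square in diagram~(\ref{diag:saddles}) is well-defined. What must be shown is the equality of
\[
  d_{S_2'} \circ d_{S_1} = \mu_{\Sigma\calL}^2(d_{S_2'},\, d_{S_1})
  \qquad\text{and}\qquad
  d_{S_1'} \circ d_{S_2} = \mu_{\Sigma\calL}^2(d_{S_1'},\, d_{S_2})
\]
as morphisms $L_{n_1}\otimes A^{\otimes r_1} \to L_{n_4}\otimes A^{\otimes r_4}$ in $\Sigma\calL$. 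Because $\mu_{\Sigma\calL}^2$ applies $\mu_\calL^2$ to the $\calL$-factors and composes the linear maps on the $A^{\otimes r}$-factors, and because Conjecture~\ref{conj:fukaya} makes all $\mu_\calL^m$ with $m\neq 2$ vanish on the generators $a_n, c_n, p_{n+2,n}, q_{n-2,n}$ (in particular $\mu_\calL^1 = 0$, so $\mu_{\Sigma\calL}^2$ is associative and these composites are unambiguous), each of the two composites factors as a $\mu_\calL^2$-product of two such generators tensored with a composition of the linear maps $\eta,\etadot,\epsilon,\epsilondot,\id_{ex},\id_{xe},m,\Delta,I_r,\Sigma_r$ of Section~\ref{ssec:linear-maps}.

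The case analysis then splits according to the types of $S_1$ and $S_2$ drawn from $\{C^\pm, R^\pm, L^\pm, W^\pm\}$. If at least one saddle is of winding type $W^\pm$, the $\calL$-factor of each composite is $\mu_\calL^2$ of two elements among $\{a_n, c_n, p_{n\pm 2, n}, q_{n\pm 2, n}\}$, and the relations of Conjecture~\ref{conj:fukaya} — $\mu_\calL^2(a_n,-) = (-)$, $\mu_\calL^2(c_{n\pm 2}, p) = \mu_\calL^2(p, c_n) = 0$ and likewise for $q$, $\mu_\calL^2(c_n,c_n)=0$, and $\mu_\calL^2(q_{n,n+2},p_{n+2,n}) = \mu_\calL^2(p_{n,n-2},q_{n-2,n}) = c_n$ — force the two composites to agree: for $W^+$ paired with $W^-$ the common value of the $\calL$-factor is $c_n$, for two $W^+$ saddles both sides are the (unspecified but identical) product $\mu_\calL^2(p_{n+4,n+2},p_{n+2,n})$, and in the mixed $W$-versus-circle cases all $c_n$-terms are killed while the surviving $a_n$-terms and the disjoint circle maps compose in the same way on both sides. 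When $S_1$ and $S_2$ are both of circle type $\{C^\pm, R^\pm, L^\pm\}$, the winding number is unchanged and the $\calL$-factor of each composite is $\mu_\calL^2(a_n,a_n)=a_n$, $\mu_\calL^2(a_n,c_n)=\mu_\calL^2(c_n,a_n)=c_n$, or $\mu_\calL^2(c_n,c_n)=0$, so the $\calL$-factors automatically match and the content of the theorem becomes the commutativity of the corresponding compositions of maps on $A^{\otimes r}$, which I would establish from the (co)associativity, (co)unit, and Frobenius identities for the algebra $A$ together with the identities $\Sigma_{a+b} = \Sigma_a I_b + I_a \Sigma_b$, $\Sigma_2\circ\Delta = \Delta\circ\Sigma_1$, and $\Sigma_1\circ m = m\circ\Sigma_2$ of equation~(\ref{eqn:identities-sigma}).

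I expect the main obstacle to be the subfamily of circle-type cases in which a circle component is enclosed with respect to one of the two saddles while simultaneously being created, destroyed, or reassigned between the enclosed and non-enclosed classes by the other saddle — for instance when $S_1$ splits a circle $C$ off the arc (type $R^\pm$ or $L^\pm$) and $S_2$ either splits $C$ in two or merges two circles lying inside $C$. In such situations the integer $r_e$ of enclosed circles appearing in the $\Sigma_{r_e}$-term of $d_{R^\pm}$ or $d_{L^\pm}$ is different on the two sides of the square, and the discrepancy must be absorbed using $\Sigma_{a+b} = \Sigma_a I_b + I_a \Sigma_b$ together with the compatibility of $\Sigma$ with $m$ and $\Delta$. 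The delicate bookkeeping is tracking exactly which tensor slots of $A^{\otimes r}$ are enclosed, and hence on which slots $\Sigma_{r_e}$ acts, as one passes around the square; ordering the circle factors so that the enclosed ones always occupy a contiguous block (as the notation $I_{r_n}\Sigma_{r_e}$ presupposes) should keep this under control. Once every type-pair has been verified in this way, the theorem follows.
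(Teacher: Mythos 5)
Your proposal is correct and takes essentially the same approach as the paper: a case-by-case verification over pairs of saddle types, using the $\mu^2_\calL$ relations of Conjecture \ref{conj:fukaya} to dispose of the $W$-saddle cases and the identities of equation (\ref{eqn:identities-sigma}) to handle the nonlocal $\Sigma_{r_e}$-terms when circles are created, destroyed, or reassigned between enclosed and non-enclosed classes. The paper, like you, carries out only representative instances (the interleaved and nested circle-type cases) and appeals to ``similar calculations'' for the rest, so the level of detail is comparable; you have correctly identified the same crux, namely that the $\Sigma_{r_e}$-terms are exactly what make the nested square close up.
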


\begin{proof}
We prove this result by explicit computation for all possible pairs of
saddles.
The computations are straightforward, though somewhat tedious, so we
describe only a few representative examples.

Consider the pair of interleaved saddles shown in Figure
\ref{fig:saddles-interleaved}.
We let $a$, $b$, and $c$ denote the number of circle components in the
regions labeled $a$, $b$, and $c$ in Figure
\ref{fig:saddles-interleaved}.
The diagram corresponding to Figure \ref{fig:saddles-interleaved} is
\begin{eqnarray}
\label{eqn:diag-interleaved}
\begin{tikzcd}
  L_n \otimes A^{\otimes(a+b+c)}
  \arrow{r}{d_1}
  \arrow{d}{d_2} &
  L_n \otimes A^{\otimes(a+b+c+1)} \arrow{d}{d_2'} \\
  L_n \otimes A^{\otimes(a+b+c+1)} \arrow{r}{d_1'} &
  L_n \otimes A^{\otimes(a+b+c)},
\end{tikzcd}
\end{eqnarray}
where
\begin{align*}
  & d_1 = d_{L^+} =
  a_n\etadot I_{a+b+c} +
  c_n\etadot I_a \Sigma_b I_c, &
  & d_2' = d_{L^-} =
  a_n\epsilondot I_{a+b+c} +
  c_n\epsilondot I_a \Sigma_b I_c, \\
  & d_2 = d_{R^+} =
  (a_n \etadot + c_n \eta) I_{a+b+c} +
  c_n \etadot \Sigma_a I_{b+c}, &
  & d_1' = d_{R^-} =
  (a_n \epsilondot + c_n \epsilon) I_{a+b+c} +
  c_n \etadot \Sigma_a I_{b+c}.
\end{align*}
A calculation shows that
\begin{align*}
  d_2' \circ d_1 = d_1' \circ d_2 = 0,
\end{align*}
so diagram (\ref{eqn:diag-interleaved}) commutes.

\begin{figure}
  \centering
  \includegraphics[scale=0.60]{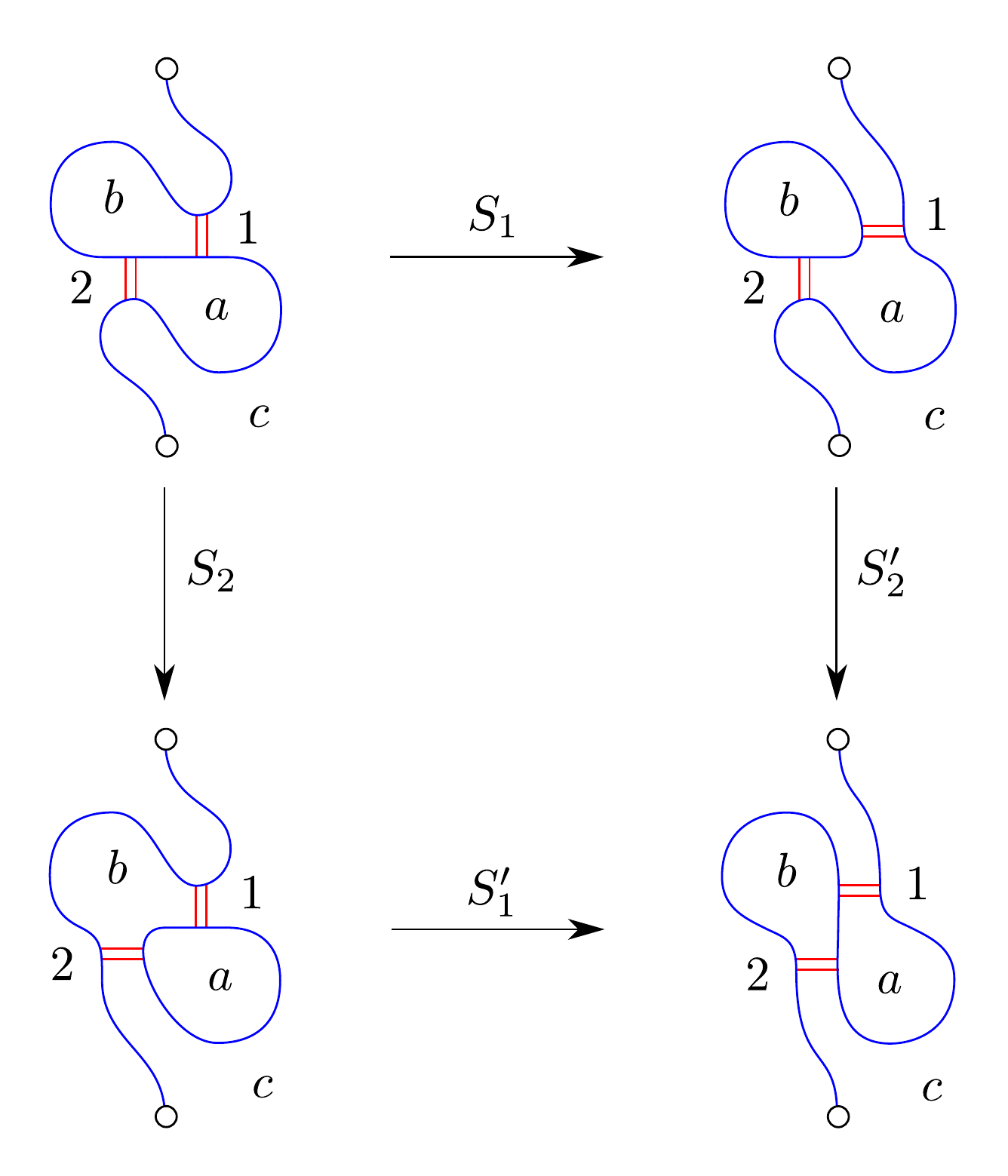}
  \caption{
    \label{fig:saddles-interleaved}
    Interleaved saddles.
  }
\end{figure}

Consider the pair of nested saddles shown in Figure
\ref{fig:saddles-nested}.
We let $a$, $b$, and $c$ denote the number of circle components in the
regions labeled $a$, $b$, and $c$ in Figure
\ref{fig:saddles-nested}.
The diagram corresponding to Figure \ref{fig:saddles-nested} is

\begin{eqnarray}
\label{eqn:diag-nested}
\begin{tikzcd}
L_n \otimes A^{\otimes(a+b+c)}
\arrow{r}{d_1} 
\arrow{d}{d_2} &
L_n \otimes A^{\otimes(a+b+c+1)}
\arrow{d}{d_2'} \\
L_n \otimes A^{\otimes(a+b+c+1)}
\arrow{r}{d_1'} &
L_n \otimes A^{\otimes(a+b+c+2)},
\end{tikzcd}
\end{eqnarray}
where
\begin{align*}
  & d_1 = d_{L^+} =
  a_n \etadot I_{a+b+c} +
  c_n\etadot I_a \Sigma_b I_c, &
  & d_2' = d_{R^+} =
  (a_n \etadot + c_n \eta)I_{a+b+c+1} + c_n \etadot
  \Sigma_{a+b+1}I_c, \\
  & d_2 = d_{R^+} =
  (a_n \etadot + c_n \eta)I_{a+b+c} +
  c_n\etadot\Sigma_a I_b, &
  & d_1' = d_{C^+} =
  a_n\Delta I_{a+b+c}.
\end{align*}
A calculation shows that
\begin{align*}
  d_2' \circ d_1 = d_1' \circ d_2 =
  (a_n\etadot\etadot +
  c_n (\etadot\eta + \eta\etadot))I_{a+b+c} +
  c_n\etadot\etadot \Sigma_a I_{b+c},
\end{align*}
where we have used the identity
\begin{align*}
  \Sigma_{a+b+1} \circ \etadot I_{a+b} =
  (\Sigma_1 I_{a+b} + I_1 \Sigma_{a+b}) \circ \etadot I_{a+b} =
  \eta I_{a+b} + \etadot \Sigma_{a+b}.
\end{align*}
So diagram (\ref{eqn:diag-nested}) commutes.
We note that without the nonlocal terms involving $\Sigma_r$, diagram
(\ref{eqn:diag-nested}) would not commute.

\begin{figure}
  \centering
  \includegraphics[scale=0.60]{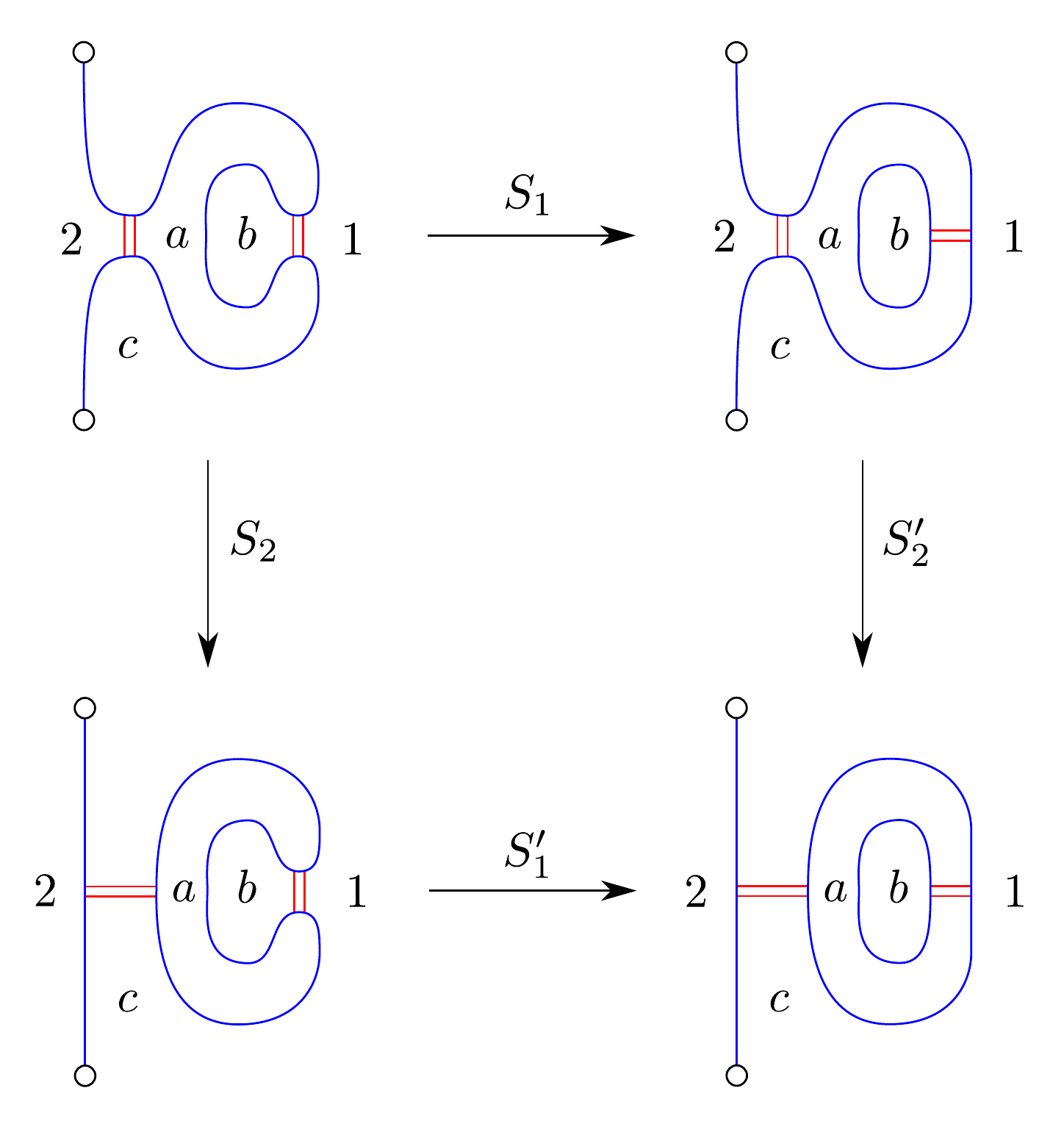}
  \caption{
    \label{fig:saddles-nested}
    Nested saddles.
  }
\end{figure}

The fact that differentials of the form $d_{C^\pm}$ commute with
differentials of the form $d_{R^\pm}$ and $d_{L^\pm}$ is due to the
identities in equation (\ref{eqn:identities-sigma}).

Similar calculations for the remaining pairs of saddles prove the
result.
\end{proof}

Consider a saddle
$S_{21}:P_{n_1}(r_1 + 1) \rightarrow P_{n_2}(r_2 + 1)$.
Choose a circle component component $C_1$ in $P_{n_1}(r_1 + 1)$ that
is unchanged under $S_{21}$, so there is a corresponding circle
component $C_2$ in $P_{n_2}(r_2 + 1)$.
Remove the circle components $C_1$ and $C_2$ from
$P_{n_1}(r_1 + 1)$ and
$P_{n_2}(r_2 + 1)$ to obtain planar tangles $P_{n_1}(r_1)$ and
$P_{n_2}(r_2)$, and
apply the local replacement for $S_{21}$ to $P_{n_1}(r_1)$ to
obtain an induced saddle
$S_{21}':P_{n_1}(r_1) \rightarrow P_{n_2}(r_2)$.
We have cups $\cup_k:P_{n_k}(r_k) \rightarrow P_{n_k}(r_k + 1)$ and
caps $\cap_k:P_{n_k}(r_k + 1) \rightarrow P_{n_k}(r_k)$ that add and
remove the circle components $C_k$.
We have the following diagram:

\begin{eqnarray}
\label{diag:cc}
\begin{tikzcd}
P_{n_1}(r_1 + 1)
\arrow{r}{S_{21}}
\arrow[shift left=2]{d}{\cap_1} &
P_{n_2}(r_2 + 1)
\arrow[shift left=2]{d}{\cap_2} \\
P_{n_1}(r_1) \arrow{r}{S_{21}'}
\arrow[shift left=2]{u}{\cup_1} &
P_{n_2}(r_2).
\arrow[shift left=2]{u}{\cup_2}
\end{tikzcd}
\end{eqnarray}

\begin{theorem}
\label{theorem:d-cc-commute}
Assuming the $A_\infty$ operations for $\calL$ are as described in
Conjecture \ref{conj:fukaya}, the differentials corresponding to the
saddles in diagram (\ref{diag:cc}) commute with the maps corresponding
to the cups and caps.
\end{theorem}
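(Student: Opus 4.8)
The plan is to prove Theorem~\ref{theorem:d-cc-commute} by explicit computation, organized by the type of the saddle $S_{21}$, in the same spirit as the proof of Theorem~\ref{theorem:d-d-commute}. The first observation is that the cup and cap maps $\cup_k = a_{n_k}\eta I_{r_k}$ and $\cap_k = a_{n_k}\epsilon I_{r_k}$ carry the identity element $a_{n_k}$ on the Lagrangian factor; hence in any composite of such a map with a saddle differential $d_{S_{21}}$, the Lagrangian part collapses, via the unit relations $\mu_\calL^2(a_n,-)=\mu_\calL^2(-,a_n)=\id$ of Conjecture~\ref{conj:fukaya}, to whichever of $a_n$, $c_n$, $p_{n+2,n}$, $q_{n-2,n}$ occurs in $d_{S_{21}}$. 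So the theorem reduces to verifying, for each saddle type, an identity between linear maps built from $\eta$, $\epsilon$, $\Delta$, $m$, $\id_{ex}$, $\Sigma_r$, and $I_r$ acting on the tensor powers of $A$.

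For saddles $S_{21}$ of type $C^\pm$ or $W^\pm$, the differential involves only $\Delta$, $m$, or $p/q$ together with identity factors and no $\Sigma_r$; since the circle $C_1$ is fixed by $S_{21}$, its tensor slot carries an identity factor, the cup/cap map acts on exactly that slot, and the splitting/merging map acts on disjoint slots, so the relevant square of linear maps commutes tautologically. The same argument applies to saddles of type $R^\pm$ or $L^\pm$ in the subcase where $C_1$ is a \emph{non-enclosed} circle for $S_{21}$: then the number $r_e$ of enclosed circles is unchanged whether or not $C_1$ is present, so $\eta$ (respectively $\epsilon$) acts on an inert slot while $\Sigma_{r_e}$ acts on the unchanged enclosed slots.

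The only case with genuine interaction is an $R^\pm$ or $L^\pm$ saddle for which $C_1$ is an \emph{enclosed} circle, so that $d_{S_{21}}$ and $d_{S_{21}'}$ differ by a factor of $\Sigma_{r_e}$ versus $\Sigma_{r_e-1}$. Here I would expand $\Sigma_{r_e}$ using $\Sigma_{a+b}=\Sigma_a I_b + I_a\Sigma_b$ from equation~(\ref{eqn:identities-sigma}) so as to isolate the single summand that places $\id_{ex}$ on the $C_1$ slot; since $\cup_1$ produces $e$ on that slot and $\id_{ex}(e)=0$, while $\epsilon\circ\id_{ex}=0$ on all of $A$, that summand is annihilated by the cup, respectively by the cap. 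The remaining summands reassemble to $\Sigma_{r_e-1}$ acting on the slots of the other enclosed circles, yielding $\Sigma_{r_e}\circ\cup_1 = \cup_2\circ\Sigma_{r_e-1}$ and $\cap_2\circ\Sigma_{r_e} = \Sigma_{r_e-1}\circ\cap_1$, which is exactly what is needed to conclude $d_{S_{21}}\circ\cup_1 = \cup_2\circ d_{S_{21}'}$ and $\cap_2\circ d_{S_{21}} = d_{S_{21}'}\circ\cap_1$. I expect the main nuisance to be purely bookkeeping: tracking which tensor slot represents $C_1$ under the fixed ordering convention for the circle factors, and then running the same routine for the remaining saddle types listed in Section~\ref{ssec:saddles}. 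There is no conceptual obstacle, and since everything is over $\F$ no signs enter.
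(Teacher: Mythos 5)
Your proposal is correct and follows essentially the same route as the paper's proof: you reduce to the case of an enclosed circle and then deploy the additivity identity $\Sigma_{a+b}=\Sigma_a I_b+I_a\Sigma_b$ together with $\Sigma_1\circ\eta=0$ and $\epsilon\circ\Sigma_1=0$ (noting $\Sigma_1=\id_{ex}$), which is exactly the pair of identities
\[\Sigma_{r+1}\circ\eta I_r=\eta I_r\circ\Sigma_r,\qquad \epsilon I_r\circ\Sigma_{r+1}=\Sigma_r\circ\epsilon I_r\]
the paper isolates. The only difference is that you spell out the easy cases (saddle types $C^\pm$, $W^\pm$, and non-enclosed $R^\pm$, $L^\pm$) more explicitly than the paper does.
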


\begin{proof}
If the circles $C_1$ and $C_2$ are not enclosed circles, then the
differentials corresponding to $S_{21}$ and $S_{21}'$ act as the
identity on the factor of $A$ corresponding to $C_1$, so the result is
clear.
If $C_1$ and $C_2$ are enclosed circles, the result follows from the
identities
\begin{align*}
  &\Sigma_{r + 1} \circ \eta I_r =
  (\Sigma_1 I_r + I_1 \Sigma_r) \circ \eta I_r =
  \eta \Sigma_r = \eta I_r \circ \Sigma_r, \\
  &\epsilon I_r \circ \Sigma_{r + 1} =
  \epsilon I_r \circ (\Sigma_1 I_r + I_1 \Sigma_r) =
  \epsilon \Sigma_r = \Sigma_r \circ \epsilon I_r,
\end{align*}
where we have used the fact that
$\Sigma_1 \circ \eta = \epsilon \circ \Sigma_1 = 0$.
\end{proof}

\subsection{Construction of the twisted complex}
\label{ssec:twisted-complex}

Given an oriented 1-tangle diagram $T$, we will construct a
corresponding object $(X,\delta)$ of $\Tw \calL$.
Recall that $(X,\delta)$ consists of an object $X$ of $\Sigma \calL$
and a differential $\delta:X \rightarrow X$ with bigrading
$(h,q) = (1,1)$.

We construct the object $X$ of $\Sigma \calL$ from a cube of
resolutions of the tangle diagram $T$.
Let $m_+$ and $m_-$ denote the number of positive an negative
crossings of $T$, and let $m = m_+ + m_-$ denote the total number of
crossings.
Fix an arbitrary ordering of the crossings.
Define the $0$-resolution, respectively 1-resolution, of a crossing
such that the overpass turns left, respectively right.
We can specify a planar resolution of $T$ in terms of a binary
string $i$ of length $m$, where the $k$-th bit of $i$ tells us how to
resolve the $k$-th crossing of $T$.
Define $I = \{0,1\}^m$ to be the set of binary strings of
length $m$.
For each binary string $i \in I$, define $T_i$ to be the planar
resolution of $T$ specified by $i$.
We have $T_i = P_{n_i}(c_i)$, where $n_i$ is the winding number of
$T_i$ and $c_i$ is the number of circle components of $T_i$.
Define the \emph{resolution degree} $r(i) \in \{0, 1, \cdots, m\}$ to
be the number of 1's in the binary string $i \in I$.
We define
\begin{align}
  \label{eqn:obj-X}
  X = \bigoplus_{i \in  I}
  (L_{n_i} \otimes A^{\otimes c_i}[r(i)+h_T, 2r(i)+q_T]),
\end{align}
where $[h_T,q_T]$ is a bigrading shift due to the oriented crossings
of $T$ that is given by
\begin{align*}
  [h_T,q_T] = [-m_-, m_+ - 3m_-].
\end{align*}
The quantum grading shift $q_T = m_+ - 3m_-$ differs from the usual
shift of $m_+ - 2m_-$ for Khovanov homology, since the quantum grading
of $\delta$ is 1 in our convention but 0 in the usual convention.

We construct the differential $\delta:X \rightarrow X$ by summing over
maps corresponding to saddles that relate different resolutions of the
tangle diagram $T$.
Consider two binary strings $i,j \in I$ that are identical except for
a single bit that is 0 for $i$ and 1 for $j$, so $r(j) = r(i) + 1$.
The corresponding planar tangles $T_i$ and $T_{j}$ are related by a
saddle $S_{ji}:T_i \rightarrow T_j$ that changes a 0-resolution in
$T_i$ to a 1-resolution in $T_j$.
We define
\begin{align*}
    d_{ji}:
    L_{n_i} \otimes A^{\otimes c_i}[r(i)+h_T, 2r(i)+q_T] \rightarrow
    L_{n_j} \otimes A^{\otimes c_j}[r(j)+h_T, 2r(j)+q_T]
\end{align*}
to be the differential corresponding to the saddle $S_{ji}$ shifted
in bigrading by $[1,2]$, so $d_{ji}$ has bigrading
$(h,q)=(1,1)$.
We define $\delta$ to be the sum of the differentials
$d_{ji}$ over all pairs of binary strings $i,j \in I$ that are
identical except for a single bit that is 0 for $i$ and 1 for $j$.

Recall from Section \ref{sec:conj-fukaya} that we considered two
different ways of defining the elements $p_{n+2,n}$ and $q_{n-2,n}$
that appear in the differentials $d_{W_+}$ and $d_{W^-}$.
The two possible definitions of $p_{n+2,n}$ and $q_{n-2,n}$ correspond
to two possible definitions of $\delta$, which we denote $\delta_+$
and $\delta_-$.
For $\delta_+$, we define $p_{n+2,n}$ and $q_{n-2,n}$ as
\begin{align*}
  p_{n+2,n}^{(-)} &= r_{n+2,n}^{(-)} + \rbar_{n+2,n}^{(-)}, &
  q_{n-2,n}^{(+)} &= s_{n-2,n}^{(+)} + \sbar_{n-2,n}^{(+)}.
\end{align*}
For $\delta_-$, we define $p_{n+2,n}$ and $q_{n-2,n}$ as
\begin{align*}
  p_{n+2,n}^{(+)} &= s_{n+2,n}^{(+)} + \sbar_{n+2,n}^{(+)}, &
  q_{n-2,n}^{(-)} &= r_{n-2,n}^{(-)} + \rbar_{n-2,n}^{(-)}.
\end{align*}
For some purposes it does not matter which definition is used, in
which case we use the notation $\delta$ with no subscript.

\begin{theorem}
Assuming the $A_\infty$ operations for $\calL$ are as described in
Conjecture \ref{conj:fukaya}, the pair $(X,\delta)$ is a twisted
complex.
\end{theorem}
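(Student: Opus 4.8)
The plan is to verify the three defining conditions of a twisted complex from Section \ref{ssec:a-infinity}: that $\delta$ is a homogeneous morphism of $\Sigma\calL$ of the correct degree, that $X$ admits a filtration with respect to which $\delta$ is strictly lower triangular, and that $\delta$ satisfies the Maurer--Cartan equation $\sum_m \mu_{\Sigma\calL}^m(\delta,\cdots,\delta) = 0$. The first two conditions are immediate from the constructions in Section \ref{ssec:twisted-complex}. Each saddle differential $d_{ji}$ has bigrading $(0,-1)$, and because $r(j) = r(i)+1$ the bigrading shifts appearing in equation (\ref{eqn:obj-X}) turn $d_{ji}$ into a map of bigrading $(1,1)$; hence $\delta$ is homogeneous of quantum degree $1$. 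Moreover $X$ is a finite direct sum indexed by $I = \{0,1\}^m$, and every term $d_{ji}$ of $\delta$ raises the resolution degree $r$ by exactly one, so ordering the summands of $X$ by resolution degree exhibits $\delta$ as strictly lower triangular; in particular the sum in the Maurer--Cartan equation is finite.

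The substance of the argument is the Maurer--Cartan equation, and the first step is to observe that it collapses to $\delta \circ \delta = 0$. By Conjecture \ref{conj:fukaya}, every operation $\mu_\calL^m$ with $m \neq 2$ vanishes on the generators $a_n$, $c_n$, $p_{n+2,n}$, $q_{n-2,n}$, and the $\hom_\calL$-components of every saddle differential $d_{C^\pm}$, $d_{R^\pm}$, $d_{L^\pm}$, $d_{W^\pm}$ --- hence of $\delta$ --- are $\F$-linear combinations of exactly these generators. Since $\mu_{\Sigma\calL}^m$ is computed by applying $\mu_\calL^m$ to the $\hom_\calL$ factors and composing the $\hom_\F$ factors, it follows that $\mu_{\Sigma\calL}^m(\delta,\cdots,\delta) = 0$ for all $m \neq 2$; in particular $\mu_{\Sigma\calL}^1(\delta) = 0$. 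Thus the Maurer--Cartan equation reduces to $\mu_{\Sigma\calL}^2(\delta,\delta) = \delta \circ \delta = 0$.

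To establish $\delta \circ \delta = 0$, I would expand $\delta = \sum d_{ji}$, so that $\delta \circ \delta = \sum d_{kj} \circ d_{ji}$, the sum running over chains $i \to j \to k$ in $I$ in which each step flips a single bit from $0$ to $1$. Such a chain exists exactly when $k$ is obtained from $i$ by flipping two specified bits, in which case there are precisely two intermediate strings $j$ and $j'$. Grouping the terms of $\delta \circ \delta$ by the pair $(i,k)$, each group equals $d_{kj} \circ d_{ji} + d_{kj'} \circ d_{j'i}$, which is the sum of the two compositions around the two sides of a square of saddles of the form (\ref{diag:saddles}). Over $\F = \Ints_2$ this sum vanishes precisely when the square commutes, and that is exactly Theorem \ref{theorem:d-d-commute}; the fixed bigrading shifts attached to the source and target summands play no role, since they do not affect whether the two compositions are equal. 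Summing over all pairs $(i,k)$ then gives $\delta \circ \delta = 0$, which completes the verification.

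I expect the only delicate point to be bookkeeping: confirming that every pair of saddle types that can appear on a $2$-face of the cube of resolutions is genuinely covered by Theorem \ref{theorem:d-d-commute}. The case worth isolating is a $2$-face whose two saddles are winding-number saddles of the same sign, where Conjecture \ref{conj:fukaya} deliberately leaves $\mu_\calL^2(p_{n+4,n+2},p_{n+2,n})$ and $\mu_\calL^2(q_{n-4,n-2},q_{n-2,n})$ unspecified. In this case the two compositions around the square are literally the same formal expression, namely $\mu_\calL^2(p_{n+4,n+2},p_{n+2,n})\,I_r$ or its $q$-analogue, so the square commutes whatever these unknown products turn out to be, and no further input about $\calL$ is required. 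Granting this observation, the proof goes through exactly as outlined.
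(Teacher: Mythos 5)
Your proposal is correct and follows essentially the same route as the paper's proof: use Conjecture~\ref{conj:fukaya} to kill all $m \neq 2$ terms of the Maurer--Cartan sum, invoke Theorem~\ref{theorem:d-d-commute} for the $m=2$ term, and take the resolution-degree filtration for lower triangularity. What you add beyond the paper's terse presentation is worth noting: you unpack the $m=2$ term as a sum over $2$-faces of the cube and, in particular, you isolate the face type where both edges are winding saddles of the same sign. Since Conjecture~\ref{conj:fukaya} deliberately leaves $\mu_\calL^2(p_{n+4,n+2},p_{n+2,n})$ and $\mu_\calL^2(q_{n-4,n-2},q_{n-2,n})$ unspecified, one could worry that Theorem~\ref{theorem:d-d-commute} has a gap there; your observation that both compositions around such a square are the identical formal expression $\mu_\calL^2(p_{n+4,n+2},p_{n+2,n})\,I_r$ (or its $q$-analogue) cleanly dispels that worry and is a useful point the paper leaves implicit. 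One small notational slip: you state that the saddle differential $d_{ji}$ has bigrading $(0,-1)$, but in the paper's convention $d_{ji}$ already denotes the $[1,2]$-shifted map of bigrading $(1,1)$; the unshifted morphism is what has bigrading $(0,-1)$. This does not affect your argument.
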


\begin{proof}
We need to show that
\begin{align}
  \label{eqn:sum-mu}
  \sum_{m=1}^\infty \mu_{\Sigma \calL}^m(\delta, \cdots, \delta) = 0
\end{align}
and that $X$ admits a filtration with respect to which
$\delta$ is strictly lower triangular.
Conjecture \ref{conj:fukaya} states that the $m \neq 2$ terms of
equation (\ref{eqn:sum-mu}) are zero.
Assuming Conjecture \ref{conj:fukaya}, Theorem
\ref{theorem:d-d-commute} states that the $m=2$ term of
equation (\ref{eqn:sum-mu}) is zero.
We will take the filtration to be the one given by the resolution
degree.
By construction, the differential $\delta$ increases the resolution
degree by 1, so it is lower triangular with respect to this
filtration.
\end{proof}

\subsection{Example twisted complex}
\label{ssec:example-twisted-complex}

Consider the tangle diagram $T$ shown in Figure
\ref{fig:example-tangle}.
There are $m_+ = 2$ positive crossings and $m_- = 0$
negative crossings, corresponding to a bigrading shift
\begin{align*}
  [h_T,q_T] = [-m_-, m_+ - 3m_-] = [0,2].
\end{align*}
The tangle diagram $T$ thus corresponds to the
following object of the twisted Fukaya category:
\begin{eqnarray*}
\begin{tikzcd}
  {} & L_0 \otimes \F[1,4]
  \arrow{dr}{d_{L^+}[1,2]} & {} \\
  L_2 \otimes \F[0,2]
  \arrow{ur}{d_{W^-}[1,2]} \arrow{dr}[swap]{d_{W^-}[1,2]} &
  {} & L_0 \otimes A[2,6], \\
  {} & L_0 \otimes \F[1,4] \arrow{ur}[swap]{d_{R^+}[1,2]} & {}
\end{tikzcd}
\end{eqnarray*}
where the differentials are given by
\begin{align*}
  d_{W^-} &= q_{0,2} \otimes \id_{\F}, &
  d_{L^+} &= a_0 \otimes \etadot, &
  d_{R^+} &= a_0 \otimes \etadot + c_0 \otimes \eta.
\end{align*}

\begin{figure}
  \centering
  \includegraphics[scale=0.5]{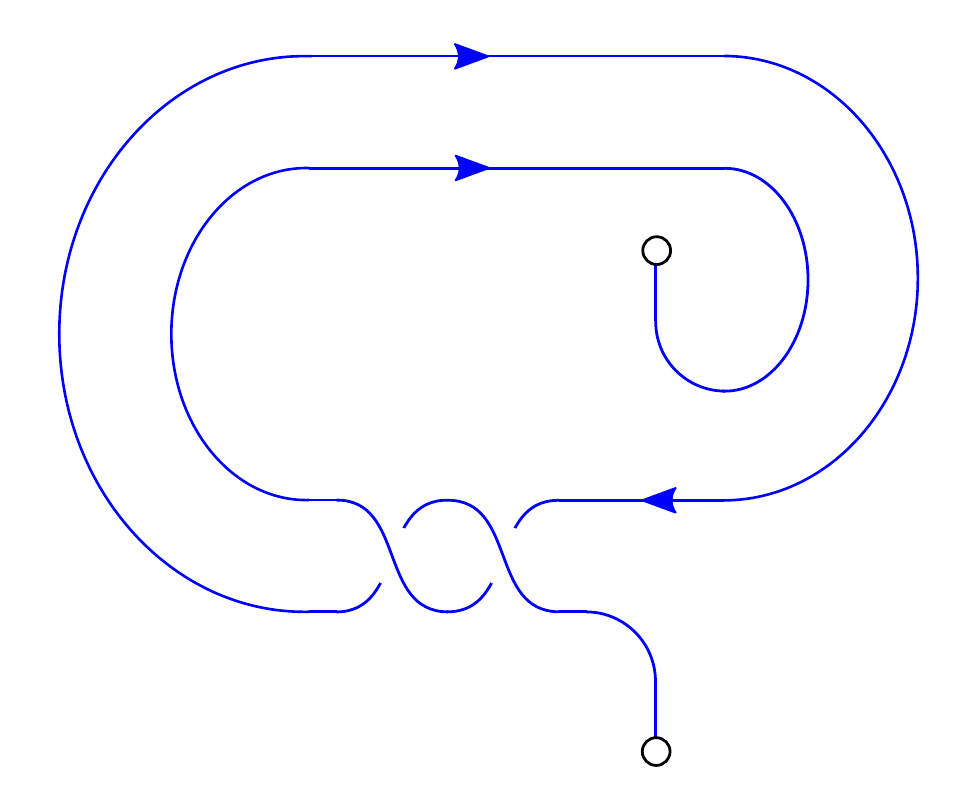}
  \caption{
    \label{fig:example-tangle}
    Example tangle diagram $T$.
  }
\end{figure}

\section{Invariance under Reidemeister moves}
\label{sec:invariance}

Consider an $A_\infty$ category $\calA$.
Let $\Tw \calA$ denote the category of twisted complexes over $\calA$.
We say that two twisted complexes $(X,\delta_X)$ and $(Y,\delta_Y)$ of
are \emph{homotopy equivalent} if there are morphisms
$F:(X,\delta_X) \rightarrow (Y,\delta_Y)$,
$G:(Y,\delta_Y) \rightarrow (X,\delta_X)$,
$K:(X,\delta_X) \rightarrow (X,\delta_X)$, and
$H:(Y,\delta_Y) \rightarrow (Y,\delta_Y)$ such that
\begin{align*}
  &\mu_{\Tw \calA}^1(F) = 0, &
  &\mu_{\Tw \calA}^1(G) = 0, \\
  &\mu_{\Tw \calA}^2(G,F) = \id_{(X,\delta_X)} + \mu_{\Tw A}^1(K), &
  &\mu_{\Tw \calA}^2(F,G) = \id_{(Y,\delta_Y)} + \mu_{\Tw A}^1(H).
\end{align*}
In this case we say that $F$ and $G$ are \emph{homotopy equivalences}
and $H$ and $K$ are \emph{homotopies}.
Our goal in this section is to prove:

\begin{theorem}
\label{theorem:invar}
Assuming the $A_\infty$ operations of $\calL$ are as described in
Conjecture \ref{conj:fukaya}, if two oriented 1-tangle
diagrams $T_1$ and $T_2$ are isotopic rel boundary, then the
corresponding twisted complexes $(X_1,\delta_1)$ and $(X_2,\delta_2)$
are homotopy equivalent.
\end{theorem}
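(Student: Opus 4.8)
The plan is to follow the standard strategy for proving invariance of a cube-of-resolutions construction: reduce to checking invariance under each of the three Reidemeister moves (and planar isotopy, which is essentially immediate since it does not change the combinatorics of the cube of resolutions or the associated planar tangles), and then for each Reidemeister move exhibit explicit homotopy equivalences $F$, $G$ together with homotopies $H$, $K$ between the twisted complexes $(X_1,\delta_1)$ and $(X_2,\delta_2)$. The general principle is that any isotopy rel boundary of 1-tangle diagrams in the annulus factors through a finite sequence of planar isotopies and Reidemeister moves $R1$, $R2$, $R3$; since homotopy equivalence of twisted complexes is an equivalence relation and is preserved under the operations used to build the cube (gluing on crossings corresponds to taking mapping cones, which respects homotopy equivalence), it suffices to treat each move locally. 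Here ``locally'' means: the diagrams $T_1$ and $T_2$ agree outside a small disk, and inside that disk one sees the two sides of a Reidemeister move; the cube of resolutions of $T_k$ is then obtained from the cube of resolutions of the ``rest'' of the diagram by tensoring with (and coning along) the sub-cube coming from the crossings inside the disk.

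First I would set up the local picture precisely: for a Reidemeister move involving the crossings inside the disk, the twisted complex $(X_k,\delta_k)$ decomposes, using the filtration by the bits corresponding to those local crossings, into an iterated mapping cone whose pieces are the twisted complexes attached to the fixed ``outside'' tangle with the various local resolutions inserted. For $R1$ (a single crossing forming a kink), the local cube has two vertices related by one saddle; depending on whether the kink is a left or right kink and on its sign, this saddle is one of $S_{R^\pm}$, $S_{L^\pm}$, or $S_{C^\pm}$ creating or destroying a circle component. Using the conjectured operations of Conjecture \ref{conj:fukaya} — in particular that $a_n$ is a unit, $\mu_\calL^2(c_n,c_n)=0$, and the identities among $\eta,\epsilon,m,\Delta,\Sigma_r$ from Section \ref{ssec:linear-maps} — one shows the mapping cone of the relevant differential is homotopy equivalent to the single remaining vertex, after Gaussian elimination of an acyclic summand (the term carrying $a_n\eta$ or $a_n\epsilon$ provides an invertible component of the differential). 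The bigrading shifts $[h_T,q_T]=[-m_-,m_+-3m_-]$ and the per-resolution shift $[r(i)+h_T,2r(i)+q_T]$ are designed precisely so that this elimination lands in the correct bigrading, and I would check that bookkeeping explicitly for each of the four kink types.

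For $R2$ the local cube is a $2\times 2$ square of resolutions, and one expects two of the four vertices to assemble (via a saddle that is an isomorphism-up-to-homotopy, again detected by the unit $a_n$ and the $\Sigma_r$-identities of equation (\ref{eqn:identities-sigma})) into an acyclic complex that Gaussian-eliminates, leaving the remaining vertices identified with the complex for the crossingless diagram. For $R3$ the local cube is a $2^3$-cube, and the standard approach is to Gaussian-eliminate matching acyclic pieces on both sides and then produce an explicit chain isomorphism between the reduced complexes; the key technical inputs are again that the relevant saddle differentials compose correctly (Theorem \ref{theorem:d-d-commute}, that differentials commute, and Theorem \ref{theorem:d-cc-commute}, that they commute with cup/cap maps) and that the winding-number-changing differentials $d_{W^\pm}$ behave as in Conjecture \ref{conj:fukaya} — here one must be careful about the two choices $\delta_+$, $\delta_-$ for $p_{n+2,n}$, $q_{n-2,n}$ and verify the argument is compatible with whichever choice is fixed. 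Throughout, I would lean on the fact that the functor $\Tw$ sends homotopy equivalences to homotopy equivalences and that mapping cones of homotopic maps are homotopy equivalent, so that establishing the local equivalences suffices to conclude the global statement.

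The main obstacle I anticipate is the $R3$ move together with the interaction of the nonlocal terms $c_n\etadot I_{r_n}\Sigma_{r_e}$ in $d_{R^\pm}$ and $d_{L^\pm}$ with the enclosed-circle bookkeeping: because the differentials act nontrivially on the tensor factors of $A$ corresponding to enclosed circles, the Gaussian elimination steps do not literally split off sub-objects of the form $L_n\otimes A^{\otimes r}$ but rather require tracking how $\Sigma_{r_e}$ transforms under removing or adding a circle, which is exactly the content of Theorem \ref{theorem:d-cc-commute} — so the delicate point is organizing the $R3$ computation so that only those already-proven commutation identities are invoked, rather than reproving them in a tangled special case. A secondary subtlety is that in a Reidemeister move the strand carrying the distinguished arc component (as opposed to a circle component) may participate, so one must separately handle the cases where the local move involves the arc versus only circle components, and where it changes the winding number; I would organize the proof by enumerating these cases and pointing to the appropriate conjectured operations in each.
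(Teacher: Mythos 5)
Your proposal is correct in outline and captures the main ideas, but organizes the argument differently from the paper, most notably at R3. The paper's key organizational move is to axiomatize the relations you would use in Gaussian elimination as explicit ``Bar-Natan relations'' (Theorems \ref{theorem:rel-0}, \ref{theorem:rel-1}, \ref{theorem:rel-2}: sphere relation, isotopy invariance/neck cutting, and 4-tube), and then to observe that once $\mu^m_\calL = 0$ for $m \neq 2$ (by Conjecture \ref{conj:fukaya}) the notion of homotopy equivalence in $\Tw\calL$ is formally identical to that in cochain complexes, so Bar-Natan's original cobordism-category homotopy equivalences can be copied verbatim once these relations are verified. For R1 and R2 what you call Gaussian elimination of an acyclic summand is essentially what the explicit $F$, $G$, $H$ in the paper do, so these are really the same argument in two languages. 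The genuine divergence is R3: you propose a direct elimination on the $2^3$-cube, which would work but — as you yourself flag — interacts delicately with the nonlocal $\Sigma_{r_e}$ terms. The paper avoids this entirely by checking that the R2 homotopy equivalence is a \emph{strong deformation retraction} (the extra condition $H\circ F = 0$), and then invoking Bar-Natan's Section 4.3 argument, which derives R3 abstractly from R2 plus the retraction property without re-examining the cube's internal structure; this is precisely what lets one ``invoke already-proven commutation identities'' rather than re-prove them in a tangled special case, which is the obstacle you anticipated. Your secondary observation about the two choices $\delta_\pm$ is correct but less central than you suspect: the paper treats $p_{n\pm 2,n}$, $q_{n\mp 2,n}$ abstractly in Conjecture \ref{conj:fukaya} so the same argument works for either choice without separate case analysis.

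One thing to be careful about: your claim that the global statement follows by reducing to local moves because mapping cones respect homotopy equivalence is true, but the paper makes this precise in a slightly different way — it shows that the local homotopy equivalences and homotopies are assembled entirely from differentials, cup maps, and cap maps, and then uses Theorems \ref{theorem:d-d-commute} and \ref{theorem:d-cc-commute} (that these commute with all other saddle differentials) to conclude the local maps patch into morphisms of the full twisted complexes. If you instead frame the argument via iterated mapping cones you need to be careful that the maps you eliminate along genuinely commute with the remaining differentials, which is again the content of those two theorems; this is where the $\Sigma_{r_e}$ terms could bite if not handled via the commutation lemmas.
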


According to Conjecture \ref{conj:fukaya}, the $A_\infty$ operations
$\mu_\calL^m$ are zero for $m \neq 2$.
In this situation the conditions for morphisms $F$ and $G$ to be
homotopy equivalences reduce to
\begin{align*}
  &\delta_Y \circ F = F \circ \delta_X, &
  &\delta_X \circ G = G \circ \delta_Y, \\
  &G \circ F = \id_{(X,\delta_X)} +
  \delta_X \circ K + K \circ \delta_X, &
  &F \circ G = \id_{(Y,\delta_Y)} +
  \delta_Y \circ H + H \circ \delta_Y,
\end{align*}
where $\circ$ denotes the product operation $\mu_{\Sigma\calL}^2$ in
$\Sigma \calL$.
So if Conjecture \ref{conj:fukaya} is correct, the notion of homotopy
equivalence in $\Tw \calL$ is formally identical to the notion of
homotopy equivalence in the category of cochain complexes, only we
interpret $\circ$ as the product operation in $\Sigma \calL$ rather
than as the composition of linear maps.

Using this observation, we can prove Theorem
\ref{theorem:invar} by adapting Bar-Natan's proof of the isotopy
invariance of Khovanov homology in \cite{BarNatan}.
Proving the isotopy invariance of Khovanov homology amounts to showing
that the cochain complexes of link diagrams related by Reidemeister
moves are homotopy equivalent.
Bar-Natan defines the relevant homotopy equivalences in terms of
certain elementary cobordisms in a cobordism category for planar
tangles.
The invariance proof then reduces to showing that these elementary
cobordisms satisfy a small number of relations.
We will show that analogous relations hold for the differentials, cup
maps, and cap maps that we defined in Section \ref{ssec:saddles}.
Using these relations, we will define homotopy equivalences for
Reidemeister moves that are formally identical to those defined by
Bar-Natan.

\subsection{Bar-Natan relations}
\label{ssec:bn-relations}

We prove here the relations we will need involving differentials,
cup maps, and cap maps.
We name the relations by analogy with corresponding relations in
Bar-Natan's cobordism category.

Suppose we create a circle component $C$ with a cup
$\cup:P_n(r) \rightarrow P_n(r+1)$ and then remove $C$ with a cap
$\cap:P_n(r+1) \rightarrow P_n(r)$.
Let $\cup$ and $\cap$ also denote the corresponding cup and cap maps.
We have:

\begin{theorem}
\label{theorem:rel-0}
We have the following sphere relation in $\Sigma \calL$:
\begin{align*}
  \cap \circ \cup = 0.
\end{align*}
\end{theorem}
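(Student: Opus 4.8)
The plan is to unwind the definition of the composition $\mu_{\Sigma \calL}^2(\cap, \cup)$ directly. Recall from Section \ref{ssec:a-infinity} that $\mu_{\Sigma \calL}^2$ applies $\mu_\calL^2$ to the $\hom_\calL$ factors of a morphism of $\Sigma \calL$ and composes the $\hom_\F$ factors. Here $\cup = a_n \eta I_r$ decomposes as the pair $(a_n,\, \eta I_r)$ with $a_n \in \hom_\calL(L_n, L_n)$ and $\eta I_r \in \hom_\F(A^{\otimes r}, A^{\otimes(r+1)})$, and likewise $\cap = a_n \epsilon I_r$ decomposes as $(a_n,\, \epsilon I_r)$. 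Therefore
\begin{align*}
  \cap \circ \cup = \mu_{\Sigma \calL}^2(\cap, \cup) =
  \mu_\calL^2(a_n, a_n) \otimes \bigl((\epsilon I_r) \circ (\eta I_r)\bigr).
\end{align*}

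Next I would evaluate the two factors separately. On the Fukaya side, Conjecture \ref{conj:fukaya} asserts that $a_n$ is an identity element, so $\mu_\calL^2(a_n, a_n) = a_n$. On the linear-algebra side, writing $\eta I_r = \eta \otimes I_r$ and $\epsilon I_r = \epsilon \otimes I_r$ (the new $A$-factor created by $\eta$ occupies the first tensor slot, which is exactly the slot removed by $\epsilon$), the composite factors through that slot: $(\epsilon I_r) \circ (\eta I_r) = (\epsilon \circ \eta) \otimes I_r$. But $\epsilon \circ \eta : \F \to \F$ sends $1 \mapsto \epsilon(\eta(1)) = \epsilon(e) = 0$ by the definitions of $\eta$ and $\epsilon$ in Section \ref{ssec:linear-maps}, so $(\epsilon I_r) \circ (\eta I_r) = 0$ and hence $\cap \circ \cup = a_n \otimes 0 = 0$.

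This is a direct unwinding of definitions rather than a substantive argument, so I do not expect any real obstacle; the only point requiring care is keeping track of the ordering of the tensor factors of $A$, which here is immediate because the circle $C$ created by $\cup$ is precisely the one removed by $\cap$, and both maps act on it through the first tensor slot. This result is the simplest of the Bar-Natan relations and will serve as a template for the slightly more involved relations proved in the remainder of Section \ref{ssec:bn-relations}.
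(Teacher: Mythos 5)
Your proof is correct and is essentially identical to the paper's: both decompose $\cup = a_n\eta I_r$ and $\cap = a_n\epsilon I_r$, use that $a_n$ is an identity element so $\mu_\calL^2(a_n,a_n)=a_n$, and conclude from $\epsilon\circ\eta = 0$. You merely spell out the $\Sigma\calL$ bookkeeping a bit more explicitly than the paper does.
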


\begin{proof}
We have
\begin{align*}
  \cap \circ \cup = a_n \epsilon I_r \circ a_n \eta I_r =
  a_n (\epsilon \circ \eta) I_r = 0,
\end{align*}
since $\epsilon \circ \eta = 0$.
\end{proof}

Take the left diagram in in Figure \ref{fig:move-r1} to describe a
portion of a planar tangle $P_n(r)$, where the arc $A$ could either
belong to the arc component of $P_n(r)$ or a circle component of
$P_n(r)$.
Define saddles $S_{A^+}:P_n(r) \rightarrow P_n(r+1)$ and
$S_{A^-}:P_n(r+1) \rightarrow P_n(r)$ that split the indicated circle
component $C$ from and merge $C$ with the arc $A$.
Let $d_{A^\pm}$ denote the differential corresponding to the saddle
$S_{A^\pm}$.
Define a cup $\cup:P_n(r) \rightarrow P_n(r+1)$ that creates $C$ and a
cap $\cap:P_n(r+1) \rightarrow P_n(r)$ that removes $C$.
We will also use $\cup$ and $\cap$ to denote the corresponding cup and
cap maps.
We have:

\begin{theorem}
\label{theorem:rel-1}
We have the following isotopy invariance relations in $\Sigma \calL$:
\begin{align*}
  d_{A^-} \circ \cup = \cap \circ d_{A^+} = a_n I_r.
\end{align*}
We have the following neck cutting relations in $\Sigma \calL$:
\begin{align*}
  d_{A^+} \circ \cap + \cup \circ d_{A^-} &= a_n I_{r+1}, &
  d_{A^-} \circ d_{A^+} &= 0.
\end{align*}
\end{theorem}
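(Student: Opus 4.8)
The strategy is to verify all five identities ($d_{A^-}\circ\cup = a_nI_r$, $\cap\circ d_{A^+} = a_nI_r$, $d_{A^+}\circ\cap + \cup\circ d_{A^-} = a_nI_{r+1}$, and $d_{A^-}\circ d_{A^+} = 0$) by direct computation in $\Sigma\calL$, using the explicit formulas for $d_{A^\pm}$, $\cup$, $\cap$ from Section~\ref{ssec:saddles}, the definitions of the elementary maps on $A$ from Section~\ref{ssec:linear-maps}, and Conjecture~\ref{conj:fukaya}. The observation that makes everything routine is that composition in $\Sigma\calL$ applies $\mu_\calL^2$ to the $\hom_\calL$ factors and composes the $\hom_\F$ factors, so each composition splits as an $\calL$-part times an $A$-part. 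Every $\calL$-part that arises is a $\mu_\calL^2$-product of $a_n$'s with at most one $c_n$; by Conjecture~\ref{conj:fukaya} ($a_n$ a two-sided unit, $\mu_\calL^2(c_n,c_n) = 0$) these evaluate to $a_n$, $c_n$, or $0$, and each identity reduces to an identity among maps on tensor powers of $A$.

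First I would eliminate the nonlocal contributions: the circle $C$ that $S_{A^+}$ creates encloses no circle components, so $r_e = 0$ and $\Sigma_{r_e} = \Sigma_0 = 0$; hence $d_{A^\pm}$ reduces to its local part, which is $a_n\Delta I_r$ resp.\ $a_n m I_r$ when $A$ is a circle component, $a_n\etadot I_r$ resp.\ $a_n\epsilondot I_r$ when $A$ lies on the left side of the arc component, and $(a_n\etadot + c_n\eta)I_r$ resp.\ $(a_n\epsilondot + c_n\epsilon)I_r$ when $A$ lies on the right side. I would then treat these three cases in turn. In the two arc-component cases the relations follow from $\epsilondot\circ\eta = \epsilon\circ\etadot = \id_\F$, $\epsilon\circ\eta = \epsilondot\circ\etadot = 0$, $\etadot\circ\epsilon + \eta\circ\epsilondot = \id_A$, and $\eta\circ\epsilon = \id_{ex}$; in the right-side case one also uses that $\F$ has characteristic two, so the two $c_n\otimes\id_{ex}$ terms in $d_{A^+}\circ\cap + \cup\circ d_{A^-}$ cancel and the two $c_n$ cross-terms in $d_{A^-}\circ d_{A^+}$ cancel, leaving $0$. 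In the circle-component case the relations become the Frobenius-algebra identities $m\circ(\eta\otimes\id) = (\epsilon\otimes\id)\circ\Delta = \id_A$, $m\circ\Delta = 0$, and $\Delta\circ(\epsilon\otimes\id) + (\eta\otimes\id)\circ m = \id_{A\otimes A}$, the last again relying on characteristic two to cancel the off-diagonal $e\otimes x$ terms.

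The computations are all short, and I expect no genuine obstacle. The one step deserving care — and the one I would write out in full — is the neck-cutting relation $d_{A^+}\circ\cap + \cup\circ d_{A^-} = a_nI_{r+1}$ in the circle-component case, where one must track $\Delta(e) = e\otimes x + x\otimes e$ and $\Delta(x) = x\otimes x$ against $m$ and the (co)units acting on the $C$-factor and the residual-$A$-factor; this is precisely where one sees that the mod-two coefficients are essential for the sum to collapse to the identity map. The remaining cases are strictly simpler, so after recording the relevant identities on $A$ I would leave them to the reader.
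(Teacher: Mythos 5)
Your proof is correct and follows the same three-case decomposition the paper uses (arc $A$ on a circle component, on the left of the arc component, on the right of the arc component), with the same reduction to the conjectured product structure on $\{a_n,c_n\}$ and elementary identities among the maps on $A$. The paper stops after listing the three cases and declares the rest a ``straightforward calculation''; the details you supply — the observation that the circle $C$ in an R1 move encloses nothing so $r_e=0$ kills the $\Sigma_{r_e}$ terms, the identities $\epsilondot\eta=\epsilon\etadot=\id_\F$, $\etadot\epsilon+\eta\epsilondot=\id_A$, $\eta\epsilon=\id_{ex}$, and the characteristic-two cancellations in the two $c_n$-terms and in $m\circ\Delta$ — are exactly those calculations, so the two arguments coincide.
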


\begin{proof}
The cup and cap maps are
\begin{align*}
  \cup &= a_n \eta I_r, &
  \cap &= a_n \epsilon I_r.
\end{align*}
We have the following cases:
\begin{enumerate}
\item
The arc $A$ belongs to a circle component of $P_n(r)$.
Then
\begin{align*}
  d_{A^+} &= d_{C^+} = a_n \Delta I_{r-1}, &
  d_{A^-} &= d_{C^-} = a_n m I_{r-1}.
\end{align*}

\item
The arc $A$ belongs to the arc component of $P_n(r)$ and the
saddles $S_{A^+}$ and $S_{A^-}$ split $C$ from and merge $C$ with the
right side of the arc component.
Then
\begin{align*}
  d_{A^+} &= d_{R^+} = (a_n \etadot + c_n \eta)I_r, &
  d_{A^-} &= d_{R^-} = (a_n \epsilondot + c_n \epsilon) I_r.
\end{align*}

\item
The arc $A$ belongs to the arc component of $P_n(r)$ and the
saddles $S_{A^+}$ and $S_{A^-}$ split $C$ from and merge $C$ with the
left side of the arc component.
Then
\begin{align*}
  d_{A^+} &= d_{L^+} = a_n \etadot I_r, &
  d_{A^-} &= d_{L^-} = a_n \epsilondot I_r.
\end{align*}
\end{enumerate}
The result now follows from straightforward calculations.
\end{proof}

Take the leftmost diagram in Figure \ref{fig:move-r2} to describe a
portion of a planar tangle $P_n(r)$, where the arcs $A$ and $B$ could
belong to either the arc component of $P_n(r)$ or to circle components
of $P_n(r)$.
Define saddles $S_{A^+}:P_n(r) \rightarrow P_n(r+1)$ and
$S_{B^-}:P_n(r+1) \rightarrow P_n(r)$ that spit the indicated circle
component $C$ from arc $A$ and merge $C$ with arc $B$.
Define saddles $S_{=\parallel}:P_n(r) \rightarrow P_{m}(s)$ and
$S_{\parallel =}:P_{m}(s) \rightarrow P_n(r)$ as shown in Figure
\ref{fig:move-r2}.
Let $d_{A^+}$, $d_{B^-}$, $d_{=\parallel}$, and $d_{\parallel =}$
denote the differentials corresponding to the saddles $S_{A^+}$,
$S_{B^-}$, $S_{=\parallel}$, and $S_{\parallel =}$.
Define a cup $\cup:P_n(r) \rightarrow P_n(r+1)$ that creates $C$ and a
cap $\cap:P_n(r+1) \rightarrow P_n(r)$ that removes $C$.
We will also use $\cup$ and $\cap$ to denote the corresponding maps.
We have:

\begin{theorem}
\label{theorem:rel-2}
We have the following isotopy invariance relation in $\Sigma \calL$:
\begin{align*}
  d_{B^-} \circ d_{A^+} = d_{\parallel =} \circ d_{=\parallel}.
\end{align*}
We have the following 4-tube relation in $\Sigma \calL$:
\begin{align*}
  d_{A^+} \circ \cap +
  \cup \circ (d_{\parallel =} \circ d_{=\parallel}) \circ \cap +
  \cup \circ d_{B^-} = a_n I_{r+1}.
\end{align*}
\end{theorem}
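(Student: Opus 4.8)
The plan is to establish the two relations in turn, with everything taking place over $\F = \F_2$, so that signs never intervene and $2x = 0$ throughout. The isotopy invariance relation will be checked directly, and the 4-tube relation will then be deduced formally from it together with the neck-cutting relations of Theorem~\ref{theorem:rel-1}.

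For the isotopy invariance relation $d_{B^-} \circ d_{A^+} = d_{\parallel =} \circ d_{=\parallel}$, both sides are endomorphisms of $L_n \otimes A^{\otimes r}$, and one may verify the equality by the same kind of case analysis used in the proofs of Theorems~\ref{theorem:d-d-commute} and~\ref{theorem:rel-1}: distinguish according to whether each of the arcs $A$, $B$ lies on the arc component or on a circle component of $P_n(r)$ (and, in the latter situation, whether $A$ and $B$ lie on the same circle component), substitute the corresponding explicit formulas from Section~\ref{ssec:saddles} for the differentials, and reduce using the identities~(\ref{eqn:identities-sigma}) for the maps $\Sigma_r$. Heuristically, this relation expresses the commutativity of the square formed by the four saddles $S_{A^+}$, $S_{B^-}$, $S_{=\parallel}$, $S_{\parallel =}$ attached to a Reidemeister 2 configuration, so it is of the type already handled in Theorem~\ref{theorem:d-d-commute}.

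Granting the isotopy invariance relation, the 4-tube relation is purely formal. Using that relation to rewrite the middle term turns the left-hand side into $d_{A^+} \circ \cap + \cup \circ (d_{B^-} \circ d_{A^+} \circ \cap) + \cup \circ d_{B^-}$. Next apply the neck-cutting relation of Theorem~\ref{theorem:rel-1} with the arc $A$, in the form $d_{A^+} \circ \cap = a_n I_{r+1} + \cup \circ d_{A^-}$, where $S_{A^-}$ is the saddle merging $C$ into $A$; composing on the left with $d_{B^-}$ and using that $a_n$ acts as a unit (Conjecture~\ref{conj:fukaya}) gives $d_{B^-} \circ d_{A^+} \circ \cap = d_{B^-} + (d_{B^-} \circ \cup) \circ d_{A^-}$. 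Applying the isotopy invariance relation of Theorem~\ref{theorem:rel-1} with the arc $B$, namely $d_{B^-} \circ \cup = a_n I_r$, and again that $a_n$ is a unit, this becomes $d_{B^-} \circ d_{A^+} \circ \cap = d_{B^-} + d_{A^-}$. Hence the middle term equals $\cup \circ d_{A^-} + \cup \circ d_{B^-}$; the two copies of $\cup \circ d_{B^-}$ cancel over $\F_2$, and the left-hand side collapses to $d_{A^+} \circ \cap + \cup \circ d_{A^-}$, which is $a_n I_{r+1}$ by the neck-cutting relation of Theorem~\ref{theorem:rel-1}. The sphere relation of Theorem~\ref{theorem:rel-0} can be called on to dispatch any degenerate sub-cases.

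The step I expect to be the main obstacle is the bookkeeping of the nonlocal terms involving $\Sigma_{r_e}$. On the one hand, verifying the isotopy invariance relation requires tracking how the enclosure structure of $C$ and of the remaining circle components transforms under all four saddles, which is exactly the subtlety that made the nested-saddle square in Theorem~\ref{theorem:d-d-commute} nontrivial. On the other hand, in order to invoke Theorem~\ref{theorem:rel-1} \emph{with the arc $B$} one must confirm that, after $C$ is created by the cup $\cup$, the local picture around $C$ and $B$ --- including which circles $C$ encloses --- really does match the hypotheses of that theorem. Once these configurational verifications are carried out, the manipulations above apply verbatim.
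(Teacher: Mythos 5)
Your proposal is correct, and for the 4-tube relation it takes a genuinely different and cleaner route than the paper. The paper's proof is an 8-case exhaustion in which both relations of Theorem~\ref{theorem:rel-2} are verified simultaneously by substituting the explicit formulas for $d_{A^+}$, $d_{B^-}$, $d_{=\parallel}$, $d_{\parallel=}$, $\cup$, and $\cap$ from Section~\ref{ssec:saddles}. You instead observe two things. First, since $S_{A^+}$ and $S_{=\parallel}$ are resolutions of two different crossings of the R2 configuration, they are distinct saddles applied at disjoint local regions, so the square
\begin{align*}
  d_{B^-} \circ d_{A^+} = d_{\parallel =} \circ d_{=\parallel}
\end{align*}
is literally an instance of Theorem~\ref{theorem:d-d-commute}; no new case analysis is needed for the isotopy invariance relation. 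Second, the 4-tube relation is a purely formal consequence of this together with two instances of Theorem~\ref{theorem:rel-1}: neck-cutting applied to the pair $(A,C)$, giving $d_{A^+}\circ\cap = a_n I_{r+1} + \cup\circ d_{A^-}$, and isotopy invariance applied to the pair $(B,C)$, giving $d_{B^-}\circ\cup = a_n I_r$. Chaining these, using that $a_n$ acts as a unit and that scalars are in $\F_2$, collapses the left-hand side of the 4-tube relation to $d_{A^+}\circ\cap + \cup\circ d_{A^-} = a_n I_{r+1}$. This buys you a substantial saving: the 4-tube relation need never be checked case-by-case, and the only genuinely geometric point left to confirm is the one you flag --- that the local picture around each of $(A,C)$ and $(B,C)$ in the R2 configuration, including the fact that the bigon circle $C$ encloses no other circle components (so the $\Sigma_{r_e}$ terms vanish), matches the hypotheses of Theorem~\ref{theorem:rel-1}. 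Your heuristic remark that the isotopy invariance relation is ``of the type handled in Theorem~\ref{theorem:d-d-commute}'' can in fact be promoted to a direct citation, which would tighten the writeup further.
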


\begin{proof}
We have the following cases:
\begin{enumerate}
\item
Arcs $A$ and $B$ both belong to the arc component of $P_n(r)$,
saddle $S_{A^+}$ splits $C$ from the right side of the arc, and
saddle $S_{B^-}$ merges $C$ with the right side of the arc.
We have
\begin{align*}
  \cup &= a_0 \eta I_r, &
  \cap &= a_0 \epsilon I_r, \\
  d_{A^+} &= d_{L^+} = a_n \etadot I_r, &
  d_{B^-} &= d_{L^-} = a_n \epsilondot I_r, \\
  d_{=\parallel} &= d_{R^+} =
  (a_n \etadot + c_n \eta)I_r +
  c_n \etadot I_{r_n} \Sigma_{r_e}, &
  d_{\parallel =} &= d_{R^-} =
  (a_n \epsilondot + c_n \epsilon)I_r +
  c_n \epsilondot I_{r_n} \Sigma_{r_e}.
\end{align*}

\item
Arcs $A$ and $B$ both belong to the arc component of $P_n(r)$,
saddle $S_{A^+}$ splits $C$ from the left side of the arc, and
saddle $S_{B^-}$ merges $C$ with the left side of the arc.
We have
\begin{align*}
  \cup &= a_0 \eta I_r, &
  \cap &= a_0 \epsilon I_r, \\
  d_{A^+} &= d_{R^+} = (a_n \etadot + c_n \eta) I_r, &
  d_{B^-} &= d_{R^-} = (a_n \epsilondot + c_n \epsilon) I_r, \\
  d_{=\parallel} &= d_{L^+} =
  a_n \etadot I_r + c_n \etadot I_{r_n} \Sigma_{r_e}, &
  d_{\parallel =} &= d_{L^-} =
  a_n \epsilondot I_r + c_n \epsilondot I_{r_n} \Sigma_{r_e}.
\end{align*}

\item
Arcs $A$ and $B$ both belong to the arc component of $P_n(r)$,
saddle $S_{A^+}$ splits $C$ from the right side of the arc, and
saddle $S_{B^-}$ merges $C$ with the left side of the arc.
We have
\begin{align*}
  &\cup = a_0 \eta I_r, &
  &\cap = a_0 \epsilon I_r, \\
  &d_{A^+} = d_{R^+} = (a_n \etadot + c_n \eta)I_r, &
  &d_{B^-} = d_{L^-} = a_n \epsilondot I_r, \\
  &d_{=\parallel} = d_{W^+} = p_{n,n+2} I_r, &
  &d_{\parallel =} = d_{W^-} = q_{n,n+2} I_r.
\end{align*}

\item
Arcs $A$ and $B$ both belong to the arc component of $P_n(r)$,
saddle $S_{A^+}$ splits $C$ from the left side of the arc, and
saddle $S_{B^-}$ merges $C$ with the right side of the arc.
\begin{align*}
  &\cup = a_0 \eta I_r, &
  &\cap = a_0 \epsilon I_r, \\
  &d_{A^+} = d_{L^+} = a_n \etadot I_r, &
  &d_{B^-} = d_{R^-} = (a_n \epsilondot + c_n \epsilon) I_r, \\
  &d_{=\parallel} = d_{W^-} = q_{n-2,n} I_r, &
  &d_{\parallel =} = d_{W^+} = p_{n,n-2} I_r.
\end{align*}

\item
Arc $A$ belongs to a circle component of $P_n(r)$, arc $B$ belongs to
the arc component, and saddle $S_{B^-}$ merges $C$ with the right side
of the arc component.
We have
\begin{align*}
  &\cup = a_0 \id \eta I_r, &
  &\cap = a_0 \id \epsilon I_r, \\
  &d_{A^+} = d_{C^+} = a_n \Delta I_r, &
  &d_{B^-} = d_{R^-} = (a_n \id \epsilondot + c_n \id \epsilon) I_r, \\
  &d_{=\parallel} = d_{R^-} =
  (a_n \epsilondot + c_n \epsilon)I_r +
  c_n \epsilondot I_{r_n} \Sigma_{r_e}, &
  &d_{\parallel =} = d_{R^+} =
  (a_n \etadot + c_n \eta) I_r +
  c_n \etadot I_{r_n} \Sigma_{r_e}.
\end{align*}

\item
Arc $A$ belongs to a circle component of $P_n(r)$, arc $B$ belongs to
the arc component, and saddle $S_{B^-}$ merges $C$ with the left side
of the arc component.
We have
\begin{align*}
  &\cup = a_0 \id \eta I_r, &
  &\cap = a_0 \id \epsilon I_r, \\
  &d_{A^+} = d_{C^+} = a_n \Delta I_r, &
  &d_{B^-} = d_{L^-} = a_n \id\epsilondot I_r, \\
  &d_{=\parallel} = d_{L^-} =
  a_n \epsilondot I_r + c_n \epsilondot I_{r_n} \Sigma_{r_e}, &
  &d_{\parallel =} = d_{L^+} =
  a_n \etadot I_r + c_n \etadot I_{r_n} \Sigma_{r_e}.
\end{align*}

\item
Arcs $A$ and $B$ belong to distinct circle components of $P_n(r)$.
We have
\begin{align*}
  & \cup = a_n \id \eta \id I_{r-2}, &
  & \cap = a_n \id \epsilon \id I_{r-2}, \\
  & d_{A^+} = d_{C^+} = a_n \Delta \id I_{r-2}, &
  & d_{B^-} = d_{C^-} = a_n \id m I_{r-2}, \\
  & d_{=\parallel} = d_{C^-} = a_n m I_{r-2}, &
  & d_{\parallel =} = d_{C^+} = a_n \Delta I_{r-2}.
\end{align*}

\item
Arcs $A$ and $B$ belong to the same circle component of $P_n(r)$.
We have
\begin{align*}
  & \cup = \id \eta I_{r-1}, &
  & \cap = \id \epsilon I_{r-1}, \\
  & d_{A^+} = d_{C^+} = a_n \Delta I_{r-1}, &
  & d_{B^-} = d_{C^-} = a_n m I_{r-1}, \\
  & d_{=\parallel} = d_{C^-} = m I_{r-1}, &
  & d_{\parallel =} = d_{C^+} = \Delta I_{r-1}.
\end{align*}

\end{enumerate}

The now result follows from straightforward calculations.
\end{proof}

In \cite{Hedden-3}, twisted complexes over the Fukaya category of the
pillowcase are constructed from 2-tangle diagrams in the disk via an
$A_\infty$ functor whose source is the $A_\infty$ category of twisted
complexes over Bar-Natan's cobordism category.
The relations described in Theorems \ref{theorem:rel-0},
\ref{theorem:rel-1}, and \ref{theorem:rel-2} suggest that it may be
possible to construct an analogous cobordism category and $A_\infty$
functor that would yield the twisted complex $(X,\delta)$ described in
Section \ref{ssec:twisted-complex}.
It would be interesting to attempt to construct such a cobordism
category.

We will now show invariance under the Reidemeister moves shown in
Figure \ref{fig:reidemeister}, under the assumption that the
$A_\infty$ relations for $\calL$ are as described in Conjecture
\ref{conj:fukaya}.

\begin{figure}
  \centering
  \includegraphics[scale=0.45]{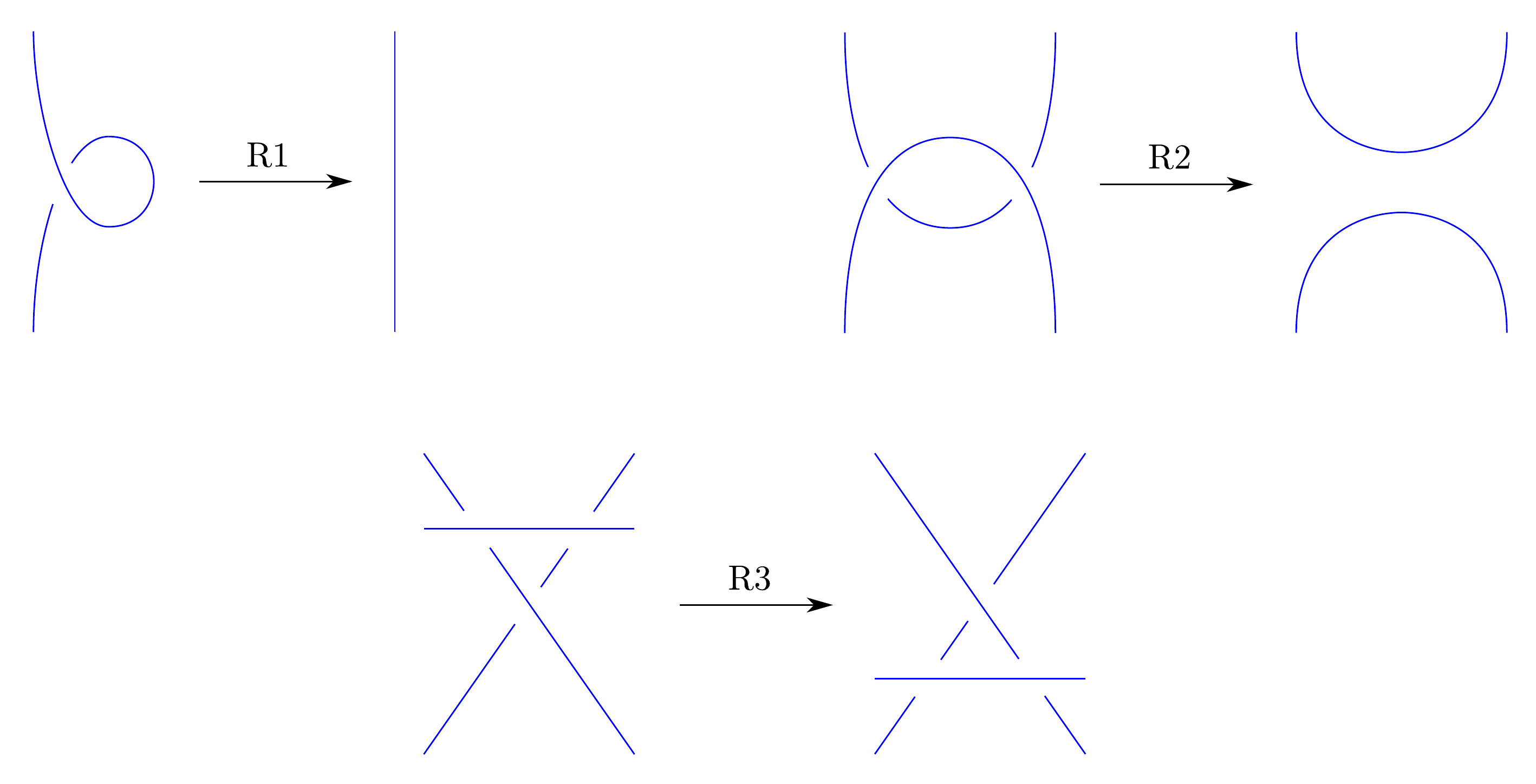}
  \caption{
    \label{fig:reidemeister}
    Reidemeister moves R1, R2, and R3.
  }
\end{figure}

\subsection{Reidemeister move R1}

Suppose we apply the Reidemeister move R1 shown in Figure
\ref{fig:reidemeister} to a tangle diagram $T_A$, resulting in a
tangle diagram $T_B$.
Let
$(X_A,\delta_A)$ and $(X_B,\delta_B)$ denote the twisted objects
corresponding to $T_A$ and $T_B$.
The objects $X_A$ and $X_B$ are formal direct sums of terms, as
described by equation (\ref{eqn:obj-X}).
Each term of $X_B$ corresponds to a planar tangle $P_n(r)$
obtained by resolving $T_B$, and can be viewed as a
subobject $(x_B,0)$ of $(X_B,\delta_B)$ with zero differential:
\begin{eqnarray*}
(x_B,0) = 
\begin{tikzcd}
  L_n \otimes A^{\otimes r}[h_B,q_B],
\end{tikzcd}
\end{eqnarray*}
where the bigrading shift $[h_B,q_B]$ depends on the crossing numbers
of $T_B$ and on the location of $P_n(r)$ within the cube of
resolutions.

If we apply the move R1 in reverse to the planar tangle $P_n(r)$ and
then resolve the resulting crossing, we obtain a saddle
$S_{A^+}:P_n(r) \rightarrow P_n(r+1)$ that splits a circle component
$C$ from the arc $A$, as shown in Figure \ref{fig:move-r1}.
Let
$d_{A^+}:L_n \otimes A^{\otimes r} \rightarrow
L_n \otimes A^{\otimes(r+1)}$ denote
the corresponding differential.
We conclude that for each subobject $(x_B,0)$ of $(X_B,\delta_B)$,
there is a corresponding subobject $(x_A,d^0)$ of $(X_A,\delta_A)$
given by
\begin{eqnarray*}
(x_A,d^0) = 
\begin{tikzcd}
  L_n \otimes A^{\otimes r}[h_A,q_A] \arrow{r}{d^0} &
  L_n \otimes A^{\otimes(r+1)}[h_A+1,q_A+2],
\end{tikzcd}
\end{eqnarray*}
where
\begin{align*}
  (d^0)^{(1,1)} = d_{A^+}[1,2].
\end{align*}

The tangle diagram $T_A$ contains one more negative crossing than
$T_B$, so the relative crossing numbers of $T_A$ and $T_B$ are
$\delta m_+ = 0$ and $\delta m_- = 1$, corresponding to a bigrading
shift
\begin{align*}
  [h_A, q_A] =
  [h_B,q_B] + [-\delta m_-, \delta m_+ - 3 \delta m_-] =
  [h_B,q_B] + [-1, -3].
\end{align*}
We will show that $(X_A,\delta_A)$ and $(X_B,\delta_B)$ are homotopy
equivalent by constructing homotopy equivalences between each
pair of corresponding subobjects $(x_A,0)$ and $(x_B,d^0)$.
The homotopy equivalences will commute with the differentials
$\delta_A$ and $\delta_B$, and will thus combine to give homotopy
equivalences of $(X_A,\delta_A)$ and $(X_B,\delta_B)$.

\begin{figure}
  \centering
  \includegraphics[scale=0.50]{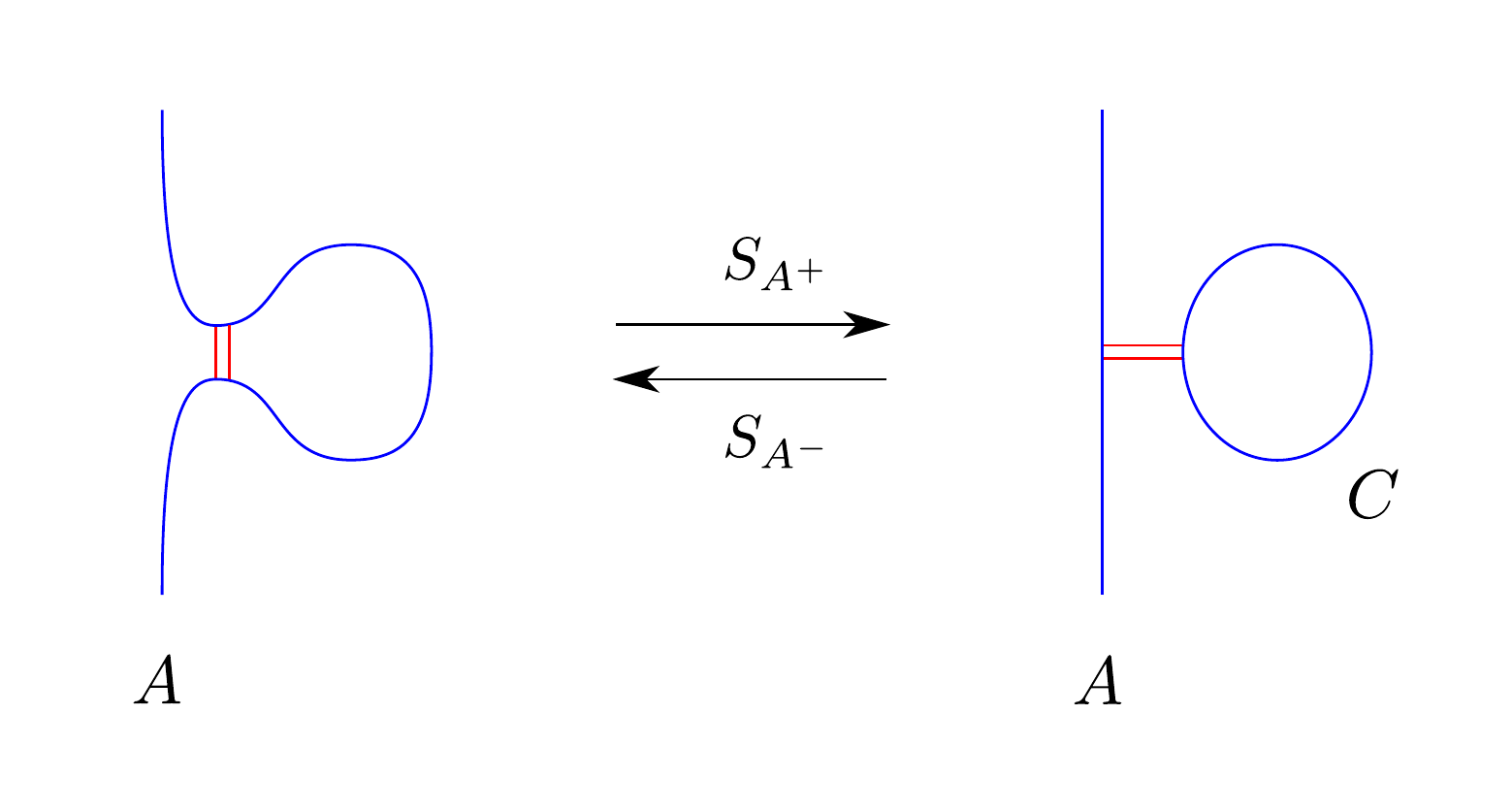}
  \caption{
    \label{fig:move-r1}
    Saddles $S_{A^+}$ and $S_{A^-}$ for Reidemeister move R1.
  }
\end{figure}

We define homotopy equivalences $F:(x_B,0) \rightarrow (x_A,d^0)$ and
$G:(x_A,d^0) \rightarrow (x_B,0)$ and a homotopy
$H:(x_A^0,d^0) \rightarrow (x_A^0, d^0)$:
\begin{eqnarray*}
\begin{tikzcd}
L_n \otimes A^{\otimes r} \arrow[shift left=2]{r}{d^0} &
L_n \otimes A^{\otimes(r+1)} \arrow[shift left=2]{l}{H^1}
\arrow[shift left=2]{d}{G^1} \\
{} &
L_n \otimes A^{\otimes r}. \arrow[shift left=2]{u}{F^1}
\end{tikzcd}
\end{eqnarray*}
For simplicity we have omitted bigrading shifts in the diagram.
The homotopy equivalences are given by
\begin{align*}
  (F^1)^{(0,0)} &= \cup[0,-1], &
  (G^1)^{(0,0)} &= d_{A^-}[0,1],
\end{align*}
where $\cup$ is the cup map for creating $C$ and
$d_{A^-}$ is the differential for the saddle
$S_{A^-}:P_n(r+1) \rightarrow P_n(r)$ that merges $C$ with the arc
$A$.
The homotopy is given by
\begin{align*}
  (H^1)^{(-1,-1)} &= \cap[-1,-2],
\end{align*}
where $\cap$ is the cap map for removing $C$.
Using the Bar-Natan relations described in Theorem
\ref{theorem:rel-1}, we find
\begin{align*}
  &G^1 \circ d^0 = (d_{A^-} \circ d_{A^+})[1,3] = 0, \\
  &G^1 \circ F^1 = d_{A^-} \circ \cup = a_n I_r, \\
  &H^1 \circ d^0 = \cap \circ d_{A^+} = a_n I_r, \\
  &d^0 \circ H^1 + F^1 \circ G^1 =
  d_{A^+} \circ \cap + \cup \circ d_{A^-} = a_n
  I_{r+1},
\end{align*}
so $F$ and $G$ are indeed homotopy equivalences.

The homotopy equivalences we have defined for subobjects are
constructed from differentials and cup maps, which by
Theorems  \ref{theorem:d-d-commute} and \ref{theorem:d-cc-commute}
commute with the differentials $\delta_A$ and $\delta_B$.
It follows that the homotopy equivalences for subobjects combine to
yield homotopy equivalences of the objects $(X_A, \delta_A)$ and
$(X_B,\delta_B)$.

A similar argument shows invariance under R1 with a positive crossing.

\subsection{Reidemeister move R2}

Suppose we apply the Reidemeister move R2 shown in Figure
\ref{fig:reidemeister} to a tangle diagram $T_A$, resulting in a
tangle diagram $T_B$.
Let $(X_A,\delta_A)$ and $(X_B,\delta_B)$ denote the twisted objects
corresponding to $T_A$ and $T_B$.
The objects $X_A$ and $X_B$ are formal direct sums of terms, as
described by equation (\ref{eqn:obj-X}).
Each term of $X_B$ corresponds to a planar tangle $P_n(r)$ obtained by
resolving $T_B$, and can be viewed as a subobject $(x_B,0)$ of
$(X_B,\delta_B)$ with zero differential:
\begin{eqnarray*}
  (x_B,0) =
\begin{tikzcd}
  L_n \otimes A^{\otimes r}[h_B,q_B],
\end{tikzcd}
\end{eqnarray*}
where the bigrading shift $[h_B,q_B]$ depends on the crossing numbers
of $T_B$ and on the location of $P_n(r)$ within the cube of
resolutions.

If we apply the move R2 in reverse to the planar tangle $P_n(r)$ and
then resolve the two resulting crossings, we obtain a cube of
resolutions
\begin{eqnarray*}
\begin{tikzcd}
{} & P_m(s+1)
\arrow[shift left=2]{dr}{S_{B^-}} \\
P_m(s)
\arrow[shift left=2]{ur}{S_{A^+}}
\arrow[shift left=2,swap]{dr}{S_{=\parallel}}
& {} & P_m(s)
\\
{} & P_n(r)
\arrow[shift left=2,swap]{ur}{S_{\parallel =}}
\end{tikzcd}
\end{eqnarray*}
as shown in Figure \ref{fig:move-r2}.
The winding number $m$ and circle number $s$ of the planar tangle
$P_m(s)$ depend on how the move is applied.
As shown in Figure \ref{fig:move-r2}, the saddle $S_{A^+}$ splits the
circle component $C$ from arc $A$ and the saddle $S_{B^-}$ merges the
circle component $C$ with arc $B$.
Let $d_{A^+}$, $d_{B^-}$, $d_{=\parallel}$, and $d_{\parallel =}$
denote the differentials corresponding to the saddles
$S_{A^+}$, $S_{B^-}$, $S_{=\parallel}$, and $S_{\parallel =}$.
We conclude that for each subobject $(x_B,0)$ of $(X_B,\delta_B)$,
there is a corresponding subobject $(x_A, d)$ of
$(X_A, \delta_A)$ given by
\begin{eqnarray*}
(x_A, d) =
\begin{tikzcd}
{} & L_m \otimes A^{\otimes(s+1)}[h_B+1,q_B+2]
\arrow[shift left=2]{dr}{d_{B^-}[1,2]} \\
L_m \otimes A^{\otimes s}[h_B,q_B]
\arrow[shift left=2]{ur}{d_{A^+}[1,2]}
\arrow[shift left=2,swap]{dr}{d_{=\parallel}[1,2]}
& {} & L_m \otimes A^{\otimes s}[h_B+2,q_B+4].
\\
{} & L_n \otimes A^{\otimes r}[h_B+1,q_B+2]
\arrow[shift left=2,swap]{ur}{d_{\parallel =}[1,2]}
\end{tikzcd}
\end{eqnarray*}
The differentials for $(x_A,d)$ are
\begin{align*}
  (d^0)^{(1,1)} &= \left(\begin{array}{c}
    d_{A^+}[1,2] \\
    d_{=\parallel}[1,2]
  \end{array}\right), &
  (d^1)^{(1,1)} &= \left(\begin{array}{cc}
    d_{B^-}[1,2] &
    d_{\parallel =}[1,2]
  \end{array}\right).
\end{align*}

The tangle diagram $T_A$ contains one more negative crossing and one
more positive crossing than $T_B$, so the relative crossing numbers of
$T_A$ and $T_B$ are $\delta m_+ = 1$ and $\delta m_- = 1$,
corresponding to a bigrading shift
\begin{align*}
  [h_A, q_A] =
  [h_B,q_B] + [-\delta m_-, \delta m_+ - 3 \delta m_-] =
  [h_B,q_B] + [-1,-2].
\end{align*}

\begin{figure}
  \centering
  \includegraphics[scale=0.50]{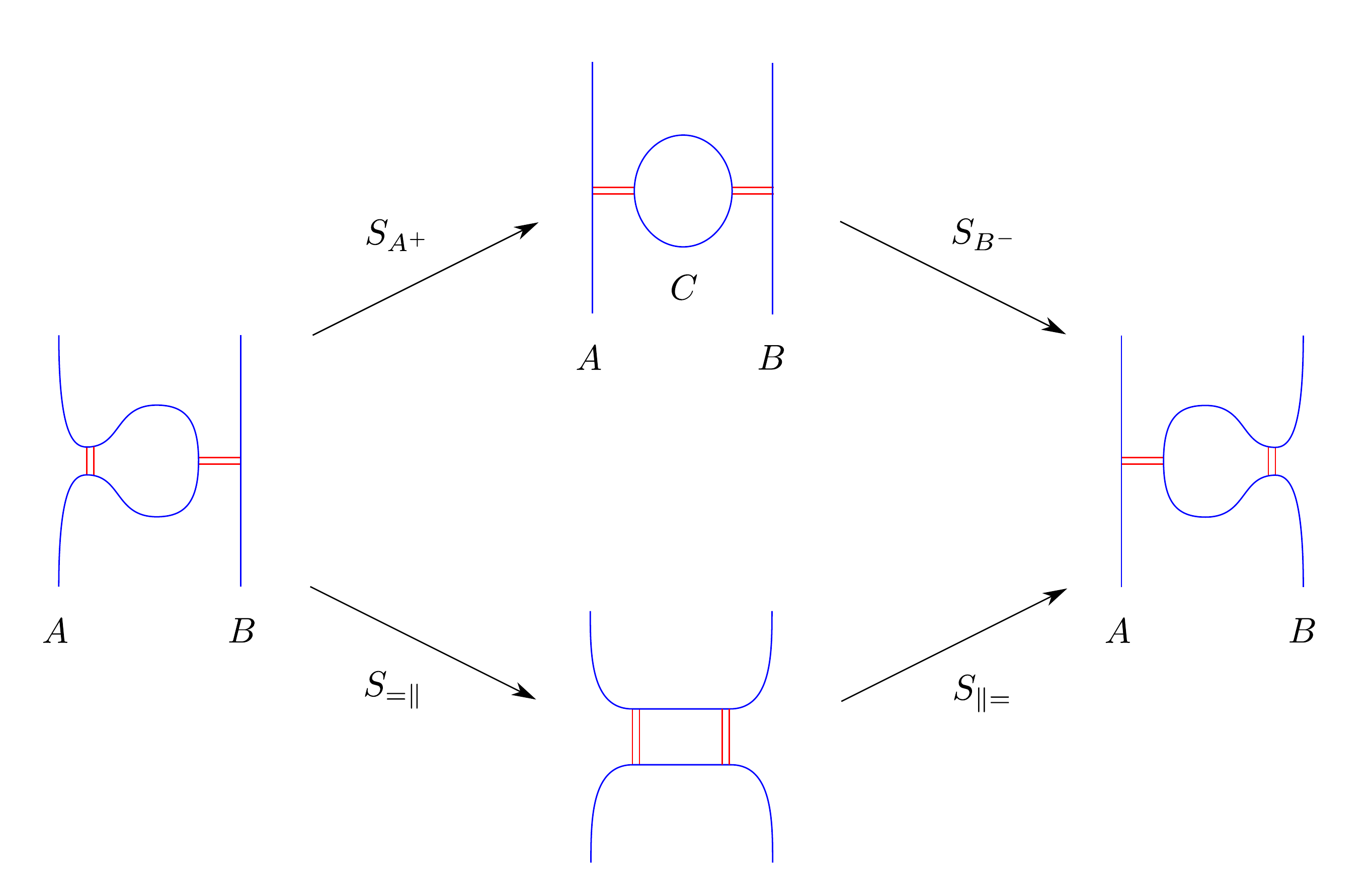}
  \caption{
    \label{fig:move-r2}
    Cube of resolutions for Reidemeister move R2.
  }
\end{figure}

We define homotopy equivalences $F:(x_B,0) \rightarrow (x_A,d^0)$ and
$G:(x_A,d) \rightarrow (x_B,0)$ and a homotopy
$H:(x_A^0,d) \rightarrow (x_A^0, d)$:
\begin{eqnarray*}
\begin{tikzcd}
{} & L_m \otimes A^{\otimes(s+1)}
\arrow[shift left=2]{dr}{d_{B^-}}
\arrow[bend left=20, shift left=2]{rrrd}{G^1}
\arrow[shift left=2]{dl}{\cap} \\
L_m \otimes A^{\otimes s}
\arrow[shift left=2]{ur}{d_{A^+}}
\arrow[shift left=2,swap]{dr}{d_{=\parallel}}
& {} & L_m \otimes A^{\otimes s}
\arrow[shift left=2]{lu}{\cup} & {} & {}
L_n \otimes A^{\otimes r}.
\arrow[bend left=20, shift left=2]{llld}{F^1}
\arrow[bend right=20, shift left=2]{lllu}{F^1}
\\
{} & L_n \otimes A^{\otimes r}
\arrow[shift left=2,swap]{ur}{d_{\parallel =}}
\arrow[bend right=20, shift left=2]{rrru}{G^1}
\end{tikzcd}
\end{eqnarray*}
For simplicity we have omitted bigrading shifts in the diagram.
Here $\cup$ and $\cap$ the cup and cap maps for creating and
removing the circle component $C$.
The homotopy equivalences are given by
\begin{align*}
  (F^1)^{(0,0)} &= \left(\begin{array}{c}
    \cup \circ d_{\parallel =} \\
    a_n I_r
  \end{array}\right), &
  (G^1)^{(0,0)} &= \left(\begin{array}{cc}
    d_{=\parallel} \circ \cap & a_n I_r
  \end{array}\right).
\end{align*}
The homotopy $H$ is given by
\begin{align*}
  (H^1)^{(-1,-1)} &=
  \left(\begin{array}{cc}
    \cap[-1,-2] & 0
  \end{array}\right), &
  (H^2)^{(-1,-1)} &=
  \left(\begin{array}{c}
    \cup[-1,-2] \\ 0
  \end{array}\right).
\end{align*}
Using the isotopy invariance relations described in Theorem
\ref{theorem:rel-1}, we find
\begin{align*}
  &d^1 \circ F^1 =
  ((d_{B^-} \circ \cup) \circ d_{\parallel =} + d_{\parallel =})[1,2]
  = 0, &
  &G^1 \circ d^0 =
  (d_{=\parallel} \circ (\cap \circ d_{A^+}) + d_{=\parallel})[1,2] = 0.
\end{align*}
Using the sphere relation described in Theorem \ref{theorem:rel-0} and
the isotopy invariance relations described in Theorem
\ref{theorem:rel-1}, we find
\begin{align*}
  &H^1 \circ d^0 = \cap \circ d_{A^+} = a_m I_s, \\
  &d^1 \circ H^2 = d_{B^-} \circ \cup = a_m I_s, \\
  &G^1 \circ F^1 =
  d_{=\parallel} \circ (\cap \circ \cup) \circ d_{\parallel =} +
  a_n I_r = a_n I_r,
\end{align*}
and using the 4-tube relation described in Theorem \ref{theorem:rel-2}
we find
\begin{align*}
  \MoveEqLeft
  d^0 \circ H^1 + F^1 \circ G^1 + H^2 \circ d^1 \\
  &=
  \left(\begin{array}{cc}
    d_{A^+} \circ \cap & 0 \\
    d_{=\parallel} \circ \cap & 0
  \end{array}\right) +
  \left(\begin{array}{cc}
    \cup \circ (d_{\parallel =} \circ d_{=\parallel}) \circ \cap &
    \cup \circ d_{\parallel =} \\
    d_{=\parallel} \circ \cap & a_n I_r
  \end{array}\right) +
  \left(\begin{array}{cc}
    \cup \circ d_{B^-} & \cup \circ d_{\parallel =} \\
    0 & 0
  \end{array}\right) \\
  &=
  \left(\begin{array}{cc}
    a_m I_{s+1} & 0 \\
    0 & a_n I_r
  \end{array}
  \right).
\end{align*}
So $F$ and $G$ are indeed homotopy equivalences.
We note also that  using the sphere relation from Theorem
\ref{theorem:rel-0}, we have
\begin{align*}
  H^1 \circ F^1 &= 0.
\end{align*}

The homotopy equivalences we have defined for subobjects are
constructed from differentials, cup maps, and cap maps, which by
Theorems  \ref{theorem:d-d-commute} and \ref{theorem:d-cc-commute}
commute with the differentials $\delta_A$ and $\delta_B$.
It follows that the homotopy equivalences for subobjects combine to
yield homotopy equivalences of the objects $(X_A, \delta_A)$ and
$(X_B,\delta_B)$.

The homotopy equivalence
$(X_A,\delta_A) \rightarrow (X_B,\delta_B)$
that we have constructed has a special property that will be
useful in proving invariance under move R3.
In the category of cochain complexes, a cochain map
$G:(C_A,d_A) \rightarrow (C_B,d_B)$ is said to be a
\emph{strong deformation retraction} if there is a cochain map
$F:(C_B,d_B) \rightarrow (C_A,d_A)$ and a homotopy
$H:(C_A,d_A) \rightarrow (C_A,d_A)$ such that
\begin{align*}
  G\circ F &= I_{(C_B,d_B)}, &
  F\circ G &= I_{(C_A,d_A)} + d_A\circ  H + H\circ  d_A, &
  H\circ F &= 0.
\end{align*}
We can extend this notion to the category of twisted complexes over an
arbitrary $A_\infty$ category $\calA$.
We say that a homotopy equivalence
$G:(X,\delta_X) \rightarrow (Y,\delta_Y)$
of twisted complexes is a \emph{strong deformation retraction} if
there is a homotopy equivalence
$F:(Y,\delta_Y) \rightarrow (X,\delta_X)$ and a homotopy
$H:(X,\delta_X) \rightarrow (X_X,\delta_X)$ such that
\begin{align*}
  &\mu_{\Tw A}^2(G,F) = \id_{(X_Y,\delta_Y)}, &
  &\mu_{\Tw \calA}^2(F,G) = \id_{(X,\delta_X)} +
  \mu_{\Tw \calA}^1(H), &
  &\mu_{\Tw \calA}^2(H,F) = 0.
\end{align*}
According to Conjecture \ref{conj:fukaya}, the $A_\infty$ operations
$\mu_\calL^m$ are zero for $m \neq 0$, so these conditions reduce to
conditions that are formally identical to those used to define the
notion of a strong deformation retraction in the category of cochain
complexes:
\begin{align*}
  &G \circ F = \id_{(X_Y,\delta_Y)}, &
  &F \circ G = \id_{(X,\delta_X)} +
  \delta_Y \circ H + H \circ \delta_X, &
  &H \circ F = 0,
\end{align*}
where $\circ$ denotes the product operation $\mu_{\Sigma\calL}^2$ in
$\Sigma\calL$.
So the homotopy equivalence
$(X_A,\delta_A) \rightarrow (X_B,\delta_B)$ we have constructed is a
strong deformation retraction.

\subsection{Reidemeister move R3}

An argument presented by Bar-Natan in Section 4.3 of \cite{BarNatan}
now proves invariance under move R3 using invariance under move R2
and the fact that the homotopy equivalence for R2 is a strong
deformation retraction.
Bar-Natan's argument is formulated in terms of the category of twisted
complexes over a cobordism category modulo local relations, but given
Conjecture \ref{conj:fukaya}, which states that only product
operations of $\calL$ are nonzero, it carries over directly to
$\Tw \calL$.
This completes the proof of Theorem \ref{theorem:invariance}

\section{Links in $S^3$}
\label{sec:s3}

In Section \ref{sec:complexes}, we showed that given an oriented
1-tangle diagram $T$ we can construct corresponding twisted
complexes $(X,\delta_+)$ and $(X,\delta_-)$.
We can close $T$ with an overpass arc $A_+$ or underpass arc $A_-$, as
shown in Figure \ref{fig:s3-lp-lm}, to obtain diagrams of oriented
links $L_{T^+}$ and $L_{T^-}$ in $S^3$.
Our goal in this section is to use the twisted complexes
$(X,\delta_+)$ and $(X,\delta_-)$ to recover the reduced Khovanov
homology for these links.

Recall from Section \ref{sec:lagrangians-s3} that for links in $S^3$ 
an overpass arc $A_+$ and an underpass arc $A_-$ correspond to
Lagrangians $\Wbar_0$ and $\Wbar_1$ in $R^*(T^2,2)$.
For convenience, we will denote these Lagrangians as $W_+$ and $W_-$:
\begin{align*}
  &W_+ = \Wbar_0, &
  &W_- = \Wbar_1.
\end{align*}
Recall from Section \ref{ssec:a-infinity} that given a Lagrangian $M$
in $R^*(T^2,2)$ we can define an $A_\infty$ functor
$\Tw \calG_M:\Tw \calL \rightarrow \Ch$.
We a define cochain complex $(C_\pm, \partial_\pm)$ for the link
$L_{T^\pm}$ by applying the functor $\Tw \calG_{W_\pm}$ to the
twisted object $(X,\delta_\pm)$ and shifting the bigrading:
\begin{align*}
  (C_\pm, \partial_\pm) =
  (\Tw \calG_{W_\pm}((X,\delta_\pm))[h_A,q_A].
\end{align*}

The bigrading shift $[h_A,q_A]$ accounts for crossings between the arc
$A_\pm$ and the tangle diagram $T$, and is computed as follows.
Assume that the arc intersects the tangle diagram $T$ transversely.
Let $a_+$ and $a_-$ denote the number of positive and negative
intersection points between the arc and $T$.
Note that $a_+$ and $a_-$ depend on the position of the arc in
relation to $T$, but the difference $a_+ - a_-$ does not.
The bigrading shift $[h_A,q_A]$ is then given by
\begin{align*}
  [h_A,q_A] =
  \left\{
  \begin{array}{ll}
    {[r,4r]} &
    \quad \mbox{for $a_+ - a_- = 2r$ even,} \\
          {[r,4r+2]} & \quad
          \mbox{for $a_+ - a_- = 2r + 1$ odd.}
  \end{array}
  \right.
\end{align*}
Recall that $(X,\delta_\pm)$ already includes a bigrading shift
$[h_T,q_T]$ that accounts for crossings of $T$ with itself.

Given certain assumptions regarding the $A_\infty$ operations of
$\calL$ and the gradings of the generators of the vector spaces
$(W_\pm, L_n)$, we can explicitly construct the cochain complex
$(C_\pm,\partial_\pm)$ from the tangle diagram $T$.
We first describe the assumptions that we will need.

\subsection{$A_\infty$ operations and gradings of generators}
\label{sec:s3-ops-gens}

Recall from Theorems \ref{theorem:Wbar0} and \ref{theorem:Wbar1} that
the vector spaces $(W_\pm, L_n)$ are
1-dimensional and are each generated by a vector with positive
orientation grading:
\begin{align*}
  (W_+,L_n) &= (\Wbar_0, L_n) = \langle w_{n,+}^{(+)} \rangle, &
  (W_-, L_n) &= (\Wbar_1, L_n) = \langle w_{n,-}^{(+)} \rangle,
\end{align*}
where for convenience we have defined
\begin{align*}
  w_{n,+} &= \wbar_{n,0}, &
  w_{n,-} &= \wbar_{n,1}.
\end{align*}
We assign an integer homological grading $h$ and an integer quantum
grading $q$ to each generator, as indicated by the
superscripts $(h,q)$:
\begin{align*}
  (W_\pm, L_n) &=
  \left\{
  \begin{array}{ll}
    \langle w_{n,\pm}^{(\pm \frac{1}{2}n, \pm 2n)} \rangle &
    \quad \mbox{for $n$ even,} \\
    \langle w_{n,\pm}^{(\frac{1}{2}(1 \pm n), \pm 2n)} \rangle &
    \quad \mbox{for $n$ odd,}
  \end{array}
  \right.
\end{align*}

To construct the cochain complex $(C_\pm, \partial_\pm)$, we need to
know $A_\infty$ operations involving the generators
$w_{n,\pm}$ of $(W_\pm,L_n)$ as well as the generators
$a_n$, $c_n$, $p_{n+2,n}^{(\mp)}$, and $q_{n-2,n}^{(\pm)}$ that appear
in $\delta_\pm$.
We make the following conjecture regarding these operations:

\begin{conjecture}
\label{conj:s3-ops}
We have the following product operations:
\begin{align*}
  \mu^2(a_n, w_{n,\pm}) &= w_{n,\pm}.
\end{align*}
We have the following $\mu^3$ operations:
\begin{align*}
  &\mu^3(c_{n-2}, q_{n-2,n}^{(+)}, w_{n,+}) =
  \mu^3(q_{n-2,n}^{(+)}, c_n, w_{n,+}) =
  w_{n-2,+}, \\
  &\mu^3(c_{n+2}, p_{n+2,n}^{(+)}, w_{n,-}) =
  \mu^3(p_{n+2,n}^{(+)}, c_n, w_{n,-}) =
  w_{n+2,-}.
\end{align*}
All other operations of the form
\begin{align*}
  &\mu^m(x_{m-1}, \cdots, x_1, w_{n,+}), &
  &\mu^m(x_{m-1}, \cdots, x_1, w_{n,-})
\end{align*}
for
$x_{m-1}, \cdots, x_1 \in
\{a_n,\, c_n,\, p_{n+2,n}^{(\mp)},\, q_{n-2,n}^{(\pm)} \mid n \in
\Ints \}$ are zero.
\end{conjecture}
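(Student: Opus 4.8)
The plan is to establish Conjecture \ref{conj:s3-ops} along the same lines used to motivate Conjecture \ref{conj:fukaya}: first use the orientation and bigradings to cut the list of possibly-nonzero operations down to the ones written; then check those candidates against the $A_\infty$ relations; and finally fix their coefficients by reducing to the pillowcase $R^*(S^2,4)$, where the analogous statement is a theorem of \cite{Hedden-3}, and propagating the answer along the $\alpha_1$-action. I would begin with the grading bookkeeping. By Theorems \ref{theorem:Wbar0} and \ref{theorem:Wbar1}, each $(W_\pm,L_n)$ is one-dimensional on a generator $w_{n,\pm}$ with orientation grading $(+)$ and with the $(h,q)$-bigradings assigned in Section \ref{sec:s3-ops-gens}, while the generators $a_n, c_n$ and $p_{n+2,n}^{(\mp)}, q_{n-2,n}^{(\pm)}$ carry the orientation gradings and bigradings of Conjecture \ref{conj:fukaya} and equation (\ref{eqn:bigradings}). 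Since $\mu^m$ has orientation grading $+$ for $m$ even and $-$ for $m$ odd and shifts $(h,q)$ by $(\sum h_i + 2 - m,\ \sum q_i + 2 - m)$, any nonzero $\mu^m(x_{m-1},\dots,x_1,w_{n,\pm})$ with the $x_i$ drawn from the allowed generating set must have orientation grading $(+)$ and bigrading equal to that of the relevant generator $w_{k,\pm}$. Running the cases, $\mu^2(a_n, w_{n,\pm}) = w_{n,\pm}$ survives, and the $\mu^3$ operations such as $\mu^3(c_{n-2}, q_{n-2,n}^{(+)}, w_{n,+}) = w_{n-2,+}$ survive: the $q$-grading balance $2n + (-1) + (-2) + (2-3) = 2(n-2)$ is exactly what forces the $w_{n-2,+}$ output into $\mu^3$ rather than $\mu^2$, since $\mu^2(q_{n-2,n}, w_{n,+})$ would carry $q$-grading $2n-1 \neq 2(n-2)$. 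One then checks that every other combination has the wrong orientation grading, the wrong bigrading, or lands in a zero space, which is the content of the ``all other operations are zero'' clause.

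Second I would check compatibility with the $A_\infty$ relations. Under Conjecture \ref{conj:fukaya} one has $\mu_\calL^m = 0$ for $m \neq 2$, so $\calG_{W_\pm}$ has only $\calG^1, \calG^2$ nonzero and the relations collapse to the statements that $a_n$ is a homotopy unit and that the pentagon relating $\mu_\calL^2$ on $\calL$ with $\mu^2$ and $\mu^3$ into $(W_\pm, L_n)$ holds. Feeding in the already-conjectured products $\mu_\calL^2(q_{n,n+2}, p_{n+2,n}) = c_n$, $\mu_\calL^2(c_n, c_n) = 0$, and $\mu_\calL^2(c_{n+2}, p_{n+2,n}) = \mu_\calL^2(p_{n+2,n}, c_n) = 0$ from Conjecture \ref{conj:fukaya}, one sees that the $\mu^3$-values written in Conjecture \ref{conj:s3-ops} are precisely those consistent with associativity and that no higher operations are forced. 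This is a finite computation over $\F$, of the same kind as the consistency checks carried out in Section \ref{ssec:ops-fukaya}.

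Third — the one genuinely geometric step — I would fix the coefficients by reducing to the pillowcase. The paper shows that $R_2^*(T^2,2) \cong R^*(S^2,4)$ symplectically, that the split almost-complex structure $J = J_1 \oplus J_2$ makes holomorphic disks in $R_2^*(T^2,2)$ holomorphic in $R^*(T^2,2)$, and (Appendix \ref{sec:pillowcase}) that Conjecture \ref{conj:fukaya} restricted to $n \in \{\pm 1\}$ is the true statement for $R^*(S^2,4)$. Since $W_\pm = \Wbar_0, \Wbar_1$ lie in $R_3^*(T^2,2)$ (Lemma \ref{lemma:W0bar}) and the intersection points $w_{n,\pm}$ are the ones in $R_3^*(T^2,2)$ listed in Theorems \ref{theorem:Wbar0}--\ref{theorem:Wbar1}, the operations $\mu^m(\cdots, w_{n,\pm})$ for the small values of $n$ can be matched term-by-term with the functor-operations of \cite{Hedden-3}, provided one rules out holomorphic disks whose image leaves $R_2^*(T^2,2)$. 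The general-$n$ case then follows by acting with powers of $\alpha_1$, which is a symplectomorphism and hence preserves all $A_\infty$ operations — the same mechanism used to pass from Theorem \ref{theorem:w0-l0-l2} to Corollary \ref{cor:gens-s2-s1}.

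The main obstacle is exactly that ``no disks leaving $R_2^*(T^2,2)$'' step, together with the $\mu^3$ disk counts themselves. In \cite{Hedden-3} the pillowcase is two-dimensional and the counts are carried out with the Riemann mapping theorem, but $R^*(T^2,2)$ is four-dimensional, so one cannot exclude holomorphic strips and triangles transverse to the two-dimensional submanifold by combinatorial means alone. Closing the gap would require either a direct moduli analysis in the four-manifold or an indirect argument — for instance the cobordism-category $A_\infty$ functor proposed at the end of Section \ref{ssec:bn-relations} — that forces the answer algebraically. Absent that, the proposal delivers the two consistency verifications and the $n \in \{\pm 1\}$ pillowcase reduction, which is the evidence the paper records in support of the conjecture.
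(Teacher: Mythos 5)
The statement is Conjecture \ref{conj:s3-ops}; the paper does not prove it, so there is no proof to compare your proposal against. What the paper does record as support matches your first two steps fairly closely: your grading bookkeeping is a restatement of the unlabeled theorem immediately following the conjecture in Section \ref{sec:s3-ops-gens}, whose proof sets up the same winding-number and bigrading balance equations and concludes that only the listed $\mu^2$ and $\mu^3$ operations can survive, while your associativity check against Conjecture \ref{conj:fukaya} is the same kind of algebraic consistency verification the paper performs for Conjecture \ref{conj:prod-unperturbed}.

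Your third step---``fix the coefficients by reducing to the pillowcase''---overstates what the paper offers and has a concrete obstruction you have not named. The Lagrangians appearing in Conjecture \ref{conj:s3-ops} are $W_\pm = \Wbar_0, \Wbar_1$, and Lemma \ref{lemma:W0bar} places $\Wbar_0$ inside $R_3^*(T^2,2)\cap\{\alpha=0\}$, which is \emph{not} contained in $R_2^*(T^2,2) = R_3^*(T^2,2)\cap\{s=0\}$. Worse, Theorem \ref{theorem:Wbar0} computes the generator $\wbar_{n,0}$ of $(\Wbar_0,L_n)$ to have $s\neq 0$ for $n\equiv 1,3\pmod 4$, so those Lagrangian intersection points are not even in $R_2^*(T^2,2)$. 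The corners of the holomorphic triangles you would need to count therefore do not all sit in the two-dimensional submanifold you propose to restrict to; the problem is not merely one of ``ruling out escaping disks.'' Consistent with this, Appendix \ref{sec:pillowcase} claims only a formal \emph{analogy} with $\calP$ (``a natural generalization of a corresponding statement''), not a geometric reduction. Your closing admission that a dimension-four moduli analysis is genuinely missing is correct, and it is precisely the gap that keeps this a conjecture in the paper.
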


It is straightforward to check that Conjecture \ref{conj:s3-ops} is
consistent with the orientation gradings of the generators.
As described in Appendix \ref{sec:pillowcase}, Conjecture
\ref{conj:s3-ops} is a natural generalization of a corresponding
statement regarding the Fukaya category of $R^*(S^2,4)$.
One motivation for Conjecture \ref{conj:s3-ops} is the following
result:

\begin{theorem}
If the $A_\infty$ operations respect bigradings, then all operations
of the form
\begin{align*}
  &\mu^m(x_{m-1}, \cdots, x_1, w_{n,+}), &
  &\mu^m(x_{m-1}, \cdots, x_1, w_{n,-}),
\end{align*}
for $x_{m-1}, \cdots, x_1 \in
\{a_n,\, c_n,\, p_{n+2,n}^{(\mp)},\, q_{n-2,n}^{(\pm)} \mid n \in
\Ints \}$
must be zero, with the exception of
\begin{align*}
  &\mu^2(a_n,w_{n,+}), &
  &\mu^3(c_{n-2}, q_{n-2,n}^{(+)}, w_{n,+}), &
  &\mu^3(q_{n-2,n}^{(+)}, c_n, w_{n,+}), \\
  &\mu^2(a_n,w_{n,-}), &
  &\mu^3(c_{n+2}, p_{n+2,n}^{(+)}, w_{n,-}), &
  &\mu^3(p_{n+2,n}^{(+)}, c_n, w_{n,-}).
\end{align*}
\end{theorem}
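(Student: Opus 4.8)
The plan is to run this as a pure bigrading bookkeeping argument, leveraging the fact that each morphism space $(W_\pm, L_{n'})$ is one-dimensional by Theorems \ref{theorem:Wbar0} and \ref{theorem:Wbar1}. A nonzero operation $\mu^m(x_{m-1},\dots,x_1,w_{n,\pm})$ must then take values in the span of the single generator $w_{n',\pm}$ of its target, so its bigrading, as predicted by the $\mu^m$ grading rule, is forced to equal the assigned bigrading of $w_{n',\pm}$; the entire proof consists of extracting what this forces.

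First I would fix notation: among the inputs $x_{m-1},\dots,x_1$, let $\alpha,\gamma,\pi,\kappa$ be the numbers of generators of type $a$, $c$, $p$, $q$ respectively, so $k:=m-1=\alpha+\gamma+\pi+\kappa$. Composability of the sequence with $w_{n,\pm}\in(W_\pm,L_n)$ forces the target to be $(W_\pm,L_{n'})$ with $n'=n+2\pi-2\kappa$, since a $p$-generator raises the winding number by $2$ and a $q$-generator lowers it by $2$; in particular $n'\equiv n\pmod 2$. Then I would write out the homological and quantum balance equations, using the bigradings $a_n^{(0,0)}$, $c_n^{(0,-2)}$, $p_{n+2,n}^{(0,-1)}$, $q_{n-2,n}^{(0,-1)}$ from equation (\ref{eqn:bigradings}), the assigned bigradings of $w_{n,\pm}$, and the rule $h=\sum h_i+2-m$, $q=\sum q_i+2-m$.

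The key point is that, after substituting $n'=n+2\pi-2\kappa$ and eliminating $k$, the homological equation collapses --- uniformly in the parity of $n$ and in the sign of $w_{n,\pm}$, because on source and target the two grading formulas for $w$ differ by the same constant --- to
\begin{align*}
  2\pi + \alpha + \gamma = 1 \ \ (\text{case } w_{n,+}), \qquad
  2\kappa + \alpha + \gamma = 1 \ \ (\text{case } w_{n,-}),
\end{align*}
so $\pi=0$ and $\alpha+\gamma=1$ (resp.\ $\kappa=0$ and $\alpha+\gamma=1$). Feeding this into the quantum equation, which reduces to $\alpha+3\gamma-2\kappa=1$ (resp.\ $\alpha+3\gamma-2\pi=1$), leaves only the nonnegative integer solutions $(\alpha,\gamma,\kappa)=(1,0,0)$ and $(0,1,1)$ (resp.\ $(\alpha,\gamma,\pi)=(1,0,0)$ and $(0,1,1)$).

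Finally I would translate these multiplicity vectors back into composable input sequences. The solution with a single $a$-type input is pinned down by composability to $\mu^2(a_n,w_{n,+})$ (resp.\ $\mu^2(a_n,w_{n,-})$). The solution with one $c$-type and one $q$-type input (the variant in the relevant generating set being $q_{n-2,n}^{(+)}$) admits exactly the two composable orderings $\mu^3(c_{n-2},q_{n-2,n}^{(+)},w_{n,+})$ and $\mu^3(q_{n-2,n}^{(+)},c_n,w_{n,+})$, and symmetrically with $p_{n+2,n}^{(+)}$ in place of $q_{n-2,n}^{(+)}$ for the $w_{n,-}$ case; this is exactly the list in the statement. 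I expect the only real obstacle to be organizational: keeping the $\pm$ sign and the two parities of $n$ straight, and checking that composability leaves precisely the orderings listed --- the arithmetic itself is immediate once the two Diophantine relations are written down.
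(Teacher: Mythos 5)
Your argument is structurally the same as the paper's: you write down the winding-number balance and the homological/quantum balance equations implied by ``respects bigradings,'' eliminate $m$, and solve the resulting Diophantine system for the multiplicity vector $(\alpha,\gamma,\pi,\kappa)$. The arithmetic is correct (your $\alpha + \gamma + 2\pi = 1$, respectively $\alpha+\gamma+2\kappa = 1$, is exactly the paper's $n_a + n_c + 2n_p = 1$ after renaming), and the two solutions $(1,0,0,0)$ and $(0,1,0,1)$ --- respectively $(1,0,0,0)$ and $(0,1,1,0)$ --- together with composability recover the listed exceptions.

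However, there is a genuine gap at the very end. The generating set in the hypothesis contains \emph{both} orientation-graded variants $q_{n-2,n}^{(+)}$ and $q_{n-2,n}^{(-)}$ (and likewise $p_{n+2,n}^{(\pm)}$): these are distinct elements of $(L_n,L_{n\mp2})$, defined respectively as $s+\sbar$ and $r+\rbar$, and they carry the \emph{same} $(h,q)$-bigrading $(0,-1)$. Consequently, your bigrading bookkeeping cannot distinguish $\mu^3(c_{n-2}, q_{n-2,n}^{(+)}, w_{n,+})$ from $\mu^3(c_{n-2}, q_{n-2,n}^{(-)}, w_{n,+})$ --- both pass the Diophantine test. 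The paper closes this by a separate orientation-grading ($\Ints_2$-sign) argument: $w_{n-2,+}$ has sign $+$, while $\mu^3$ contributes sign $-$, $c_{n-2}$ sign $-$, $q_{n-2,n}^{(-)}$ sign $-$, and $w_{n,+}$ sign $+$, giving sign $-$, a contradiction; with $q_{n-2,n}^{(+)}$ the signs match. Your parenthetical ``the variant in the relevant generating set being $q_{n-2,n}^{(+)}$'' asserts this without justifying it --- you cannot read the $\pm$ in the hypothesis as slaved to the $\pm$ in $w_{n,\pm}$, since the conclusion of the theorem must also assert vanishing for the ``wrong'' variant. You need to add the orientation-grading step to rule out $q_{n-2,n}^{(-)}$ in the $w_{n,+}$ case and $p_{n+2,n}^{(-)}$ in the $w_{n,-}$ case.
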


\begin{proof}
Let $n_a$, $n_c$, $n_p$, and $n_q$ denote the number of $x_k$'s that
belong to the sets
\begin{align*}
  &\{a_n \mid n \in \Ints \}, &
  &\{c_n \mid n \in \Ints \}, &
  &\{p_{n+2,n}^{(\pm)} \mid n \in \Ints \}, &
  &\{q_{n-2,n}^{(\pm)} \mid n \in \Ints \}.
\end{align*}
We have
\begin{align}
  \label{eqn:sum-ns}
  n_a + n_c + n_p + n_q  = m - 1.
\end{align}
If the $A_\infty$ operation involving $w_{n,+}$ is nonzero, then we
must have
\begin{align}
  \label{eqn:nonzero-op}
  \mu^m(x_{m-1}, \cdots, x_1, w_{n,+}^{(h,q)}) =
  w_{n-2r,+}^{(h-r,q-4r)}
\end{align}
for some integer $r$.
We must balance the winding numbers on both sides of equation
(\ref{eqn:nonzero-op}), so:
\begin{align}
  \label{eqn:ops-w}
  n_q - n_p &= r.
\end{align}
Given our assumption that the $A_\infty$ relations respect bigradings,
we have
\begin{align}
  \label{eqn:ops-h-q}
  2n_c + n_p + n_q + m - 2 &= 4r, &
  m - 2 &= r.
\end{align}
From equations (\ref{eqn:sum-ns}), (\ref{eqn:ops-w}), and
(\ref{eqn:ops-h-q}), it follows that
\begin{align*}
  n_a + n_c + 2n_p = 1.
\end{align*}
So $n_p = 0$ and either $(n_a, n_c) = (1,0)$, in which case
\begin{align*}
  n_a &= 1, & n_c &= 0, & n_p &= 0, & n_q &= 0, & m &= 2, & r &= 0,
\end{align*}
or $(n_a,n_c) = (0,1)$, in which case
\begin{align*}
  n_a &= 0, & n_c &= 1, & n_p &= 0, & n_q &= 1, & m &= 3, & r &= 1.
\end{align*}
The operations
\begin{align*}
  &\mu^3(c_{n-2}, q_{n-2,n}^{(-)}, w_{n,+}), &
  &\mu^3(q_{n-2,n}^{(-)}, c_n, w_{n,+}),
\end{align*}
must be zero, since otherwise they would have to be
$w_{n-2,+}$, which is inconsistent with orientation gradings.
The proof for the $A_\infty$ operations involving $w_{n,-}$ is
similar.
\end{proof}

\subsection{Chain complex}

Using Conjecture \ref{conj:s3-ops}, the description of the functor
$\Tw \calG_{W_\pm}$ from Section \ref{ssec:a-infinity}, and the
description of the twisted complex $(X,\delta_\pm)$ from Section
\ref{ssec:twisted-complex}, we can
explicitly construct the cochain complex $(C_\pm,\partial_\pm)$ for
the link $L_{T^\pm}$.
The vector space $C_\pm$ is given by
\begin{align*}
  C_\pm = \bigoplus_{i \in  I}
  ((W_\pm, L_{n_i}) \otimes
  A^{\otimes c_i}[r(i)+h_T+h_A, 2r(i)+q_T+q_A]).
\end{align*}
There are no $\mu^1$ operations, so the differentials $\partial_\pm$
are due entirely to $\delta_\pm$.
The $\mu^2$ operations give \emph{short differentials} corresponding
to single saddles, and the $\mu^3$ operations give
\emph{long differentials} corresponding to pairs of successive
saddles.

For both $L_{T^+}$ and $L_{T^-}$, the $\mu^2$ operations involving
$w_{n,\pm}$ give the following short differentials:
\begin{enumerate}
\item
For a saddle
$P_n(r+1) \rightarrow P_n(r+2)$
that splits one circle into two circles:
\begin{align*}
  & \partial_{C^+}:
  (W_\pm,L_n) \otimes A^{\otimes(r+1)} \rightarrow
  (W_\pm,L_n) \otimes A^{\otimes(r+2)}[1,2], &
  & \partial_{C^+} =
  ((w_{n,\pm} \mapsto w_{n,\pm}) \otimes \Delta I_r)[1,2].
\end{align*}
For a saddle
$P_n(r+2) \rightarrow P_n(r+1)$
that merges two circles into one circle:
\begin{align*}
  & \partial_{C^-}:
  (W_\pm,L_n) \otimes A^{\otimes(r+2)} \rightarrow
  (W_\pm,L_n) \otimes A^{\otimes(r+1)}[1,2], &
  & \partial_{C^-} =
  ((w_{n,\pm} \mapsto w_{n,\pm}) \otimes m I_r)[1,2].
\end{align*}

\item
For a saddle
$P_n(r) \rightarrow P_n(r+1)$
that splits a circle from either side of the arc component:
\begin{align*}
  & \partial_{A^+}:
  (W_\pm,L_n) \otimes A^{\otimes r} \rightarrow
  (W_\pm,L_n) \otimes A^{\otimes(r+1)}[1,2], &
  & \partial_{A^+} =
  ((w_{n,\pm} \mapsto w_{n,\pm}) \otimes \etadot I_r)[1,2].
\end{align*}
For a saddle
$P_n(r+1) \rightarrow P_n(r)$
that merges a circle with either side of the arc component:
\begin{align*}
  & \partial_{A^-}:
  (W_\pm,L_n) \otimes A^{\otimes(r+1)} \rightarrow
  (W_\pm,L_n) \otimes A^{\otimes r}[1,2], &
  & \partial_{A^-} =
  ((w_{n,\pm} \mapsto w_{n,\pm}) \otimes \epsilondot I_r)[1,2].
\end{align*}
\end{enumerate}

For $L_{T^+}$, the $\mu^3$ operations involving $w_{n,+}$ give the
following long differentials:
\begin{enumerate}
\item
For a saddle
$P_n(r) \rightarrow P_{n-2}(r)$
that decreases the winding number by two followed
by a saddle $P_{n-2}(r) \rightarrow P_{n-2}(r+1)$ that splits a circle
from the right side of the arc component, or for a saddle
$P_n(r) \rightarrow P_n(r+1)$
that splits a circle from the right side of the arc component
followed by a saddle
$P_n(r+1) \rightarrow P_{n-2}(r+1)$
that decreases the winding number by two:
\begin{align*}
  & \partial_{R^+W^-}:
  (W_+,L_n) \otimes A^{\otimes r} \rightarrow
  (W_+,L_{n-2}) \otimes A^{\otimes(r+1)}[2,4], &
  & \partial_{R^+W^-} =
  ((w_{n,+} \mapsto w_{n-2,+}) \otimes \eta I_r)[2,4].
\end{align*}

\item
For a saddle
$P_n(r+1) \rightarrow P_{n-2}(r+1)$
that decreases the winding number by two followed
by a saddle $P_{n-2}(r+1) \rightarrow P_{n-2}(r)$ that merges a circle
with the right side of the arc component, or for a saddle
$P_n(r+1) \rightarrow P_n(r)$ that merges a circle with the right side
of the arc component followed by a saddle
$P_n(r) \rightarrow P_{n-2}(r)$
that decreases the winding number by two:
\begin{align*}
  & \partial_{R^-W^-}:
  (W_+,L_n) \otimes A^{\otimes(r+1)} \rightarrow
  (W_+,L_{n-2}) \otimes A^{\otimes r}[2,4], &
  & \partial_{R^-W^-} =
  ((w_{n,+} \mapsto w_{n-2,+}) \otimes \epsilon I_r)[2,4].
\end{align*}

\end{enumerate}

For $L_{T^-}$, the $\mu^3$ operations involving $w_{n,-}$ give the
following long differentials:

\begin{enumerate}
\item
For a saddle
$P_n(r) \rightarrow P_{n+2}(r)$
that increases the winding number by two followed
by a saddle $P_{n+2}(r) \rightarrow P_{n+2}(r+1)$ that splits a circle
from the right side of the arc component, or for a saddle
$P_n(r) \rightarrow P_n(r+1)$ that splits a circle
from the right side of the arc component followed by a saddle
$P_n(r+1) \rightarrow P_{n+2}(r+1)$
that increases the winding number by two:
\begin{align*}
  & \partial_{R^+W^+}:
  (W_-,L_n) \otimes A^{\otimes r} \rightarrow
  (W_-,L_{n+2}) \otimes A^{\otimes(r+1)}[2,4], &
  & \partial_{R^+W^+} =
  ((w_{n,-} \mapsto w_{n+2,-}) \otimes \eta I_r)[2,4].
\end{align*}

\item
For a saddle
$P_n(r+1) \rightarrow P_{n+2}(r+1)$
that increases the winding number by two followed
by a saddle $P_{n+2}(r+1) \rightarrow P_{n+2}(r)$ that merges a circle
with the right side of the arc component, or for a saddle
$P_n(r+1) \rightarrow P_n(r)$ that merges a circle
with the right side of the arc component followed by a saddle
$P_n(r) \rightarrow P_{n+2}(r)$
that increases the winding number by two:
\begin{align*}
  & \partial_{R^-W^+}:
  (W_-,L_n) \otimes A^{\otimes r} \rightarrow
  (W_-,L_{n+2}) \otimes A^{\otimes(r+1)}[2,4], &
  & \partial_{R^-W^+} =
  ((w_{n,-} \mapsto w_{n+2,-}) \otimes \epsilon I_r)[2,4].
\end{align*}
\end{enumerate}

In fact, the long differentials due to the $\mu^3$ operations contain
additional terms involving the nonlocal operator $\Sigma_r$.
However, in a cube of resolutions for a tangle diagram these terms
always occur in canceling pairs corresponding to saddles on opposite
sides of the arc component, so we can omit them from the expressions
for the differentials.
We note that for tangle diagrams with loop number 0, there are no long
differentials and the cochain complex is the usual cochain complex for
reduced Khovanov homology, where the marked component is the one
containing the arc component $T$, and with the convention that the
differential has quantum grading 1, rather than the usual quantum
grading of 0.

\begin{theorem}
We have
\begin{align*}
  \Khr^{(h,q)}(L_{T^\pm}) =  H^{(h,h+q)}((C_\pm,\partial_\pm)).
\end{align*}
\end{theorem}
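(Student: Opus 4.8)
The plan is to compare the explicitly constructed complex $(C_\pm,\partial_\pm)$ — which is a well-defined combinatorial object, independent of the conjectures on $\calL$ — directly with the standard cochain complex computing $\Khr(L_{T^\pm})$, with basepoint on the component of $L_{T^\pm}$ containing the closure arc $A_\pm$, and to exhibit a chain homotopy equivalence between them after the regrading $q \mapsto h+q$. The point is that $(C_\pm,\partial_\pm)$ is built only from the cube of resolutions of $T$, whereas the standard complex resolves every crossing of the diagram $D = T \cup A_\pm$: the $m$ self-crossings of $T$ together with the $a = a_+ + a_-$ crossings of $A_\pm$ with $T$. First I would order the crossings so that the $a$ arc-crossings come last, presenting the standard complex as an $a$-fold iterated mapping cone whose vertices are the reduced Khovanov complexes of the $2^a$ planar link diagrams $D_{\vec\epsilon}$ obtained by resolving the arc-crossings, the $T$-crossings being resolved as usual inside each vertex.

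The key geometric input is that $A_\pm$ is an embedded, unknotted closure arc all of whose crossings with $T$ have the same over/under type. Hence, for each resolution $\vec\epsilon$ of the arc-crossings, the resolved curve $A_{\vec\epsilon}$ overlaid on $T$ is, up to planar isotopy in the annulus, either the closure of a planar tangle obtained from one of the $P_n$ by a winding change $W^{\mp}$ (when $\vec\epsilon$ reconnects the arc across a full loop of the arc component of $T$) or differs from such a closure by a collection of contractible circles. Applying Bar-Natan delooping to those contractible circles and then Gaussian-eliminating the acyclic subcomplexes produced by the arc-crossing differentials collapses the $a$ arc-crossing directions of the cube. Inspecting the local model at a single arc-crossing, what survives is exactly: the underlying vector space $\bigoplus_{i\in I}(W_\pm,L_{n_i})\otimes A^{\otimes c_i}$ of $(C_\pm,\partial_\pm)$ — each $(W_\pm,L_{n_i})$ being one-dimensional by Theorems \ref{theorem:Wbar0} and \ref{theorem:Wbar1}, and $A^{\otimes c_i}$ being the Khovanov factor for the $c_i$ circle components of $T_i$ with the arc component playing the role of the reduced (marked) circle; the short differentials $\partial_{C^\pm}$, $\partial_{A^\pm}$, which descend from the $T$-saddles; and, as the only composite maps surviving elimination, the long differentials $\partial_{R^\pm W^\mp}$ for $L_{T^+}$ and $\partial_{R^\pm W^\pm}$ for $L_{T^-}$, which are forced because in the standard complex a winding-changing $T$-saddle can be reached only by a zig-zag through an intermediate arc-crossing resolution. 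The nonlocal $\Sigma_r$-terms that in principle accompany these composites occur in canceling pairs coming from the two sides of the arc component, as already noted after the displayed formulas above, and so drop out.

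The gradings are handled by tracking homological and quantum degrees through delooping (which shifts $q$ by $\pm 1$ and preserves $h$) and through Gaussian elimination (which preserves both), and matching against the shifts $[h_T,q_T] = [-m_-,m_+-3m_-]$ and $[h_A,q_A]$ already built into $(C_\pm,\partial_\pm)$; the comparison with the standard reduced Khovanov normalization is then routine. The index shift $H^{(h,h+q)}$ in the statement reflects only our convention that the differential carries quantum grading $1$ rather than the usual $0$: a homological step of $+1$ raises our internal $q$ by $1$ while leaving the Khovanov quantum grading unchanged, so our bidegree $(h,q)$ corresponds to Khovanov bidegree $(h,q-h)$, equivalently a Khovanov class of bidegree $(h,q)$ lives in our $(h,h+q)$.

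The main obstacle is the bookkeeping in the Gaussian-elimination step: one must prove the precise local claim that the subcomplex of arc-crossing resolutions lying over a single full loop of the arc component of $T$ is acyclic except for one surviving composite equal to $\partial_{R^\pm W^\mp}$ (with the correct coefficient and with the $\Sigma_r$-terms canceling), and one must check that the resulting chain homotopy equivalences assemble over the entire cube of $T$-resolutions, i.e. commute with the $T$-saddle differentials — this follows from naturality of delooping and of Gaussian elimination but needs care when two $T$-saddles interact with the same loop. I would organize the argument by first disposing of the base case of loop number $0$, where $(C_\pm,\partial_\pm)$ coincides on the nose with the standard reduced Khovanov complex (as already observed in the text), and then inducting on the loop number, peeling off one loop of the arc component of $T$ at a time by an $R2$-type isotopy of $D$ that trades two arc-crossings for a winding change, and using invariance of both sides — Theorem \ref{theorem:invar} on the side of $(C_\pm,\partial_\pm)$ and Reidemeister invariance of $\Khr$ on the other — to reduce to the previously handled case.
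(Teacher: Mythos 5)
The paper's proof of this theorem contains no geometric content at all: it simply invokes \cite{Boozer-3}, where the statement is actually established, and then spends a short paragraph matching the grading conventions of \cite{Boozer-3} (generators $P_{\ell,n}^\pm$, bigrading shift $[-a_-,a_+-2a_-]$, differential of bidegree $(1,0)$) against those used here (generators $w_{n,\pm}$, shift $[h_A,q_A]$, differential of bidegree $(1,1)$), arriving at the index shift $(h,q)\leftrightarrow(h,h+q)$. So your proposal is a genuinely different route: you are reconstructing the argument of \cite{Boozer-3} rather than translating it. Your grading comparison $(h,q)\leftrightarrow(h,q-h)$ is consistent with the paper's, and the delooping--then--Gaussian--elimination scheme applied to the iterated mapping cone over the arc-crossing directions is a credible outline of what such a proof would look like.

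There is, however, a gap in the organizing induction at the end. The move you describe --- ``peeling off one loop of the arc component of $T$ \ldots\ by an $R2$-type isotopy of $D$ that trades two arc-crossings for a winding change'' --- is an ambient isotopy of $D$ in $S^3$, but as a change on the tangle diagram $T$ it alters the winding number of the arc component by $\pm 2$. That winding number is an isotopy-rel-boundary invariant of $T$ in the annulus, so $T$ and its peeled-off version are not isotopic rel boundary, and Theorem~\ref{theorem:invar} therefore gives you nothing about the relationship between their twisted complexes $(X,\delta_\pm)$, nor between the resulting $(C_\pm,\partial_\pm)$. The loop number of a 1-tangle diagram is bounded below by the absolute winding number of its arc component, so for any $T$ whose arc component has nonzero winding you cannot reach the loop-number-$0$ base case by moves covered by Theorem~\ref{theorem:invar}. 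The hard content in the inductive step is precisely the direct comparison of the two complexes across a winding change, which is the ``bookkeeping in the Gaussian-elimination step'' you flagged as the main obstacle; the reorganization does not let you offload it to the already-proved invariance theorem. You would instead need to carry out the local delooping/elimination computation for a single winding change in full (verifying that only the stated long differentials $\partial_{R^\pm W^\mp}$ survive and that the $\Sigma_r$-terms cancel in pairs), and then argue naturality over the rest of the cube of $T$-resolutions --- which is essentially doing the first approach from scratch rather than deriving it from the second.
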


\begin{proof}
This is shown in \cite{Boozer-3}, although we need to translate
conventions.
Here we define generators $w_{n,+}$ and $w_{n,-}$ for an overpass and
underpass arc, and we shift bigradings by $[h_A,q_A]$ to account for
crossings of the arc with the tangle.
In \cite{Boozer-3}, the corresponding cochain complex is defined in
terms of generators $P_{\ell,n}^+$ or $P_{\ell,n}^-$ for an
overpass or underpass arc, where $\ell = a_+ + a_-$, and we shift
bigradings by $[h_A^P,q_A^P]$ to account for crossings of the arc with
the tangle diagram.
The bigrading $(h,q)$ of $P_{\ell,n}^\pm$ is
\begin{align*}
  ((1/2)(\ell \pm n),\, (1/2)(\ell \pm 3n)).
\end{align*}
The bigrading shift $[h_A^P,q_A^P]$ is given by
\begin{align*}
  [h_A^P, q_A^P] = [-a_-, a_+ - 2a_-].
\end{align*}
The generators of the two complexes are thus related by
\begin{align*}
  &P_{\ell,n}^+[h_A^P,q_A^P] & \longleftrightarrow &
  &w_{n,+} [h_A,q_A], \\
  &P_{\ell,n}^-[h_A^P,q_A^P] & \longleftrightarrow &
  &w_{n,-} [h_A,q_A].
\end{align*}
The convention for the bigrading of the differential is
$(1,0)$ in \cite{Boozer-3} and $(1,1)$ here, so we want to show that
the bigradings of the generators are related by
\begin{align*}
  &(h,q) & \longleftrightarrow &
  &(h,h+q).
\end{align*}
This is a straightforward calculation.
\end{proof}

\subsection{Example: unknot in $S^3$}
\label{ssec:example-unknot}

We can close the tangle diagram $T$ shown in Figure
\ref{fig:example-tangle} with an overpass arc $A_+$ as shown in Figure
\ref{fig:s3-lp-lm} to obtain the link $L_{T^+}$, which is the unknot.
The number of positive and negative crossings between $T$ and $A_+$ is
$a_+ = 0$ and $a_- = 2$, so $a_+ - a_- = -2 = 2r$ for $r = -1$,
corresponding to a bigrading shift
\begin{align*}
  [h_A, q_A] = [r, 4r] = [-1,-4].
\end{align*}
We obtain the following cochain complex $(C_+,\partial_+)$:
\begin{eqnarray*}
\begin{tikzcd}
  {} & \langle w_{0,+}^{(0,0)} \rangle \otimes \F[0,0]
  \arrow{dr}{((w_{0,+} \mapsto w_{0,+}) \otimes \etadot)[1,2]} & {}
  \\
  \langle w_{2,+}^{(1,4)} \rangle \otimes \F[-1,-2]
  \arrow{rr}{((w_{2,+} \mapsto w_{0,+})\otimes\eta)[2,4]} &
  {} & \langle w_{0,+}^{(0,0)} \rangle \otimes A[1,2]. \\
  {} & \langle w_{0,+}^{(0,0)} \rangle \otimes \F[0,0]
  \arrow{ur}[swap]{((w_{0,+} \mapsto w_{0,+}) \otimes \etadot)[1,2]}
  & {}
\end{tikzcd}
\end{eqnarray*}
So we obtain the reduced Khovanov homology $\F[0,0]$ for the unknot.

\subsection{Example: right trefoil in $S^3$}
\label{ssec:example-trefoil}

We can close the tangle diagram $T$ shown in Figure
\ref{fig:example-tangle} with an underpass arc $A_-$ as shown in
Figure \ref{fig:s3-lp-lm} to obtain the link $L_{T^-}$, which is the
right trefoil.
The number of positive and negative crossings between $T$ and $A_-$ is
$a_+ = 2$ and $a_- = 0$, so $a_+ - a_- = 2 = 2r$ for $r = 1$,
corresponding to a bigrading shift
\begin{align*}
  [h_A, q_A] = [r,4r] = [1,4].
\end{align*}
We obtain the following cochain complex $(C_-,\partial_-)$:
\begin{eqnarray*}
\begin{tikzcd}
  {} & \langle w_{0,-}^{(0,0)} \rangle \otimes \F[2,8]
  \arrow{dr}{((w_{0,-} \mapsto w_{0,-})\otimes \etadot)[1,2]}
  & {} \\
  \langle w_{2,-}^{(-1,-4)} \rangle \otimes \F[1,6] &
  {} & \langle w_{0,-}^{(0,0)} \rangle \otimes A[3,10]. \\
  {} & \langle w_{0,-}^{(0,0)} \rangle \otimes \F[2,8]
  \arrow{ur}[swap]{((w_{0,-} \mapsto w_{0,-})\otimes \etadot)[1,2]}
  & {}
\end{tikzcd}
\end{eqnarray*}
So we obtain the reduced Khovanov homology
$\F[0,2] \oplus \F[2,6] \oplus \F[3,8]$ for the right trefoil.

\section{Links in $S^2 \times S^1$}
\label{sec:s2xs1}

In Section \ref{sec:complexes}, we showed that given an oriented
1-tangle diagram $T$ we can construct corresponding twisted
complexes $(X,\delta_+)$ and $(X,\delta_-)$.
We can construct links $L_{T^+}'$ and $L_{T^-}'$ in $S^2 \times S^1$ by
closing the tangle diagram $T$ with an overpass arc $A_+$ or underpass
arc $A_-$ as shown in Figure \ref{fig:s3-lp-lm}.
The links $L_{T^+}'$ and $L_{T^-}'$ are isotopic, since one can
isotope $A_+$ to $A_-$ by moving $A_+$ around the $S^2$ factor, and
we will let $L_{T^0}$ denote either of these isotopic links.
For links in $S^2 \times S^1$, an overpass arc $A_+$ and an underpass
$A_-$ correspond to the same unperturbed Lagrangian $W_0$ in
$R^*(T^2,2)$.
Recall from Section \ref{ssec:a-infinity} that given a Lagrangian $M$
in $R^*(T^2,2)$ we can define an $A_\infty$ functor
$\Tw \calG_M:\Tw \calL \rightarrow \Ch$.
We define a cochain complex $(C_0,\partial_0)$ by applying the functor
$\Tw \calG_{W_0}$ to the twisted complex $(X,\delta_-)$:
\begin{align*}
  (C_0, \partial_0) = (\Tw \calG_{W_0})((X,\delta_-)).
\end{align*}
Our goal in this section is to explicitly describe this cochain complex
and to investigate its cohomology.
Given a tangle diagram $T$, we define a bigraded vector space that we
call the \emph{generalized reduced Khovanov homology of $T$}:
\begin{align*}
  \Khr^{(h,q)}(S^2 \times S^1, T) = H^{(h,h+q)}((C_0,\partial_0)).
\end{align*}

One might hope that the generalized reduced Khovanov homology depends
only on the isotopy class of the link $L_{T^0}$ and not on its
description as the closure of the particular tangle diagram $T$.
From Theorem \ref{theorem:invar}, it follows that the generalized
reduced Khovanov homology is an isotopy invariant of the tangle diagram
$T$.
But it is possible for nonisotopic tangle diagrams $T_1$ and $T_2$ to
yield isotopic links $L_{T_1^0}$ and $L_{T_2^0}$ in $S^2 \times S^1$,
and example calculations show that in this situation the generalized
reduced Khovanov homology for $T_1$ and $T_2$ need not be the same.
In all the examples we have checked, however, the dependence on the
tangle diagram is reflected only in the bigradings of generators, and
the generalized reduced Khovanov homology does agree if we collapse
bigradings from $\Ints$ to $\Ints_2$.
We thus make the following conjecture:

\begin{conjecture}
\label{conj:khr-gen}
If $L_{T_1^0}$ and $L_{T_2^0}$ are isotopic links in $S^2 \times S^1$,
then with bigradings collapsed to $\Ints_2$ we have
\begin{align*}
  \Khr(S^2 \times S^1, T_1) = \Khr(S^2 \times S^1, T_2).
\end{align*}
\end{conjecture}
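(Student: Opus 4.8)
The final statement is a conjecture, and with the techniques of this paper a complete proof is out of reach; the plan is therefore to establish the strongest partial results that the conjectured $A_\infty$ structure of $\calL$ permits, and to let those — together with the explicit computations of Sections \ref{ssec:example-s2-s1-1}, \ref{ssec:example-s2-s1-2}, and \ref{ssec:example-s2-s1-3} — stand as the evidence for it. The first partial result I would prove is invariance of $H^*((C_0,\partial_0))$, with bigradings collapsed to $\Ints_2$, under the full-twist move $T \mapsto T_\tau$ of Figure \ref{fig:diagram-twist}; by the unwinding of Figure \ref{fig:s2-s1-twist} this is the simplest nontrivial instance of the conjectured invariance, since $T$ and $T_\tau$ are generally non-isotopic as annular diagrams while $L_{T^0}$ and $L_{T_\tau^0}$ are isotopic in $S^2 \times S^1$. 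This is Theorem \ref{theorem:intro-s2xs1-invar}. The second is Theorem \ref{theorem:intro-s2xs1-khr-0}, which shows that for loop number $0$ the cohomology is $V \otimes \Khr(L_{T^0}')$ for a link $L_{T^0}'$ in $S^3$, hence is an isotopy invariant in that range; its proof is comparatively routine, since for loop number $0$ no winding-changing saddle appears in the cube of resolutions, $\delta_-$ involves only the maps $d_{C^\pm}$, $d_{L^\pm}$, $d_{R^\pm}$ with $a_0$-coefficients, and $\Tw\calG_{W_0}$ then splits off the factor $(W_0,L_0) \cong V$.

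For the full-twist invariance the approach is to imitate the Reidemeister-move arguments of Section \ref{sec:invariance}. Because Conjecture \ref{conj:fukaya} asserts $\mu_\calL^m = 0$ for $m \neq 2$, homotopy equivalence in $\Tw\calL$ reduces to the chain-level notion, so it suffices to produce $\Sigma\calL$-morphisms $F\colon (X,\delta_-) \to (X^\tau,\delta_-^\tau)$ and $G$ in the other direction, together with homotopies $H$ and $K$, assembled from the saddle differentials, the cup and cap maps of Section \ref{ssec:saddles}, and the winding-changing elements $p_{n+2,n}$, $q_{n-2,n}$. The verification that $F$ and $G$ are mutually inverse up to homotopy is then reduced, exactly as in Section \ref{sec:invariance}, to the Bar-Natan relations of Theorems \ref{theorem:rel-0}, \ref{theorem:rel-1}, and \ref{theorem:rel-2}, while Theorems \ref{theorem:d-d-commute} and \ref{theorem:d-cc-commute} guarantee that the subobject-level maps assemble into maps of the full twisted complexes. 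Applying the $A_\infty$ functor $\Tw\calG_{W_0}$ and using the conjectured unperturbed operations of Conjecture \ref{conj:prod-unperturbed} then transports $F$, $G$, $H$, $K$ to the cochain level and gives the desired homotopy equivalence of $(C_0,\partial_0)$ and $(C_0^\tau,\partial_0^\tau)$; the technical check that the terms involving the nonlocal operators $\Sigma_r$ cancel in the twist cube is where most of the bookkeeping lives, and I would defer it to Appendix \ref{sec:chain-homotopy-equivalences}.

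The collapse to $\Ints_2$ is forced by grading bookkeeping. A full twist changes the winding number $n_i$ of every resolution of $T$ by $\pm 2$, hence changes the bigradings of the generators of the morphism spaces $(W_0, L_{n_i})$ by $\Ints$-valued amounts that are not all equal, so $F$ cannot be grading-preserving over $\Ints$; after reducing the homological and quantum gradings modulo $2$ these discrepancies disappear, and the homotopy equivalence becomes grading-preserving. This is exactly why the conclusion can only be asserted with $\Ints_2$ bigradings, i.e.\ in the form stated in the conjecture and in Theorem \ref{theorem:intro-s2xs1-invar}.

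The main obstacle to going further is twofold. First, even the full twist is tractable only because its cube of resolutions involves the winding maps in combinations whose products $\mu_\calL^2(p_{n+4,n+2}, p_{n+2,n})$ and $\mu_\calL^2(q_{n-4,n-2}, q_{n-2,n})$ — which Conjecture \ref{conj:fukaya} deliberately leaves undetermined — cancel against one another or do not enter the relevant homotopies; a general isotopy of $L_{T^0}$ need not be so forgiving, so even the partial program depends on this cancellation. Second, and more seriously, there is no known finite list of local moves on annular $1$-tangle diagrams that realizes every isotopy of their closures in $S^2 \times S^1$: the full twist is one such move, but surely not all of them, so no argument of Reidemeister-move type can reach the general case. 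For these reasons I would present the statement as a conjecture, supported by Theorems \ref{theorem:intro-s2xs1-invar} and \ref{theorem:intro-s2xs1-khr-0} and by the example computations, rather than as a theorem.
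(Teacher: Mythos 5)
Your treatment matches the paper's: since this is a conjecture, the paper does not prove it but offers the full-twist invariance (Theorem \ref{theorem:intro-s2xs1-invar}) and the loop-number-$0$ identification with $V \otimes \Khr$ (Theorem \ref{theorem:intro-s2xs1-khr-0}) together with the example computations as evidence, and you cite exactly these pieces and the same two obstacles (the undetermined $\mu_\calL^2(p_{n+4,n+2},p_{n+2,n})$ and the absence of a complete move-set for $S^2 \times S^1$ closures). One small imprecision worth flagging: in your sketch of the loop-number-$0$ case you say $\delta_-$ involves only maps with $a_0$-coefficients and that $\Tw\calG_{W_0}$ then splits off $(W_0,L_0)$, but $d_{R^\pm}$ and $d_{L^\pm}$ carry $c_0$-terms, so $(C_0,\partial_0)$ is in general \emph{not} the tensor product $(W_0,L_0) \otimes (C_T,d_T)$ at the chain level; the paper first replaces $(X,\delta)$ by a homotopy-equivalent $(X,\delta')$ with those $c_0$-terms stripped out (Theorem \ref{theorem:loop-0-complexes}), and only after that does the tensor-product splitting appear.
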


We will focus on links obtained from a tangle diagram $T$ with loop
number 0 or 2.
To compute the cochain complexes for such links, we need to know the
bigradings of the generators of the vector spaces $(W_0,L_n)$ for
$n \in \{0, \pm 2\}$ and the $A_\infty$ operations involving these
generators.

\subsection{$A_\infty$ operations and gradings of generators}

Recall from Corollary \ref{cor:gens-s2-s1} that the vector spaces
$(W_0,L_0)$ and $(W_0,L_{\pm 2})$ are 2-dimensional.
We assign integer bigradings to the generators as follows:
\begin{align*}
  (W_0, L_0) &=
  \langle \alpha_0^{(0,0)},\, \beta_0^{(0,-2)} \rangle, &
  (W_0, L_{\pm 2}) &=
  \langle \sigma_{\pm 2, 0}^{(0,-1)},\, \tau_{\pm 2, 0}^{(0,-1)} \rangle.
\end{align*}
Based on Conjecture \ref{conj:prod-unperturbed} for the product
operations of these generators, we make the following conjecture:

\begin{conjecture}
\label{conj:ops-s2-s1}
We have the following product operations:
\begin{align*}
  &\mu^2(a_0,-):
  (W_0,L_0) \rightarrow (W_0,L_0), &
  &x \mapsto x, \\
  &\mu^2(a_{\pm 2},-):
  (W_0,L_{\pm 2}) \rightarrow (W_0,L_{\pm 2}), &
  &x \mapsto x, \\
  &\mu^2(c_0,-):
  (W_0,L_0) \rightarrow (W_0,L_0), &
  &\alpha_0 \mapsto \beta_0, &
  &\beta_0 \mapsto 0, \\
  &\mu^2(c_{\pm 2},-):
  (W_0,L_{\pm 2}) \rightarrow (W_0,L_{\pm 2}), &
  &x \mapsto 0, \\
  &\mu^2(p_{2,0}^{(-)},-):
  (W_0,L_0) \rightarrow (W_0,L_2), &
  &\alpha_0 \mapsto \tau_{2,0}, &
  &\beta_0 \mapsto 0, \\
  &\mu^2(p_{2,0}^{(+)},-):
  (W_0,L_0) \rightarrow (W_0,L_2), &
  &\alpha_0 \mapsto 0, &
  &\beta_0 \mapsto \sigma_{2,0}, \\
  &\mu^2(q_{2,0}^{(+)},-):
  (W_0,L_0) \rightarrow (W_0,L_2), &
  &\alpha_0 \mapsto \sigma_{-2,0}, &
  &\beta_0 \mapsto 0, \\
  &\mu^2(q_{2,0}^{(-)},-):
  (W_0,L_0) \rightarrow (W_0,L_2), &
  &\alpha_0 \mapsto \tau_{-2,0}, &
  &\beta_0 \mapsto 0, \\
  &\mu^2(p_{0,-2}^{(-)},-):
  (W_0,L_{-2}) \rightarrow (W_0,L_0), &
  &\sigma_{-2,0} \mapsto \beta_0, &
  &\tau_{-2,0} \mapsto 0, \\
  &\mu^2(p_{0,-2}^{(+)},-):
  (W_0,L_{-2}) \rightarrow (W_0,L_0), &
  &\sigma_{-2,0} \mapsto 0, &
  &\tau_{-2,0} \mapsto \beta_0, \\
  &\mu^2(q_{0,2}^{(+)},-):
  (W_0,L_2) \rightarrow (W_0,L_0), &
  &\sigma_{2,0} \mapsto 0, &
  &\tau_{2,0} \mapsto \beta_0, \\
  &\mu^2(q_{0,2}^{(-)},-):
  (W_0,L_2) \rightarrow (W_0,L_0), &
  &\sigma_{2,0} \mapsto \beta_0, &
  &\tau_{2,0} \mapsto 0.
\end{align*}
All operations $\mu^m(x_{m-1}, \cdots, x_1, y)$ for $m \neq 2$,
$x_{m-1}, \cdots, x_1 \in
\{a_n,\, c_n,\, p_{n+2,n},\, q_{n-2,n} \mid n \in \Ints \}$, and
$y \in (W_0, L_0)$ or $y \in (W_0, L_{\pm 2})$ are zero.
\end{conjecture}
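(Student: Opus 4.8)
The statement is a conjecture rather than a theorem, so the aim is not a proof from scratch but a derivation from the operations already conjectured in Section~\ref{ssec:ops-fukaya} (packaged, for the unperturbed Lagrangians, as Conjecture~\ref{conj:prod-unperturbed}) and from Conjecture~\ref{conj:fukaya}, together with the structural constraints any family of $A_\infty$ operations must obey. The plan is first to recall from Corollary~\ref{cor:gens-s2-s1} and the end of Section~\ref{ssec:ops-fukaya} the generators $\alpha_0,\beta_0$ of $(W_0,L_0)$ and $\sigma_{\pm 2,0},\tau_{\pm 2,0}$ of $(W_0,L_{\pm 2})$, with their orientation gradings and bigradings. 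The operations $\mu^2(a_n,-)$ are then immediate because $a_n$ is a unit (Conjecture~\ref{conj:fukaya}), and $\mu^2(c_0,-)$, $\mu^2(c_{\pm 2},-)$ are read off directly from Conjecture~\ref{conj:prod-unperturbed}, which records $\mu^2(c_0,\alpha_0)=\beta_0$ and lists no other products of this type (so everything else vanishes). For the operations involving $p_{n+2,n}^{(\pm)}$ and $q_{n-2,n}^{(\pm)}$ I would substitute the two definitions from Section~\ref{sec:conj-fukaya}, namely $p_{n+2,n}^{(-)}=r_{n+2,n}^{(-)}+\rbar_{n+2,n}^{(-)}$, $p_{n+2,n}^{(+)}=s_{n+2,n}^{(+)}+\sbar_{n+2,n}^{(+)}$ and similarly for $q_{n-2,n}^{(\pm)}$, and expand $\mu^2$ by bilinearity; each resulting term $\mu^2(r_\ast,-)$, $\mu^2(\rbar_\ast,-)$, $\mu^2(s_\ast,-)$, $\mu^2(\sbar_\ast,-)$ is either one of the nonzero products listed in Conjecture~\ref{conj:prod-unperturbed} or is zero, and collecting terms yields the stated images of $\alpha_0$, $\beta_0$, $\sigma_{\pm 2,0}$, $\tau_{\pm 2,0}$. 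Since Conjecture~\ref{conj:prod-unperturbed} already covers all winding numbers $n_i\in\{0,\pm 2\}$ (the cases $n_i=-2$ having been obtained there by acting with powers of $\alpha_1$), no further equivariance argument is needed at this point.

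The vanishing of all $\mu^m$ with $m\neq 2$ is the cleanest piece, and I would establish it by the bigrading-counting argument of Section~\ref{sec:s3-ops-gens}: every generator of $(W_0,L_0)$ and $(W_0,L_{\pm 2})$, and every one of $a_n,c_n,p_{n+2,n},q_{n-2,n}$, has homological grading $h=0$; since the operations respect bigradings, the output of $\mu^m(x_{m-1},\dots,x_1,y)$ has homological grading $\sum_k h_{x_k}+h_y+2-m=2-m$, which must match the grading $0$ of a generator of $(W_0,L_{n'})$, forcing $m=2$. (The same argument shows in particular that $\mu^1=0$ on these spaces.)

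Finally I would run the consistency checks that accompany the analogous conjectures earlier in the paper: that each product respects orientation gradings (the $(+)/(-)$ label of a product equals the product of the labels), that the operations are associative given $\mu^1=0$, and that they reduce to the known operations for $R^*(S^2,4)$ when winding numbers are restricted to $\{\pm 1\}$, as in Appendix~\ref{sec:pillowcase}. The main obstacle is not computational---granting Conjectures~\ref{conj:prod-unperturbed} and \ref{conj:fukaya}, the rest is bookkeeping---but lies in correctly matching the two sign conventions for $p_{n+2,n}^{(\pm)}$ and $q_{n-2,n}^{(\pm)}$ against the list in Conjecture~\ref{conj:prod-unperturbed}, so that the images of $\alpha_0$ versus $\beta_0$ (and of $\sigma_{\pm 2,0}$ versus $\tau_{\pm 2,0}$) end up on the correct side; this is exactly where an orientation-grading slip would surface, and it is the step I would verify most carefully.
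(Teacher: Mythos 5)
Your approach matches the paper's own framing of this statement: the paper does not prove it but states it is ``based on Conjecture~\ref{conj:prod-unperturbed},'' i.e.\ obtained by substituting $p^{(\pm)}_{n+2,n}=r_{n+2,n}+\rbar_{n+2,n}$ or $s_{n+2,n}+\sbar_{n+2,n}$ (and similarly for $q$) into the products of Conjecture~\ref{conj:prod-unperturbed} and expanding by bilinearity, with the vanishing clause for $m\neq 2$ asserted alongside. Your additional observation that the $m\neq 2$ vanishing follows purely from the homological-grading count (every input in $\{a_n,c_n,p,q\}$ and every generator $\alpha_0,\beta_0,\sigma_{\pm2,0},\tau_{\pm2,0}$ has $h=0$, so the output has $h=2-m$, forcing $m=2$) is a clean motivation the paper does not spell out for this particular conjecture, and is correct granted the respect-bigradings assumption of Section~\ref{sec:conj-fukaya}.

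However, you flagged the orientation/sign bookkeeping as the step to verify most carefully, and you were right to do so: carry the substitution through and you will find a discrepancy with the printed statement. From Conjecture~\ref{conj:prod-unperturbed}, $\mu^2(s_{2,0},\alpha_0)=\sigma_{2,0}$ and $\mu^2(\sbar_{2,0},\alpha_0)=0$, so with $p_{2,0}^{(+)}=s_{2,0}+\sbar_{2,0}$ one gets $\mu^2(p_{2,0}^{(+)},\alpha_0)=\sigma_{2,0}$ and $\mu^2(p_{2,0}^{(+)},\beta_0)=0$, whereas the printed conjecture lists $\alpha_0\mapsto 0$, $\beta_0\mapsto\sigma_{2,0}$. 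The printed version also violates the quantum-grading constraint the paper imposes on $\mu^2$ (with $q(p)=q(\sigma)=-1$ and $q(\beta_0)=-2$, the product $\mu^2(p,\beta_0)$ would have to lie in quantum degree $-3$, not $-1$), and it contradicts the second commutative diagram drawn immediately after the conjecture, which shows $\alpha_0\xrightarrow{p_{2,0}^{(+)}}\sigma_{2,0}$. So the bilinear-substitution derivation you outline is the correct one, and following it through reveals an apparent typo in the line for $p_{2,0}^{(+)}$; it also surfaces a labeling slip in the two $q_{2,0}^{(\pm)}$ lines, whose targets $\sigma_{-2,0},\tau_{-2,0}$ live in $(W_0,L_{-2})$, not $(W_0,L_2)$, so those should read $q_{-2,0}^{(\pm)}$. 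One last minor caveat: the pillowcase restriction to winding numbers $\{\pm 1\}$ is a useful consistency check for Conjecture~\ref{conj:fukaya} and Conjecture~\ref{conj:s3-ops}, but it does not directly apply here, since $(W_0,L_n)$ vanishes for odd $n$ and has no analogue among the pillowcase spaces $(W_\times,L_{\pm1})$, $(W_{\timesbar},L_{\pm1})$ of Appendix~\ref{sec:pillowcase}.
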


We can depict the product operations in
Conjecture \ref{conj:ops-s2-s1} as
\begin{eqnarray*}
\begin{tikzcd}
  \sigma_{-2,0}^{(-)} \arrow{dr}[swap]{p_{0,-2}^{(-)}} &[2 em]
  \alpha_0^{(-)} \arrow{d}{c_0^{(-)}} \arrow{dr}{p_{2,0}^{(-)}}
  \arrow{l}[swap]{q_{-2,0}^{(+)}} &[2 em]
  \sigma_{2,0}^{(-)} \\
  \tau_{-2,0}^{(+)} &[2 em]
  \beta_0^{(+)} &[2 em]
  \tau_{2,0}^{(+)} \arrow{l}[swap]{q_{0,2}^{(+)}}
\end{tikzcd}
&&
\begin{tikzcd}
  \sigma_{-2,0}^{(-)} &[2 em]
  \alpha_0^{(-)}
  \arrow{d}{c_0^{(-)}}
  \arrow{r}{p_{2,0}^{(+)}}
  \arrow{dl}[swap]{q_{-2,0}^{(-)}} &[2 em]
  \sigma_{2,0}^{(-)}
  \arrow{dl}{q_{0,2}^{(-)}} \\
  \tau_{-2,0}^{(+)}
  \arrow{r}[swap]{p_{0,-2}^{(+)}} &[2 em]
  \beta_0^{(+)} &[2 em]
  \tau_{2,0}^{(+)}.
\end{tikzcd}
\end{eqnarray*}

There is a sense in which the product operations in Conjecture
\ref{conj:ops-s2-s1} are consistent with
the product operations in Conjecture \ref{conj:prod-perturbed} for
perturbed Lagrangians.
We define a bigraded vector space
\begin{align*}
  V &= \langle e^{(0,0)},\, v^{(0,0)} \rangle.
\end{align*}
We define an isomorphism
$(W_0, L_n) \otimes V \rightarrow (L_0, L_n)$ of bigraded vector
spaces:
\begin{align*}
  \alpha_0 \otimes e &\mapsto a_0, &
  \alpha_0 \otimes v &\mapsto b_0, &
  \beta_0 \otimes e &\mapsto c_0, &
  \beta_0 \otimes v &\mapsto d_0, \\
  \sigma_{2,0} \otimes e &\mapsto s_{2,0} + \sbar_{2,0}, &
  \sigma_{2,0} \otimes v &\mapsto r_{2,0}, &
  \tau_{2,0} \otimes e &\mapsto r_{2,0} + \rbar_{2,0}, &
  \tau_{2,0} \otimes v &\mapsto \sbar_{2,0}, \\
  \sigma_{-2,0} \otimes e &\mapsto s_{-2,0} + \sbar_{-2,0}, &
  \sigma_{-2,0} \otimes v &\mapsto r_{-2,0}, &
  \tau_{-2,0} \otimes e &\mapsto r_{-2,0} + \rbar_{-2,0}, &
  \tau_{-2,0} \otimes v &\mapsto s_{-2,0}.
\end{align*}
Using the product operations described in Conjecture
\ref{conj:prod-perturbed} and Conjecture \ref{conj:ops-s2-s1}, it is
straightforward to check that we have the following commutative
diagrams:

\begin{eqnarray*}
\begin{tikzcd}
  (W_0, L_0) \otimes V
  \arrow{d}{\cong}
  \arrow{r}{\mu^2(p_{2,0}^{(\pm)},-) \otimes 1_V} &[5em]
  (W_0, L_2) \otimes V
  \arrow{d}{\cong} \\
  (L_0, L_0) 
  \arrow{r}{\mu_\calL^2(p_{2,0}^{(\pm)},-)} &[5em]
  (L_0, L_2),
\end{tikzcd}
&&
\begin{tikzcd}
  (W_0, L_2) \otimes V
  \arrow{d}{\cong}
  \arrow{r}{\mu^2(q_{0,2}^{(\pm)},-) \otimes 1_V} &[5em]
  (W_0, L_0) \otimes V
  \arrow{d}{\cong} \\
  (L_0, L_2) 
  \arrow{r}{\mu_\calL^2(q_{0,2}^{(\pm)},-)} &[5em]
  (L_0, L_0),
\end{tikzcd}
\\
\begin{tikzcd}
  (W_0, L_0) \otimes V
  \arrow{d}{\cong}
  \arrow{r}{\mu^2(q_{-2,0}^{(\pm)},-) \otimes 1_V} &[5em]
  (W_0, L_{-2}) \otimes V
  \arrow{d}{\cong} \\
  (L_0, L_0) 
  \arrow{r}{\mu_\calL^2(q_{-2,0}^{(\pm)},-)} &[5em]
  (L_0, L_{-2}),
\end{tikzcd}
&&
\begin{tikzcd}
  (W_0, L_{-2}) \otimes V
  \arrow{d}{\cong}
  \arrow{r}{\mu^2(p_{0,-2}^{(\pm)},-) \otimes 1_V} &[5em]
  (W_0, L_0) \otimes V
  \arrow{d}{\cong} \\
  (L_0, L_{-2}) 
  \arrow{r}{\mu_\calL^2(p_{0,-2}^{(\pm)},-)} &[5em]
  (L_0, L_0),
\end{tikzcd}
\\
\begin{tikzcd}
  (W_0, L_0) \otimes V
  \arrow{d}{\cong}
  \arrow{r}{\mu^2(c_0,-) \otimes 1_V} &[5em]
  (W_0, L_0) \otimes V
  \arrow{d}{\cong} \\
  (L_0, L_0) 
  \arrow{r}{\mu_\calL^2(c_0,-)} &[5em]
  (L_0, L_0),
\end{tikzcd}
&&
\begin{tikzcd}
  (W_0, L_{\pm 2}) \otimes V
  \arrow{d}{\cong}
  \arrow{r}{\mu^2(c_{\pm 2},-) \otimes 1_V} &[5em]
  (W_0, L_{\pm 2}) \otimes V
  \arrow{d}{\cong} \\
  (L_0, L_{\pm 2}) 
  \arrow{r}{\mu_\calL^2(c_{\pm 2},-)} &[5em]
  (L_0, L_{\pm 2}).
\end{tikzcd}
\end{eqnarray*}

\subsection{Cochain complex}

Using Conjecture \ref{conj:ops-s2-s1}, the description of the
functor $\calG_{W_0}$ from Section \ref{ssec:a-infinity}, and the
description of the twisted complex $(X,\delta_-)$ from Section
\ref{ssec:twisted-complex}, we can explicitly construct the cochain
complex $(C_0,\partial_0)$ for the tangle diagram $T$.
The vector space $C_0$ is given by
\begin{align*}
  C_0 = \bigoplus_{i \in  I}
  ((W_0,L_{n_i}) \otimes A^{\otimes c_i}[r(i)+h_T, 2r(i)+q_T]).
\end{align*}
The differential $\partial_0$ is a sum of differentials corresponding
to saddles in the cube of resolutions of $T$.
The differential corresponding to a saddle
$S:P_{n_1}(r_1) \rightarrow P_{n_2}(r_2)$ is given by
\begin{align*}
  &\partial_S:(W_0,L_{n_1}) \otimes A^{\otimes r_1} \rightarrow
  (W_0,L_{n_2}) \otimes A^{\otimes r_2}[1,2], &
  \partial_S = \mu_{\Sigma \calL}^2(d_S[1,2], -).
\end{align*}

We could also define a cochain complex by applying the functor
$\calG_{W_0}$ to the twisted object $(X,\delta_+)$, but given the
$A_\infty$ operations in Conjecture \ref{conj:ops-s2-s1} the resulting
complex is homotopy equivalent to $(C_0,\partial_0)$.

We obtain a nontrivial cochain complex only when the loop number of the
tangle diagram $T$ is even, due to the following result:

\begin{theorem}
For a tangle diagram $T$ with odd loop number, the cochain complex
$(C_0,\partial_0)$ vanishes.
\end{theorem}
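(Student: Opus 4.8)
The plan is to show that the complex vanishes already at the level of its underlying bigraded vector space. By the description of the cochain complex given above,
\begin{align*}
  C_0 = \bigoplus_{i \in I} \bigl( (W_0, L_{n_i}) \otimes A^{\otimes c_i}[r(i) + h_T,\, 2r(i) + q_T] \bigr),
\end{align*}
so it is enough to prove that the morphism space $(W_0, L_{n_i})$ is zero for every complete resolution $T_i = P_{n_i}(c_i)$ of $T$. This reduces the statement to two essentially independent facts: (i) if $T$ has odd loop number then every resolution winding number $n_i$ is odd, and (ii) $(W_0, L_n) = 0$ whenever $n$ is odd.

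For (i) I would argue homologically. The underlying $1$-manifold of a $1$-tangle diagram in the annulus, with its two fixed boundary points $p_1, p_2$, determines a class in $H_1(S^1 \times [0,1], \{p_1, p_2\}; \F)$; passing between two adjacent complete resolutions changes one crossing by a saddle move, which is supported in a disk and alters the $1$-manifold by a null-homologous quadrilateral, so this $\F$-homology class is the same for all resolutions and its nontriviality is detected by the parity of the loop number of $T$. Since in a resolution $T_i = P_{n_i}(c_i)$ the arc component winds $n_i$ times and the circle components are null-homotopic, the class is represented by $n_i$ copies of the core of the annulus modulo $2$; hence $n_i \equiv \ell \pmod 2$ for all $i$, and odd loop number forces every $n_i$ odd.

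For (ii) I would follow the scheme of the proof of Theorem~\ref{theorem:w0-l0-l2}. In the limit in which the Hamiltonian pushoff is turned off, $W_0^{(1)} = W_0$, and by Theorem~\ref{theorem:W0} together with the Hamiltonian flow equations of Section~\ref{ssec:hamiltonian} one can check that the pushoff $W_0^{(1)}$ in fact remains inside the $3$-dimensional submanifold $R_3^*(T^2,2)$. In the limit in which the holonomy perturbation of $L_n$ is also turned off, $L_n$ degenerates to $W_n = \alpha_1^n \cdot W_0$, so it suffices to show $W_0 \cap W_n = \emptyset$ for odd $n$; equivalently, that $\alpha_1^n \cdot \{B = \id\}$ is disjoint from $\{B = \id\}$. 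I would extract this from the explicit $\PB_2(T^2)$-action on $R(T^2,2)$ recorded in \cite{Boozer-2}: a point of the intersection would be a representation satisfying $B = \id$ for which the $\alpha_1^n$-transform of the same condition also holds, and this forces a traceless meridional generator of $\pi_1$ to be a word in $A$ and $B$ that must equal $\pm\id$, a contradiction. (An alternative route is to show directly that $W_n$, for odd $n$, is disjoint from $R_3^*(T^2,2)$, which $W_0^{(1)}$ is not.) Since ``empty intersection'' is an open condition, disjointness then persists after the perturbations are restored, giving $W_0^{(1)} \cap L_n = \emptyset$ and hence $(W_0, L_n) = 0$. Combining (i) and (ii) gives $(W_0, L_{n_i}) = 0$ for all $i$, so $C_0 = 0$.

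The grading bookkeeping and the control of perturbation sizes are routine; the substantive point is (ii), and I expect it to be the main obstacle. All of the Lagrangian intersection computations done earlier in the paper concern even winding numbers, so the odd case requires genuinely new input — the explicit description of $L_n$, equivalently of the action of $\alpha_1$ — and its content is precisely a mod-$2$ obstruction: $W_0$ presents a trivial arc of winding $0$ and $L_n$ a trivial arc of winding $n$, and the associated handlebodies can be glued to a pair carrying a common $SU(2)$ representation only when these windings agree modulo $2$.
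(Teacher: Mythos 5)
Your proposal is correct and follows the paper's overall contour: both proofs reduce the claim to (i) every resolution winding number $n_i$ shares the parity of the loop number, and (ii) $(W_0, L_n) = 0$ for odd $n$. For (ii) the paper gives a one-sentence representation-theoretic argument: $W_0 \cap W_n = \emptyset$ because gluing the two handlebodies produces a link in $S^2 \times S^1$ that winds an odd number of times around the $S^1$ factor, and such a link admits no traceless $SU(2)$ representations. Your route --- extracting emptiness from the explicit $\PB_2(T^2)$-action, or from $W_n \cap R_3^*(T^2,2) = \emptyset$ for odd $n$ --- computes the same obstruction on the Lagrangian side, as your final paragraph rightly identifies; it is more work but more in the spirit of the earlier intersection calculations in the paper. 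Where you genuinely add something is (i): the paper simply asserts that the morphism spaces appearing are $(W_0, L_n)$ with $n$ odd, while your observation that the class of a resolution in $H_1(S^1 \times I, \{p_1,p_2\}; \F)$ is a saddle invariant equal to $\ell \pmod{2}$ is a concrete justification. The one point to tighten is the passage from $W_0 \cap W_n = \emptyset$ to $W_0^{(1)} \cap L_n^{(0)} = \emptyset$: both $W_0$ and $W_n$ are noncompact, so ``empty is open'' is not immediate; you should invoke that $L_n^{(0)}$ is the image of a compact manifold, that this image limits uniformly (as $\epsilon \to 0$ and the Hamiltonian time goes to $0$) to a compact set contained in a neighborhood of $W_n$ that stays disjoint from the closed subset $W_0$. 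The paper's ``hence $(W_0,L_n)$ vanishes'' quietly relies on the same fact.
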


\begin{proof}
For odd loop number, the morphism spaces that appear in $C_0$ have the
form $(W_0,L_n)$ for $n$ odd.
But $(W_0,W_n)$ vanishes for odd $n$, since there are no traceless
$SU(2)$ representations for a link that winds around the solid torus
an odd number of times, hence $(W_0,L_n)$ vanishes.
\end{proof}

\subsection{Example: double cycle in $S^2 \times S^1$}
\label{ssec:example-s2-s1-1}

We can close the tangle diagram $T$ shown in Figure
\ref{fig:example-tangle} with an overpass or underpass arc to obtain a
double cycle $L_{T^0}$ in $S^2 \times S^1$.
The cochain complex $(C_0,\partial_0)$ is given by
\begin{eqnarray*}
\begin{tikzcd}
  {} & (W_0, L_0) \otimes \F[1,4]
  \arrow{dr}{\partial_{L^+}} & {} \\
  (W_0, L_2) \otimes \F[0,2]
  \arrow{ur}{\partial_{W^-}}
  \arrow{dr}[swap]{\partial_{W^-}} &
  {} & (W_0, L_0) \otimes A[2,6], \\
  {} & (W_0, L_0) \otimes \F[1,4]
  \arrow{ur}[swap]{\partial_{R^+}} & {}
\end{tikzcd}
\end{eqnarray*}
where
\begin{align*}
  &\partial_{W^-}:
  (W_0,L_2) \otimes \F \rightarrow (W_0,L_0) \otimes \F, &
  &\sigma_{2,0} \otimes 1 \mapsto \beta_0 \otimes 1, &
  &\tau_{2,0} \otimes 1 \mapsto 0, \\
  &\partial_{L^+}:
  (W_0,L_0) \otimes \F \rightarrow (W_0,L_0) \otimes A, &
  &\alpha_0 \otimes 1 \mapsto \alpha_0 \otimes x, &
  &\beta_0 \otimes 1 \mapsto \beta_0 \otimes x, \\
  &\partial_{R^+}:
  (W_0,L_0) \otimes \F \rightarrow (W_0,L_0) \otimes A, &
  &\alpha_0 \otimes 1 \mapsto \alpha_0 \otimes x + \beta_0 \otimes e, &
  &\beta_0 \otimes 1 \mapsto \beta_0 \otimes x.
\end{align*}
So the generalized reduced Khovanov homology of $T$ is
\begin{align*}
  \Khr(S^2 \times S^1, T) = \F[0,1] \oplus \F[2,5].
\end{align*}

The link $L_{T^0}$ is isotopic to the link $L_{P_2^0}$ obtained by
closing the planar tangle $P_2$, oriented in either direction, with an
overpass or underpass arc.
The cochain complex for the tangle diagram $P_2$ is
\begin{align*}
  (W_0,L_2) \otimes \F[0,0]
\end{align*}
with zero differential, so the generalized reduced Khovanov homology
of $P_2$ is
\begin{align*}
  \Khr(S^2 \times S^1, P_2) = 2\F[0,-1].
\end{align*}
So if we collapse bigradings to $\Ints_2$, we have
\begin{align*}
  \Khr(S^2 \times S^1, T) =
  \Khr(S^2 \times S^1, P_2) =
  2\F[0,1],
\end{align*}
as is consistent with Conjecture \ref{conj:khr-gen}.

\subsection{Example: two cycles and unknot in $S^2 \times S^1$}
\label{ssec:example-s2-s1-2}

Consider the 1-tangle diagrams $T_1$ and $T_2$ shown in Figure
\ref{fig:example-tangle-pair}.
The corresponding links $L_{T_1^0}$ and $L_{T_2^0}$ in
$S^2 \times S^1$ are isotopic, and describe two cycles oriented in the
same direction and an unlinked unknot.
We find
\begin{align*}
  \Khr(S^2 \times S^1, T_1) &=
  \F[0,1] \oplus \F[1,1] \oplus \F[1,3] \oplus \F[2,3], \\
  \Khr(S^2 \times S^1, T_2) &=
  \F[0,-1] \oplus \F[0,1] \oplus \F[1,1] \oplus \F[1,3].
\end{align*}
So if we collapse bigradings to $\Ints_2$, we have
\begin{align*}
  \Khr(S^2 \times S^1, T_1) =
  \Khr(S^2 \times S^1, T_2) =
  2\F[0,1] \oplus 2\F[1,1],
\end{align*}
as is consistent with Conjecture \ref{conj:khr-gen}.

\begin{figure}
  \centering
  \includegraphics[scale=0.5]{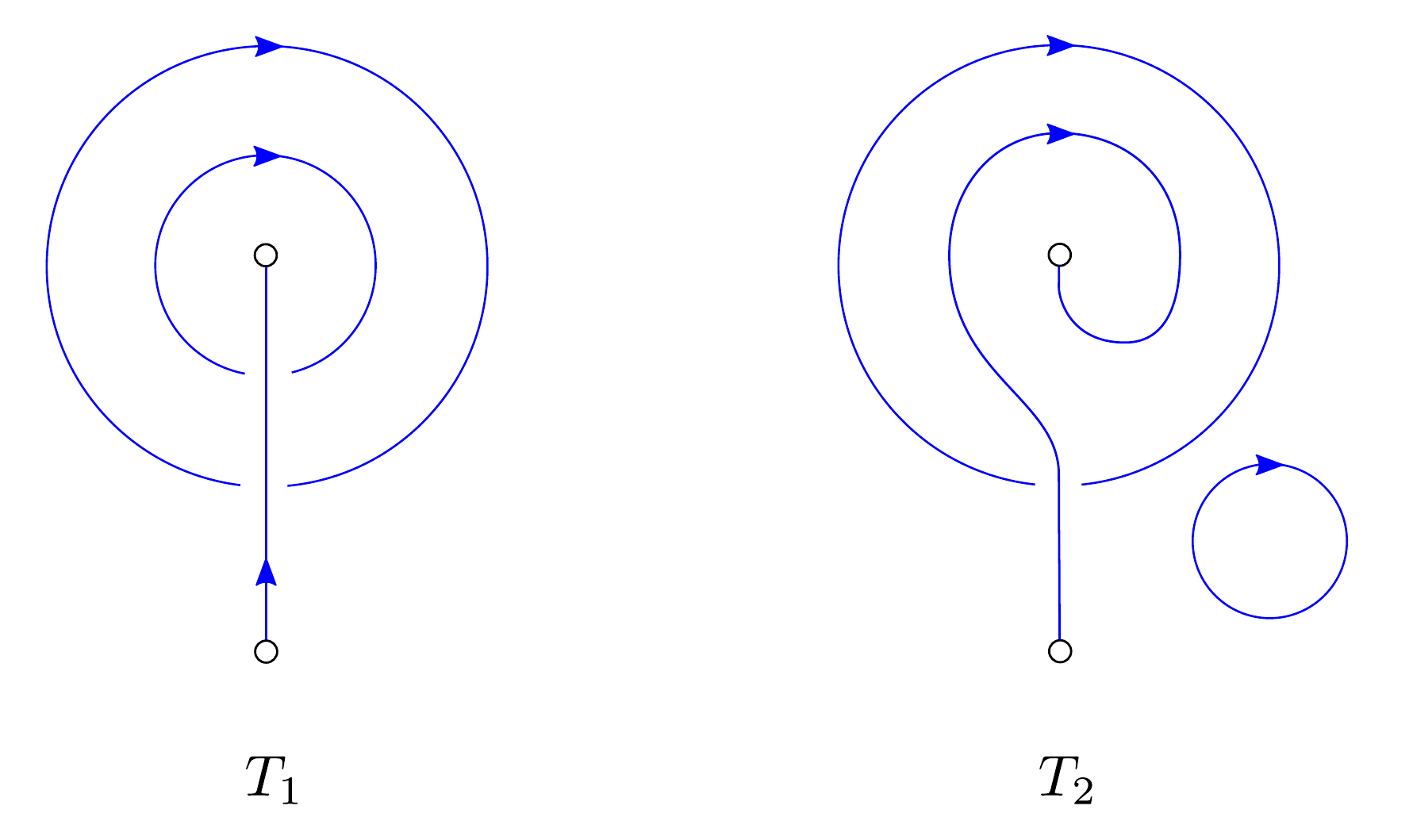}
  \caption{
    \label{fig:example-tangle-pair}
    Example tangle diagrams $T_1$ and $T_2$.
    The links $L_{T_1^0}$ and $L_{T_2^0}$ in $S^2 \times S^1$ are
    isotopic.
  }
\end{figure}

We define a 1-tangle diagram $T_3$ by flipping the orientation of one
of the two circles in $T_1$ that loops around the annulus.
We define 1-tangle diagram $T_4$ by flipping the orientation of the
single circle in $T_2$ that loops around the annulus.
The corresponding links $L_{T_3^0}$ and $L_{T_4^0}$ in
$S^2 \times S^1$ are isotopic, and describe two cycles oriented in the
opposite direction and an unlinked unknot.
We find
\begin{align*}
  \Khr(S^2 \times S^1, T_3) &=
  \F[-1,-2] \oplus \F[0,-2] \oplus \F[0,0] \oplus \F[1,0], \\
  \Khr(S^2 \times S^1, T_4) &=
  \F[-1,-4] \oplus \F[-1,-2] \oplus \F[0,-2] \oplus \F[0,0].
\end{align*}
So if we collapse bigradings to $\Ints_2$, we have
\begin{align*}
  \Khr(S^2 \times S^1, T_3) =
  \Khr(S^2 \times S^1, T_4) =
  2\F[1,0] \oplus 2\F[0,0],
\end{align*}
as is consistent with Conjecture \ref{conj:khr-gen}.

\subsection{Example: double cycle and unknot in $S^2 \times S^1$}
\label{ssec:example-s2-s1-3}

Consider the 1-tangle diagrams $T_1$ and $T_2$ shown in Figure
\ref{fig:example-tangle-pair-2}.
The corresponding links $L_{T_1^0}$ and $L_{T_2^0}$ in
$S^2 \times S^1$ are isotopic, and describe a double cycle and an
unlinked unknot.
We find
\begin{align*}
  \Khr(S^2 \times S^1, T_1) &=
  \F[0,2] \oplus 2\F[2,4] \oplus \F[2,6], \\
  \Khr(S^2 \times S^1, T_2) &=
  2\F[0,-2] \oplus 2\F[0,0].
\end{align*}
So if we collapse gradings to $\Ints_2$ we have
\begin{align*}
  \Khr(S^2 \times S^1, T_1) =
  \Khr(S^2 \times S^1, T_2) =
  4\F[0,0],
\end{align*}
as is consistent with Conjecture \ref{conj:khr-gen}.

\begin{figure}
  \centering
  \includegraphics[scale=0.5]{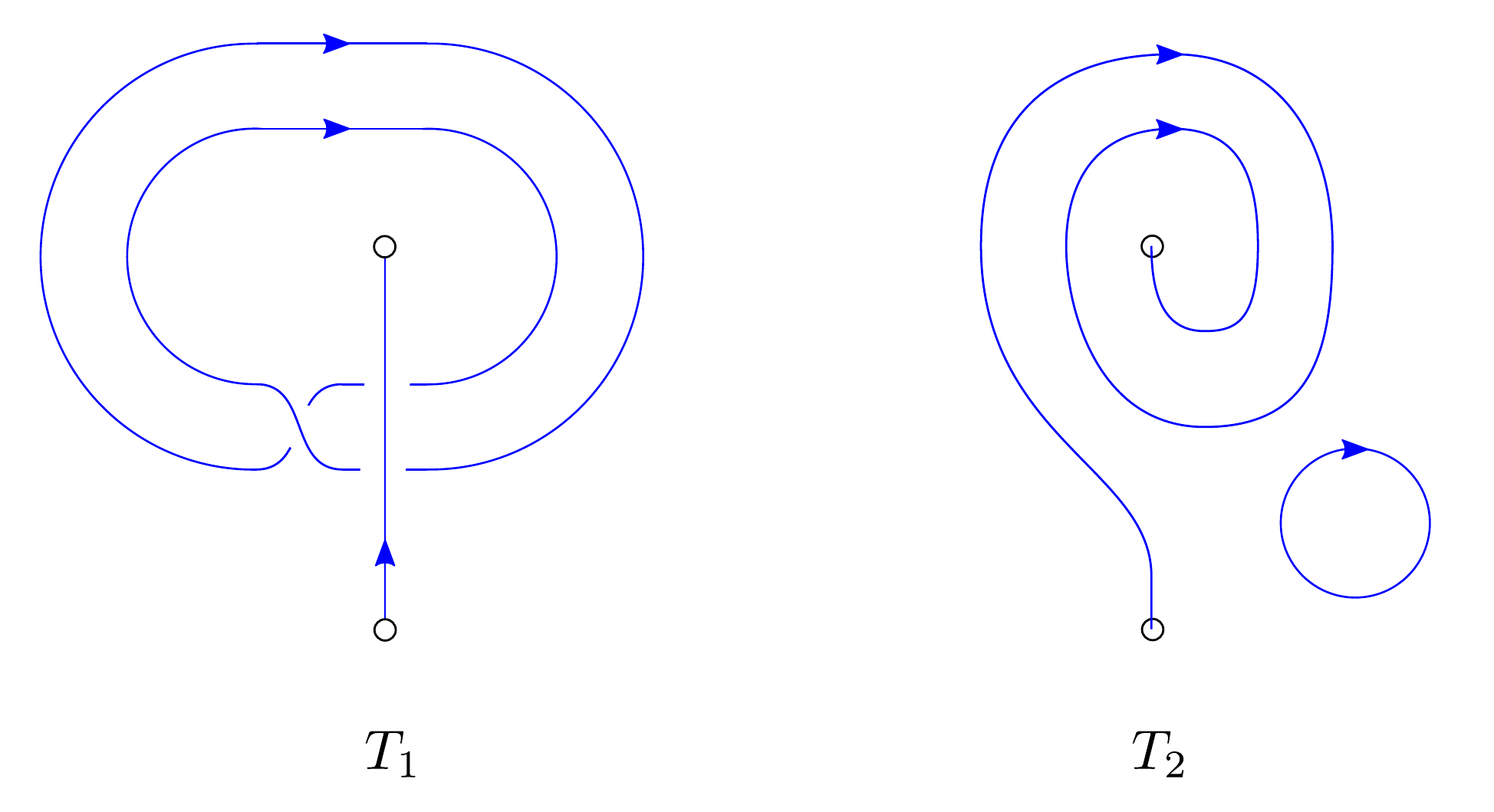}
  \caption{
    \label{fig:example-tangle-pair-2}
    Example tangle diagrams $T_1$ and $T_2$.
    The links $L_{T_1^0}$ and $L_{T_2^0}$ in $S^2 \times S^1$ are
    isotopic.
  }
\end{figure}

\subsection{Tangle diagrams with loop number 0}

Consider a tangle diagram $T$ with loop number 0.
The corresponding link $L_{T^0}$ is contained inside an open 3-ball
in $S^2 \times S^1$.
By collapsing the complement of the open 3-ball to a point, we obtain
a link $L_{T^0}'$ in $S^3$.
We can also view $L_{T^0}'$ as the link obtained by closing $T$ with
either an overpass arc $A_+$ or an underpass arc $A_-$, as shown in
Figure \ref{fig:s3-lp-lm}, and interpreting the resulting diagram as
describing a link in $S^3$.
In this section we relate the generalized reduced Khovanov homology of
the tangle diagram $T$ to the reduced Khovanov homology of the link
$L_{T^0}'$ in $S^3$.

Let $(X,\delta)$ denote the twisted complex corresponding to the
tangle diagram $T$.
We can define a new twisted complex $(X,\delta')$ by modifying the
differential $\delta$ as follows.
Recall that $\delta$ is a sum of differentials $d_{ji}$ corresponding
to saddles in the cube of resolution of $T$.
The differentials $d_{ji}$ have the form $d_{C^\pm}$, $d_{R^\pm}$, or
$d_{L^\pm}$, as defined in Section \ref{ssec:saddles}.
Since $T$ has loop number 0, there are no differentials of
the form $d_{W^\pm}$.
Define new differentials $d_{ji}'$ by eliminating the terms in
$d_{ji}$ that contain $c_0$.
That is, we replace differentials of the form $d_{C^\pm}$,
$d_{R^\pm}$, and $d_{L^\pm}$ by $d_{C^\pm}'$,
$d_{R^\pm}'$, and $d_{L^\pm}'$, where
\begin{align*}
  &d_{C^+}' = d_{C^+} = a_0 \otimes \Delta I_{r-1}, &
  &d_{C^-}' = d_{C^-} = a_0 \otimes m I_{r-1}, \\
  &d_{R^+}' = d_{L^+}' = a_0 \otimes \etadot I_r, &
  &d_{R^-}' = d_{L^-}' = a_0 \otimes \epsilondot I_r.
\end{align*}
We then define $\delta'$ to be the sum of the new differentials
$d_{ji}'$.
Let $(C_T,d_T)$ denote the cochain complex for the reduced Khovanov
homology of $L_{T^0}'$, where the marked component is the one
containing the arc component of $T$, and with the convention that
$d_T$ has quantum grading 1, rather than the usual quantum grading of
0.
Given the construction of $X$ and our definition $\delta'$, we have
that $X = L_0 \otimes C_T$ and $\delta' = a_0 \otimes d_T$.

\begin{theorem}
Assuming Conjecture \ref{conj:fukaya} for the $A_\infty$ operations of
$\calL$, the pair $(X,\delta')$ is a twisted complex.
\end{theorem}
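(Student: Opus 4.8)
The plan is to verify the two defining conditions for $(X,\delta')$ to be an object of $\Tw\calL$: that $\sum_{m\geq 1}\mu_{\Sigma\calL}^m(\delta',\dots,\delta') = 0$, and that $X$ admits a filtration with respect to which $\delta'$ is strictly lower triangular. For the second condition I would use the same filtration by resolution degree $r(i)$ that was used for $(X,\delta)$: each constituent map $d_{ji}'$ sends the summand indexed by $i$ to the summand indexed by $j$ with $r(j) = r(i)+1$, so $\delta'$ strictly increases resolution degree and is therefore strictly lower triangular. Equivalently, under the identification $X = L_0\otimes C_T$, the map $\delta' = a_0\otimes d_T$ raises the homological grading of $C_T$ by one.

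For the first condition I would invoke Conjecture \ref{conj:fukaya}, which says that all $A_\infty$ operations $\mu_\calL^m$ with $m\neq 2$ vanish on the relevant generators. Since $T$ has loop number $0$, the only generator of $\calL$ appearing in $\delta'$ is the unit $a_0\in(L_0,L_0)$: there are no $d_{W^\pm}$ terms, hence no $p_{n+2,n}$ or $q_{n-2,n}$, and the $c_0$ terms have been removed by construction. In particular $\mu_\calL^1(a_0)=0$ and $\mu_\calL^m(a_0,\dots,a_0)=0$ for all $m\neq 2$, so every term of $\sum_m\mu_{\Sigma\calL}^m(\delta',\dots,\delta')$ except the $m=2$ term vanishes automatically, and the condition reduces to $\mu_{\Sigma\calL}^2(\delta',\delta') = 0$. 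To finish, recall that $\mu_{\Sigma\calL}^2$ is computed by applying $\mu_\calL^2$ to the $\hom_\calL$ factors and composing the linear-map factors; with $\delta' = a_0\otimes d_T$ this gives $\mu_{\Sigma\calL}^2(\delta',\delta') = \mu_\calL^2(a_0,a_0)\otimes(d_T\circ d_T)$. By Conjecture \ref{conj:fukaya} the element $a_0$ is a two-sided unit, so $\mu_\calL^2(a_0,a_0) = a_0$; and $(C_T,d_T)$ is the cochain complex for the reduced Khovanov homology of $L_{T^0}'$, so $d_T\circ d_T = 0$ (the nonstandard quantum grading convention on $d_T$ plays no role here, since we work over $\F$). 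Hence $\mu_{\Sigma\calL}^2(\delta',\delta') = a_0\otimes 0 = 0$.

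The one step I would regard as the substantive input — rather than the formal manipulation above — is the identification $X = L_0\otimes C_T$, $\delta' = a_0\otimes d_T$: that the $c_0$-free specializations $d_{C^\pm}',\,d_{R^\pm}',\,d_{L^\pm}'$ of the saddle differentials assemble, over the cube of resolutions of $T$, into precisely the reduced Khovanov differential $d_T$, with the marked component being the one carrying the arc of $T$. This is the content of the discussion immediately preceding the theorem and reduces to matching $\Delta$, $m$, $\eta$, $\etadot$, $\epsilon$, $\epsilondot$ with the Frobenius-algebra structure maps of the reduced Khovanov complex; once it is in hand, $\delta'\circ\delta' = 0$ follows at once from $d_T^2 = 0$, with no fresh case-by-case check of commuting saddles as in Theorem \ref{theorem:d-d-commute}. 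As an alternative route that avoids this identification, one could instead observe that the identities established in the proof of Theorem \ref{theorem:d-d-commute} hold identically in the structure maps, and therefore remain valid after setting all $c_0$-terms to zero; this directly yields $d_{ji}'\circ d_{kj}' = d_{ki}'\circ d_{\ell j}'$ for the induced pairs of saddles, hence $\delta'\circ\delta' = 0$.
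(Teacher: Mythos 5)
Your proof is correct and follows essentially the same route as the paper's: invoke Conjecture \ref{conj:fukaya} to kill all $\mu^m$ with $m\neq 2$, then use the identification $\delta' = a_0\otimes d_T$ together with $\mu_\calL^2(a_0,a_0)=a_0$ and $d_T\circ d_T=0$ to conclude $\mu_{\Sigma\calL}^2(\delta',\delta')=0$, and filter by resolution degree. The paper compresses this to a single displayed line, whereas you unpack the intermediate step $\mu_{\Sigma\calL}^2(\delta',\delta') = \mu_\calL^2(a_0,a_0)\otimes(d_T\circ d_T)$, but the content is identical; your aside about rerouting through the argument of Theorem \ref{theorem:d-d-commute} is a valid alternative but is not what the paper does.
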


\begin{proof}
Assuming Conjecture \ref{conj:fukaya} for the $A_\infty$ operations of
$\calL$, we have
\begin{align*}
  \sum_{m=1}^\infty \mu_{\Sigma \calL}^m(\delta', \cdots, \delta') =
  \mu_{\Sigma \calL}^2(\delta', \delta') =
  a_0 \otimes (d_T \circ d_T) = 0.
\end{align*}
We can define a filtration on $X$ using the resolution degree.
The differential $\delta'$ increases the resolution degree by 1, so it
is lower triangular with respect to this filtration.
\end{proof}

\begin{theorem}
\label{theorem:loop-0-complexes}
Assuming Conjecture \ref{conj:fukaya} for the $A_\infty$ operations of
$\calL$, the twisted complexes $(X,\delta)$ and $(X,\delta')$ are
homotopy equivalent.
\end{theorem}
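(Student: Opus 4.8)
The plan is to realize $(X,\delta)$ and $(X,\delta')$ not merely as homotopy equivalent but as \emph{isomorphic} twisted complexes, via an explicit ``gauge transformation'' built from the algebra of Section~\ref{ssec:linear-maps}. Since $T$ has loop number $0$, every resolution $T_i$ has winding number $0$, so every saddle differential appearing in $\delta$ is a map between $L_0$-summands; consequently $\delta$ splits as $a_0 \otimes d_T + c_0 \otimes g$, where $d_T$ is the reduced Khovanov differential on $C_T = \bigoplus_i A^{\otimes c_i}[\,\cdots]$ (the $a_0$-parts of all the saddle maps, together with the $d_{C^\pm}$ maps, which carry no $c_0$) and $g$ is the sum of the $c_0$-parts, occurring only at the $R^\pm$ and $L^\pm$ saddles. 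By construction $\delta' = a_0 \otimes d_T$, so $X = L_0 \otimes C_T$ exactly as in the paragraph preceding the statement. Using $\mu_\calL^2(a_0,a_0)=a_0$, $\mu_\calL^2(a_0,c_0)=\mu_\calL^2(c_0,a_0)=c_0$, $\mu_\calL^2(c_0,c_0)=0$, and the vanishing of all remaining operations among $a_0,c_0$ (Conjecture~\ref{conj:fukaya}), the twisted-complex identity for $\delta$ unpacks into the two statements $d_T\circ d_T = 0$ and $d_T\circ g + g\circ d_T = 0$.

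The key reduction is: it suffices to produce an $\F$-linear map $h\colon C_T \to C_T$ of bigrading $(0,2)$, preserving the resolution-degree filtration, with $d_T\circ h + h\circ d_T = g$. Granting such an $h$, set $\Phi = a_0\otimes \id_{C_T} + c_0\otimes h \in \hom_{\Sigma\calL}(X,X)$. Because $\mu_\calL^2(c_0,c_0)=0$, one checks immediately that $\mu_{\Sigma\calL}^2(\Phi,\Phi) = a_0\otimes\id_{C_T} = \id_X$, so $\Phi$ is its own $\mu^2$-inverse and hence invertible. Expanding $\mu_{\Tw\calL}^1(\Phi) = \mu_{\Sigma\calL}^2(\delta,\Phi) + \mu_{\Sigma\calL}^2(\Phi,\delta')$ with the same multiplication rules yields $\mu_{\Tw\calL}^1(\Phi) = c_0\otimes(d_T\circ h + h\circ d_T + g)$, which vanishes precisely by the defining property of $h$. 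Thus $\Phi$ is an isomorphism $(X,\delta')\to(X,\delta)$ in $\Tw\calL$, in particular a homotopy equivalence (with zero homotopies). As a bonus, $(X,\delta') = L_0\otimes(C_T,d_T)$ then directly produces the splitting used in Theorem~\ref{theorem:intro-s2xs1-khr-0}.

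It remains to build $h$. The leading term is the ``global raising operator'' $h_0$, acting on the summand $L_0\otimes A^{\otimes c_i}$ of $X$ by $\Sigma_{c_i}$ on the $A$-factors. On the $d_{C^\pm}$ blocks, $d_T\circ h_0 + h_0\circ d_T$ already matches $g$ (which is zero there): on a splitting edge this is the cancellation $(\Delta\circ\Sigma_1 + \Sigma_2\circ\Delta)I_r = 0$ and on a merging edge $(\Sigma_1\circ m + m\circ\Sigma_2)I_r = 0$, both from equation~(\ref{eqn:identities-sigma}) (the remaining $\Delta\Sigma_r$, $m\Sigma_r$ terms cancel in characteristic two). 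On an $R^\pm$ or $L^\pm$ edge $e$, using $\Sigma_1\circ\etadot = \eta$ and $\epsilondot\circ\Sigma_1 = \epsilon$, one finds that $d_T\circ h_0 + h_0\circ d_T$ produces $\eta I_r$ (resp. $\epsilondot$-dual) on the split-off circle but leaves a residual discrepancy with $g_e$ supported on the enclosed-circle factors of $e$ (the $I_{r_n}\Sigma_{r_e}$ terms), plus a spurious new-circle term at the $L^\pm$ edges where $g$ has none. One then corrects $h_0$ by adding, for each $R^\pm$/$L^\pm$ edge, an explicit edge-supported term assembled from $\id_{ex}$, $\eta$, $\epsilon$, $I_r$, and $\Sigma_{r_e}$ that cancels these discrepancies.

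The main obstacle is exactly this last step of bookkeeping. Each resolution is the head and tail of several saddle edges of possibly different types --- the interleaved, nested, and left/right configurations enumerated in the proof of Theorem~\ref{theorem:d-d-commute} --- so a correction inserted for one edge reappears in the edge-blocks attached to its endpoints, and one must verify that all such cross-contributions cancel, so that $d_T\circ h + h\circ d_T = g$ holds block-by-block simultaneously. This is a purely algebraic case analysis within the small algebra of linear maps of Section~\ref{ssec:linear-maps}, of the same flavour and difficulty as the commuting-square checks already carried out for $\delta$ itself; no new geometric input is required.
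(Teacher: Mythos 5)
Your reduction is sound and is in fact the same mechanism the paper uses: writing $\delta = a_0\otimes d_T + c_0\otimes g$ and $\delta' = a_0\otimes d_T$, a morphism $\Phi = a_0\otimes\id_{C_T} + c_0\otimes h$ squares to $\id_X$ (characteristic two plus $\mu_\calL^2(c_0,c_0)=0$) and intertwines $\delta$ and $\delta'$ precisely when $d_T\circ h + h\circ d_T = g$. The gap is that you never actually produce such an $h$. Your leading candidate $h_0=\Sigma_{c_i}$ (raising on \emph{all} circle factors) fails on the $R^\pm/L^\pm$ edges, as you concede, and the promised ``edge-supported corrections'' are neither written down nor verified. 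Worse, the ingredients you list ($\eta$, $\epsilon$) change the number of $A$-factors, so a correction genuinely supported on an edge maps a cube vertex to one of higher resolution degree; every element of the summand at vertex $i$ has homological degree $r(i)+h_T$, so such a component has homological degree $1$ and cannot contribute to the degree-one edge blocks of $d_T\circ h + h\circ d_T$ where your discrepancy lives --- it only creates new length-two (face) terms that must separately cancel, and it breaks the homogeneity of bigrading $(0,2)$ you stipulated for $h$. If instead you intend vertex-supported corrections indexed by incident edges, then each correction re-enters the blocks of every other edge at that vertex, and the cancellation of these cross-contributions --- which you explicitly defer as ``bookkeeping'' --- is exactly the substance of the theorem; nothing in the proposal establishes it.

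The paper's own proof shows that the right choice of $h$ needs no corrections at all: since the loop number is $0$, every circle in every resolution lies either to the left or to the right of the arc component, and one takes $F=G=a_0 I_{\ell+r}+c_0 I_\ell\Sigma_r$ on each summand, i.e.\ $h$ is the raising operator applied only to the circles on one side of the arc. Then $F\circ G=G\circ F=a_0 I_{\ell+r}$, and commutation with $\delta$ and $\delta'$ is a short check per saddle type ($d_{C^\pm}$ via the identities in equation (\ref{eqn:identities-sigma}), and $d_{R^\pm}$, $d_{L^\pm}$ by direct computation), yielding an isomorphism of twisted complexes. To complete your argument you would either need to rediscover this diagonal $h$ --- in which case the global raising operator and the correction scheme are an unnecessary detour --- or exhibit your corrected $h$ explicitly and carry out the block-by-block verification, which at present is missing.
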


\begin{proof}
Each term of $X$ corresponds to a planar tangle $P_0(k)$ obtained by
resolving the nonplanar tangle $T$, and can be viewed as a subobject
$(x,0)$ of $(X,\delta)$ with zero differential:
\begin{eqnarray*}
(x,0) =
\begin{tikzcd}
  L_n \otimes A^{\otimes k}[h,q],
\end{tikzcd}
\end{eqnarray*}
where the bigrading shift $[h,q]$ depends on the crossing numbers
of $T$ and on the location of $P_0(k)$ within the cube of resolutions.
We have a corresponding identical subobject $(x',0)$ of
$(X,\delta')$.

For planar tangles that are obtained by resolving a tangle diagram
with loop number 0, we can distinguish between circle components to
the left and to the right of the arc component.
Let $\ell$ and $r$ denote the number of circle components to the left
and right of the arc component of $P_0(k)$, so $k = \ell + r$.
Define homotopy equivalences
$F:(x,0) \rightarrow (x',0)$ and
$G:(x',0) \rightarrow (x,0)$:
\begin{align*}
  F = G = a_0 I_{\ell+r} + c_0 I_\ell \Sigma_r.
\end{align*}
We have
\begin{align*}
  F \circ G = G \circ F = a_0 I_{\ell + r}.
\end{align*}
Since the differentials are zero for the subobjects $(x,0)$ and
$(x',0)$, it is clear that $F$ and $G$ are homotopy equivalences.

We combine the homotopy equivalences for subobjects to obtain
morphisms $(X,\delta) \rightarrow (X,\delta')$ and
$(X,\delta') \rightarrow (X,\delta)$.
To show that these morphisms are are homotopy equivalences, we must
check that they commute with the differentials $\delta$ and $\delta'$.
The differential $\delta$ the sum of differentials
$d:x_1 \rightarrow x_2$
corresponding to saddles.
For each such term, there is a corresponding term
$d':x_1' \rightarrow x_2'$ of
$\delta'$.
We want to show that the following diagram commutes:
\begin{eqnarray*}
\begin{tikzcd}
  x_1' \arrow{d}[shift left=4]{G_1} \arrow{r}{d'} &
  x_2' \arrow{d}[shift left=4]{G_2} \\
  x_1 \arrow[shift left=4]{u}{F_1} \arrow{r}{d} &
  x_2. \arrow[shift left=4]{u}{F_2}
\end{tikzcd}
\end{eqnarray*}
For a differential $d$ of the form $d_{C^\pm}$, commutativity follows
from the identities in equation (\ref{eqn:identities-sigma}).
For a differential $d$ of the form $d_{R^+}$, we have
\begin{align*}
  d &= d_{R^+} =
  a_0 I_{\ell+r} \etadot + c_0 I_{\ell+r} \eta +
  c_0 I_{\ell_n} \Sigma_{\ell_e} I_r \etadot, &
  d' &= d_{R^+}' =
  a_0 I_{\ell+r} \etadot, \\
  F_1 &= G_1 =
  a_0 I_{\ell+r} + c_0 I_\ell \Sigma_r, &
  F_2 &= G_2 =
  a_0 I_{\ell+r+1} + c_0 I_{\ell_n} \Sigma_{\ell_e + r + 1}.
\end{align*}
We check:
\begin{align*}
  &d' \circ F_1 = F_2 \circ d =
  a_0 I_{\ell+r} \etadot + c_0 I_\ell \Sigma_r \etadot, \\
  &d \circ G_1 = G_2 \circ d' =
  a_0 I_{\ell+r} \etadot + c_0 I_{\ell+r} \eta +
  c_0 I_{\ell_n} \Sigma_{\ell_e + r} \etadot.
\end{align*}
Similar calculations show commutativity for differentials of the form
$d_{R^-}$ and $d_{L^{\pm}}$
\end{proof}

The generalized reduced Khovanov homology of the tangle diagram $T$ is
related to the reduced Khovanov homology of the link $L_{T^0}'$ in
$S^3$ by the following result:

\begin{theorem}
\label{theorem:khr-s2-s1}
We have an isomorphism of bigraded vector spaces
\begin{align*}
  \Khr(S^2 \times S^1,T) = (W_0, L_0) \otimes \Khr(L_0').
\end{align*}
\end{theorem}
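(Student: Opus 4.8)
The plan is to reduce the statement to Theorem~\ref{theorem:loop-0-complexes} together with the functoriality of $\Tw\calG_{W_0}$, and then to compute the image of the simplified twisted complex $(X,\delta')$ under $\Tw\calG_{W_0}$ by hand. First I would observe that because $T$ has loop number $0$ there are no $d_{W^\pm}$ saddles, so $\delta_+=\delta_-=\delta$ and $(C_0,\partial_0)=(\Tw\calG_{W_0})((X,\delta))$. By Theorem~\ref{theorem:loop-0-complexes} the twisted complexes $(X,\delta)$ and $(X,\delta')$ are homotopy equivalent in $\Tw\calL$, where $X=L_0\otimes C_T$ and $\delta'=a_0\otimes d_T$ for $(C_T,d_T)$ the reduced Khovanov complex of $L_{T^0}'$. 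An $A_\infty$ functor sends homotopy-equivalent objects to homotopy-equivalent objects, so $\Tw\calG_{W_0}$ produces a homotopy equivalence in $\Ch$,
\begin{align*}
(C_0,\partial_0)=(\Tw\calG_{W_0})((X,\delta))\ \simeq\ (\Tw\calG_{W_0})((X,\delta')),
\end{align*}
and in particular the two complexes have isomorphic cohomology as bigraded vector spaces. It therefore suffices to compute the cohomology of the right-hand side.

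Next I would compute $(\Tw\calG_{W_0})((X,\delta'))$ directly from the formula in Section~\ref{ssec:a-infinity}. Every object summand appearing in $X$ is the single Lagrangian $L_0$, and $\mu_\calL^1$ vanishes on $(W_0,L_0)$, so $\calG_{W_0}(L_0)=((W_0,L_0),0)$ and the internal differential $d$ of the image complex is zero. Since $\mu_\calL^m=0$ for $m\neq 2$, the only surviving term of $\partial=\sum_{m\geq 2}\mu_{\Sigma\calL}^m(\delta',\dots,\delta',-)$ is $\mu_{\Sigma\calL}^2(\delta',-)$; and because $\delta'=a_0\otimes d_T$ and $a_n$ acts as an identity, so $\mu_\calL^2(a_0,w)=w$ for all $w\in(W_0,L_0)$, this term equals $\id_{(W_0,L_0)}\otimes d_T$. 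Hence
\begin{align*}
(\Tw\calG_{W_0})((X,\delta'))=\bigl((W_0,L_0)\otimes C_T,\ \id_{(W_0,L_0)}\otimes d_T\bigr)=(W_0,L_0)\otimes (C_T,d_T).
\end{align*}

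Finally I would take cohomology, which commutes with tensoring by the fixed bigraded vector space $(W_0,L_0)$, to get $H^*((C_0,\partial_0))\cong (W_0,L_0)\otimes H^*((C_T,d_T))$, and then translate gradings exactly as in the proof that $(C_\pm,\partial_\pm)$ computes $\Khr(L_{T^\pm})$: both $\Khr(S^2\times S^1,T)$ and $\Khr(L_{T^0}')$ arise from the $(h,q)\mapsto(h,h+q)$ reindexing of $H^*((C_0,\partial_0))$ and $H^*((C_T,d_T))$ respectively, and since the generators $\alpha_0^{(0,0)},\beta_0^{(0,-2)}$ of $(W_0,L_0)$ lie in homological degree $0$, this reindexing is compatible with the tensor decomposition. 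This yields $\Khr(S^2\times S^1,T)=(W_0,L_0)\otimes\Khr(L_{T^0}')$.

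The one step that is not pure bookkeeping is the passage from the homotopy equivalence $(X,\delta)\simeq(X,\delta')$ in $\Tw\calL$ to a homotopy equivalence of cochain complexes. I would make this explicit by noting that under Conjecture~\ref{conj:fukaya} the higher operations of $\calL$ vanish, so $\Tw\calG_{W_0}$ is an ordinary dg functor, and then checking that the images of the homotopy equivalences $F,G$ and of the homotopies $H,K$ from Section~\ref{sec:invariance} satisfy the defining identities of a homotopy equivalence in $\Ch$. Everything else follows from the explicit formulas of Section~\ref{ssec:a-infinity}, Theorem~\ref{theorem:loop-0-complexes}, and the product operations in Conjecture~\ref{conj:ops-s2-s1}.
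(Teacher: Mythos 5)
Your proposal is correct and follows essentially the same route as the paper's (very terse) proof: reduce via Theorem~\ref{theorem:loop-0-complexes} to the simplified object $(X,\delta')$, compute $(\Tw\calG_{W_0})((X,\delta'))=\bigl((W_0,L_0)\otimes C_T,\,\id_{(W_0,L_0)}\otimes d_T\bigr)$ using that $a_0$ is a unit and the higher operations vanish, and then take cohomology. You have filled in the bookkeeping the paper leaves implicit, in particular (i) that $\delta_+=\delta_-$ when the loop number is $0$ so $(C_0,\partial_0)$ is unambiguous, (ii) that $\Tw\calG_{W_0}$ is effectively a dg functor under Conjecture~\ref{conj:fukaya} and so preserves homotopy equivalences, and (iii) that the $(h,q)\mapsto(h,h+q)$ reindexing is compatible with the tensor decomposition because the generators $\alpha_0,\beta_0$ of $(W_0,L_0)$ both sit in homological degree $0$; all three checks are sound.
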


\begin{proof}
This follows from Theorem \ref{theorem:loop-0-complexes} and the fact
that
\begin{align*}
  \calG_{W_0}((X,\delta')) =
  ((W_0, L_0) \otimes C_T,\, \id_{(W_0,L_0)} \otimes d_T).
\end{align*}
\end{proof}

Theorem \ref{theorem:khr-s2-s1} shows that if the generalized reduced
Khovanov homology with bigradings collapsed to $\Ints_2$ is indeed a
link invariant, then it is a natural generalization of reduced
Khovanov homology for links in $S^3$.
Since the reduced Khovanov homology of an unknot in
$S^3$ is $\F[0,0]$, we could view $(W_0,L_0)$ as additional cohomology
for $S^2 \times S^1$ that describes the topology of this space.

\subsection{Tangle diagrams with loop number 2}

Given a 1-tangle diagram $T$ with loop number 2, we can define a
1-tangle diagram $T_\tau$ by adding a full twist to $T$ as shown in
Figure \ref{fig:diagram-twist}.
The diagrams $T$ and $T_\tau$ are typically not isotopic, but the
corresponding links $L_{T_0}$ and $L_{T_\tau^0}$ in $S^2 \times S^1$
are isotopic, since one can unwind the full twist by moving one of the
strands around the $S^2$ factor as shown in Figure
\ref{fig:s2-s1-twist}.
Our goal in this section is to prove:

\begin{theorem}
\label{theorem:twist}
With bigradings collapsed from $\Ints$ to $\Ints_2$, there is an
isomorphism of bigraded vector spaces
\begin{align*}
  \Khr(S^2 \times S^1, T) \rightarrow \Khr(S^2 \times S^1, T_\tau).
\end{align*}
\end{theorem}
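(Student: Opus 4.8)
The plan is to present the twisted complex of $T_\tau$ as the total twisted complex of a $2\times2$ square of twisted sub\-complexes indexed by the two crossings that the full twist inserts, apply the functor $\Tw\calG_{W_0}$, and then collapse the resulting square of cochain complexes by Gaussian elimination down to a bigrading shift of $(C_0,\partial_0)$ that becomes trivial once gradings are reduced to $\Ints_2$.

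First I would fix the local picture. The diagram $T_\tau$ agrees with $T$ outside a disk in which the two strands of $T$ that loop the annulus are replaced by a full twist; label its two crossings $c_1,c_2$ and index their resolutions by $\epsilon=(\epsilon_1,\epsilon_2)\in\{0,1\}^2$. Resolving $c_1$ and $c_2$ splits each resolution of the original crossings of $T$ into four planar tangles, so by the construction of Section \ref{ssec:twisted-complex} the object $X_\tau$ is a direct sum of objects $X_\epsilon$, with one corner (say $X_{(0,0)}$) canonically the object $X$ for $T$, and the other three obtained from the resolutions of $T$ by attaching a circle component or by a winding-changing saddle. The differential $\delta_-^\tau$ then has the block form of a square
\begin{eqnarray*}
\begin{tikzcd}
X_{(0,0)} \arrow{r} \arrow{d} & X_{(0,1)} \arrow{d} \\
X_{(1,0)} \arrow{r} & X_{(1,1)}
\end{tikzcd}
\end{eqnarray*}
whose internal differentials are copies of $\delta_-$ and whose edge maps are sums of the saddle differentials $d_{C^\pm},d_{R^\pm},d_{L^\pm},d_{W^\pm}$ of Section \ref{ssec:saddles}. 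I would record, by direct inspection of the full-twist smoothings, exactly which saddle type each edge carries, and in particular check that no edge is a composite of two winding-changing saddles of the same sign, so that the products $\mu_\calL^2(p_{n+4,n+2},p_{n+2,n})$ and $\mu_\calL^2(q_{n-4,n-2},q_{n-2,n})$ left open in Conjecture \ref{conj:fukaya} never enter, and that (since $T_\tau$ again has loop number $2$) only the winding numbers $n\in\{-2,0,2\}$ occur, for which Conjecture \ref{conj:ops-s2-s1} supplies all the needed operations.

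Next I apply $\Tw\calG_{W_0}$. By Conjecture \ref{conj:ops-s2-s1} this replaces each $L_n\otimes A^{\otimes r}$ by $(W_0,L_n)\otimes A^{\otimes r}$ and each saddle differential by the explicit linear map described in Section \ref{sec:s2xs1}; since $\mu^m(\cdots,y)=0$ for $m\neq2$ and $y\in(W_0,L_n)$, the result $(C_0^\tau,\partial_0^\tau)$ is again a square of honest cochain complexes with no long differentials. I would then locate an edge of the square whose induced map restricts to an isomorphism on a direct summand — this is where the explicit $e/x$ structure of $A$ together with $\mu^2(c_0,\alpha_0)=\beta_0$ makes a cup map or an $\etadot$-map an isomorphism onto a complementary summand — and perform Gaussian elimination to cancel that acyclic block, then iterate to cancel a second corner. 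Using the delooping splitting $A\cong\F[0,1]\oplus\F[0,-1]$ and the explicit saddle maps, the complex that remains should be identified with $(C_0,\partial_0)$ up to an overall shift $[\Delta h,\Delta q]$ coming from the two extra crossings of $T_\tau$ and from the winding-number changes. As noted in Section \ref{sec:invariance}, Conjecture \ref{conj:fukaya} forces $\mu_\calL^m=0$ for $m\neq2$, so all of these Gaussian eliminations are the familiar ones for cochain complexes and are legitimate; a separate (routine) check is that the argument is insensitive to whether $\delta_+$ or $\delta_-$ is used, which follows from the homotopy equivalence of $\calG_{W_0}((X,\delta_+))$ with $(C_0,\partial_0)$ recorded in Section \ref{sec:s2xs1}.

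The main obstacle I anticipate is the grading bookkeeping: the full twist changes winding numbers and inserts $d_{W^\pm}$'s, so the shift $[\Delta h,\Delta q]$ produced by the elimination will not in general be zero, and the theorem is correspondingly only stated after collapsing $\Ints$ to $\Ints_2$. I would therefore carry the $[h,q]$ shifts through the elimination carefully, verify that the discrepancy between the reduced complex and $(C_0,\partial_0)$ is a single global shift vanishing mod $2$ (together, possibly, with an acyclic summand), and defer the most tedious verifications — that each eliminated block is genuinely acyclic, the explicit homotopies, and the bigrading computations — to Appendix \ref{sec:chain-homotopy-equivalences}. Taking cohomology and passing to $\Ints_2$ gradings then gives $\Khr(S^2\times S^1,T)\cong\Khr(S^2\times S^1,T_\tau)$.
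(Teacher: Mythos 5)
Your overall framing (decompose $T_\tau$ along the two twist crossings, apply $\Tw\calG_{W_0}$, and compare with $(C_0,\partial_0)$) is reasonable, and it differs in direction from the paper, which instead resolves the \emph{original} crossings first, pairs each one-term slice $(c,0)$ of $(C_0,\partial_0)$ with the corresponding four-term twist square $(c_\tau,d)$ inside $(C_0^\tau,\partial_0^\tau)$, writes down explicit homotopy equivalences $F,G,H$ for the five planar 3-tangle types of Figure \ref{fig:disk-tangles}, and then checks by hand (and Mathematica) that these maps strictly commute with all the old-crossing saddle differentials. But your proposed endpoint contains a genuine error: you expect the Gaussian elimination, carried out over $\Ints$, to leave $(C_0,\partial_0)$ up to a \emph{single global} bigrading shift (plus possibly an acyclic summand), with the $\Ints_2$ collapse only needed at the end to absorb that shift. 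No such reduction can exist. If it did, $H^*(C_0^\tau,\partial_0^\tau)$ would be isomorphic to $H^*(C_0,\partial_0)$ as $\Ints$-bigraded spaces up to one overall shift, and the example of Section \ref{ssec:example-s2-s1-1} rules this out: there $\Khr(S^2\times S^1,T)=\F[0,1]\oplus\F[2,5]$ while $\Khr(S^2\times S^1,P_2)=2\F[0,-1]$, and these are not related by any global shift. The collapse to $\Ints_2$ has to enter \emph{inside} the comparison, not after it; this is exactly why the paper folds each twist square into a two-periodic complex before writing $F^0,G^0$ — the surviving retract mixes the $(0,0)$ and $(1,1)$ corners of the square, which differ by homological degree $2$, so the equivalence is only degree-preserving mod $2$.

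A second, related gap is that you treat the post-elimination identification as routine. Gaussian elimination on the total complex leaves zig-zag correction terms in the induced differential, and even slice-wise the comparison maps require nonlocal corrections: in the $P_2(\ell,r)$ case the paper's equivalence is $F^0=\id_{\alpha\sigma}\eta I_{\ell+r}+\id_{\beta\sigma}\eta I_\ell\Sigma_r$ (plus the $\id_{\tau\tau}$ component into the far corner), not the naive inclusion of the $T$-corner, and the analogous maps for $P_0^L$, $P_0^C$, $P_0^R$, $P_{-2}$ carry different $\Sigma_r$-type terms. So showing that what survives your elimination is $(C_0,\partial_0)$ mod $2$, compatibly with all the original saddle differentials (which depend on how the two twist strands sit relative to the arc and to enclosed circles), is precisely the case-by-case content of Appendix \ref{sec:chain-homotopy-equivalences}; it is not the same set of verifications you could simply defer there. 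Minor points: your worry about $\mu_\calL^2(p_{n+4,n+2},p_{n+2,n})$ is moot (when two winding saddles of the same sign commute, both composites are literally the same product, so $\delta^2=0$ does not need its value), and you are correct that with $W_0$ there are no long differentials by Conjecture \ref{conj:ops-s2-s1}.
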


\begin{proof}
Let $(C_0,\partial_0)$ and $(C_0^\tau, \partial_0^\tau)$ denote the
cochain complexes for the tangle diagrams $T$ and $T_\tau$.
The 1-tangle diagram $T$ in the annulus can be viewed as a 3-tangle
diagram $T_D$ in the disk that has been closed with two arcs wrapping
around the annulus.
We can thus describe the planar resolution of $T$ in terms of the
planar resolution of $T_D$.
The vector space $C_0$ is a direct sum of subspaces corresponding
to planar tangles that occur in this resolution.
Each subspace corresponds to a planar 3-tangle $P_D$ obtained by
resolving $T_D$, and can be viewed as a subcomplex $(c,0)$ of
$(C_0,\partial_0)$ with zero differential.
Let $P_n(r)$ denote the planar 1-tangle in the annulus obtained by
closing $P_D$ with two arcs that wrap around the annulus.
The subcomplex $(c,0)$ is then given by
\begin{eqnarray*}
  (c,0) &=
\begin{tikzcd}
  (W_0, L_n) \otimes A^{\otimes r}[h,q],
\end{tikzcd}
\end{eqnarray*}
where the bigrading shift $[h,q]$ depends on the crossing numbers of
$T$ and on the location of $P_D$ within the cube of resolutions.
If we apply a full twist to $P_n(r)$ and then resolve the two
resulting crossings, we obtain the cube of resolutions shown in Figure
\ref{fig:resolve-twist}:
\begin{eqnarray*}
\begin{tikzcd}
  {} & P_m(s)
  \arrow{dr}{S_1'} & {} \\
  P_n(r)
  \arrow{ru}{S_2}
  \arrow{rd}[swap]{S_1} & {} &
  P_m(s+1), \\
  {} & P_m(s)
  \arrow{ur}[swap]{S_2'}
\end{tikzcd}
\end{eqnarray*}
where the planar tangle $P_m(s)$ depends on $P_D$.
Thus for each such subcomplex $(c,0)$ of $(C_0,\partial_0)$, there is
a corresponding subcomplex $(c_\tau,d)$ of $(C_0^\tau,\partial_0^\tau)$
given by
\begin{eqnarray*}
  (c_\tau, d) &=
\begin{tikzcd}
  {} & (W_0, L_m) \otimes A^{\otimes s}[h_\tau+1,q_\tau]
  \arrow{dr}{\partial_{S_1'}} & {} \\
  (W_0, L_n) \otimes A^{\otimes r}[h_\tau,q_\tau]
  \arrow{ru}{\partial_{S_2}}
  \arrow{rd}[swap]{\partial_{S_1}} & {} &
  (W_0, L_m) \otimes A^{\otimes(s+1)}[h_\tau,q_\tau], \\
  {} & (W_0, L_m) \otimes A^{\otimes s}[h_\tau+1,q_\tau]
  \arrow{ur}[swap]{\partial_{S_2'}}
\end{tikzcd}
\end{eqnarray*}
where we have collapsed bigradings to $\Ints_2$.
The relative crossing numbers of the tangle diagrams $T_\tau$ and $T$
are either $\delta m_+ = 2$ and $\delta m_- = 0$ or
$\delta m_+ = 0$ and $\delta m_- = 2$.
In either case, we have
\begin{align*}
  [h_\tau, q_\tau] =
  [h,q] + [-\delta m_-,\, \delta m_+ - 3 \delta m_-] = [0,0].
\end{align*}
Since we are collapsing homological gradings to $\Ints_2$, we can
express the subcomplex $(c_\tau,d)$ as
\begin{eqnarray*}
  (c_\tau, d) &=
\begin{tikzcd}
\left(\begin{array}{c}
  (W_0, L_m) \otimes A^{\otimes(s+1)}[h,q] \\
  (W_0, L_n) \otimes A^{\otimes r}[h,q]
\end{array}\right)
\arrow[shift left=2]{r}{d^0}  &
\left(\begin{array}{c}
  (W_0, L_m) \otimes A^{\otimes s}[h+1,q] \\
  (W_0, L_m) \otimes A^{\otimes s}[h+1,q]
\end{array}\right),
\arrow[shift left=2]{l}{d^1}
\end{tikzcd}
\end{eqnarray*}
where
\begin{align*}
  d^0 &=
  \left(\begin{array}{cc}
    0 & \partial_{S_2} \\
    0 & \partial_{S_1} \\
  \end{array}
  \right), &
  d^1 &=
  \left(\begin{array}{cc}
    \partial_{S_1'} & \partial_{S_2'} \\
    0 & 0
  \end{array}
  \right).
\end{align*}

\begin{figure}
  \centering
  \includegraphics[scale=0.40]{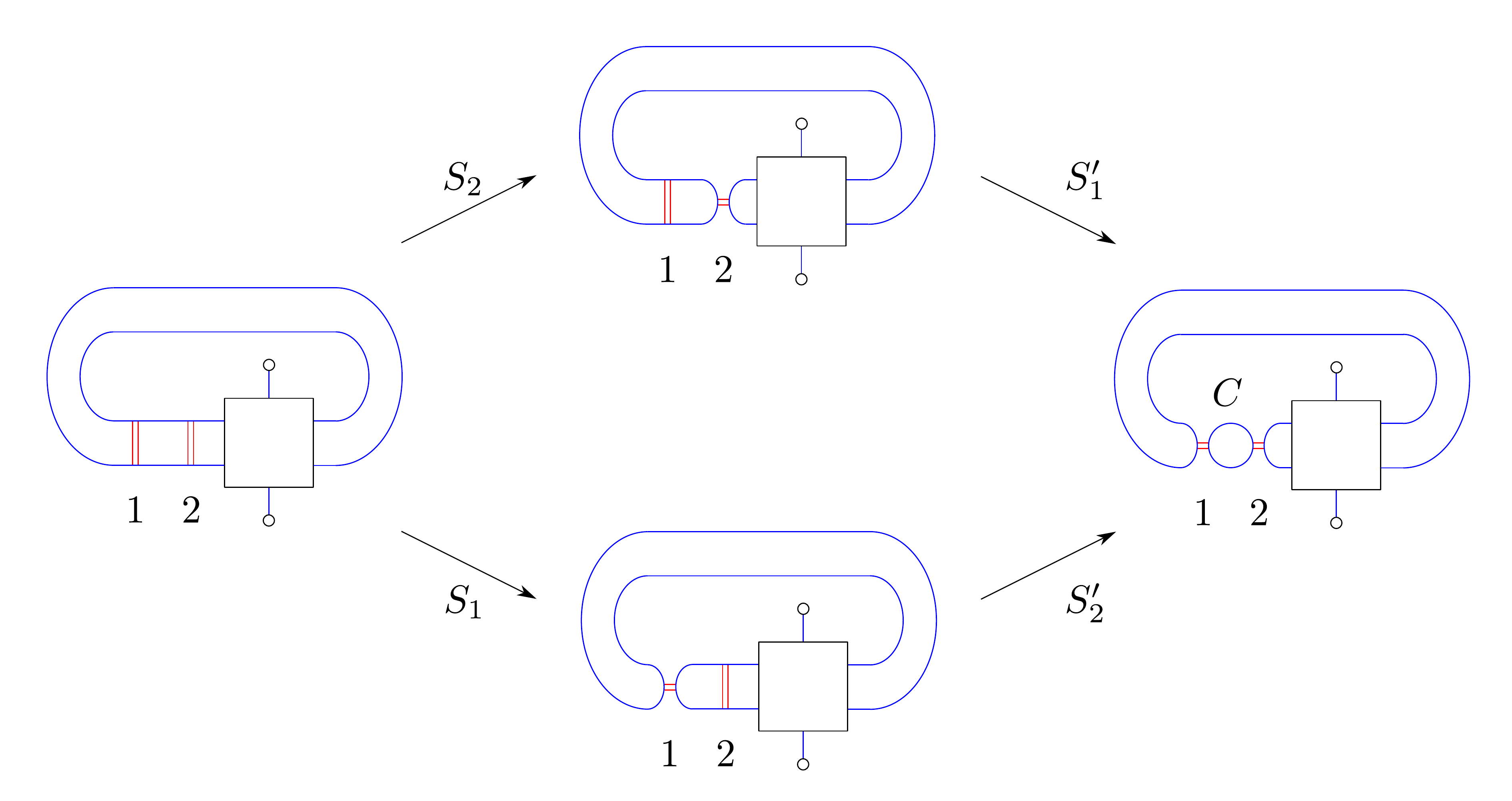}
  \caption{
    \label{fig:resolve-twist}
    Resolution of a full twist.
  }
\end{figure}

For each isotopy class of planar 3-tangle $P_D$ in the disk, we define
homotopy equivalences $F:(c,0) \rightarrow (c_\tau,d)$ and
$G:(c_\tau,d) \rightarrow (c,0)$ of the corresponding subobjects of
$(c,0)$ and $(c_\tau,d)$:
\begin{eqnarray}
\label{diag:twist-he}
\begin{tikzcd}
\left(\begin{array}{c}
(W_0, L_m) \otimes A^{\otimes(s+1)} \\
(W_0, L_n) \otimes A^{\otimes r}
\end{array}\right)
\arrow[shift left=2]{r}{d^0,H^0} \arrow{d}[shift left=4]{G^0} &[2em]
\left(\begin{array}{c}
(W_0, L_m) \otimes A^{\otimes s} \\
(W_0, L_m) \otimes A^{\otimes s}
\end{array}\right)
\arrow[shift left=2]{l}{d^1,H^1} \\
(W_0, L_n) \otimes A^{\otimes r}. \arrow[shift left=4]{u}{F^0}
\end{tikzcd}
\end{eqnarray}
The specific homotopy equivalences are rather complicated and are
described in Appendix \ref{sec:chain-homotopy-equivalences}.

We combine the homotopy equivalences for subcomplexes to obtain linear
maps $C_0 \rightarrow C_0^\tau$ and $C_0^\tau \rightarrow C_0$.
We now want to show that these maps commute with the differentials
$\partial_0$ and $\partial_0^\tau$.
The differential $\partial_0$ is a sum of terms corresponding to
saddles that occur in the cube of resolutions of the 3-tangle $T_D$.
Given a saddle $S:P_{D1} \rightarrow P_{D2}$, let $d_S$ denote the
corresponding term of $\partial_0$.
We have a corresponding term $\dtilde_S$ of $\partial_0^\tau$ and a
diagram
\begin{eqnarray*}
\begin{tikzcd}
\left(\begin{array}{c}
(W_0, L_{m_1}) \otimes A^{\otimes(s_1+1)} \\
(W_0, L_{n_1}) \otimes A^{\otimes r_1}
\end{array}\right)
\arrow{d}[shift left=4]{G^0}
\arrow{r}{\dtilde_S} &[2em]
\left(\begin{array}{c}
(W_0, L_{m_2}) \otimes A^{\otimes(s_2+1)} \\
(W_0, L_{m_2}) \otimes A^{\otimes r_2}
\end{array}\right)
\arrow{d}[shift left=4]{G^0} \\
(W_0, L_{n_1}) \otimes A^{\otimes r_1}
\arrow{r}{d_S}
\arrow[shift left=4]{u}{F^0} &
(W_0, L_{n_2}) \otimes A^{\otimes r_2}.
\arrow[shift left=4]{u}{F^0}
\end{tikzcd}
\end{eqnarray*}
We show that this diagram commutes for each type of saddle, as
described in Appendix \ref{sec:chain-homotopy-equivalences}.
Thus the maps define homotopy equivalences
$(C_0,\partial_0) \rightarrow (C_0^\tau,\partial_0^\tau)$ and
$(C_0^\tau,\partial_0^\tau) \rightarrow (C_0,\partial_0)$.
We have verified the necessary calculations with Mathematica, since
they are rather involved.
\end{proof}

\begin{appendix}

\addtocontents{toc}{\protect\setcounter{tocdepth}{1}}

\section{The Fukaya category of $R^*(S^2,4)$}
\label{sec:pillowcase}

We describe here some results regarding the Fukaya category of
the pillowcase $R^*(S^2,4)$, so they can be compared with
corresponding results for $R^*(T^2,2)$.
These results are either derived in \cite{Hedden-3}, or are easily
obtained using the methods described there.

As described in Section \ref{ssec:pillowcase}, the traceless character
variety $R(S^2,4)$ is a 2-sphere with four reducible points.
One of these four reducible points is special, in that the action of
the mapping class group $\MCG_4(S^2)$ on $R(S^2,4)$ fixes the special
point and permutes the other three reducible points.
We will depict $R^*(S^2,4)$ as a plane with three punctures, where the
point at infinity is the puncture corresponding to the special point.

We define 2-tangles in the disk $T_{-1}$, $T_1$, $T_\times$, and
$T_\timesbar$ as shown in Figure \ref{fig:disk-planar-tangles}.
The planar 2-tangles $T_{\pm 1}$ are analogous to the planar 1-tangles
in the annulus $P_n$ that we defined in Section \ref{ssec:saddles}.
The nonplanar 2-tangles $T_\times$ and $T_\timesbar$ are analogous to
the overpass arc $A_+$ and underpass arc $A_-$ that we use to define
links in $S^3$, since closing $T_{\pm 1}$ with $T_\times$ or
$T_\timesbar$ yields an unknot in $S^3$ just as closing $P_n$ with
$A_+$ or $A_-$ yields an unknot in $S^3$.

\begin{figure}
  \centering
  \includegraphics[scale=0.55]{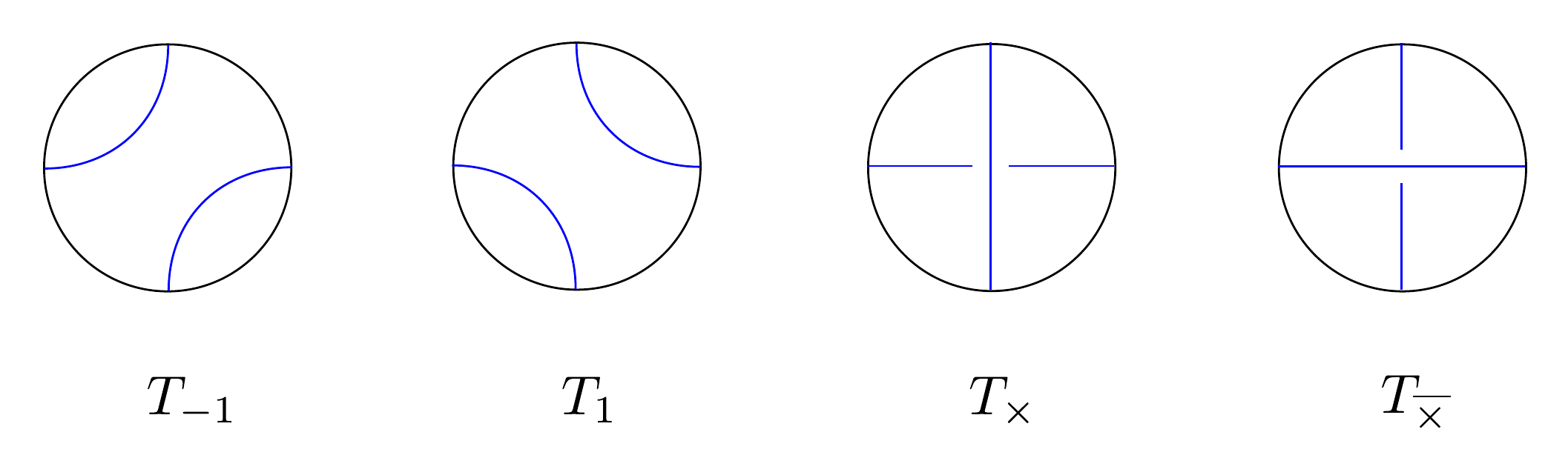}
  \caption{
    \label{fig:disk-planar-tangles}
    Tangles $T_{-1}$, $T_1$, $T_\times$, and $T_\timesbar$ in the disk.
  }
\end{figure}

To the tangle $T_{\pm 1}$ we associate a compact Lagrangian
$L_{\pm 1}$ and to the tangles $T_\times$ and $T_\timesbar$ we
associate noncompact Lagrangians $W_\times$ and $W_\timesbar$ as shown
in Figure \ref{fig:pillowcase-lagrangians}, which should be compared
to Figure \ref{fig:lagrangians} for $R_2^*(T^2,2)$.
As discussed in \cite{Hedden-3}, one can define a Fukaya category
$\calP$ whose objects are the compact Lagrangians $L_{\pm 1}$.
The pillowcase has vanishing first Chern class, and the
Lagrangians $L_{\pm 1}$, $W_\times$, and $W_{\timesbar}$ all have
vanishing Maslov class, so the morphism spaces of $\calP$ carry an
integer Maslov grading.
One can choose graded lifts of the Lagrangians such that the morphism
spaces for $\calP$ are
\begin{align*}
  (L_{\pm 1}, L_{\pm 1}) &=
  \langle a_{\pm 1}^{(0)},\, b_{\pm 1}^{(3)},\,
  c_{\pm 1}^{(-2)},\, d_{\pm 1}^{(1)} \rangle, &
  (L_{-1}, L_1) &= \langle p_{1,-1}^{(-1)},\, q_{1,-1}^{(3)} \rangle, &
  (L_1, L_{-1}) &= \langle p_{-1,1}^{(3)},\, q_{-1,1}^{(-1)} \rangle.
\end{align*}
We note that the Maslov gradings are consistent with Poincar\'{e}
duality, as described at the end of Section \ref{ssec:ops-fukaya}.
The morphism spaces for $\calP$ should be compared with the
corresponding morphism spaces for $\calL$ described in Corollary
\ref{cor:fukaya-gens}.
The morphism spaces involving the Lagrangians $W_\times$ and
$W_\timesbar$ are
\begin{align*}
  (W_\times, L_{\pm 1}) &=
  \langle w_{\pm 1}^{(\pm 2)} \rangle, &
  (W_\timesbar, L_{\pm 1}) &=
  \langle \wbar_{\pm 1}^{(\mp 2)} \rangle.
\end{align*}
These spaces should be compared to the analogous morphism spaces
$(W_+,L_n)$ and $(W_-,L_n)$ for $\calL$ described in Section
\ref{sec:s3-ops-gens}.

\begin{figure}
  \centering
  \includegraphics[scale=0.65]{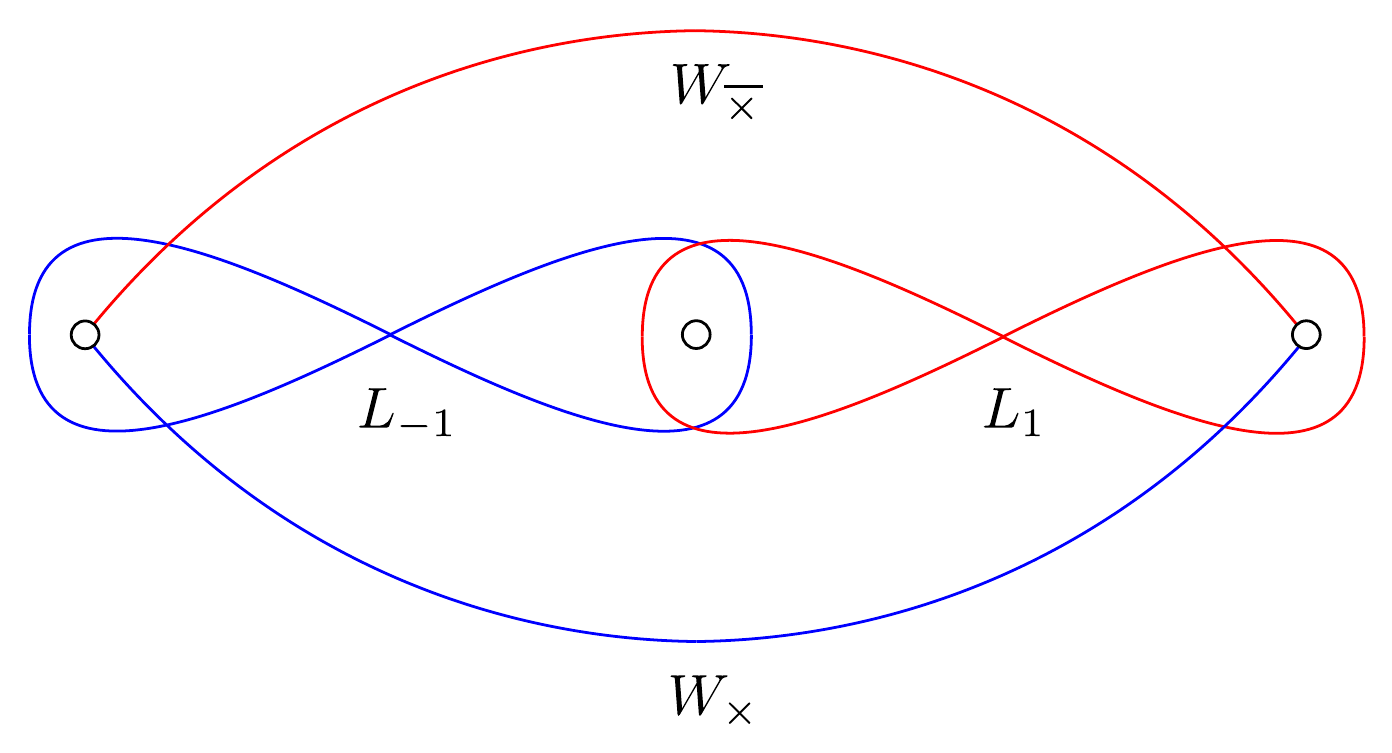}
  \caption{
    \label{fig:pillowcase-lagrangians}
    Lagrangians $L_1$, $L_{-1}$, $W_\times$, and $W_\timesbar$ in the
    pillowcase $R^*(S^2,4)$.
    The three circles indicate three of the puncture points of
    $R^*(S^2,4)$; the fourth puncture point is the point at infinity.
  }
\end{figure}

The $A_\infty$ operations derived in \cite{Hedden-3} prove the
following result for $\calP$, which should be compared to
Conjecture \ref{conj:fukaya} for $\calL$:

\begin{theorem}
The generators $a_{\pm 1}$ are identity elements that satisfy
\begin{align*}
  &\mu_\calP^2(a_{\pm 1},x) = x, &
  &\mu_\calP^2(y,a_{\pm 1}) = y
\end{align*}
whenever these operations are defined.
We have the following product operations:
\begin{align*}
  &\mu_\calP^2(p_{1,-1}, q_{-1,1}) = c_1, &
  &\mu_\calP^2(q_{-1,1}, p_{1,-1}) = c_{-1}, \\
  &\mu_\calP^2(c_1,c_1) =
  \mu_\calP^2(c_1, p_{1,-1}) = \mu_\calP^2(p_{1,-1}, c_{-1}) = 0, &
  & \mu_\calP^2(c_{-1},c_{-1}) =
  \mu_\calP^2(c_{-1}, q_{-1,1}) = \mu_\calP^2(q_{-1,1}, c_1) = 0.
\end{align*}
All operations of the form
$\mu_\calP^m(x_m, \cdots, x_1)$
for $m \neq 2$ and
$x_m, \cdots, x_1 \in
\{a_{\pm 1},\, c_{\pm 1},\, p_{1,-1},\, q_{-1,1}\}$ are zero.
\end{theorem}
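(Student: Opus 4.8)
The plan is to derive the statement from the computation of the $A_\infty$ operations on the Fukaya category of the pillowcase carried out in \cite{Hedden-3}, after translating that work into the present notation. The essential point is that $R^*(S^2,4)$ is two-dimensional, so with an appropriate almost complex structure the Riemann mapping theorem identifies each rigid pseudo-holomorphic $(m+1)$-gon contributing to $\mu_\calP^m$ with an immersed $(m+1)$-gon in the surface, bounded by the perturbed Lagrangians $L_1^{(1)}, L_1^{(0)}, L_{-1}^{(1)}, L_{-1}^{(0)}$ (the Lagrangians of Figure \ref{fig:pillowcase-lagrangians} together with their Hamiltonian pushoffs) and having corners at the prescribed generators. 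The computation is therefore combinatorial, and the proof reduces to: drawing these curves in the pillowcase, presented as a thrice-punctured plane; labelling the intersection points by the generators $a_{\pm 1}, b_{\pm 1}, c_{\pm 1}, d_{\pm 1}$ of $(L_{\pm 1}, L_{\pm 1})$ and $p_{1,-1}, q_{1,-1}, p_{-1,1}, q_{-1,1}$ of $(L_{-1}, L_1)$ and $(L_1, L_{-1})$; and enumerating immersed polygons with all corners in the restricted set $\{a_{\pm 1}, c_{\pm 1}, p_{1,-1}, q_{-1,1}\}$.

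First I would record that $\mu_\calP^1 = 0$ on the morphism spaces in question, the only immersed bigons between $L_{\pm 1}^{(1)}$ and $L_{\pm 1}^{(0)}$ occurring in cancelling pairs, exactly as for $(L_0, L_0)$ in $\calL$ discussed in Section \ref{ssec:ops-fukaya}; this in turn forces $\mu_\calP^2$ to be associative, a useful consistency check. Next I would treat $\mu_\calP^2$. The composable pairs with inputs in the restricted set are read off from the quiver $L_1 \rightleftarrows L_{-1}$: those of the form $\mu_\calP^2(a_{\pm 1}, -)$ or $\mu_\calP^2(-, a_{\pm 1})$, which act as the identity; $\mu_\calP^2(c_{\pm 1}, c_{\pm 1})$; $\mu_\calP^2(p_{1,-1}, q_{-1,1})$ and $\mu_\calP^2(q_{-1,1}, p_{1,-1})$; $\mu_\calP^2(c_1, p_{1,-1})$, $\mu_\calP^2(p_{1,-1}, c_{-1})$, $\mu_\calP^2(c_{-1}, q_{-1,1})$, $\mu_\calP^2(q_{-1,1}, c_1)$; and nothing else. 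For the two products claimed nonzero I would exhibit the unique immersed triangle, and for the others I would check that no immersed triangle with the required corners exists, or that the contributing triangles cancel. Here the integer Maslov gradings recorded above, namely $a_{\pm 1}^{(0)}, b_{\pm 1}^{(3)}, c_{\pm 1}^{(-2)}, d_{\pm 1}^{(1)}, p_{1,-1}^{(-1)}, q_{1,-1}^{(3)}, p_{-1,1}^{(3)}, q_{-1,1}^{(-1)}$, prune most candidates on the nose: a nonzero $\mu_\calP^m(x_m, \cdots, x_1) = y$ forces $|y| = |x_m| + \cdots + |x_1| + 2 - m$, and the orientation ($\Ints_2$) grading of $y$ must be the product of those of the $x_i$.

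Finally, for $\mu_\calP^m$ with $m \geq 3$ and inputs in the restricted set, I would argue no rigid polygon contributes: a corner at $a_{\pm 1}$ (the point of $L_{\pm 1}$ nearest itself under the pushoff) cannot appear on a rigid $(m+1)$-gon for $m \geq 3$, and the remaining input strings built from $c_{\pm 1}, p_{1,-1}, q_{-1,1}$ either violate the Maslov grading balance above or bound no immersed polygon in the two-dimensional pillowcase. All of the polygon counts needed above are precisely the ones already performed in \cite{Hedden-3}, so the mathematical substance of the proof lies entirely there; what remains is a careful dictionary — matching the Lagrangians, their graded lifts, and the generators of $(L_{\pm 1}, L_{\pm 1})$, $(L_{-1}, L_1)$, $(L_1, L_{-1})$ in \cite{Hedden-3} to the symbols used here, and confirming that the perturbation scheme there (Hamiltonian pushoff, together with the perturbations that make $L_{\pm 1}$ compact) matches ours. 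The main obstacle I anticipate is exactly this bookkeeping: making sure the enumeration of composable $\mu_\calP^2$'s is complete, that every higher operation with inputs in the restricted set is genuinely zero and not merely unlisted, and that the sign and grading conventions are transported consistently.
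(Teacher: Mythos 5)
Your proposal is correct and follows the same route as the paper: the paper states this theorem immediately after the sentence ``The $A_\infty$ operations derived in \cite{Hedden-3} prove the following result for $\calP$'' and supplies no further argument, so like you it views the result as an immediate consequence of the combinatorial polygon counts in the two-dimensional pillowcase carried out by Hedden--Herald--Hogancamp--Kirk, together with the dictionary between their notation and the present one. Your added observations --- that the Maslov gradings already force the zero $\mu^2_\calP$ products, that $\mu^1_\calP$ vanishes via cancelling bigons exactly as for $(L_0,L_0)$ in Section~\ref{ssec:ops-fukaya}, and that the residual work is bookkeeping of graded lifts and perturbation schemes --- are consistent elaborations of the same argument rather than a different one.
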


We have the following result for $R^*(S^2,4)$, which should be
compared to Conjecture \ref{conj:s3-ops} for $R^*(T^2,2)$:

\begin{theorem}
We have the following product operations:
\begin{align*}
  \mu^2(a_{\pm 1}, w_{\pm 1}) &= w_{\pm 1}, &
  \mu^2(a_{\pm 1}, \wbar_{\pm 1}) &= \wbar_{\pm 1}.
\end{align*}
We have the following $\mu^3$ operations:
\begin{align*}
  &\mu^3(c_{-1}, q_{-1,1}, w_1) =
  \mu^3(q_{-1,1}, c_1, w_1) =
  w_{-1}, &
  &\mu^3(c_1, p_{1,-1}, \wbar_{-1}) =
  \mu^3(p_{1,-1}, c_{-1}, \wbar_1) =
  \wbar_1.
\end{align*}
All other operations of the form
\begin{align*}
  &\mu^m(x_{m-1}, \cdots, x_1, w_{\pm 1}), &
  &\mu^m(x_{m-1}, \cdots, x_1, \wbar_{\pm 1})
\end{align*}
for
$x_{m-1}, \cdots, x_{m-1} \in
\{a_{\pm 1},\, c_{\pm 1},\, p_{1,-1},\, q_{-1,1}\}$ are zero.
\end{theorem}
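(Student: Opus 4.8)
The plan is to compute these operations in the Fukaya category $\calP$ of the pillowcase using the method of \cite{Hedden-3}: because $R^*(S^2,4)$ is a surface, the $A_\infty$ operations are obtained combinatorially by counting immersed holomorphic polygons via the Riemann mapping theorem, working in the plane-with-three-punctures model of Figure \ref{fig:pillowcase-lagrangians}, where the $L_{\pm1}$, $W_\times$, $W_\timesbar$ together with the Hamiltonian pushoffs used in defining the morphism spaces are drawn explicitly and the generators $a_{\pm1},c_{\pm1},p_{1,-1},q_{-1,1}$ and $w_{\pm1},\wbar_{\pm1}$ are marked intersection points. A first observation, which I would exploit, is that most of the assertion is forced by gradings and composability alone, so only a handful of polygon counts actually need to be carried out.

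More precisely, the $A_\infty$ unit axiom — valid once $a_{\pm1}$ is identified as the cohomological unit of $L_{\pm1}$, which it is, being the Maslov-grading-$0$ generator and the image of the fundamental class — immediately kills every operation $\mu^m(x_{m-1},\dots,x_1,w)$ with $m\ne 2$ having some $x_k=a_{\pm1}$, and gives $\mu^2(a_{\pm1},-)=\mathrm{id}$ on $(W_\times,L_{\pm1})$ and $(W_\timesbar,L_{\pm1})$. For the remaining operations the inputs $x_k$ lie in $\{c_{\pm1},p_{1,-1},q_{-1,1}\}$, all of Maslov grading $\le -1$; since $\mu^m$ has degree $2-m$ and the possible outputs $w_{\pm1}^{(\pm2)}$, $\wbar_{\pm1}^{(\mp2)}$ have gradings $\pm2$, balancing Maslov degrees forces $m\le 3$, and for $m=3$ forces exactly one input of grading $-2$ (a $c$) and one of grading $-1$ (a $p$ or $q$), with output on the same noncompact Lagrangian as the input. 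Running through the composable chains $L_{\pm1}\to L_{\pm1}\to L_{\mp1}$ with these degree data leaves precisely the four triples listed in the theorem, and likewise shows all $\mu^1$'s and all $\mu^2$'s with $x\in\{c,p,q\}$ vanish; the orientation $\Ints_2$-grading gives an independent quick confirmation.

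It then remains to confirm that the four surviving $\mu^3$ operations are genuinely nonzero, with the stated outputs and with coefficient $1\in\F$. For this I would exhibit, in the model picture, the immersed topological quadrilateral responsible for each: for $\mu^3(c_{-1},q_{-1,1},w_1)$ and $\mu^3(q_{-1,1},c_1,w_1)$, the $4$-gon with boundary on $W_\times$, $L_1$, a Hamiltonian pushoff of $L_{-1}$, and $L_{-1}$, with cyclic corners at $w_1$, the $c$-corner, the $q$-corner, and $w_{-1}$, checking that it is immersed and carries no puncture in its interior so that it counts once; the $\wbar$ cases are mirror-symmetric, with $W_\timesbar$ and $p_{1,-1}$ in place of $W_\times$ and $q_{-1,1}$. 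Restricting $n$ to $\{\pm1\}$, the resulting statement is exactly the $R^*(S^2,4)$-analogue of Conjecture \ref{conj:s3-ops} for $R^*(T^2,2)$.

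The main obstacle, as in \cite{Hedden-3}, is essentially bookkeeping and twofold: first, fixing the correct graded lifts of the Lagrangians so that the Maslov gradings of the generators are as stated — these gradings drive the entire argument and are what make the vanishing claims "free"; and second, verifying that the candidate quadrilaterals for the four $\mu^3$'s are genuinely immersed and puncture-free, ruling out stray small polygons created by the Hamiltonian pushoffs near the punctures. Both points are settled by the explicit disk enumerations already carried out in \cite{Hedden-3}, which I would cite rather than reproduce.
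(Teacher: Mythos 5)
Your proposal is correct and takes essentially the same approach as the paper, whose entire proof is the single sentence ``This is a straightforward application of the methods described in \cite{Hedden-3}.'' You have simply unpacked what those methods are when applied here: strict unitality for $a_{\pm1}$, Maslov-degree balancing to eliminate almost all operations, and explicit quadrilateral counts in the plane-with-three-punctures picture for the four surviving $\mu^3$'s, deferring to \cite{Hedden-3} for the disk enumerations — all consistent with what the paper has in mind.
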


\begin{proof}
This is a straightforward application of the methods described
in \cite{Hedden-3}.
\end{proof}

\section{Homotopy equivalences}
\label{sec:chain-homotopy-equivalences}

Here we describe the homotopy equivalences and differentials
used to prove Theorem \ref{theorem:twist}.
For economy of space we do not describe differentials that merge
circles, since they are straightforward modifications of corresponding
differentials that split circles.

We define planar 3-tangles in the disk $P_2$, $P_0^L$, $P_0^C$,
$P_0^R$, and $P_{-2}$ as shown in Figure \ref{fig:disk-tangles}.
Any planar 3-tangle in the disk is isotopic to one of these five
planar tangles, together with some number of circle components.
We let $\ell$, $r$, and $c$ denote the number of circle
components in the regions labeled $\ell$, $r$, and $c$ in Figure
\ref{fig:disk-tangles}, and we use the notation
\begin{align*}
  &P_2(\ell,r), &
  &P_0^L(c,\ell,r), &
  &P_0^C(\ell,r), &
  &P_0^R(\ell,r,c), &
  &P_{-2}(\ell,r)
\end{align*}
to describe a planar 3-tangle whose arc components are isotopic to
$P_2$, $P_0^L$, $P_0^C$, $P_0^R$, and $P_{-2}$, and which contains
the indicated number of circle components.

\begin{figure}
  \centering
  \includegraphics[scale=0.50]{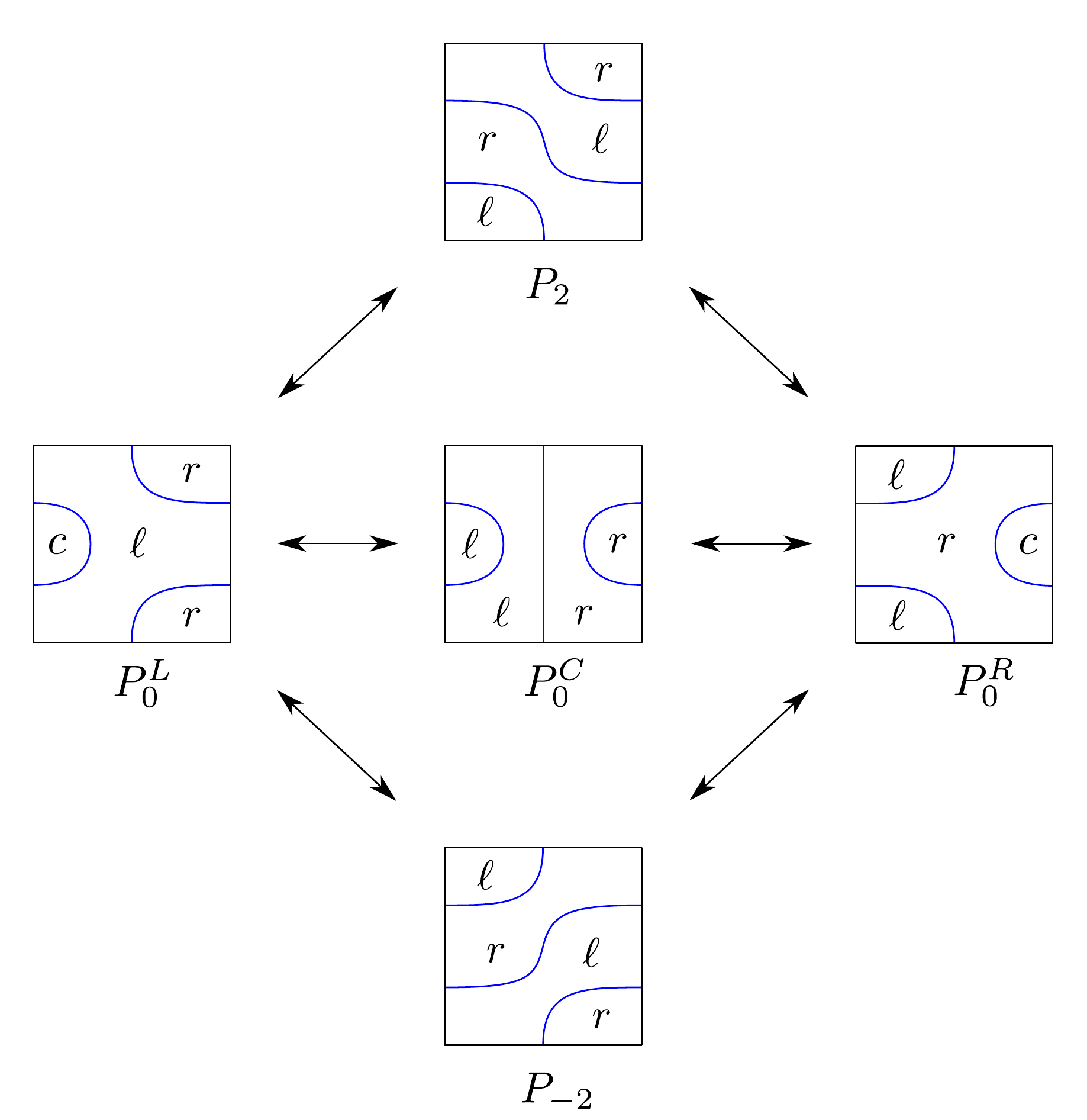}
  \caption{
    \label{fig:disk-tangles}
    Planar 3-tangles in the disk.
    Pairs of planar tangles connected by arrows are related by
    saddles.
  }
\end{figure}

We define linear maps
$\id_{\alpha \alpha}^{(0,0)}$,
$\id_{\beta \beta}^{(0,0)}$,
$\id_{\beta \alpha}^{(0,-2)}$,
$\id_{\alpha \beta}^{(0,2)}$:
\begin{align*}  
  &\id_{\alpha \alpha}:(W_0,L_0) \rightarrow (W_0,L_0), &
  &\alpha_0 \mapsto \alpha_0, &
  &\beta_0 \mapsto 0, \\
  &\id_{\beta \beta}:(W_0,L_0) \rightarrow (W_0,L_0), &
  &\alpha_0 \mapsto 0, &
  &\beta_0 \mapsto \beta_0, \\
  &\id_{\beta \alpha}:(W_0,L_0) \rightarrow (W_0,L_0), &
  &\alpha_0 \mapsto \beta_0, &
  &\beta_0 \mapsto 0, \\
  &\id_{\alpha \beta}:(W_0,L_0) \rightarrow (W_0,L_0), &
  &\alpha_0 \mapsto 0, &
  &\beta_0 \mapsto \alpha_0.
\end{align*}
We define a \emph{projection map} $\id_{ee}^{(0,0)}$:
\begin{align*}
  &\id_{ee}:A \rightarrow A, &
  &\id_{ee}(e) = e, & \id_{ee}(x) = 0.
\end{align*}
We define a \emph{swap map}
$\tau_{a,b}^{(0,0)}$:
\begin{align*}
  & \tau_{a,b}:A^{\otimes(a+b)} \rightarrow A^{\otimes(a+b)}, &
  v_1 \otimes \cdots \otimes v_a \otimes
  w_1 \otimes \cdots \otimes w_b \mapsto
  w_1 \otimes \cdots \otimes w_b \otimes
  v_1 \otimes \cdots \otimes v_a.
\end{align*}
For simplicity, we denote the maps $\mu^2(a_n,-)$ and $\mu^2(c_n,-)$
as $a_n$ and $c_n$.

\subsection{Chain homotopy equivalence for $P_2(\ell,r)$}

For a planar 3-tangle $P_2(\ell,r)$, diagram (\ref{diag:twist-he}) has
the form
\begin{eqnarray}
\label{diag:twist-he-p2}
\begin{tikzcd}
\left(\begin{array}{c}
(W_0, L_0) \otimes A^{\otimes(k+1)} \\
(W_0, L_2) \otimes A^{\otimes k}
\end{array}\right)
\arrow[shift left=2]{r}{d^0,H^0} \arrow{d}[shift left=4]{G^0} &[2em]
\left(\begin{array}{c}
(W_0, L_0) \otimes A^{\otimes k} \\
(W_0, L_0) \otimes A^{\otimes k}
\end{array}\right)
\arrow[shift left=2]{l}{d^1,H^1} \\
(W_0, L_2) \otimes A^{\otimes k}, \arrow[shift left=4]{u}{F^0}
\end{tikzcd}
\end{eqnarray}
where $k = \ell + r$.
We define linear maps
\begin{align*}  
  &\id_{\sigma\alpha}:(W_0,L_0) \rightarrow (W_0,L_2), &
  &\alpha_0 \mapsto \sigma_{2,0}, &
  &\beta_0 \mapsto 0, \\
  &\id_{\sigma\beta}:(W_0,L_0) \rightarrow (W_0,L_2), &
  &\alpha_0 \mapsto 0, &
  &\beta_0 \mapsto \sigma_{2,0}, \\
  &\id_{\alpha\sigma}:(W_0,L_2) \rightarrow (W_0,L_0), &
  &\sigma_{2,0} \mapsto \alpha_0, &
  &\tau_{2,0} \mapsto 0, \\
  &\id_{\beta\sigma}:(W_0,L_2) \rightarrow (W_0,L_0), &
  &\sigma_{2,0} \mapsto \beta_0, &
  &\tau_{2,0} \mapsto 0, \\
  &\id_{\tau\tau}:(W_0,L_2) \rightarrow (W_0,L_2), &
  &\sigma_{2,0} \mapsto 0, &
  &\tau_{2,0} \mapsto \tau_{2,0}.
\end{align*}
The differentials in diagram (\ref{diag:twist-he-p2}) are
\begin{align*}
  d^0 &=
  \left(\begin{array}{cc}
    0 & \id_{\beta \sigma} I_{\ell+r} \\
    0 & \id_{\beta \sigma} I_{\ell+r}
  \end{array}\right), &
  d^1 &=
  \left(\begin{array}{cc}
    (a_0 \etadot + c_0 \eta) I_{\ell+r} & a_0 \etadot I_{\ell+r} \\
    0 & 0
  \end{array}\right).
\end{align*}
The homotopy equivalences in diagram (\ref{diag:twist-he-p2})
are
\begin{align*}
  F^0 &=
  \left(\begin{array}{c}
    \id_{\alpha \sigma}\eta I_{\ell+r}  +
    \id_{\beta \sigma} \eta I_\ell \Sigma_r \\
    \id_{\tau\tau} I_{\ell+r}
  \end{array}\right), &
  G^0 &=
  \left(\begin{array}{cc}
    \id_{\sigma \alpha} \epsilondot I_{\ell+r} &
    \id_{\tau \tau} I_{\ell+r}
  \end{array}\right).
\end{align*}
The homotopy in diagram (\ref{diag:twist-he-p2}) is
\begin{align*}
  H^0 &=
  \left(\begin{array}{cc}
    \id_{\alpha\beta} \epsilondot I_{\ell+r} +
    \id_{\alpha\alpha} \epsilondot I_\ell \Sigma_r & 0 \\
    (a_0 \epsilon + \id_{\alpha\beta}\epsilondot)I_{\ell+r} +
    \id_{\alpha\alpha}\epsilondot I_\ell \Sigma_r & 0
  \end{array}\right), &
  H^1 &=
  \left(\begin{array}{cc}
    0 & 0 \\
    \id_{\sigma \beta} I_{\ell+r} & 0
  \end{array}\right).
\end{align*}
The differentials for a saddle that splits a circle in region $\ell$
from the arc component are
\begin{align*}
  d_+ &= a_0 \etadot I_{\ell+r}, &
  \dtilde_+ &=
  \left(\begin{array}{cc}
    a_0 \id  \etadot I_{\ell+r} +
    c_0 \id \etadot I_{\ell} \Sigma_{r_e} I_{r_n} & 0 \\
    0 & d_+
  \end{array}\right).
\end{align*}
The differentials for a saddle that splits a circle in region $r$ from
the arc component are
\begin{align*}
  d_+ &= a_0 I_{\ell+r} \etadot, &
  \dtilde_+ &=
  \left(\begin{array}{cc}
    a_0 \id I_{\ell+r} \etadot + c_0 \id I_{\ell+r} \eta +
    c_0 \id  I_{\ell_n} \Sigma_{\ell_e} I_r \etadot & 0 \\
    0 & d_+
  \end{array}\right).
\end{align*}
The differentials for a saddle
$P_2(\ell_n+\ell_e,r) \rightarrow P_0^R(\ell_n,r,\ell_e)$ are
\begin{align*}
  d_{R2} &= \id_{\beta \sigma} I_{\ell_n} \tau_{\ell_e,r}, &
  \dtilde_{R2} &=
  \left(\begin{array}{cc}
    (a_0 \etadot \id + c_0 \eta \id) I_{\ell_n} \tau_{\ell_e,r} +
    c_0 \etadot \id I_{\ell_n} (\tau_{\ell_e,r} \circ \Sigma_{\ell_e} I_r) & 0 \\
    0 & d_{R2}
  \end{array}\right).
\end{align*}
The differentials for a saddle
$P_2(\ell,r_e+r_n) \rightarrow P_0^L(r_e,\ell,r_n)$ are
\begin{align*}
  d_{L2} &= \id_{\beta \sigma} \tau_{\ell,r_e} I_{r_n}, &
  \dtilde_{L2} &=
  \left(\begin{array}{cc}
    a_0 \etadot \id \tau_{\ell,r_e} I_{r_n} +
    c_0 \etadot \id (\tau_{\ell,r_e} \circ I_\ell \Sigma_{r_e}) I_{r_n} & 0 \\
    0 & d_{L2}
  \end{array}\right).
\end{align*}
Differentials that split one circle into two circles or merge two
circles into one circle commute with $F^0$ and $G^0$ due to the
identities in equation (\ref{eqn:identities-sigma}).

\subsection{Chain homotopy equivalence for $P_0^R(\ell,r,c)$}

For a planar 3-tangle $P_0^R(\ell,r,c)$, diagram (\ref{diag:twist-he})
has the form
\begin{eqnarray}
\label{diag:twist-he-p0r}
\begin{tikzcd}
\left(\begin{array}{c}
(W_0, L_0) \otimes A^{\otimes(k+2)} \\
(W_0, L_0) \otimes A^{\otimes k}
\end{array}\right)
\arrow[shift left=2]{r}{d^0, H^0} \arrow{d}[shift left=4]{G^0} &[2 em]
\left(\begin{array}{c}
(W_0, L_0) \otimes A^{\otimes(k+1)} \\
(W_0, L_0) \otimes A^{\otimes(k+1)}
\end{array}\right)
\arrow[shift left=2]{l}{d^1, H^1} \\
(W_0, L_0) \otimes A^{\otimes k}, \arrow[shift left=4]{u}{F^0}
\end{tikzcd}
\end{eqnarray}
where $k = \ell + r + c$.
The differentials
in diagram (\ref{diag:twist-he-p0r}) are
\begin{align*}
  d^0 &=
  \left(\begin{array}{cc}
    0 & a_0 \etadot I_{\ell+r+c} \\
    0 & a_0 \etadot I_{\ell+r+c}
  \end{array}\right), &
  d^1 &=
  \left(\begin{array}{cc}
    a_0 \Delta I_{\ell+r+c} & a_0 \id \etadot I_{\ell+r+c} \\
    0 & 0
  \end{array}\right).
\end{align*}
the homotopy equivalences
in diagram (\ref{diag:twist-he-p0r}) are
\begin{align*}
  F^0 &=
  \left(\begin{array}{c}
    a_0 \eta \eta I_{\ell+r+c} +
    a_0 \etadot \eta I_\ell \Sigma_r I_c +
    \id_{\beta\beta} \etadot \eta I_{\ell+r} \Sigma_c +
    \id_{\alpha\beta} \etadot \eta I_{\ell+r+c} +
    \id_{\beta\alpha} (\eta \eta I_{\ell+r} \Sigma_c +
    \etadot \eta I_\ell \Sigma_r \Sigma_c) \\
    0
  \end{array}\right), \\
  G^0 &=
  \left(\begin{array}{cc}
    a_0 \epsilondot \epsilondot I_{\ell+r+c} +
    \id_{\beta\alpha} \epsilondot \epsilondot I_{\ell+r} \Sigma_c & 0
  \end{array}\right).
\end{align*}
The homotopy
in diagram (\ref{diag:twist-he-p0r}) is
\begin{align*}
  H^0 &=
  \left(\begin{array}{cc}
    a_0 (\epsilon \id) I_{\ell+r+c} +
    \id_{\alpha\beta} (\id_{ee} \circ m) I_{\ell+r+c} +
    a_0 (\id_{ee} \circ m) I_{\ell} \Sigma_{r+c}) &
    0 \\
    a_0 (\id_{ex} \circ m) I_{\ell+r+c} +
    \id_{\alpha\beta} (\id_{ee} \circ m) I_{\ell+r+c} +
    a_0 (\id_{ee} \circ m) I_\ell \Sigma_{r+c} &
    0
  \end{array}\right), \\
  H^1 &=
  \left(\begin{array}{cc}
    0 & 0 \\
    0 & a_0 \epsilon I_{\ell+r+c}
  \end{array}\right).
\end{align*}
The differentials for a saddle that splits a circle in region $\ell$
from the (left) of the arc component are
\begin{align*}
  d_+ &=
  a_0 \etadot I_{\ell+r+c} +
  c_0 \etadot I_\ell \Sigma_{r_e} I_{r_n + c}, &
  \dtilde_+ &=
  \left(\begin{array}{cc}
    a_0 \id \id \etadot I_{\ell+r+c} +
    c_0 \id\id \etadot I_\ell \Sigma_{r_e} I_{r_n+c} & 0 \\
    0 & d_+
  \end{array}\right).
\end{align*}
The differentials for a saddle that splits a circle in region $r$ from
the right of the arc component are
\begin{align*}
  d_+ &=
  a_0 I_\ell \etadot I_{r+c} +
  c_0 I_\ell \eta I_{r+c} +
  c_0 I_{\ell_n} \Sigma_{\ell_e} \etadot I_{r+c}, \\
  \dtilde_+ &=
  \left(\begin{array}{cc}
    a_0 \id \id I_\ell \etadot I_{r+c} +
    c_0 \id \id I_\ell \eta I_{r+c} +
    c_0 \id \id I_{\ell_n} \Sigma_{\ell_e} \etadot I_{r+c} & 0 \\
    0 & d_+
  \end{array}\right).
\end{align*}
The differentials for a saddle that splits a circle in region $r$ from
the left of the arc component are
\begin{align*}
  d_+ &=
  a_0 I_{\ell+r} \etadot I_c +
  c_0 I_{\ell+r} \etadot \Sigma_{c_e} I_{c_n}, &
  \dtilde_+ &=
  \left(\begin{array}{cc}
    a_0
    (\eta \epsilondot \id I_{\ell+r} \etadot +
    \etadot \epsilondot \id I_{\ell+r} \eta +
    \etadot \epsilon \id I_{\ell+r} \etadot) I_c & 0 \\
    0 & d_+
  \end{array}\right).
\end{align*}
The differentials for a saddle that splits a circle in region $c$ from
the (right) of the arc component are
\begin{align*}
  d_+ &=
  a_0 I_{\ell+r+c} \etadot +
  c_0 I_{\ell+r+c} \eta +
  c_0 I_{\ell+r_n} \Sigma_{r_e} I_c \etadot, \\
  \dtilde_+ &=
  \left(\begin{array}{cc}
    a_0 (
    \eta \epsilondot \id I_{\ell+r+c} \etadot +
    \etadot \epsilondot \id I_{\ell+r+c} \eta +
    \etadot \epsilon \id I_{\ell+r+c} \etadot) & 0 \\
    0 & d_+
  \end{array}\right).
\end{align*}
The differentials for a saddle
$P_0^R(\ell,r,c) \rightarrow P_2(\ell+c,r)$ are
\begin{align*}
  d_{2R} &= \id_{\sigma \alpha} I_\ell \tau_{r,c}, &
  \dtilde_{2R} &=
  \left(\begin{array}{cc}
    (a_0 \epsilondot \id + c_0 \epsilon \id) I_\ell \tau_{r,c} +
    c_0 \epsilondot \id I_\ell (\tau_{r,c} \circ I_r \Sigma_c) & 0 \\
    0 & d_{2R}
  \end{array}\right).
\end{align*}
The differentials for a saddle
$P_0^R(\ell,r_e + r_n,c) \rightarrow P_0^C(\ell+r_e,r_n+c)$ are
\begin{align*}
  d_{CR} &=
  a_0 \etadot I_{\ell+ r_e+r_n+c} +
  c_0 \etadot I_\ell \Sigma_{r_e} I_{r_n+c} +
  c_0 \etadot I_{\ell+r_e+r_n} \Sigma_c, \\
  \dtilde_{CR} &=
  \left(\begin{array}{cc}
    a_0 \etadot \tau_{1,1} I_{\ell+r_e+r_n+c} +
    c_0 \etadot \tau_{1,1} I_\ell \Sigma_{r_e} I_{r_n+c} &
    0 \\
    0 & d_{CR}
  \end{array}\right).
\end{align*}
The differentials for a saddle
$P_0^R(\ell,r,c) \rightarrow P_{-2}(\ell+c,r)$ are
\begin{align*}
  d_{-2R} &= \id_{\tau \alpha} I_\ell \tau_{r,c}, &
  \dtilde_{-2R} &=
  \left(\begin{array}{cc}
    (a_0 \epsilondot \id + c_0 \epsilon \id) I_\ell \tau_{r,c} +
    c_0 \epsilondot \id I_\ell (\tau_{r,c} \circ I_r \Sigma_c) & 0 \\
    0 & d_{-2R}
  \end{array}\right).
\end{align*}
Differentials that split one circle into two circles or merge two
circles into one circle commute with $F^0$ and $G^0$ due to the
identities in equation (\ref{eqn:identities-sigma}).

\subsection{Chain homotopy equivalence for $P_0^L(c,\ell,r)$}

For a planar 3-tangle $P_0^L(c,\ell,r)$, diagram (\ref{diag:twist-he})
has the form
\begin{eqnarray*}
\label{diag:twist-he-p0l}
\begin{tikzcd}
\left(\begin{array}{c}
(W_0, L_0) \otimes A^{\otimes(k+2)} \\
(W_0, L_0) \otimes A^{\otimes k}
\end{array}\right)
\arrow[shift left=2]{r}{d^0, H^0} \arrow{d}[shift left=4]{G^0} &[2 em]
\left(\begin{array}{c}
(W_0, L_0) \otimes A^{\otimes(k+1)} \\
(W_0, L_0) \otimes A^{\otimes(k+1)}
\end{array}\right)
\arrow[shift left=2]{l}{d^1, H^1} \\
(W_0, L_0) \otimes A^{\otimes k}, \arrow[shift left=4]{u}{F^0}
\end{tikzcd}
\end{eqnarray*}
where $k = c + \ell + r$.
The differentials
in diagram (\ref{diag:twist-he-p0l}) are
\begin{align*}
  d^0 &=
  \left(\begin{array}{cc}
    0 & (a_0 \etadot + c_0 \eta) I_{c+\ell+r} \\
    0 & (a_0 \etadot + c_0 \eta) I_{c+\ell+r}
  \end{array}\right), &
  d^1 &=
  \left(\begin{array}{cc}
    (a_0 \id \etadot + c_0 \id \eta) I_{c+\ell+r} &
    a_0 \Delta I_{c+\ell+r} \\
    0 & 0
  \end{array}\right).
\end{align*}
The homotopy equivalences
in diagram (\ref{diag:twist-he-p0l}) are
\begin{align*}
  F^0 &=
  \left(\begin{array}{c}
    (\id_{\alpha\beta} \etadot \eta  +
    \id_{\alpha\alpha} \eta \eta) I_{c+\ell+r} +
    \id_{\alpha\alpha} \etadot \eta \Sigma_c I_{\ell+r} +
    (a_0 \etadot \eta +
    \id_{\beta\alpha} \eta \eta) I_{c+\ell} \Sigma_r +
    \id_{\beta\alpha} \etadot \eta \Sigma_c I_\ell \Sigma_r  \\
    0
  \end{array}\right), \\
  G^0 &=
  \left(\begin{array}{cc}
    (a_0 \epsilondot \epsilondot +
    \id_{\beta\alpha} (\epsilon \epsilondot + \epsilondot \epsilon))
    I_{c+\ell+r} +
    \id_{\beta\alpha} \epsilondot \epsilondot \Sigma_c I_{\ell+r} &
    0
  \end{array}\right).
\end{align*}
The homotopies
in diagram (\ref{diag:twist-he-p0l}) are
\begin{align*}
  H^0 &=
  \left(\begin{array}{cc}
    (\id_{\beta\beta} (\id_{ex} \circ m) +
    \id_{\alpha\beta} \eta \epsilondot \epsilondot) I_{c+\ell+r} +
    a_0 m I_{c+\ell} \Sigma_r &
    0 \\
    (\id_{\alpha\alpha} \id \epsilon +
    \id_{\beta\beta} \epsilon \id +
    \id_{\alpha\beta} \eta \epsilondot \epsilondot) I_{c+\ell+r} +
    a_0 m I_{c+\ell} \Sigma_r &
    0
  \end{array}
  \right), \\
  H^1 &=
  \left(\begin{array}{cc}
    0 & 0 \\
    a_0 \epsilon I_{c+\ell+r} +
    (a_0 \epsilondot +
    \id_{\beta\alpha} \epsilon) I_{c+\ell} \Sigma_r &
    0
  \end{array}\right).
\end{align*}
The differentials for a saddle that splits a circle in region $c$ from
the left of the arc component are
\begin{align*}
  d_+ &=
  a_0 \etadot I_{c+\ell+r} +
  c_0 \etadot I_c \Sigma_{\ell_e} I_{\ell_n+r}, &
  \dtilde_+ &=
  \left(\begin{array}{cc}
    a_0 (
    \eta \epsilondot \id \etadot +
    \etadot \epsilondot \id \eta +
    \etadot \epsilon \id \etadot) I_{c+\ell+r} & 0 \\
    0 & d_+
  \end{array}\right).
\end{align*}
The differentials for a saddle that splits a circle in region $\ell$
from the right of the arc component are
\begin{align*}
  d_+ &=
  (a_0 I_c \etadot + c_0 I_c \eta) I_{\ell+r} +
  c_0 I_{c_n} \Sigma_{c_e} \etadot I_{\ell+r}, &
  \dtilde_+ &=
  \left(\begin{array}{cc}
    a_0 (
    \eta \epsilondot \id I_c \etadot +
    \etadot \epsilondot \id I_c \eta +
    \etadot \epsilon \id I_c \etadot) I_{\ell+r} & 0 \\
    0 & d_+
  \end{array}\right).
\end{align*}
The differentials for a saddle that splits a circle in region $\ell$
from the left of the arc component are
\begin{align*}
  d_+ &=
  a_0 I_{c+\ell} \etadot I_r +
  c_0 I_{c+\ell} \etadot \Sigma_{r_e} I_{r_n}, &
  \dtilde_+ &=
  \left(\begin{array}{cc}
    a_0 \id\id I_{c+\ell} \etadot I_r +
    c_0 \id\id I_{c+\ell} \etadot \Sigma_{r_e} I_{r_n}, & 0 \\
    0 & d_+
  \end{array}\right).
\end{align*}
The differentials for a saddle that splits a circle in region $r$ from
the (right) of the arc component are
\begin{align*}
  d_+ &=
  a_0 I_{c+\ell+r} \etadot + c_0 I_{c+\ell+r} \eta +
  c_0 I_{c+\ell_n} \Sigma_{\ell_e} I_r \etadot, \\
  \dtilde_+ &=
  \left(\begin{array}{cc}
    a_0 \id\id I_{c+\ell+r} \etadot +
    c_0 \id\id I_{c+\ell+r} \eta +
    c_0 \id \id I_{c+\ell_n} \Sigma_{\ell_e} I_r \etadot & 0 \\
    0 & d_+
  \end{array}\right).
\end{align*}
The differentials for a saddle
$P_0^L(c,\ell,r) \rightarrow P_2(\ell,r+c)$ are
\begin{align*}
  d_{2L} &= \id_{\sigma \alpha} \tau_{c,\ell} I_r, &
  \dtilde_{2L} &=
  \left(\begin{array}{cc}
    a_0 \epsilondot \id \tau_{c,\ell} I_r +
    c_0 \epsilondot \id (\tau_{c,\ell} \circ \Sigma_c I_\ell) I_r & 0 \\
    0 & d_{2L}
  \end{array}\right).
\end{align*}
The differentials for a saddle
$P_0^L(c,\ell_n+\ell_e,r) \rightarrow P_0^C(c+\ell_n, \ell_e+r)$:
\begin{align*}
  d_{CL} &=
  a_0 \etadot I_{c+\ell_n+\ell_e+r} +
  c_0 \eta I_{c+\ell_n+\ell_e+r} +
  c_0 \etadot \Sigma_c I_{\ell_n+\ell_e+r} +
  c_0 \etadot I_{c+\ell_n} \Sigma_{l_e} I_r, \\
  \dtilde_{CL} &=
  \left(\begin{array}{cc}
    a_0 \id \id \etadot I_{c+\ell_n+\ell_e+r} +
    c_0 \id \id \eta I_{c+\ell_n+\ell_e+r} +
    c_0 \id \id \etadot I_{c+\ell_n} \Sigma_{\ell_e} I_r &
    0 \\
    0 & d_{CL}
  \end{array}\right).
\end{align*}
The differentials for a saddle
$P_0^L(c,\ell,r) \rightarrow P_{-2}(\ell,r+c)$ are
\begin{align*}
  d_{-2L} &= \id_{\tau \alpha} \tau_{c,\ell} I_r, &
  \dtilde_{-2L} &=
  \left(\begin{array}{cc}
    a_0 \epsilondot \id \tau_{c,\ell} I_r +
    c_0 \epsilondot \id (\tau_{c,\ell} \circ \Sigma_c I_\ell) I_r & 0 \\
    0 & d_{-2L}
  \end{array}\right).
\end{align*}
Differentials that split one circle into two circles or merge two
circles into one circle commute with $F^0$ and $G^0$ due to the
identities in equation (\ref{eqn:identities-sigma}).

\subsection{Chain homotopy equivalence for $P_0^C(\ell,r)$}

For a planar 3-tangle $P_0^C(\ell,r)$, diagram (\ref{diag:twist-he})
has the form
\begin{eqnarray}
\label{diag:twist-he-p0c}
\begin{tikzcd}
\left(\begin{array}{c}
(W_0, L_0) \otimes A^{\otimes(k+3)} \\
(W_0, L_0) \otimes A^{\otimes(k+1)}
\end{array}\right)
\arrow[shift left=2]{r}{d^0, H^0} \arrow{d}[shift left=4]{G^0} &[2 em]
\left(\begin{array}{c}
(W_0, L_0) \otimes A^{\otimes(k+2)} \\
(W_0, L_0) \otimes A^{\otimes(k+2)}
\end{array}\right)
\arrow[shift left=2]{l}{d^1, H^1} \\
(W_0, L_0) \otimes A^{\otimes(k+1)}, \arrow[shift left=4]{u}{F^0}
\end{tikzcd}
\end{eqnarray}
where $k = \ell + r$.
We define a 3-fold product map
\begin{align*}
  &m_3:A^{\otimes 3} \rightarrow A, &
  &m_3 = m \circ \id m.
\end{align*}
The differentials
in diagram (\ref{diag:twist-he-p0c}) are
\begin{align*}
  d^0 &=
  \left(\begin{array}{cc}
    0 & a_0 \Delta I_{\ell+r} \\
    0 & a_0 \Delta I_{\ell+r}
  \end{array}\right), &
  d^1 &=
  \left(\begin{array}{cc}
    a_0 \id \Delta I_{\ell+r} & a_0 \Delta \id I_{\ell+r} \\
    0 & 0
  \end{array}\right).
\end{align*}
The homotopy equivalences:
in diagram (\ref{diag:twist-he-p0c}) are
\begin{align*}
  F^0 &=
  \left(\begin{array}{cc}
    a_0 \id \eta \eta I_{\ell+r} +
    \id_{\alpha\beta} (\id \eta \id \circ \Delta) I_{\ell+r} +
    a_0 (\id \eta \id \circ \Delta) I_\ell \Sigma_r \\
    0
  \end{array}\right), \\
  G^0 &=
  \left(\begin{array}{cc}
    a_0 (\id \epsilondot \epsilondot + \epsilondot \id \epsilondot +
    \epsilondot \epsilondot \id) I_{\ell+r} & 0
  \end{array}\right).
\end{align*}
The homotopy
in diagram (\ref{diag:twist-he-p0c}) is
\begin{align*}
  H^0 &=
  \left(\begin{array}{cc}
    (a_0 \id \id \epsilon +
    \id_{\alpha\beta} m_3 \eta) I_{\ell+r} +
    a_0 m_3 \eta I_\ell \Sigma_r & 0 \\
    (a_0 \id (\id_{ex} \circ m) +
    \id_{\alpha\beta} m_3 \eta) I_{\ell+r} +
    a_0 m_3 \eta I_\ell \Sigma_r & 0
  \end{array}\right), &
  H^1 &=
  \left(\begin{array}{cc}
    0 & 0 \\
    0 & a_0 \id \epsilon I_{\ell+r}
  \end{array}\right).
\end{align*}
The differentials for a saddle that splits a circle in region $\ell$
from the left or right of the circle containing the left arc is
\begin{align*}
  d_+ &=
  a_0 (
  \eta \epsilondot \etadot +
  \etadot \epsilondot \eta +
  \etadot \epsilon \etadot) I_{\ell+r}, &
  \dtilde_+ &=
  \left(\begin{array}{cc}
  a_0 (
  \eta \epsilondot \id \id \etadot +
  \etadot \epsilondot \id \id \eta +
  \etadot \epsilon \id \id \etadot) I_{\ell+r} & 0 \\
  0 & d_+
  \end{array}\right).
\end{align*}
The differentials for a saddle that splits a circle in region $\ell$
from the (left) of the arc component are
\begin{align*}
  d_+ &=
  a_0 \id \etadot I_{\ell+r} +
  c_0 \id \etadot I_\ell \Sigma_{r_e} I_{r_n}, &
  \dtilde_+ &=
  \left(\begin{array}{cc}
  a_0 \id \id \id \etadot I_{\ell+r} +
  c_0 \id \id \id \etadot I_\ell \Sigma_{r_e} I_{r_n} & 0 \\
  0 & d_+
  \end{array}\right).
\end{align*}
The differentials for a saddle that splits a circle in region $r$ from
the (right) of the arc component are
\begin{align*}
  d_+ &=
  a_0 \id I_{\ell+r} \etadot +
  c_0 \id I_{\ell+r} \eta +
  c_0 \id I_{\ell_n} \Sigma_{l_e} I_r \etadot, \\
  \dtilde_+ &=
  \left(\begin{array}{cc}
  a_0 \id \id \id I_{\ell+r} \etadot +
  c_0 \id \id \id I_{\ell+r} \eta +
  c_0 \id \id \id I_{\ell_n} \Sigma_{l_e} I_r \etadot & 0 \\
  0 & d_+
  \end{array}\right).
\end{align*}
The differentials for a saddle that splits a circle in region $r$ from
the left or right of the circle containing the right arc are
\begin{align*}
  d_+ &=
  a_0 (
  \eta \epsilondot I_{\ell+r}\etadot +
  \etadot \epsilondot I_{\ell+r} \eta +
  \etadot \epsilon I_{\ell+r} \etadot), \\
  \dtilde_+ &=
  \left(\begin{array}{cc}
  a_0 (
  \id \id \eta \epsilondot I_{\ell+r} \etadot +
  \id \id \etadot \epsilondot I_{\ell+r} \eta +
  \id \id \etadot \epsilon I_{\ell+r} \etadot) & 0 \\
  0 & d_+
  \end{array}\right).
\end{align*}
The differentials for a saddle
$P_0^C(\ell_n+\ell_e,r_n+r_e) \rightarrow P_0^R(\ell_n,\ell_e+r_n,r_e)$ are
\begin{align*}
  d_{RC} &=
  a_0 \epsilondot I_{\ell_n+\ell_e+r_n+r_e} +
  c_0 \epsilondot I_{\ell_n} \Sigma_{\ell_e} I_{r_n+r_e} +
  c_0 \epsilondot I_{\ell_n+\ell_e+r_n} \Sigma_{r_e}, \\
  \dtilde_{RC} &=
  \left(\begin{array}{cc}
    a_0 \epsilondot \tau_{1,1} I_{\ell_n+\ell_e+r_n+r_e} +
    c_0 \epsilondot \tau_{1,1} I_{\ell_n} \Sigma_{\ell_e} I_{r_n+r_e} &
    0 \\
    0 & d_{RC}
  \end{array}\right).
\end{align*}
The differentials for a saddle
$P_0^C(\ell_e+\ell_n, r_e+r_n) \rightarrow P_0^L(\ell_e,\ell_n+r_e,r_n)$:
\begin{align*}
  d_{LC} &=
  a_0 \epsilondot I_{\ell_e+\ell_n+r_e+r_n} +
  c_0 \epsilon I_{\ell_e+\ell_n+r_e+r_n} +
  c_0 \epsilondot \Sigma_{\ell_e} I_{\ell_n+r_e+r_n} +
  c_0 \epsilondot I_{\ell_e+\ell_n} \Sigma_{r_e} I_{r_n}, \\
  \dtilde_{LC} &=
  \left(\begin{array}{cc}
    a_0 \id \id \epsilondot I_{\ell_e+\ell_n+r_e+r_n} +
    c_0 \id \id \epsilon I_{\ell_e+\ell_n+r_e+r_n} +
    c_0 \id \id \epsilondot I_{\ell_e+\ell_n} \Sigma_{r_e} I_{r_n} &
    0 \\
    0 & d_{LC}
  \end{array}\right).
\end{align*}
Differentials that split one circle into two circles or merge two
circles into one circle commute with $F^0$ and $G^0$ due to the
identities in equation (\ref{eqn:identities-sigma}).

\subsection{Chain homotopy equivalence for $P_{-2}(\ell,r)$}

For a planar 3-tangle $P_{-2}(\ell,r)$, diagram (\ref{diag:twist-he})
has the form
\begin{eqnarray}
\label{diag:twist-he-m2}
\begin{tikzcd}
\left(\begin{array}{c}
(W_0, L_0) \otimes A^{\otimes(k+1)} \\
(W_0, L_{-2}) \otimes A^{\otimes k}
\end{array}\right)
\arrow[shift left=2]{r}{d^0,H^0} \arrow{d}[shift left=4]{G^0} &[2em]
\left(\begin{array}{c}
(W_0, L_0) \otimes A^{\otimes k} \\
(W_0, L_0) \otimes A^{\otimes k}
\end{array}\right)
\arrow[shift left=2]{l}{d^1,H^1} \\
(W_0, L_{-2}) \otimes A^{\otimes k}, \arrow[shift left=4]{u}{F^0}
\end{tikzcd}
\end{eqnarray}
where $k = \ell + r$.
We define linear maps
\begin{align*}
  &\id_{\tau\alpha}:(W_0,L_0) \rightarrow (W_0,L_{-2}), &
  &\alpha_0 \mapsto \tau_{-2,0}, &
  &\beta_0 \mapsto 0, \\
  &\id_{\tau\beta}:(W_0,L_0) \rightarrow (W_0,L_{-2}), &
  &\alpha_0 \mapsto 0, &
  &\beta_0 \mapsto \tau_{-2,0}, \\
  &\id_{\alpha\tau}:(W_0,L_{-2}) \rightarrow (W_0,L_0), &
  &\sigma_{-2,0} \mapsto 0, &
  &\tau_{-2,0} \mapsto \alpha_0, \\
  &\id_{\beta\tau}:(W_0,L_{-2}) \rightarrow (W_0,L_0), &
  &\sigma_{-2,0} \mapsto 0, &
  &\tau_{-2,0} \mapsto \beta_0, \\
  &\id_{\sigma\sigma}:(W_0,L_{-2}) \rightarrow (W_0,L_{-2}), &
  &\sigma_{-2,0} \mapsto \sigma_{-2,0}, &
  &\tau_{2,0} \mapsto 0.
\end{align*}
The differentials in diagram (\ref{diag:twist-he-m2}) are
\begin{align*}
  d^0 &=
  \left(\begin{array}{cc}
    0 & \id_{\beta \tau} I_{\ell+r} \\
    0 & \id_{\beta \tau} I_{\ell+r}
  \end{array}\right), &
  d^1 &=
  \left(\begin{array}{cc}
    (a_0 \etadot + c_0 \eta) I_{\ell+r} & a_0 \etadot I_{\ell+r} \\
    0 & 0
  \end{array}\right).
\end{align*}
The homotopy equivalences in diagram (\ref{diag:twist-he-m2})
are
\begin{align*}
  F^0 &=
  \left(\begin{array}{c}
    \id_{\alpha \tau}\eta I_{\ell+r}  +
    \id_{\beta \tau} \eta I_\ell \Sigma_r \\
    \id_{\sigma\sigma} I_{\ell+r}
  \end{array}\right), &
  G^0 &=
  \left(\begin{array}{cc}
    \id_{\tau \alpha} \epsilondot I_{\ell+r} &
    \id_{\sigma \sigma} I_{\ell+r}
  \end{array}\right).
\end{align*}
The homotopy in diagram (\ref{diag:twist-he-m2}) is
\begin{align*}
  H^0 &=
  \left(\begin{array}{cc}
    \id_{\alpha\beta} \epsilondot I_{\ell+r} +
    \id_{\alpha\alpha} \epsilondot I_\ell \Sigma_r & 0 \\
    (a_0 \epsilon + \id_{\alpha\beta}\epsilondot)I_{\ell+r} +
    \id_{\alpha\alpha}\epsilondot I_\ell \Sigma_r & 0
  \end{array}\right), &
  H^1 &=
  \left(\begin{array}{cc}
    0 & 0 \\
    \id_{\tau \beta} I_{\ell+r} & 0
  \end{array}\right).
\end{align*}
The differentials for a saddle that splits a circle in region $\ell$
from the arc component are
\begin{align*}
  d_+ &= a_0 \etadot I_{\ell+r}, &
  \dtilde_+ &=
  \left(\begin{array}{cc}
    a_0 \id  \etadot I_{\ell+r} +
    c_0 \id \etadot I_{\ell} \Sigma_{r_e} I_{r_n} & 0 \\
    0 & d_+
  \end{array}\right).
\end{align*}
The differentials for a saddle that splits a circle in region $r$ from
the arc component are
\begin{align*}
  d_+ &= a_0 I_{\ell+r} \etadot, &
  \dtilde_+ &=
  \left(\begin{array}{cc}
    a_0 \id I_{\ell+r} \etadot + c_0 \id I_{\ell+r} \eta +
    c_0 \id  I_{\ell_n} \Sigma_{\ell_e} I_r \etadot & 0 \\
    0 & d_+
  \end{array}\right).
\end{align*}
The differentials for a saddle
$P_{-2}(\ell_n+\ell_e,r) \rightarrow P_0^R(\ell_n,r,\ell_e)$ are
\begin{align*}
  d_{R-2} &= \id_{\beta \tau} I_{\ell_n} \tau_{\ell_e,r}, &
  \dtilde_{R-2} &=
  \left(\begin{array}{cc}
    (a_0 \etadot \id + c_0 \eta \id) I_{\ell_n} \tau_{\ell_e,r} +
    c_0 \etadot \id I_{\ell_n} (\tau_{\ell_e,r} \circ \Sigma_{\ell_e} I_r) & 0 \\
    0 & d_{R-2}
  \end{array}\right).
\end{align*}
The differentials for a saddle
$P_{-2}(\ell,r_e+r_n) \rightarrow P_0^L(r_e,\ell,r_n)$ are
\begin{align*}
  d_{L-2} &= \id_{\beta \tau} \tau_{\ell,r_e} I_{r_n}, &
  \dtilde_{L-2} &=
  \left(\begin{array}{cc}
    a_0 \etadot \id \tau_{\ell,r_e} I_{r_n} +
    c_0 \etadot \id (\tau_{\ell,r_e} \circ I_\ell \Sigma_{r_e}) I_{r_n} & 0 \\
    0 & d_{L-2}
  \end{array}\right).
\end{align*}
Differentials that split one circle into two circles or merge two
circles into one circle commute with $F^0$ and $G^0$ due to the
identities in equation (\ref{eqn:identities-sigma}).

\end{appendix}

\bibliographystyle{abbrv}
\bibliography{arxiv-fukaya-r-t2-2}

\end{document}